\documentclass[a4paper, reqno, 12pt]{amsart}

\usepackage[usenames,dvipsnames]{color}
\usepackage{amsthm,amsfonts,amssymb,amsmath,amsxtra}
\usepackage[all]{xy}
\SelectTips{cm}{}
\usepackage{xr-hyper}
\usepackage[colorlinks=
   citecolor=Black,
   linkcolor=Red,
   urlcolor=Blue]{hyperref}
\usepackage{verbatim}
\usepackage{enumitem}
\usepackage{tikz,tikz-cd}
\usepackage{graphicx}
\usepackage{adjustbox}
\usetikzlibrary{positioning}

\usepackage[margin=1.25in]{geometry}
\usepackage{mathrsfs}

\newlist{spadelist}{enumerate}{1}
\setlist[spadelist]{
  label=$\spadesuit$\ \arabic*.,  % 注意：添加 $ $ 数学环境
  ref=$\spadesuit$\ \arabic*      % 引用时同样需要数学环境
}

\RequirePackage{xspace}
\RequirePackage{etoolbox}
\RequirePackage{varwidth}
\RequirePackage{enumitem}
\RequirePackage{tensor}
\RequirePackage{mathtools}
\RequirePackage{longtable}
\RequirePackage{multirow}

\def\<{\langle}
\def\>{\rangle}

\newcommand{\fka}{\ensuremath{\mathfrak{a}}\xspace}
\newcommand{\fkb}{\ensuremath{\mathfrak{b}}\xspace}

\newcommand{\fkg}{\ensuremath{\mathfrak{g}}\xspace}

\newcommand{\fkl}{\ensuremath{\mathfrak{l}}\xspace}
\newcommand{\fkm}{\ensuremath{\mathfrak{m}}\xspace}
\newcommand{\fkn}{\ensuremath{\mathfrak{n}}\xspace}

\newcommand{\fkp}{\ensuremath{\mathfrak{p}}\xspace}
\newcommand{\fkq}{\ensuremath{\mathfrak{q}}\xspace}

\newcommand{\fkt}{\ensuremath{\mathfrak{t}}\xspace}
\newcommand{\fku}{\ensuremath{\mathfrak{u}}\xspace}
\newcommand{\fkv}{\ensuremath{\mathfrak{v}}\xspace}

\newcommand{\fkz}{\ensuremath{\mathfrak{z}}\xspace}

\newcommand{\BC}{\ensuremath{\mathbb {C}}\xspace}

\newcommand{{\BG}}{\ensuremath{\mathbb {G}}\xspace}

\newcommand{{\BK}}{\ensuremath{\mathbb {K}}\xspace}

\newcommand{\BR}{\ensuremath{\mathbb {R}}\xspace}

\newcommand{\BZ}{\ensuremath{\mathbb {Z}}\xspace}

\newcommand{\CA}{\ensuremath{\mathcal {A}}\xspace}
\newcommand{\CB}{\ensuremath{\mathcal {B}}\xspace}
\newcommand{\CC}{\ensuremath{\mathcal {C}}\xspace}
\newcommand{\CD}{\ensuremath{\mathcal {D}}\xspace}
\newcommand{\CE}{\ensuremath{\mathcal {E}}\xspace}
\newcommand{\CF}{\ensuremath{\mathcal {F}}\xspace}

\newcommand{\CJ}{\ensuremath{\mathcal {J}}\xspace}

\newcommand{\CL}{\ensuremath{\mathcal {L}}\xspace}
\newcommand{\CM}{\ensuremath{\mathcal {M}}\xspace}
\newcommand{\CN}{\ensuremath{\mathcal {N}}\xspace}
\newcommand{\CO}{\ensuremath{\mathcal {O}}\xspace}

\newcommand{\CQ}{\ensuremath{\mathcal {Q}}\xspace}

\newcommand{\CS}{\ensuremath{\mathcal {S}}\xspace}
\newcommand{\CT}{\ensuremath{\mathcal {T}}\xspace}

\newcommand{\CV}{\ensuremath{\mathcal {V}}\xspace}

\newcommand{\CZ}{\ensuremath{\mathcal {Z}}\xspace}

\newcommand{\Ad}{{\mathrm{Ad}}}

\DeclareMathOperator{\Coker}{Coker}

\DeclareMathOperator{\End}{End}

\newcommand{\GL}{\mathrm{GL}}

\DeclareMathOperator{\Hom}{Hom}

\newcommand{\id}{\ensuremath{\mathrm{id}}\xspace}

\newcommand{\Ind}{{\mathrm{Ind}}}

\DeclareMathOperator{\Ker}{Ker}

\DeclareMathOperator{\Lie}{Lie}

\newcommand{\Rep}{{\mathrm{Rep}}}

\newcommand{\HC}{{\mathcal{HC}}}

\DeclareMathOperator{\Sym}{Sym}

\newcommand{\U}{\mathrm{U}}

\newcommand{\dslash}{/\!\!/}

\newcommand{\wt}{\mathrm{wt}}
\newcommand{\Bil}{\mathrm{Bil}}

\newcommand{\ov}{\overline}

\newcommand{\lra}{\longrightarrow}

\newcommand{\rmh}{\mathrm{H}}
\newcommand{\Tot}{\mathrm{Tot}}

\newcommand{\mind}{\bar{\times}}

\newcommand{\Span}{{\operatorname{Span}}}

\newcommand{\SInd}{\mathcal{S}\mathrm{Ind}}
\newcommand{\Smod}{\mathcal{S}mod}

\DeclareMathOperator{\supp}{supp}
\newtheorem{theorem}{Theorem}
\newtheorem{proposition}[theorem]{Proposition}
\newtheorem{lemma}[theorem]{Lemma}
\newtheorem {conjecture}[theorem]{Conjecture}
\newtheorem{corollary}[theorem]{Corollary}

\theoremstyle{definition}
\newtheorem{definition}[theorem]{Definition}
\newtheorem{example}[theorem]{Example}

\newtheorem{remark}[theorem]{Remark}

\numberwithin{equation}{section}
\numberwithin{theorem}{section}

\setitemize[0]{leftmargin=*,itemsep=\the\smallskipamount}
\setenumerate[0]{leftmargin=*,itemsep=\the\smallskipamount}

\renewcommand{\to}{%
   \ifbool{@display}{\longrightarrow}{\rightarrow}%
   }
\let\shortmapsto\mapsto
\renewcommand{\mapsto}{%
   \ifbool{@display}{\longmapsto}{\shortmapsto}%
   }
\newlength{\olen}
\newlength{\ulen}
\newlength{\xlen}
\newcommand{\xra}[2][]{%
   \ifbool{@display}%
      {\settowidth{\olen}{$\overset{#2}{\longrightarrow}$}%
       \settowidth{\ulen}{$\underset{#1}{\longrightarrow}$}%
       \settowidth{\xlen}{$\xrightarrow[#1]{#2}$}%
       \ifdimgreater{\olen}{\xlen}%
          {\underset{#1}{\overset{#2}{\longrightarrow}}}%
          {\ifdimgreater{\ulen}{\xlen}%
             {\underset{#1}{\overset{#2}{\longrightarrow}}}
             {\xrightarrow[#1]{#2}}}}%
      {\xrightarrow[#1]{#2}}
   }
\makeatother
\newcommand{\xyra}[2][]{%
   \settowidth{\xlen}{$\xrightarrow[#1]{#2}$}%
   \ifbool{@display}%
      {\settowidth{\olen}{$\overset{#2}{\longrightarrow}$}%
       \settowidth{\ulen}{$\underset{#1}{\longrightarrow}$}%
       \ifdimgreater{\olen}{\xlen}%
          {\mathrel{\xymatrix@M=.12ex@C=3.2ex{\ar[r]^-{#2}_-{#1} &}}}%
          {\ifdimgreater{\ulen}{\xlen}%
             {\mathrel{\xymatrix@M=.12ex@C=3.2ex{\ar[r]^-{#2}_-{#1} &}}}
             {\mathrel{\xymatrix@M=.12ex@C=\the\xlen{\ar[r]^-{#2}_-{#1} &}}}}}%
      {\mathrel{\xymatrix@M=.12ex@C=\the\xlen{\ar[r]^-{#2}_-{#1} &}}}%
   }
\makeatletter
\newcommand{\xla}[2][]{%
   \ifbool{@display}%
      {\settowidth{\olen}{$\overset{#2}{\longleftarrow}$}%
       \settowidth{\ulen}{$\underset{#1}{\longleftarrow}$}%
       \settowidth{\xlen}{$\xleftarrow[#1]{#2}$}%
       \ifdimgreater{\olen}{\xlen}%
          {\underset{#1}{\overset{#2}{\longleftarrow}}}%
          {\ifdimgreater{\ulen}{\xlen}%
             {\underset{#1}{\overset{#2}{\longleftarrow}}}
             {\xleftarrow[#1]{#2}}}}%
      {\xleftarrow[#1]{#2}}
   }
\newcommand{\isoarrow}{%
   \ifbool{@display}{\overset{\sim}{\longrightarrow}}{\xrightarrow\sim}%
   }
   
\begin{document}

\title[Archimedean Bernstein-Zelevinsky]{Canonical Bernstein-Zelevinsky Filtration and Casselman's Comparison Conjecture}
%\author[Kei Yuen Chan]{Kei Yuen Chan}
\author[Kaidi Wu]{Kaidi Wu}
\author[Jun Yu]{Jun Yu}

%\address{(Chan) Department of Mathematics, The University of Hong Kong, HK.}\email{kychan1@hku.hk}

\address{(Wu) Department of Mathematics and New Cornerstone Science Laboratory, The University of Hong Kong, HK.}
\email{kaidiwu24@connect.hku.hk}

\address{(Yu) School of Mathematical Sciences and Beijing International Center for Mathematical Research, Peking, 100871, China}
\email{junyu@bicmr.pku.edu.cn}

\thanks{}

\subjclass{}
\keywords{Coarse spectral filtration, Bernstein-Zelevinsky filtration, Casselman-Jacquet functor, Twisted Jacquet functor, Casselman's comparison conjecture}

\date{\today}

\begin{abstract}
    We establish a canonical Bernstein--Zelevinsky filtration for Casselman--Wallach representations that is analogous to the $p$-adic case. In addition, we outline an approach to Casselman's comparison conjecture and prove it for general linear groups, as well as for quasi-split even orthogonal groups in some special cases. We also give some applications of the Bernstein--Zelevinsky filtration, such as to the study of highest derivatives and the indecomposability of mirabolic restrictions.
\end{abstract}
\maketitle

\tableofcontents

\section{Introduction}
\subsection{} Let $G$ be a real reductive group and $P$ a parabolic subgroup of $G$ with unipotent radical $N$. A fundamental problem in the representation theory of $G$ is to understand the structures related to $P$, such as parabolic induction. Following~\cite{Fd,WZ}, two interrelated and fruitful perspectives emerge.

The first is \textit{spectral decomposition}. Namely, we aim to decompose the restriction of $\pi$ to $P$ according to irreducible unitary representations of $N$. The second is \textit{spectral reduction}: we study the coinvariants of $\pi$ with respect to irreducible unitary representations of $N$.

Analogous questions have been studied for $p$-adic groups since the pioneering works of Bernstein–Zelevinsky~\cite{BZ77} and Zelevinsky~\cite{Ze80}, which concern the spectral filtration of a smooth representation of $\GL_n(F)$ restricted to the mirabolic subgroup. (Here $F$ is a $p$-adic field.) The subquotients of this spectral filtration, also known as the Bernstein–Zelevinsky filtration, are described by compact induction from its derivatives. This filtration has been widely used in the investigation of various aspects of the Langlands program, such as branching laws (see \cite{CSa21,Ch25}) and local $L$-functions (see \cite{CPS17}).

On the other hand, two specific cases of spectral reduction also appear frequently in the literature. The first is the Jacquet functor
\[
\CJ_N(\pi) := \pi_N = \pi / \ov{\fkn \cdot \pi},
\]
which is the spectral reduction at the trivial character. One can also compare the homology of the Jacquet functor in the smooth setting and in the algebraic setting. This comparison is known as Casselman's comparison conjecture; see \cite[Conjecture 10.3]{Vog08}.

\begin{conjecture}\label{comparison conj}
    For any integer $i$, there is a natural isomorphism
    \[
    \rmh_i(\fkn, \pi_K)^{\infty} \simeq \rmh_i(\fkn, \pi),
    \]
    where $\pi_K$ is the Harish-Chandra module of $\pi$ consisting of $K$-finite vectors, and $(\cdot)^{\infty}$ denotes the Casselman–Wallach globalization.
\end{conjecture}

When $P$ is a minimal parabolic subgroup, the result was proved in~\cite{HT98} and~\cite{LLY21}. For the general case, it has long remained unresolved. Nevertheless, the study of branching laws and theta correspondence draws extensively on this result.

The second specific case is the generalized Whittaker model (see~\cite{GGS17}), which is closely connected with nilpotent invariants of representations. Some special cases in classical groups, called local descent in~\cite{JZ18}, form an important constituent of the local Gan–Gross–Prasad conjecture.

In this article, we study the spectral decomposition and spectral reduction through the coarse spectral filtration proposed in~\cite{WZ} when $N$ is abelian. As we mentioned in the introduction of~\cite{WZ}, the coarse spectral filtration is concretely constructed through detailed Fourier transform and non-canonical in nature. However, it contains enough information to deduce the canonical filtration and Casselman's comparison conjecture. We will introduce our main results in more detail in the following subsections.

\subsection{Spectral reduction}
For generality, we first assume that $P$ is an almost linear Nash group with a Levi decomposition $P = LN$, \textbf{where $N$ is abelian}. Let $\phi$ be a unitary character of $N$, and let $S_{\phi}$ be the stabilizer of $\phi$ in $P$. The (normalized) spectral reduction at $\phi$ is defined as the (normalized) twisted Jacquet functor
\[
 \Psi_{\phi}(\sigma):= \delta_{S_{\phi}\cap L} \otimes \sigma/ \Span\{\alpha v-\phi(\alpha)v\mid v\in\sigma, \alpha\in \fkn\},
\]
where $\delta_{S_{\phi}\cap L}$ is the modular character, and we regard $\phi$ as a character of the complexified Lie algebra $\fkn$ by derivation. Our first main result studies the reduction at open orbits.
\begin{theorem}[Proposition~\ref{open-iso}, Corollary~\ref{open-exa}]
    Let $\phi$ be a unitary character such that its $P$-orbit $\CO$ in $\widehat{N}$ is open. Then $\Psi_{\phi}(\sigma)$ is Hausdorff for any smooth moderate-growth Fr\'echet representation $\sigma$ of $P$. Moreover, if 
    \[
    0 \lra \sigma_1 \lra \sigma_2 \lra \sigma_3 \lra 0
    \]
    is a short exact sequence of such representations, and $\CS(\CO)\cdot \sigma_1$ is closed in $\sigma_1$, then
    \[
    0 \lra \Psi_{\phi}(\sigma_1) \lra \Psi_{\phi}(\sigma_2) \lra \Psi_{\phi}(\sigma_3) \lra 0
    \]
    is also exact.
\end{theorem}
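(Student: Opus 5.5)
The plan is to use the Fourier-transform picture of $\sigma$ as a module over $\CS(\widehat N)$. Since $N$ is abelian, the action of $\CS(N)$ on $\sigma$ becomes, after Fourier transform, an action of the Schwartz algebra $\CS(\widehat N)$ of the (real vector space) dual $\widehat N$. The orbit $\CO$ is open, hence $\CS(\CO)\subset \CS(\widehat N)$ is a $P$-stable ideal. The coarse spectral filtration of \cite{WZ} provides the subspace $\CS(\CO)\cdot\sigma$, and I take as given from \cite{WZ} (or the earlier parts of the paper) its structure: its closure $\ov{\CS(\CO)\cdot\sigma}$ is a Schwartz induction from $S_\phi$ to $P$ of a representation on the fibre at $\phi$.

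\textbf{Step 1: localisation at $\phi$.} The twisted Jacquet functor $\Psi_\phi$ depends only on the germ of $\sigma$ near $\phi\in\widehat N$. Concretely, pick $f\in\CS(\CO)$ with $f\equiv 1$ near $\phi$. Then $(1-f)\cdot v$ lies in the closure of $\Span\{\alpha v'-\phi(\alpha)v'\}$. The reason is that $1-f$ vanishes near $\phi$, so it is a finite sum of terms $(\alpha_i-\phi(\alpha_i))g_i$ with $g_i$ Schwartz multipliers; this is a division statement in $\CS(\widehat N)$ away from a point. Consequently
\[
\Psi_\phi(\sigma)\simeq \Psi_\phi\bigl(\ov{\CS(\CO)\cdot\sigma}\bigr)\simeq \Psi_\phi\bigl(\CS(\CO)\cdot\sigma\bigr).
\]

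\textbf{Step 2: identify $\Psi_\phi$ on the open-orbit part.} Write $\CO\simeq P/S_\phi$. A Schwartz induced module $\SInd_{S_\phi}^P\tau$, on which $\fkn$ acts through multiplication by the coordinate on $\CO$, is a Schwartz section space over $\CO$. Its coinvariant quotient at $\phi$ is evaluation at the point $\phi$, which is surjective with kernel $\ov{\fkn_\phi\cdot(\ldots)}$. The fact I need is that, for a Fréchet bundle over a manifold, sections vanishing at a point equal $\sum x_i\cdot(\text{sections})$ where the $x_i$ are local coordinates (Hadamard's lemma with parameters). So the span is already closed and the quotient is the fibre $\tau$, twisted by the modular character, which gives $\delta_{S_\phi\cap L}$. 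In particular $\Psi_\phi(\sigma)$ is Hausdorff; this is Proposition~\ref{open-iso}.

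\textbf{Step 3: exactness.} Given $0\to\sigma_1\to\sigma_2\to\sigma_3\to0$, right exactness of coinvariants is formal. Surjectivity $\sigma_2\to\sigma_3$ gives surjectivity $\CS(\CO)\sigma_2\to\CS(\CO)\sigma_3$, which by Step 2 yields surjectivity on fibres. For left exactness I apply the functor to the open-orbit parts. The hypothesis that $\CS(\CO)\cdot\sigma_1$ is closed ensures that $\CS(\CO)\sigma_1=\CS(\CO)\sigma_2\cap\sigma_1$ is a closed subspace. It also gives that $0\to\CS(\CO)\sigma_1\to\CS(\CO)\sigma_2\to\CS(\CO)\sigma_3\to0$ is a topologically exact sequence of Schwartz bundles over $\CO$, via the open mapping theorem and $\CS(\CO)$-linearity. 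Taking fibres at $\phi$ of an exact sequence of such bundles is exact, by a local trivialisation argument using Step 2 fibrewise.

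\textbf{Main obstacle.} I expect the hardest point to be the intersection identity $\CS(\CO)\sigma_2\cap\sigma_1=\CS(\CO)\sigma_1$, i.e. that the open-orbit part is strictly compatible with subobjects. I would first try to prove it using an approximate identity in $\CS(\CO)$ that acts as a projector modulo the ideal, together with the factorisation $\CS(\CO)=\CS(\CO)\cdot\CS(\CO)$ (Dixmier–Malliavin type). The closedness hypothesis is exactly what makes the resulting limits stay in $\CS(\CO)\sigma_1$.
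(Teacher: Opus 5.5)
\textbf{There is a genuine gap: the load-bearing input is assumed, and in the form you assume it, it is false.}

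In this statement $P$ is an arbitrary almost linear Nash group and $\sigma$ an arbitrary object of $\Smod_P$. So there is no coarse spectral filtration to appeal to; that is the hypothesis $\spadesuit$, imposed only on principal series of reductive groups. Moreover, $\ov{\CS(\CO)\cdot\sigma}$ need not be a Schwartz induction. In subsection~\ref{counter-exa-sec}, $\CS(\CO)\cdot E^{\infty}$ (which is $\SInd_{V_2}^{M_2}(\psi)$) is a proper dense subspace of $E^{\infty}$. Its closure is therefore $E^{\infty}$, which is not even a differentiable $\CS(\CO)$-module.

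Your Step 2 only computes $\Psi_\phi$ of a module already known to be of the form $\SInd_{S_\phi}^P(\tau)$, so Hausdorffness of $\Psi_\phi(\sigma)$ is left unproved. The identification $\CS(\CO)\cdot\sigma\simeq\SInd_{S_\phi}^P(\Psi_\phi(\sigma))$ is precisely the content of Proposition~\ref{open-iso}. For the same reason your ``main obstacle'' is misplaced. The identity $\CS(\CO)\sigma_2\cap\sigma_1=\CS(\CO)\sigma_1$ is just the short approximate-identity argument of Lemma~\ref{exa_lem}. The real difficulty is the closedness of $\Span\{\alpha v-\phi(\alpha)v\}$ for arbitrary $\sigma$.

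\textbf{The missing idea is du Cloux's imprimitivity theorem} \cite[Theorem 2.5.8]{Fd}. Apply it to $\CS(\CO)\cdot\sigma$ in its own quotient topology from $\CS(\CO)\widehat{\otimes}\sigma$, not to its closure in $\sigma$. In that topology it is a differentiable $\CS(\CO)$-module and an imprimitive system over $\CO\simeq S_\phi\backslash P$, hence isomorphic to $\SInd_{S_\phi}^P(\tau)$. Compatibility with the $\CS(\widehat{N})$-action forces $N$ to act on $\tau$ by $\phi$. Your Hadamard computation then gives $\Psi_\phi(\CS(\CO)\cdot\sigma)\simeq\tau$, which is Hausdorff.

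Step 1 also needs two repairs.
\begin{enumerate}
\item Because $1-f\to 1$ at infinity, the $g_i$ (e.g.\ $(\xi_i-\phi_i)(1-f)/|\xi-\phi|^2$) are tempered, not Schwartz. They act on $\sigma$ by $g\cdot(hu):=(gh)u$, which is well defined by factorization and an approximate identity. This gives $(1-f)\cdot\sigma\subset\Span\{\alpha v-\phi(\alpha)v\}$ exactly; membership in the closure, as you wrote, would be useless for Hausdorffness.
\item To transfer closedness back to $\sigma$, you must use that $v\mapsto f\cdot v$ is continuous from $\sigma$ into $\CS(\CO)\cdot\sigma$ with its finer topology. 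If $v_n\to v$ with each $v_n$ in the span, then $fv_n\to fv$ in $\CS(\CO)\cdot\sigma$. So $fv$ lies in the closed span inside $\CS(\CO)\cdot\sigma$, while $(1-f)v$ lies in the span by the first repair.
\end{enumerate}

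\textbf{Comparison with the paper once repaired.} With these fixes your route works and is a genuine alternative. The paper instead applies du Cloux's theorem to the kernel of the action map $\SInd_{S_\phi}^P(\pi)\to\pi$, whose image is $\CS(\CO)\cdot\pi$. This yields a kernel $\SInd_{S_\phi}^P(\kappa)$ with $\kappa\subset\pi$ closed. It then identifies $\kappa$ with the twisted span by an explicit Fourier-inversion computation. That gives closedness directly in $\pi$, with no tempered multipliers and no localization step. Your Step 3 then matches Corollary~\ref{open-exa}: Lemma~\ref{exa_lem} plus passage to fibres. The paper does the latter via vanishing of the higher $\fkn$-homology of the Schwartz induction rather than evaluation at the base point.
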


The next key ingredient of this paper is to outline an approach to the comparison conjecture. Our central idea is to pass to a complete category and then establish a corresponding comparison between complete categories by comparing the standard objects within the complete category. Let us illustrate this in more detail.

Let $G$ be a real reductive group and $P$ a parabolic subgroup. In~\cite[Section 2.5]{WZ}, we introduced a smooth complete category $\CC(\fkg,L)_f$ with respect to the spectral reduction at the trivial character of $\fkn$. In Lemma~\ref{CW CJ in finite length cat}, we will prove that the smooth Casselman--Jacquet functor
\[
\mathrm{CJ}^{\infty}_N(\pi) := \varprojlim_k \, \pi / \ov{\fkn^k \pi}
\]
sends a Casselman-Wallach representation $\pi$ to an object in this complete category. In Section~\ref{alg complete cat sec}, we propose an algebraic complete category, denoted $\HC(\fkg,K_L)$, and prove that the algebraic Casselman-Jacquet functor
\[
\mathrm{CJ}_N(E) := \varprojlim_k \, E / \fkn^k E
\]
sends a Harish--Chandra module $E$ to an object in this algebraic complete category. 

The two complete categories share many similarities. For example, both have a family of standard objects, called \textit{formal Verma modules}, whose $\fkn$-homology is straightforward to compute, and every irreducible object arises as the unique irreducible quotient of some formal Verma module. Therefore, it is desirable to ask whether there is a category equivalence between these two categories.

In Section~\ref{alg complete cat sec}, we construct the category equivalence 
\[
\begin{tikzcd}[column sep=large]
\CC(\fkg,L)_f \arrow[r, bend left=25, "\cdot^{\flat}"] & \HC(\fkg,K_L) \arrow[l, bend left=25, "\widehat{\cdot}"]
\end{tikzcd}
\]
via the Casselman-Wallach globalization functor of the Levi subgroup $L$. Moreover, we demonstrate that the category equivalence is well-behaved with respect to standard objects. Therefore, we can prove the homology comparison between two complete categories (see Theorem~\ref{complet-com})
\[
\rmh_i(\fkn, E)^{\infty} \simeq \rmh_i(\fkn, \widehat{E})
\]
via replacing $E$ by standard objects. Here, $E\in \HC(\fkg,K_L)$ and $\cdot^{\infty}$ is the Casselman-Wallach globalization of a Harish-Chandra module of $L$.

On the other hand, the category equivalence also intertwines the Casselman-Jacquet functor. Namely, we have a natural isomorphism
\[
\widehat{ \mathrm{CJ}_N(E)} \simeq \mathrm{CJ}_N^{\infty}(E^{\infty}).
\]
The proof essentially uses the relationship between the smooth and algebraic generalized Jacquet functors established in~\cite[Theorem 12.1]{CWYZ}. This leads to the following theorem.   
\begin{theorem}[Proposition~\ref{HC-com}, Theorem~\ref{complet-com}]\label{general com}
    Let $E$ be a Harish-Chandra module of $G$. Then we have a natural isomorphism for any integer $i$
    \[
    \rmh_i(\fkn, E)^{\infty}\simeq \rmh_i(\fkn, \mathrm{CJ}_N^{\infty}(E^{\infty})).
    \]
\end{theorem}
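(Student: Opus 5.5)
The proof has two halves: an algebraic comparison, which is Proposition~\ref{HC-com}, and a comparison between complete categories, which is Theorem~\ref{complet-com}. We chain them through the intermediate object $\mathrm{CJ}_N(E)\in\HC(\fkg,K_L)$.

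\textbf{Step 1 (algebraic step).} Show that $\rmh_i(\fkn,E)\simeq\rmh_i(\fkn,\mathrm{CJ}_N(E))$ as Harish-Chandra modules for $(\fkl,K_L)$. The plan is as follows.
\begin{itemize}
\item Since $E$ is finitely generated over $\U(\fkg)$, it is finitely generated over $\U(\fkn)$-modulo-$\fkl$-finiteness. Concretely, the $\fkn$-adic filtration $E\supset\fkn E\supset\fkn^2E\supset\cdots$ has $(\fkl,K_L)$-admissible graded pieces. This is the Casselman--Osborne/Artin--Rees type input.
\item Hence $\mathrm{CJ}_N(E)$ is the $\fkn$-adic completion of a module that is finite over a Noetherian ring in the appropriate sense. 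The completion is then flat, and $\rmh_i(\fkn,\cdot)$ commutes with it, because $\rmh_i(\fkn,E)$ is already annihilated by $\fkn$.
\item To carry this out, compute $\rmh_i$ by the Koszul complex $\wedge^\bullet\fkn\otimes E$. Use Mittag-Leffler for the inverse system $E/\fkn^kE$, whose terms are finite length in each generalized $\fkz(\fkl)$-eigenspace. This gives $\varprojlim_k \rmh_i(\fkn,E/\fkn^kE)\simeq \rmh_i(\fkn,\mathrm{CJ}_N(E))$.
\item Finally, compare with $\rmh_i(\fkn,E)$ eigenspace by eigenspace. Each generalized $\fkz(\fkl)$-eigenspace stabilizes for large $k$, since the weights of $\fkn^k$ go to infinity.
\end{itemize}

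\textbf{Step 2 (complete-category step).} Apply Theorem~\ref{complet-com} to $\mathrm{CJ}_N(E)\in\HC(\fkg,K_L)$. This gives
\[
\rmh_i(\fkn,\mathrm{CJ}_N(E))^{\infty}\simeq\rmh_i\bigl(\fkn,\widehat{\mathrm{CJ}_N(E)}\bigr).
\]
The intended proof of that theorem runs as follows.
\begin{itemize}
\item Resolve objects of $\HC(\fkg,K_L)$ by formal Verma modules, i.e.\ completed $\U(\fkg)\otimes_{\U(\fkp)}$ of finite length $(\fkl,K_L)$-modules. Their $\fkn$-homology is computed explicitly and matches under $\widehat{\cdot}$, since the equivalence is built from the Casselman--Wallach globalization of $L$.
\item Conclude with a dimension-shifting argument using the exactness of $\widehat{\cdot}$ and of $(\cdot)^\infty$ on Harish-Chandra modules of $L$.
\item One must also check that the $\fkn$-homology of objects of $\CC(\fkg,L)_f$ is automatically Hausdorff. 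The Hausdorffness should follow from the completeness and admissibility of each generalized eigenspace.
\end{itemize}

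\textbf{Step 3 (identification of the globalized completion).} Use the natural isomorphism
\[
\widehat{\mathrm{CJ}_N(E)}\simeq\mathrm{CJ}_N^{\infty}(E^{\infty}),
\]
stated before the theorem. It is derived from \cite[Theorem 12.1]{CWYZ}, which compares the smooth and algebraic generalized Jacquet functors $\pi/\ov{\fkn^k\pi}$ and $E/\fkn^kE$. Concretely:
\begin{itemize}
\item Show that $(E/\fkn^kE)^\infty\simeq E^\infty/\ov{\fkn^kE^\infty}$ compatibly in $k$.
\item Then pass to the limit, using Lemma~\ref{CW CJ in finite length cat} to know that the limit lies in $\CC(\fkg,L)_f$ and that $\widehat{\cdot}$ is computed as such a limit.
\end{itemize}
Chaining Steps 1--3 and applying $(\cdot)^\infty$ to Step 1 yields $\rmh_i(\fkn,E)^\infty\simeq\rmh_i(\fkn,\mathrm{CJ}^\infty_N(E^\infty))$. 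Naturality in $E$ holds because each isomorphism is induced by functorial maps.

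The main obstacle is Step 2: matching $\fkn$-homology across the equivalence requires enough projective-like formal Verma resolutions in both complete categories, together with closedness of boundary images on the smooth side. I would first try to prove acyclicity of formal Verma modules for $\rmh_{>0}(\fkn,\cdot)$ twisted appropriately, reducing to the finite-dimensional statement $\rmh_i(\fkn,\U(\fkg)\otimes_{\U(\fkp)}V)$ computed via a PBW filtration.
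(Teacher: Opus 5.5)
Your three-step chain is the paper's: Proposition~\ref{HC-com}, then Theorem~\ref{complet-com} applied to $\mathrm{CJ}_N(E)$, then Proposition~\ref{commutative dia prop} via \cite[Theorem 12.1]{CWYZ}. Step 3 is exactly the paper's argument.

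Your Step 1 flatness idea is a valid variant of the paper's proof, which instead resolves $E$ by finite free $\U(\fkn_B)$-modules using Osborne's lemma and the exactness of $\mathrm{CJ}$. Since $\mathfrak{g}=\mathfrak{k}+\fkp$, $E=\U(\fkp)F$ for a finite-dimensional $K$-stable $F$. So $\wedge^\bullet\fkn\otimes E$ is a complex of finitely generated $\U(\fkp)$-modules under the diagonal action. By Artin--Rees, $\fkn$-adic completion is exact on such modules, and it turns this complex into the Koszul complex of $\mathrm{CJ}_N(E)$, because the two $\fkn$-adic filtrations on $\wedge^j\fkn\otimes E$ are cofinal. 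Finally, $\rmh_i(\fkn,E)$ is killed by $\fkn$, so it is its own completion. The Mittag-Leffler and eigenspace bullets are then unnecessary. The last one is also not justified as written: $\fkz_L$ does not act locally finitely on $E$, so ``the weights of $\fkn^k$ go to infinity'' does not by itself control $\rmh_*(\fkn,\fkn^kE)$.

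The genuine gap is in Step 2. First, formal Verma modules are not projective-like, so objects cannot in general be resolved by them. A morphism $\CV(\sigma)\to V$ is determined by an $(\fkl,K_L)$-map from $\sigma$ into the $\ov{\fkn}$-invariants of $V$. For a dual Verma module $\CQ(\tau)\neq\CL(\tau)$ these invariants are just $\tau$, so every morphism from a sum of formal Verma modules lands in the socle $\CL(\tau)$.

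Second, your acyclicity plan uses $\U(\fkg)\otimes_{\U(\fkp)}V$, which is free over $\U(\ov{\fkn})$ rather than $\U(\fkn)$. Its $\fkn$-homology is not concentrated in degree $0$: for a generic $\mathfrak{sl}_2$ Verma module induced from the Borel containing $\fkn$, one gets $\rmh_0=0$ and $\rmh_1\neq 0$. The standard objects are induced from $\ov{\fkp}$. Then $\CV(\tau)\simeq\U[[\fkn]]\otimes\tau$ and $\widehat{\CV(\tau)}\simeq\U[[\fkn]]\widehat{\otimes}\tau^{\infty}$ as $\fkn$-modules. Their homology is $\tau$, resp.\ $\tau^{\infty}$, in degree $0$ and zero otherwise, with no PBW computation needed.

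The missing idea is that no resolution is needed. Objects have finite length, and $\fkn$-homology has long exact sequences on both sides; on the smooth side this is because $\rmh_i(\fkn,\widehat{V})$ is Casselman--Wallach \cite[Proposition 2.26]{WZ}. The comparison map is natural, so by the five lemma it suffices to treat $\CL(\tau)$. Then use $0\to\iota\to\CV(\tau)\to\CL(\tau)\to 0$ with $\min\wt(\iota)>\wt(\tau)$, and induct downward on weights in the finite set $\CT_\mu$. The induction terminates because a formal Verma module of maximal weight is irreducible (Corollary~\ref{finite term coro}).

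Alternatively, you can rescue your dimension shifting with completed inductions $\U(\fkg)\otimes_{\U(\ov{\fkp})}M$ from finite-length $(\ov{\fkp},K_L)$-modules $M$ on which $\ov{\fkn}$ acts nilpotently. These have formal Verma flags, hence are $\fkn$-acyclic on both sides, and every object is a quotient of one.
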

The theorem reduces Casselman's comparison conjecture to studying the natural map for Casselman-Wallach representations $\pi$:
\begin{equation}\label{CW com eq}
    \rmh_i(\fkn,\pi) \lra \rmh_i(\fkn, \mathrm{CJ}_N^{\infty}(\pi)).
\end{equation}
This is where the coarse spectral filtration plays a fundamental role, see Definition~\ref{coarse def} for the definition of coarse spectral filtration. We remark that Lemma~\ref{reduce to max} reduces the comparison conjecture to maximal parabolic cases.
\begin{theorem}\label{CW com thm}
    Suppose that $P$ is maximal and any principal series of $G$ has a coarse spectral filtration. Then the natural map \eqref{CW com eq} is an isomorphism for any Casselman-Wallach representation of $G$.
\end{theorem}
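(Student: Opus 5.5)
We reduce to principal series and use the coarse spectral filtration to split the representation into an ``$\fkn$-nilpotent'' piece, where the comparison holds by construction, and a piece whose $\fkn$-homology vanishes on both sides.

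\textbf{Step 1: reduction to principal series.} By Casselman's subrepresentation theorem, every Casselman--Wallach representation $\pi$ embeds into a principal series $I$. Iterating on the cokernel gives a resolution
\[
0\to \pi\to I^0\to I^1\to\cdots
\]
by principal series (finite length in each degree). Both sides of \eqref{CW com eq} are $\delta$-functors in $\pi$, provided we check two things:
\begin{itemize}
\item $\mathrm{CJ}^\infty_N$ is exact on Casselman--Wallach representations (via Lemma~\ref{CW CJ in finite length cat} and the exactness of the algebraic $\mathrm{CJ}_N$ together with the equivalence $\widehat{\cdot}$);
\item smooth $\fkn$-homology of Casselman--Wallach representations is Hausdorff.
\end{itemize}
A five-lemma and dimension-shifting argument, by descending induction on $i$ (note $\rmh_i=0$ for $i>\dim\fkn$), then reduces the claim to $\pi=I$ a principal series.

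\textbf{Step 2: the filtration.} Assume $I$ admits a coarse spectral filtration (Definition~\ref{coarse def}) as a $P$-representation. Since $P$ is maximal, $\widehat N$ decomposes into finitely many $L$-orbits, and these are ordered by closure. The bottom piece $I_0$ is the subspace annihilated by a power of $\fkn$ in the appropriate completed sense, i.e.\ spectral support at $\{0\}$. The remaining graded pieces $I_j/I_{j-1}$ are supported on nonzero orbits $\CO_j$, and have the form $\CS(\CO_j)$-modules, i.e.\ Schwartz sections over $\CO_j$.

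\textbf{Step 3: vanishing on nonzero orbits.} For a piece supported on a nonzero orbit, we show that both $\rmh_*(\fkn,\cdot)$ and $\mathrm{CJ}^\infty_N$ vanish. For $\fkn$-homology: the Koszul complex of $\fkn$ acting on Schwartz functions on $\CO_j\subset\widehat N\setminus\{0\}$ is acyclic, because multiplication by coordinate functions, which have no common zero on $\CO_j$, is a contractible Koszul situation. This should be uniform via the Fourier-transform description in \cite{WZ} and the Hausdorffness statements of Proposition~\ref{open-iso}. For $\mathrm{CJ}^\infty_N$: $\fkn^k$ acts surjectively onto a dense subspace, so all quotients $\pi/\ov{\fkn^k\pi}$ vanish.

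\textbf{Step 4: the bottom piece.} It follows that both sides of \eqref{CW com eq} for $I$ are computed by $I/I_0$ versus the bottom part. On the bottom part $\fkn$ acts locally nilpotently in the completed sense, so $\mathrm{CJ}_N^\infty$ recovers it, $I_0\cong\mathrm{CJ}^\infty_N(I)$, and the map becomes the identity.

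\textbf{Expected obstacle.} The main difficulty is Step 3 together with the topological issues: ensuring the relevant submodules $\CS(\CO)\cdot\sigma$ are closed, so that long exact sequences of smooth homology remain valid with Hausdorff terms. This is exactly what the hypothesis in the exactness statement for $\Psi_\phi$ on open orbits requires. The plan is to handle the open orbit first, using Corollary~\ref{open-exa}, and then induct down the closure order of orbits.
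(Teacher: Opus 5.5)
\textbf{The gap is in Steps 2--4.} The hypothesis $\spadesuit$ only gives a coarse spectral filtration of $I|_P$. Its Mackey pieces $I_\ell(\sigma)$ occur in an arbitrary, non-canonical order, with infinite chains and inverse limits. Nothing places the trivial-spectrum pieces at the bottom, and in general they cannot be placed there, because $\mathrm{CJ}^\infty_N(I)$ is naturally a \emph{quotient} of $I$, not a subrepresentation.

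Already the open-cell piece of a principal series of $\mathrm{SL}_2(\BR)$, restricted to the Borel subgroup, shows this. That piece Fourier-transforms to $\CS(\widehat N)$, which has no nonzero vector killed by $\CS(\widehat N\setminus\{0\})$. Yet its $\mathrm{CJ}^\infty_N$ is the space of formal Taylor series at $0$, so the trivial spectrum appears there only as a quotient.

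Consequently the isomorphism $I_0\cong\mathrm{CJ}^\infty_N(I)$ in Step 4, and the claim that ``the map becomes the identity'', are unsupported. Nor can you assemble $\mathrm{CJ}^\infty_N(I)$ piece by piece from the vanishing in Step 3, since $\mathrm{CJ}^\infty_N$ is not exact on $\Smod_P$. Your ``expected obstacle'' also points the wrong way. Twisted Jacquet functors at open orbits (Corollary~\ref{open-exa}) play no role at the trivial character, and they would not cover nonzero orbits that are not open.

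\textbf{The missing idea is the rearrangement, followed by identifying the quotient.}
\begin{enumerate}
\item By Example~\ref{diff-mod-exam}, and Lemma~\ref{limit diff} for the inverse limits, pieces on nonzero orbits are differentiable $\CS(\widehat N\setminus\{0\})$-modules, while pieces at $0$ are killed by this algebra.
\item Lemma~\ref{split} then reverses every extension whose sub is killed and whose quotient is differentiable. Iterating this (Lemma~\ref{generalized prin lem}) makes $\CS(\widehat N\setminus\{0\})\cdot I$ a \emph{closed} $P$-subrepresentation, filtered by the $I_\ell$ with $\ell\neq0$.
\item The $\fkn$-homology of this subrepresentation vanishes by Lemma~\ref{vanish lem}, together with \cite[Lemma 2.9]{WZ} to pass through the limits. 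Your Koszul-contraction argument is essentially Lemma~\ref{vanish lem}.
\item Theorem~\ref{topological spectral short exa} identifies $I/\CS(\widehat N\setminus\{0\})\cdot I$ with $\mathrm{CJ}^\infty_N(I)$ via the natural map.
\item The long exact sequence then gives \eqref{CW com eq} for $I$.
\end{enumerate}

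\textbf{Step 1 is essentially sound} and is a valid alternative to the paper's double-complex argument: dimension shifting with two passes of the four lemma (injectivity first, then surjectivity) works.
\begin{itemize}
\item You do not need, and do not have a priori, Hausdorffness of $\rmh_i(\fkn,\pi)$; that is really a consequence of the theorem. The long exact sequences exist algebraically because $\wedge^q\fkn\otimes-$ is exact.
\item A finite-length $\pi$ embeds into a \emph{generalized} principal series, not necessarily into a principal series. The generalized principal series inherits a coarse spectral filtration from its principal series subquotients.
\end{itemize}
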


Combining Theorem~\ref{general com} and Theorem~\ref{CW com thm} with~\cite[Theorem 3.11, Theorem 3.6]{WZ}, we prove the comparison conjecture~\ref{comparison conj} in the following specific cases.
\begin{theorem}
    The comparison conjecture holds for $G=\GL_n(\BR)$ and  $G=\GL_n(\BC)$, for arbitrary parabolic subgroups $P$. It also holds for quasi-split even orthogonal groups $\mathrm{O}_{2n}$ with respect to the parabolic subgroup $P_t$, whose Levi factor is isomorphic to
    $\underbrace{\GL_1\times\cdots\times\GL_1}_t\times  \mathrm{O}_{2n-2t}$.
\end{theorem}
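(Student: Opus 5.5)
The argument will combine Theorem~\ref{general com} with Theorem~\ref{CW com thm}, after first using Lemma~\ref{reduce to max} to pass to maximal parabolics. The statement concerns arbitrary parabolic subgroups $P$ of $\GL_n(\BR)$ and $\GL_n(\BC)$, so the first step is this reduction. The nilradical of a maximal parabolic of $\GL_n$ is abelian, isomorphic to $\mathrm{M}_{a\times b}$. The reduction itself should run by induction along a chain of parabolics $P=P_1\subset P_2\subset\cdots$ with $N=N_2\ltimes N'$, via a Hochschild--Serre spectral sequence for $\fkn=\fkn'\rtimes\fkn_2$. One has to check that both sides of Conjecture~\ref{comparison conj} give compatible spectral sequences, and that the Levi of a maximal parabolic is again a product of general linear groups, so the induction stays inside the class of groups we treat. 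For $\mathrm{O}_{2n}$ with $P_t$, the same lemma reduces to the maximal parabolic $P_1$ of $\mathrm{O}_{2m}$ with Levi $\GL_1\times\mathrm{O}_{2m-2}$, applied iteratively. Its nilradical is abelian, of dimension $2m-2$.

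Now fix $P$ maximal with abelian $N$, and let $\pi=E^\infty$ with $E$ a Harish-Chandra module. By Theorem~\ref{general com},
\[
\rmh_i(\fkn,E)^\infty\simeq \rmh_i(\fkn,\mathrm{CJ}^\infty_N(\pi)).
\]
So it remains to show that the natural map \eqref{CW com eq} is an isomorphism, and we must also check that the composite is the natural comparison map. Theorem~\ref{CW com thm} gives the isomorphism provided every principal series of $G$ admits a coarse spectral filtration. That input is supplied by \cite[Theorem 3.11]{WZ} for $\GL_n(\BR)$ and $\GL_n(\BC)$ with maximal parabolics. It is supplied by \cite[Theorem 3.6]{WZ} for quasi-split $\mathrm{O}_{2n}$ with respect to $P_1$. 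Naturality in $E$ follows because every identification used is functorial: the category equivalence $\widehat{\cdot}$, the CJ intertwining, and the coarse-filtration argument.

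The main obstacle is the reduction to maximal parabolics in the orthogonal case. The Levi of $P_1$ is $\GL_1\times\mathrm{O}_{2n-2}$, and the iterated step must be applied to the $\fkn$-homology of $\pi$, which is a Casselman--Wallach representation of the Levi rather than of $\mathrm{O}_{2n}$. I would first verify that Lemma~\ref{reduce to max} only requires the comparison for each maximal step on the respective Levi, namely $\mathrm{O}_{2n-2}$ and then further down. I would also verify that the Hausdorffness and finiteness of the intermediate homologies needed for the spectral sequence comparison follow from the already established isomorphism with the Casselman--Wallach globalizations $\rmh_j(\fkn_2,E)^\infty$. Once that is in place, induction on $t$ finishes the proof.
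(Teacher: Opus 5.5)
Your proposal matches the paper's argument. You reduce to maximal parabolics via Lemma~\ref{reduce to max}, which is exactly the Koszul double-complex (Hochschild--Serre) comparison you describe, using the comparison for $G$ with $Q$ and for the Levi $M$ with $M\cap P$; the issues you flag, namely that $\rmh_q(\fku,\pi)$ is Casselman--Wallach and that the induction passes through Levis such as $\GL_a\times\GL_b$ and $\GL_1\times\mathrm{O}_{2n-2}$, sit within that lemma and are treated in the paper at the same level of detail. Then, for maximal $P$ with abelian $N$, you combine Theorem~\ref{general com} with Theorem~\ref{CW com thm}, with the coarse spectral filtration supplied by \cite[Theorem 3.11, Theorem 3.6]{WZ}, just as the paper does.
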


\subsection{Spectral decomposition}
For a spectral decomposition of a Casselman-Wallach representation $\pi$ with respect to $P$, we mean a filtration of $\pi|_P$ such that each subquotient is a spectral component of $N$ described by a certain functor. Unlike the $p$-adic case, where the functor is simply given by the twisted Jacquet functor composed with compact induction, the co-normal derivative in the Archimedean case significantly complicates the issue. Fortunately, two spectra are accessible: the trivial spectrum and the spectrum corresponding to the open $P$-orbits in $\widehat{N}$. Roughly speaking, the trivial spectrum only requires the co-normal derivative, while the open spectrum involves only Schwartz induction. 

Moreover, since the topology matters in the Archimedean case, it is crucial that each submodule in the filtration of $\pi|_P$ be closed. To establish this, we utilize the rearrangement of the coarse spectral filtration introduced in~\cite{WZ}. Finally, we get the following result.
\begin{theorem}[Theorem~\ref{Canonical filtration}]\label{can-fil-intr}
   Suppose that any principal series of $G$ admits a coarse spectral filtration. Then, any Casselman-Wallach representation $\pi$ has a decreasing filtration as $P$-representations
    \begin{equation}
          \pi=\pi_0\supset \pi_1\supset \dots \supset \pi_r= 0 
    \end{equation}
    such that $\pi_{\ell}/\pi_{\ell+1}$ is an $N$-spectral component for any $\ell$. Moreover, we have functorial isomorphisms
    \begin{enumerate}
        \item $\pi_0/\pi_1\simeq \mathrm{CJ}^{\infty}_N(\pi)$;
        \item and $\pi_{\ell}/\pi_{\ell+1}\simeq \SInd_{S_{\phi}}^P(\Psi_{\phi}(\pi))$ if the $P$-orbit of $\phi\in\widehat{N}$ is open.
    \end{enumerate} 
\end{theorem}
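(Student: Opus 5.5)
The plan is to start from a coarse spectral filtration of a principal series and transport it to an arbitrary Casselman--Wallach representation $\pi$. By Casselman's subrepresentation theorem (dually, the quotient theorem), $\pi$ is a quotient of a principal series $I$. By hypothesis $I$ carries a coarse spectral filtration in the sense of Definition~\ref{coarse def}. Its graded pieces are Schwartz-induced or co-normal-derivative type modules attached to $P$-orbits $\CO\subset\widehat{N}$.

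The first step is the rearrangement of~\cite{WZ}. It groups these pieces according to orbits, ordered by closure: the open orbits come first as subrepresentations, and the trivial orbit $\{0\}$ comes last as a quotient. This gives a closed $P$-stable filtration $I=I_0\supset I_1\supset\cdots\supset I_r=0$ whose subquotients are $N$-spectral components. Concretely, $\CS(\CO)\cdot I$ is closed for each open orbit $\CO$, and $I_0/I_1$ is supported at $0\in\widehat N$.

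Next I would define the filtration on $\pi$ intrinsically, so that it is canonical. Let $\pi_1$ be the kernel of $\pi\to \mathrm{CJ}^\infty_N(\pi)$, which is $\bigcap_k\ov{\fkn^k\pi}$. For the open-orbit layers, take the $\pi_\ell$ to be successive closures of $\CS(\CO)\cdot\pi$ over the open orbits $\CO$, with the intermediate orbits in between, following the order fixed for $I$. I would then show that $\pi_\ell$ is the image of $I_\ell$ under $I\to\pi$. One inclusion is clear from functoriality of the Schwartz action. For the other, I would use that the image of $I_\ell$ is spectrally supported in the corresponding union of orbits. Closedness follows because $\pi$ is a quotient of the Fréchet space $I$ by a closed subspace: the image of a closed $P$-submodule generated by the spectral condition is closed once we know it equals an intrinsically closed object such as a kernel or a closure. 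Exactness of $\Psi_\phi$ on open orbits (Proposition~\ref{open-iso}, Corollary~\ref{open-exa}) is applied to $0\to\ker\to I\to\pi\to0$, and the hypothesis that $\CS(\CO)\cdot\ker$ is closed is checked through the filtration of the kernel induced from $I$.

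For the identifications: (1) is immediate from the definition of $\pi_1$, once we know $\pi_1$ is closed and $\mathrm{CJ}^\infty_N$ is exact on the relevant category. Closedness follows from Lemma~\ref{CW CJ in finite length cat} together with the fact that the natural map $\pi\to\mathrm{CJ}^\infty_N(\pi)$ is surjective. Surjectivity holds because it holds for $I$, where the bottom layer is a co-normal module equal to its own completion, and passes to quotients. For (2), the layer for an open orbit is a Fréchet $P$-module supported on $\CO$. There is a natural map from this layer to $\SInd_{S_\phi}^P(\Psi_\phi(\pi))$, obtained by Frobenius reciprocity from the projection to $\phi$-coinvariants. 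I would check that it is an isomorphism by reducing to $I$, where the coarse filtration gives the piece explicitly as a Schwartz induction. Then I would use exactness of $\Psi_\phi$ and of $\SInd$ to pass to the quotient $\pi$. Functoriality comes from the intrinsic definitions, which do not depend on the choice of $I$.

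I expect the main obstacle to be closedness and independence of choices: showing that the image of the rearranged filtration of $I$ in $\pi$ consists of closed subspaces, and that these coincide with the intrinsic $\pi_\ell$. My first attempt would use Hausdorffness of $\Psi_\phi$ on open orbits together with a Schwartz-function partition of $\widehat N$ into orbit neighbourhoods. The aim is to show that each layer of $\pi$ is the quotient of the layer of $I$ by a closed subspace, arguing inductively from the open orbits down to the trivial one.
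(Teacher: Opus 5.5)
There is a genuine gap at the step you flag yourself, closedness, and the quotient presentation $q\colon I\to\pi$ cannot supply it. For $X=\widehat N\setminus Z_\ell$ one has $q(\CS(X)\cdot I)=\CS(X)\cdot\pi$ exactly, so the whole issue is topological. This image is closed if and only if $\CS(X)\cdot I+\Ker q$ is closed in $I$, and images of closed subspaces under quotient maps are in general not closed. Your justification (``closed once we know it equals an intrinsically closed object such as a kernel or a closure'') assumes what has to be shown.

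The same circularity appears when you invoke Corollary~\ref{open-exa} for $0\to\Ker q\to I\to\pi\to 0$. Its hypothesis, that $\CS(\CO)\cdot\Ker q$ is closed, is the theorem itself for the Casselman--Wallach representation $\Ker q$. The induced closed filtration $\Ker q\cap I_\ell$ is not known to have Mackey-induced pieces or to equal $\CS(\widehat N\setminus Z_\ell)\cdot\Ker q$.

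Replacing the layers by closures, as your definition of the $\pi_\ell$ suggests, does not rescue (2): $\ov{\CS(\CO)\cdot\pi}$ need not be $\SInd_{S_\phi}^P(\Psi_\phi(\pi))$, as the example in subsection~\ref{counter-exa-sec} shows. Separately, surjectivity of $\pi\to\mathrm{CJ}^\infty_N(\pi)$ does not simply ``pass to quotients'', since inverse limits are not right exact. It is instead a general fact for all $\pi\in\Smod_N$ (\cite{CWYZ}, Proposition 5.1).

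The missing idea is to use the \emph{subrepresentation} form of Casselman's theorem together with the fact that $\CA:=\CS(\widehat N\setminus Z_\ell)$ is a hereditary Fr\'echet algebra. Embed $\pi$ as a closed subrepresentation of a generalized principal series $I$. By Lemma~\ref{generalized prin lem}, $\CA\cdot I$ is closed in $I$ and is a differentiable $\CA$-module. Hence $(\CA\cdot I)\cap\pi$ is a closed submodule of it, therefore differentiable, therefore equal to $\CA\cdot((\CA\cdot I)\cap\pi)$. This gives $(\CA\cdot I)\cap\pi\subset\CA\cdot\pi$, and the reverse inclusion is trivial. Thus $\pi_\ell:=\CA\cdot\pi$ is closed: intersecting with a closed subspace preserves closedness, whereas passing to a quotient does not.

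Defining all layers uniformly as $\CS(\widehat N\setminus Z_\ell)\cdot\pi$ also replaces your vague ``intermediate orbits in between'' and gives functoriality for free. Statement (1) is then Theorem~\ref{topological spectral short exa}, whose kernel $\ov{\CS(\widehat N\setminus\{0\})\cdot\pi}$ is now exactly $\pi_1$. Statement (2) follows from Lemma~\ref{open-orbits split} and Proposition~\ref{open-iso}. The latter holds for every $\pi\in\Smod_P$, so no reduction to $I$ via exactness of $\Psi_\phi$ is needed.
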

When $G = \GL_n$ and $P = M_n$ is the mirabolic subgroup consisting of matrices whose last row is $(0, \dots, 0, 1)$, there are only the trivial spectrum and the open spectrum. Therefore, Theorem~\ref{can-fil-intr} yields a canonical Bernstein--Zelevinsky filtration analogous to the $p$-adic one in~\cite{BZ77}.
\begin{theorem}[Theorem~\ref{canon-fil-GL_n}]
    Let $\pi$ be a Casselman-Wallach representation of $\GL_n(\BR)$ or $\GL_n(\BC)$. Then $\pi|_{M_n}$ has a decreasing filtration
    \[
    \pi|_{M_n}=\pi_0\supset \pi_1\supset \dots\supset \pi_n= 0
    \]
    such that
    \[
    \pi_k/\pi_{k+1}\simeq I^{k} ( D^{k+1}(\pi)) \text{ for}\quad 0\leq k\leq n-1.
    \]
\end{theorem}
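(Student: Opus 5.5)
The plan is to derive this by induction on $n$ from Theorem~\ref{can-fil-intr}, applied to $G=\GL_n$ and the mirabolic $P=M_n$. Here $M_n=\GL_{n-1}\ltimes N$ with $N\simeq F^{n-1}$ abelian, so the hypotheses are in place. By \cite[Theorem 3.11]{WZ}, every principal series of $\GL_n(\BR)$ and $\GL_n(\BC)$ admits a coarse spectral filtration, which is what Theorem~\ref{can-fil-intr} requires. The group $\GL_{n-1}$ acts on $\widehat N\simeq F^{n-1}$ with exactly two orbits: $\{0\}$ and the open orbit $\CO$ of a nondegenerate character $\phi$, whose stabilizer is $S_\phi=M_{n-1}\ltimes N$. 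Theorem~\ref{can-fil-intr} therefore gives a two-step filtration $\pi\supset\pi'\supset 0$ of $\pi|_{M_n}$. Its top quotient is $\pi/\pi'\simeq \mathrm{CJ}^\infty_N(\pi)$, which, after the normalization built into $I^0$ and $D^1$, is $I^0(D^1(\pi))$. Its bottom piece is $\pi'\simeq \SInd_{M_{n-1}N}^{M_n}\Psi_\phi(\pi)$, where $\Psi_\phi(\pi)$ is a smooth moderate-growth representation of $M_{n-1}$.

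The next step is to iterate inside $\Psi_\phi(\pi)$. It is not a Casselman--Wallach representation of $\GL_{n-1}$, so Theorem~\ref{can-fil-intr} cannot be applied to it directly. Instead I would work with the finite-length principal-series picture. Take a principal series $I$ surjecting onto $\pi$ and use its coarse spectral filtration from \cite{WZ}. Along $\CO$, the spectral reduction $\Psi_\phi$ is exact on the relevant sequences by Corollary~\ref{open-exa}, because the closedness of $\CS(\CO)\cdot\sigma_1$ is exactly what the rearranged coarse filtration provides. I would then identify $\Psi_\phi(\pi)$ with a representation of $M_{n-1}$ that carries the analogous structure: its own trivial spectrum $\mathrm{CJ}^\infty_{N_{n-1}}$ and its own open-orbit reduction. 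This yields a filtration of $\Psi_\phi(\pi)|_{M_{n-1}}$ whose pieces are the iterated derivatives. Applying the exact functor $\SInd_{M_{n-1}N}^{M_n}$, which is exact on Fréchet moderate-growth representations and preserves closed embeddings, transports this filtration into $\pi'$. Transitivity of Schwartz induction, $\SInd_{M_{n-1}N}^{M_n}\circ I^{k-1}=I^k$, then shows that the successive quotients are $I^k(D^{k+1}(\pi))$, where $D^{k+1}=D^1\circ\Psi_\phi^{k}$ in the paper's normalization. The filtration terminates at length $n$ because after $n-1$ reductions the group is $M_1$, which is trivial.

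The main obstacle is the inductive step. One must check that the iterated derivatives $\Psi_\phi^k(\pi)$ still satisfy the hypotheses of the open-orbit and trivial-spectrum statements: Hausdorffness, closedness of $\CS(\CO)\cdot\sigma$, and the identification of the top quotient with a completed Jacquet module. It must also be checked that the resulting submodules are closed in $\pi$ itself and not merely in $\pi'$. My first approach would be to prove a version of Theorem~\ref{can-fil-intr} for the class of $M_m$-representations obtained as iterated derivatives of principal series of $\GL_n$. Such representations have explicit coarse spectral filtrations, because the derivatives of principal series are computed through the Bernstein--Zelevinsky geometric lemma on the $M_n$-orbits of the flag variety. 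Closedness in $\pi$ then follows because Schwartz induction of a closed subspace of $\Psi_\phi(\pi)$ is closed in the closed subspace $\pi'$.
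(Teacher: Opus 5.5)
Your architecture matches the paper's. You obtain $0\lra\pi_1\lra\pi|_{M_n}\lra D^1(\pi)\lra 0$ from Theorem~\ref{Canonical filtration}, then produce at each level a short exact sequence
\[
0\lra I(\Psi^{k+1}(\pi))\lra \Psi^k(\pi)\lra D^{k+1}(\pi)\lra 0
\]
of $M_{n-k}$-representations, and finally apply the exact functor $I^k$. The gap is in the one nontrivial input of the inductive step. Theorem~\ref{topological spectral short exa} and Proposition~\ref{open-iso} hold for every object of $\Smod_{M_{n-k}}$. So what must be proved is exactly that $\CS(\widehat{V_{n-k}}\setminus\{0\})\cdot\Psi^k(\pi)$ is closed in $\Psi^k(\pi)$ for an \emph{arbitrary} Casselman--Wallach $\pi$. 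Your sentence ``identify $\Psi_\phi(\pi)$ with a representation of $M_{n-1}$ that carries the analogous structure'' restates this rather than proving it.

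The route you sketch cannot deliver it, because you pass to $\pi$ through a surjection $I\twoheadrightarrow\pi$, and closedness does not descend to quotients. Write $\kappa=\Ker(I\to\pi)$. The image of the closed subspace $\CS\cdot\Psi^k(I)$ in $\Psi^k(\pi)$ is closed only if $\CS\cdot\Psi^k(I)+\Psi^k(\kappa)$ is closed, which nothing guarantees. Moreover, the exactness you want from Corollary~\ref{open-exa} along $0\to\kappa\to I\to\pi\to 0$ already requires closedness of $\CS\cdot\Psi^{j}(\kappa)$ for $1\le j<k$. That is the same statement for another Casselman--Wallach representation, so the argument is circular. Separately, your claim that iterated derivatives of principal series carry coarse spectral filtrations ``by the geometric lemma'' is itself the Archimedean difficulty. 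It requires exactness, Hausdorffness, and compatibility with the infinite normal-derivative filtrations, and cannot be taken for granted.

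The paper supplies the missing step with Lemma~\ref{BZ-fil-psi-lem}. By \cite[Theorem 1.1]{WZ}, every Casselman--Wallach $\pi$, not just a principal series, has a Bernstein--Zelevinsky filtration on $\pi|_{M_n}$ with successive quotients $I^dE(\tau)$. Applying $\Psi^k$ termwise gives a Bernstein--Zelevinsky filtration of $\Psi^k(\pi)$, by \cite[Lemma 2.10, Lemma 2.17, Proposition 4.1]{WZ}. The reasons are that $\Psi^k$ is exact on these pieces with Hausdorff values, sends $I^dE(\tau)$ to $I^{d-k}E(\tau)$ or $0$, and commutes with the inverse limits. The rearrangement argument of Lemma~\ref{generalized prin lem} then applies verbatim to $\Psi^k(\pi)$, which gives the closedness and hence the displayed sequence.

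If you want to keep your principal-series strategy, replace the surjection by a Casselman embedding $\pi\hookrightarrow J$ and bootstrap. Once $\CS\cdot\Psi^k(\pi)$ is closed, Corollary~\ref{open-exa} and Proposition~\ref{open-iso} make $\Psi^{k+1}(\pi)\to\Psi^{k+1}(J)$ a closed embedding. The hereditary intersection argument from the proof of Theorem~\ref{Canonical filtration} then transfers closedness of $\CS\cdot\Psi^{k+1}(J)$ to $\CS\cdot\Psi^{k+1}(\pi)$. This still needs coarse spectral filtrations on the $\Psi^{k+1}(J)$, which is again what Lemma~\ref{BZ-fil-psi-lem} provides.
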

Here, the derivative functor $D^k$ involved in the description of the subquotient was originally introduced by~\cite{AGS15a,AGS15b} and is recalled in Definition~\ref{def-der-gl}.

\subsection{Applications to general linear groups}
In this subsection, we summarize some applications of the canonical Bernstein-Zelevinsky filtration to the study of general linear groups. First, we give an affirmative answer to the open question in~\cite[3.1, (3)]{AGS15a}.
\begin{proposition}[Corollary~\ref{derivative exact}]
        The derivative functor
    \[
    D^{k+1}:\mathrm{CW}_{\GL_n}\lra \Smod_{M_{n-k}}
    \]
    is an exact functor for any non-negative integer $k$.
\end{proposition}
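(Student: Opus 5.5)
We will prove exactness of $D^{k+1}$ by induction on $k$, using the canonical Bernstein--Zelevinsky filtration of Theorem~\ref{canon-fil-GL_n} together with the exactness statement for spectral reduction at open orbits (Proposition~\ref{open-iso}, Corollary~\ref{open-exa}). Recall that $D^{k+1}$ is built as an iterate: one restricts to the mirabolic, applies the twisted Jacquet functor $\Psi$ at a generic character of the abelian unipotent radical $N_n\simeq \BC^{n-1}$ or $\BR^{n-1}$ (whose nonzero characters form a single open $M_n$-orbit), lands in a Schwartz representation of $M_{n-1}$, and repeats. We finish with the trivial-character Jacquet functor in the last step. So it suffices to show that each step is exact on the relevant category of representations of mirabolic groups.

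\textbf{Step 1 (first derivative).} Start with a short exact sequence $0\to\pi_1\to\pi_2\to\pi_3\to0$ in $\mathrm{CW}_{\GL_n}$. Since Casselman--Wallach representations form an abelian category with closed morphisms, the restriction to $M_n$ is an exact sequence of smooth moderate-growth Fr\'echet representations. By Corollary~\ref{open-exa}, exactness of $\Psi_\phi$ for the open orbit $\CO=\widehat{N_n}\setminus\{0\}$ follows once we know that $\CS(\CO)\cdot\pi_1$ is closed in $\pi_1$. This closedness is exactly what the canonical filtration supplies. The step $\pi_1\supset(\pi_1)_1$ of Theorem~\ref{Canonical filtration} is a closed $M_n$-submodule, with quotient $\mathrm{CJ}^\infty_N(\pi_1)$, and we identify it with $\ov{\CS(\CO)\pi_1}$. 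We then check that it equals $\CS(\CO)\pi_1$ itself, using that $(\pi_1)_1$ is a successive extension of Schwartz inductions $\SInd(\Psi_\phi(\cdot))$, on which $\CS(\CO)$ acts with closed (indeed full) image by the Dixmier--Malliavin type factorization for Schwartz induction. This gives exactness of the first derivative on $\mathrm{CW}_{\GL_n}$.

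\textbf{Step 2 (iteration).} For the higher derivatives we must iterate on representations of $M_{n-1}$, $M_{n-2}$, and so on. These are no longer Casselman--Wallach, so we need the same closedness input for the intermediate objects $\Psi(\pi|_{M_n})$ restricted to the next mirabolic. Here we use Theorem~\ref{canon-fil-GL_n} more fully: the subquotients $\pi_k/\pi_{k+1}\simeq I^k(D^{k+1}(\pi))$ show that the iterated $\Psi$ applied to $\pi$ inherits a finite closed filtration. Concretely, the $(j+1)$-st iterate applied to the submodule $\pi_j$ recovers $D^{j+1}(\pi)$, and it kills $\pi_{j+1}$ appropriately. Hence $D^{k+1}(\pi)\simeq \Psi^{(k)}(\pi_k/\pi_{k+1})$ with $I^k$ inverted, which is functorial in $\pi$. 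Exactness of $\pi\mapsto\pi_k$ and $\pi\mapsto\pi_k/\pi_{k+1}$ would then follow by an induction on $k$ and the five lemma, using exactness of each $\Psi$ step from Step 1 applied to objects of the form $I^j(\cdot)$. For those objects, closedness of $\CS(\CO)\cdot(\cdot)$ is automatic from the Schwartz-induction structure. Finally, exactness of $I^k$ (Schwartz induction is exact) and the fact that $I^k$ reflects exactness (as $\Psi\circ I^k$ recovers the input) give exactness of $D^{k+1}$.

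The main obstacle is Step 2, specifically showing that the closedness hypothesis of Corollary~\ref{open-exa} propagates through the iteration, since the intermediate representations are only Schwartz/Fr\'echet and not Casselman--Wallach. I would first try to prove directly that for $\sigma=\SInd$ of a moderate-growth representation, and for finite extensions of such, $\CS(\CO)\sigma$ is closed. I would then verify that every object met in the iteration has such a filtration, which the canonical filtration of Theorem~\ref{canon-fil-GL_n} provides.
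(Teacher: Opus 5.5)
\textbf{There is a genuine gap, at exactly the obstacle you flag.}

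Your outline is sound. $D^{k+1}(\pi)$ is the quotient of $\Psi^k(\pi)$ by $\CS(\widehat{V_{n-k}}\setminus\{0\})\cdot\Psi^k(\pi)\simeq I(\Psi^{k+1}(\pi))$. So the nine lemma gives exactness of $D^{k+1}$ once $\Psi^k$ and this submodule functor are exact. (Your Step 1 proves exactness of $\Psi$, not of $D^1=\Phi$, but the nine lemma recovers $\Phi$ as well.)

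The problem is your fix for closedness. To pass from $\Psi^j$ to $\Psi^{j+1}$ via Corollary~\ref{open-exa}, you need $\CS(\widehat{V_{n-j}}\setminus\{0\})\cdot\Psi^j(\pi')$ to be closed in $\Psi^j(\pi')$, where $\pi'$ is the subobject in your short exact sequence.
\begin{itemize}
\item $\Psi^j(\pi')$ is neither of the form $I(\cdot)$ nor an extension of Schwartz inductions from the open orbit. It has the quotient $D^{j+1}(\pi')$, which is killed by $\CS(\widehat{V_{n-j}}\setminus\{0\})$. For $j=\mathrm{depth}(\pi')-1$ this quotient is all of $\Psi^j(\pi')$.
\item The $M_n$-objects $I^j(\Psi^j\pi')$, where closedness really is automatic, do not help. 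Since $\Psi\circ I^j\simeq I^{j-1}$, applying $\Psi$ to $0\to I^j\Psi^j\pi'\to I^j\Psi^j\pi\to I^j\Psi^j\pi''\to 0$ presupposes exactness of $\Psi^j$ and yields nothing new.
\item For a module $\sigma$ with both differentiable and $\CS(\CO)$-killed constituents ($\CO$ the open orbit), $\CS(\CO)\cdot\sigma$ is closed precisely when the differentiable part sits as a closed \emph{sub}module with killed quotient. Establishing this is the rearrangement argument (Lemmas~\ref{limit diff}, \ref{split}, \ref{generalized prin lem}), and it must be run on $\Psi^j(\pi')$ itself.
\item Running it requires knowing that $\Psi^j(\pi')$ carries a BZ-filtration. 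That is Lemma~\ref{BZ-fil-psi-lem}, not the statement of Theorem~\ref{canon-fil-GL_n}, which only filters $\pi|_{M_n}$. The proof of that theorem is where the needed closedness is obtained, from Lemma~\ref{BZ-fil-psi-lem} and Theorem~\ref{Canonical filtration}.
\end{itemize}

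This is exactly the paper's argument. By Lemma~\ref{BZ-fil-psi-lem}, Theorem~\ref{Canonical filtration} applies to $\Psi^k(\pi)$, so $\CS(\widehat{V_{n-k}}\setminus\{0\})\cdot\Psi^k(\pi)$ is closed with quotient $D^{k+1}(\pi)$. Lemma~\ref{exa_lem} applied to $0\to\Psi^k(\pi')\to\Psi^k(\pi)\to\Psi^k(\pi'')\to 0$ then gives exactness of these submodules, hence of the quotients $D^{k+1}$.

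The paper does not re-derive exactness of $\Psi^k$. It imports it from \cite{WZ}, and it is already an input to Lemma~\ref{BZ-fil-psi-lem}. So your inductive re-proof of exactness of $\Psi^k$ via Corollary~\ref{open-exa} is redundant. If you want it self-contained, run it as a simultaneous induction over BZ-filtered representations, proving at each stage both that $\Psi^j$ is exact and that $\Psi^j$ preserves BZ-filtrations. Otherwise the closedness input you need at stage $j$ depends on the exactness you are trying to prove.
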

Among all the derivatives, the highest derivative $\pi^-$ contains the most comprehensive information. For example, the computation of highest derivatives will reveal the irreducibility of monomial representations, see~\cite{G17}. In this article, we investigate the highest derivative of the contragredient representation through the bilinear pairing proposed by~\cite{BZ77}. 
\begin{theorem}[Corollary~\ref{pairing-coro}]
    Let $\pi$ be a Casselman-Wallach representation of $\GL_n$ of depth $d$. Then there is a non-degenerate $M_{n-d+1}$-equivariant bilinear map
    \[
    \pi^{-}\times  (\pi^{\vee})^- \lra \BC.
    \]
\end{theorem}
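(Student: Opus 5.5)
Following Bernstein--Zelevinsky, I would build the pairing from the canonical filtration of Theorem~\ref{canon-fil-GL_n} applied to both $\pi$ and $\pi^\vee$, together with the natural invariant pairing $\pi\times\pi^\vee\to\BC$.

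\textbf{Step 1: the highest derivative sits at the bottom.} Let $d$ be the depth, so $D^{d}(\pi)=\pi^-\neq0$ and $D^{k}(\pi)=0$ for $k>d$. Then $\pi_{d}=0$ in the filtration of $\pi|_{M_n}$, and the bottom nonzero piece $\pi_{d-1}\simeq I^{d-1}(\pi^-)$ is a closed $M_n$-submodule. Here $I^{d-1}$ is the iterated Schwartz induction through the open orbits. The depth of $\pi^\vee$ is also $d$. I would get this from the exactness of $D^{k+1}$ (Corollary~\ref{derivative exact}) plus a reduction to irreducible subquotients. For irreducibles, depth is read off from the Langlands parameter or wave-front set, and both are compatible with the contragredient.

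\textbf{Step 2: restrict the pairing and descend.} Restricting $\langle\ ,\ \rangle:\pi\times\pi^\vee\to\BC$ to $\pi_{d-1}\times\pi^\vee_{d-1}$ gives an $M_n$-invariant pairing $I^{d-1}(\pi^-)\times I^{d-1}((\pi^\vee)^-)\to\BC$. I would unwind $I^{d-1}$ as Schwartz induction from the stabilizer $S=M_{n-d+1}\ltimes U$ of a generic character $\psi$ of the unipotent part. Functions on the open orbit can then be paired by integrating over $M_n/S$. The key point is that an invariant pairing between two Schwartz inductions $\SInd_S^{M_n}(\tau_1)$ and $\SInd_S^{M_n}(\tau_2)$, twisted by $\psi$ and $\psi^{-1}$, comes from an $S$-invariant pairing $\tau_1\times\tau_2\to\BC$ (suitably modular-twisted). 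This is an Archimedean Frobenius-type argument using Bruhat's theory of invariant distributions on the open orbit. It yields an $M_{n-d+1}$-equivariant map $\pi^-\times(\pi^\vee)^-\to\BC$, with the modular characters cancelling by the normalization in $\Psi_\phi$.

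\textbf{Step 3: non-degeneracy, the main obstacle.} Suppose $v\in\pi^-$ pairs to zero with all of $(\pi^\vee)^-$. Then the corresponding Schwartz sections in $\pi_{d-1}$ pair to zero with all of $\pi^\vee_{d-1}$. One must then show they pair to zero with all of $\pi^\vee$, which forces them to vanish because the pairing $\pi\times\pi^\vee$ is non-degenerate. I would argue dually. The annihilator of $\pi^\vee_{d-1}$ in $\pi$ is tied to the higher filtration steps, and by Theorem~\ref{Canonical filtration} the piece $\pi^\vee/\pi^\vee_{d-1}$ has spectral support in the non-generic $N$-orbits of each successive step. Meanwhile $\pi_{d-1}$ is a Schwartz induction supported on the generic orbit. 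Pairing a section supported on generic spectrum against a vector with non-generic spectral support gives zero, via the Fourier transform along the abelian unipotent radicals at each step. So the annihilator of $\pi^\vee_{d-1}$ inside $\pi_{d-1}$ must already vanish.

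Making this rigorous needs care with topology: closedness of the filtration, the Hausdorffness from Proposition~\ref{open-iso}, and the exactness from Corollary~\ref{open-exa}. I expect this compatibility of the spectral supports between $\pi$ and $\pi^\vee$ to be the hardest part. I would prove it inductively, one mirabolic step at a time, reducing at each step to the case of a single abelian radical with an open orbit.
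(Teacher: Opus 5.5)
Your proposal is essentially the paper's argument. Proposition~\ref{pairing-decent} restricts the natural pairing $\pi\times\pi^\vee$ to the open-orbit pieces $I(\Psi(\pi))\times I(\Psi(\pi^\vee))$ and keeps it non-degenerate via the adjointness of the $\CS(\widehat{V_n}\setminus\{0\})$-action against differentiability, as in Lemma~\ref{bil-lem}(2). It then descends to $\Psi(\pi)\times\Psi(\pi^\vee)$ by Lemma~\ref{bil-lem}(1) and inducts on depth one mirabolic step at a time, which is what your Step~3 induction amounts to. The only difference is that the paper does restriction, non-degeneracy and descent together at each step instead of passing to $\pi_{d-1}$ first; this avoids having to define spectral support along the non-normal radicals $V_{n-k}\subset M_n$.
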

Unlike the situation for $p$-adic groups, where the highest derivative of an irreducible representation of depth $d$ is an irreducible representation of $\GL_{n-d}$, in the Archimedean case the highest derivative of an irreducible representation of $\GL_n$ is typically not irreducible. Thus, in subsection~\ref{conj sec}, we propose some conjectures about the structure of the highest derivatives (Conjecture~\ref{socle conjecture}) and give some convincing computations. In particular, We conjecture that, as a representation of $M_{n-d+1}$, it has a unique irreducible submodule. 

A primary motivation for studying the restriction to the mirabolic subgroup $M_n$ is that it serves as an intermediate step toward restriction to $\GL_{n-1}$. Although the main object of interest is the multiplicity space upon restriction to $\GL_{n-1}$, it is instrumental to study categorical properties, such as indecomposability or projectivity. For relevant research on $p$-adic groups, see~\cite{Ch21}. Some of these questions can first be asked for the mirabolic restriction. In subsection~\ref{inde sec}, we obtain the following result.
\begin{theorem}
    When $\pi$ is an irreducible unitary representation or an irreducible generic representation of $\GL_n$, it is homogeneous. In particular, it is indecomposable as a representation of $M_n$.
\end{theorem}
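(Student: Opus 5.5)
Here $\pi$ is homogeneous in the sense used later in the paper: the mirabolic restriction $\pi|_{M_n}$ has essentially a single ``bottom'' layer in its canonical Bernstein--Zelevinsky filtration. Concretely, if $d$ is the depth of $\pi$, then the last nonzero step $\pi_{d-1}\simeq I^{d-1}(D^{d}(\pi))=I^{d-1}(\pi^-)$ is the unique minimal nonzero piece. Every nonzero closed $M_n$-subrepresentation of $\pi$ meets it nontrivially.

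The plan is to deduce indecomposability from homogeneity, and homogeneity from the canonical filtration (Theorem~\ref{canon-fil-GL_n}) together with exactness of derivatives (Corollary~\ref{derivative exact}).

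\textbf{Step 1: homogeneity implies indecomposability.} Suppose $\pi|_{M_n}=\sigma\oplus\tau$ with closed $M_n$-stable summands. The filtration is canonical and functorial, built from $\mathrm{CJ}^\infty$ and $\Psi_\phi$, which are additive. Hence it splits compatibly: $\pi_k=\sigma_k\oplus\tau_k$ and $D^{k}(\pi)=D^k(\sigma)\oplus D^k(\tau)$. Homogeneity says every nonzero closed $M_n$-submodule has nonzero $d$-th derivative, i.e.\ reaches depth $d$. So both $D^d(\sigma)$ and $D^d(\tau)$ are nonzero, and $\pi^-$ decomposes nontrivially. I would contradict this using the non-degenerate pairing of Corollary~\ref{pairing-coro} together with a uniqueness statement for $\pi^-$. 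For generic $\pi$ the depth is $n$, so $\pi^-$ is the space of Whittaker functionals dual, and it is one-dimensional by uniqueness of Whittaker models. For unitary $\pi$ I would use the explicit computation of $\pi^-$ as an irreducible (or socle-irreducible) representation of $\GL_{n-d}$, via the Vogan classification and the Speh-type induced products, available through the computations of subsection~\ref{conj sec}.

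\textbf{Step 2: proving homogeneity.} Let $0\neq\sigma\subset\pi|_{M_n}$ be closed and $M_n$-stable. Take $k$ maximal with $\sigma\subset\pi_k$, so $\sigma$ has nonzero image in $I^k(D^{k+1}(\pi))$. I must show $k=d-1$, i.e.\ that no nonzero submodule lives only in higher layers. The argument is by induction on $n$, using the recursive structure $I^k=\SInd$ from $M_{n-k}$-data. The derivative $D^{k+1}(\pi)$, for $\pi$ generic or unitary, is built (by exactness and the Leibniz rule over a standard module or a Speh product) from pieces that are again generic or unitary, or products of such, of smaller rank. The inductive hypothesis gives that these pieces are homogeneous as $M_{n-k}$-modules. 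A nonzero $M_n$-submodule of $\SInd(\cdot)$ whose image is essential then propagates down to depth $d$.

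\textbf{Main obstacle.} The hard part is controlling extensions between layers: showing that a closed submodule cannot split off a middle layer. This needs a vanishing of the relevant $\Hom$ or a non-splitting statement for the extensions $\pi_k/\pi_{k+2}$. I would attack it via Frobenius reciprocity for Schwartz induction, reducing to $\Hom_{M_{n-k}}$ between derivatives. Then I would compare infinitesimal and central characters of the derivatives (Leibniz rule) to show that no nonzero map exists except into the highest derivative.
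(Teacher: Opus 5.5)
Your Step~2 carries the whole content of the theorem, but it is not actually carried out, and the induction you sketch does not close. The reduction is misstated. You cannot take $k$ maximal with $\sigma\subset\pi_k$ and show $k=d-1$: for $\sigma=\pi$ one gets $k=0$. What must be ruled out is a nonzero closed $\sigma'\subset\pi_k$ with $\sigma'\cap\pi_{k+1}=0$ for some $k<d-1$. More seriously, the inductive hypothesis is aimed at the wrong objects. $\Psi^k(\pi)$ and $D^{k+1}(\pi)$ are $M_{n-k}$-modules that are in general neither irreducible nor restrictions of irreducible unitary or generic representations of $\GL_{n-k}$. Knowing that the pieces of a filtration are homogeneous says nothing about whether the extension between two layers splits over some submodule. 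That splitting question is exactly your ``main obstacle'', and you leave it open with an unspecified $\Hom$-vanishing.

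The paper's proof rests on inputs your proposal does not use.

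\emph{Unitary case.} Write $\pi=\Pi^\infty$.
\begin{enumerate}
\item Baruch's proof of the Kirillov conjecture, plus Mackey theory, gives $\Pi|_{M_n}\simeq\Ind_{H_{n,d}}^{M_n}(A(\Pi)\otimes\psi_{n,d})$.
\item Aizenbud--Gourevitch--Sahi identify $d$ with the depth and $A(\Pi)^\infty$ with $\pi^-$, which is therefore irreducible. This, not subsection~\ref{conj sec}, is where irreducibility of $\pi^-$ comes from.
\item So $\pi|_{M_n}$ embeds continuously into ${}^{\infty}\Ind_{H_{n,d}}^{M_n}(\pi^-\otimes\psi_{n,d})$. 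By Frobenius reciprocity and Schur's lemma, this embedding is a multiple of the map induced by the projection $\pi\to\pi^-$.
\item Hence every nonzero submodule has nonzero image in $\pi^-$, i.e.\ it meets $I^{d-1}(\pi^-)$. No layer-by-layer analysis is needed.
\end{enumerate}

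\emph{Generic case.}
\begin{enumerate}
\item Vogan's theorem embeds $\pi$ into a dominant principal series of the same depth $n$, and homogeneity descends to such a subrepresentation.
\item The induction is then run on principal series $\chi_1\times\dots\times\chi_n$ with $\chi_i|\det|_{\BK}(\det)^{r_1}(\overline{\det})^{r_2}\neq\chi_j$ for $i<j$. This class is stable under the open-orbit step, because $\Psi(\pi_o)\simeq(\chi_2\times\dots\times\chi_n)|_{M_{n-1}}$.
\item The open-orbit case uses Lemma~\ref{open-sub}.
\item The closed-orbit case is handled not by a $\Hom$ computation but by the non-degenerate pairing with $|\det|_{\BK}^{-1}\pi^\vee$. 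Proposition~\ref{pairing-decent} pushes this pairing down to highest derivatives, and an infinitesimal-character comparison of the explicitly computed derivatives gives the contradiction.
\end{enumerate}

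Once $\pi^-$ is known to be irreducible, your Step~1 also simplifies. $I^{d-1}(\pi^-)$ is then irreducible, so homogeneity makes it the unique irreducible submodule, which gives indecomposability directly.
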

For the definition of homogeneous representations, readers are referred to Definition~\ref{homo def}. Our strategy is inspired by the argument in~\cite[section 6]{Ze80}. However, it is worth mentioning that, unlike in the $p$-adic case where the Zelevinsky classification is available, we do not know how to embed an irreducible representation into a well-behaved parabolic induced representation (such as a degenerate principal series) in such a way that both have the same depth. Thus, our proof has intrinsic difficulties that prevent it from generalizing to prove that any irreducible representation is homogeneous, though we conjecture that this is true. 

Proving this conjecture is desirable because, together with the conjecture that $\pi^-$ has a unique irreducible submodule, it would imply that every irreducible representation, when restricted to the mirabolic subgroup, is indecomposable.

\subsection{Computation of the Casselman-Jacquet functor} The last part of this article is devoted to preparing the computation of the Casselman--Jacquet functor for parabolically induced representations in~\cite{WZ26}. In~\cite{WZ26}, we will compute the Casselman--Jacquet functor using Mackey theory, which yields a filtration of the Casselman--Jacquet functor whose subquotients are formal Verma modules or dual Verma modules indexed by orbits. Since a maximal parabolic subgroup has minimal orbits, it is desirable to first compute the Casselman--Jacquet functor for maximal parabolics and then deduce the result for arbitrary parabolics by a transitivity argument for (dual) Verma modules. In Section~\ref{trans sec}, we carry out this transitivity and prove that the Casselman--Jacquet functor of a (dual) Verma module is again a (dual) Verma module; see Proposition~\ref{trans co-std} and Proposition~\ref{trans std} for details. 

\bigskip
\centerline{\scshape Acknowledgements}
We benefit a lot from the joint work~\cite{CWYZ} with Kei Yuen Chan and Hongfeng Zhang. Part of the manuscript was written during Wu's visit to CIRM for the thematic month on the Langlands program. Wu thanks the organizers, in particular Professor Raphaël Beuzart-Plessis, for their warm hospitality. Wu is partially supported by the New Cornerstone Science Foundation through the New Cornerstone Investigator Program awarded to Professor Xuhua He. Wu is also supported by the National Natural Science Foundation of China (Grant No. 123B1004). He also thanks Xuhua He for his continued support and encouragement.

\subsection{Notation and Conventions}
\subsubsection{General groups}
\begin{itemize}
    \item $G$, $H$, etc. (capital English letters): various real Lie groups (almost linear Nash groups or real reductive groups).
    \item $\mathfrak{g} := \mathrm{Lie}(G)_{\mathbb{C}}$ (Gothic letters): the complexified Lie algebras.
    \item $\delta_H$: the modular character of Lie group $H$.
    \item $Z_G$: the center of $G$.
    \item $\U(\mathfrak{g})$: the universal enveloping algebra of $\mathfrak{g}$;  $\mathcal{Z}(\mathfrak{g})$: the center of $\U(\mathfrak{g})$.
    
\end{itemize}

\subsubsection{Real reductive groups}
For a real reductive group $G$:
\begin{itemize}
    \item We fix a Cartan involution $\theta$ and a $\theta$-stable maximally split Cartan subgroup $A$ (from now on, the Cartan involution will no longer be involved and
 $\theta$ is free for other notation).
    \item $P^0 = L^0 N^0$: a minimal parabolic subgroup with Levi decomposition such that $L^0$ contains $A$.  
    \item $P = LN$: standard parabolic subgroup $P \supset P^0$ with Levi decomposition.
    \item $\overline{P}$: the opposite parabolic subgroup of $P$.
    \item $K$ (resp.\ $K_L$): the complexification of the maximal compact subgroup of $G$ (resp.\ $L$) fixed by the Cartan involution.
    \item $\mathfrak{b} \subset \mathfrak{p}^0$: a Borel subalgebra with $\mathfrak{a} \subset \mathfrak{b}$; $\fkn_B$: the nilradical of $\fkb$; $\fkn_L:= \fkn_B\cap \fkl$.
    \item $\Delta(\mathfrak{a}, \mathfrak{g})$: the roots of $\mathfrak{a}$ in $\mathfrak{g}$ with positive roots corresponding to $\mathfrak{b}$.
    \item $\rho$: the half-sum of positive roots.
    \item $\rho_{\mathfrak{l}}$: half-sum of positive roots in $\Delta(\mathfrak{a}, \mathfrak{l})$, where $L \subset P$ is a standard Levi subgroup.
    \item $W$ (resp.\ $W_L$): the Weyl group of $G$ (resp.\ $L$). For general linear groups, $W$ is represented by permutation matrices.
    \item $\chi_{\lambda}$: the infinitesimal character (algebra homomorphism $\mathcal{Z}(\mathfrak{g}) \to \mathbb{C}$) corresponding to $\lambda \in \mathfrak{a}^*$ via the Harish-Chandra isomorphism.% normalized so that $-\rho$ corresponds to the infinitesimal character of the trivial representation.
\end{itemize}

\subsubsection{General linear groups}
Let $\mathrm{GL}_n = \mathrm{GL}_n(\mathbb{K})$ where $\mathbb{K} = \mathbb{R}$ or $\mathbb{C}$:
\begin{itemize}
    \item Cartan involution: (complex-conjugate) transpose inverse; Cartan subgroup $A=A_n$: diagonal matrices.
    \item $B_n$: Borel subgroup of upper triangular matrices with unipotent radical $N_n$.
    \item $M_n$: mirabolic subgroup (matrices with last row $(0,\dots,0,1)$).
    \item $V_n$: unipotent radical of $M_n$ of matrices $\begin{pmatrix} I_{n-1} & v \\ & 1 \end{pmatrix}$.
    \item $H_{n,d}$: subgroup of $M_n$ of matrices $\begin{pmatrix} a & x \\ 0 & u \end{pmatrix}$ with $a \in \mathrm{GL}_{n-d}$, $u \in N_d$, and $x$ an $(n-d) \times d$ matrix.
    \item $P_{k,n-k}$: standard parabolic subgroup with Levi factor $\mathrm{GL}_k \times \mathrm{GL}_{n-k}$.
    \item $U_{k,n-k}$: unipotent radical of $P_{k,n-k}$.
\end{itemize}
For a subgroup $H \subseteq \mathrm{GL}_n$, $\overline{H}$ denotes the transpose of $H$. Fixed characters:
\begin{itemize}
    \item $\psi$: a fixed non-trivial unitary character of $\mathbb{K}$.
    \item $\psi_n$: character of $V_n$ defined by $\psi_n\left(\begin{bmatrix} I_{n-1} & v \\ & 1 \end{bmatrix}\right) := \psi(x_{n-1})$ for $v = [x_1,\dots,x_{n-1}]^t \in \mathbb{K}^{n-1}$; which also denotes the corresponding character of the Lie algebra $\mathfrak{v}_n$.
    \item $\psi_{n,d}$: character of $H_{n,d}$ defined by 
        \[
        \psi_{n,d}\left(\begin{bmatrix} a & x \\ 0 & u \end{bmatrix}\right) := \psi\left(\sum_{i=1}^{d-1} u_{i,i+1}\right), \quad u = (u_{ij})_{1 \leq i,j \leq d}.
        \]
\end{itemize}

\subsubsection{Topological vector spaces and representations}
The category of locally convex topological vector spaces that are complete and Hausdorff forms a quasi-abelian category, see~\cite[Section 2]{Ka93} for more discussions. However, the notion of exact sequences in this category differs from the standard categorical notion. Instead, we say that a sequence
\[
A \stackrel{\alpha}{\longrightarrow} B \stackrel{\beta}{\longrightarrow} C
\]
is exact if $\alpha$ and $\beta$ are continuous and the sequence is exact in the category of vector spaces.

Notations for the category of representations:
\begin{enumerate}
    \item When $G$ is an almost linear Nash group, we use $\Smod_G$ to denote the category of Fr\'echet representation of $G$, which is moderate-growth and smooth. We use $\widehat{G}$ to denote the irreducible classes in $\Smod_G$ that are unitarizable.
    \item When $G$ is a real reductive group, we use $\mathrm{CW}_G$ to denote the category of Casselman-Wallach representations of $G$.
    \item Let $(\fkg, H)$ be a Nash Lie pair. For $(\fkg, H)$-modules, two different situations arise. When $H$ is compact, a $(\fkg, H)$-module is a vector space equipped with compatible actions of $\fkg$ and $H$. The $H$-action may or may not be locally finite. When $H$ is non‑compact, a $(\fkg, H)$-module $\Pi$ is a Fréchet space equipped with compatible continuous actions of $\fkg$ and $H$, such that $\Pi \in \Smod_H$.
\end{enumerate}

Let $G$ be a Nash group and $E$ be a Hilbert representation of $G$. Then we use $E^{\infty}$ to denote the subspace consisting of smooth vectors. It is equipped with a countable family of semi-norms $q_W$ indexed by a basis of $W\in \U(\fkg)$:
\begin{equation}\label{smooth vector top}
    q_W(v):= \|W\cdot  v\|.
\end{equation}
Here, $\|\cdot\|$ is the norm of the Hilbert space. It is a standard fact that $E^{\infty}\in \Smod_G$. For a Harish-Chandra module $E$ of a real reductive group $G$, we will also use $E^{\infty}$ to denote the Casselman-Wallach globalization of $E$. This will not cause any confusion in the context.

Let $G$ be a Lie group and let $H$ be a closed subgroup. If $\sigma$ is a complete Hausdorff locally convex space equipped with a smooth action of $H$. Then we define the (normalized) smooth induction
\[
{}^{\infty}\Ind_{H}^G(\sigma):= \{ f\in \CC^{\infty}(G,\sigma)\mid f(hg)= \delta_G^{-\frac{1}{2}}(h) \delta_H^{\frac{1}{2}}(h) h\cdot f(g)\},
\]
equipped with the subspace topology of $\CC^{\infty}(G,\sigma)$. When $G$ is a Nash group, the smooth induction has subspaces $\CO\Ind_{H}^G(\sigma)\supset \SInd_H^G(\sigma)$ consisting of tempered sections and Schwartz sections, see~\cite[section 2.1]{Fd},\cite{CS21} for details. If $\sigma$ is a Hilbert representation of $H$, then we define the (normalized) $L^2$-induction
\begin{align*}
    \Ind_H^G:=\{& f\in \CM(G,\sigma)\mid f(hg)= \delta_G^{-\frac{1}{2}}(h) \delta_H^{\frac{1}{2}}(h) h\cdot f(g) \\
   & \int_{H\backslash G} \langle f(g), f(g)\rangle \eta(g) dg <\infty \},
\end{align*}
where $\CM(G,\sigma)$ is the space of measurable functions, and $dg$ is the right $\eta$-eigenmeasure. It is a Hilbert space equipped with the inner product defined by
\[
  \langle f_1, f_2 \rangle:= \int_{H\backslash G} \langle f_1(g), f_2(g)\rangle \eta(g) dg.
\]

\section{Preliminary}
\subsection{Filtration of a representation}
We introduce a generalized notion of filtration.
\begin{definition}\label{def_fil}
Given a representation $\sigma$ of an almost linear Nash group $G$, a \textbf{level $\leq 1$ filtration} of $\sigma$ consists of the data
\begin{itemize}
    \item[(i)] Finite decreasing subrepresentations of $\sigma$, \[\sigma= \sigma_0 \supset \sigma_1 \supset \dots \supset \sigma_m,\]
    \item[(ii)] For all $0 \leq i \leq m-1$, a finite or infinite decreasing chain of subrepresentations of $\sigma_i/\sigma_{i+1}$, 
    \[
    \sigma_i = \sigma_{i,0} \supset \sigma_{i,1} \supset \sigma_{i,2} \supset \dots \supset \sigma_{i+1},
    \]
    such that the canonical map
    $
    \sigma_i/\sigma_{i+1} \to \varprojlim_j \sigma_{i} / \sigma_{i,j}
    $
    is a topological isomorphism of $G$-representations.
\end{itemize}

A \textbf{level $\leq r$ filtration} of $\sigma$ consists of the data described above, with the additional requirement that each quotient $\sigma_{i,j} / \sigma_{i,j+1}$ is equipped with a level $\leq r-1$ filtration.

Given a level $\leq r$ filtration, for every pair of subrepresentations $\sigma^{\flat} \supset \sigma^{\sharp}$ in the filtration such that there are no other terms between $\sigma^{\flat}$ and $\sigma^{\sharp}$, we call the quotient $\sigma^{\flat} / \sigma^{\sharp}$ a \textbf{successive quotient} of the filtration.
\end{definition}

\subsection{Fr\'echet modules}
We first introduce the abstract notion of Fr\'echet algebra and Fr\'echet module following~\cite{Fd}. All Fréchet spaces are assumed to be nuclear, as all spaces in this paper are nuclear unless otherwise specified.
\begin{definition}
    \begin{enumerate}
    \item We call a pair $(\CA, \alpha: \CA \widehat{\otimes} \CA \to \CA)$ a \textbf{Fréchet algebra} if $\CA$ is a Fréchet space and $\alpha$ is a continuous map satisfying the algebra axioms. We will always omit $\alpha$ in the notation.
    \item Let $\CA$ be a Fr\'echet algebra. We call a pair $(\Pi, \alpha: \CA \widehat{\otimes} \Pi \to \Pi)$ a \textbf{Fréchet module} if $\Pi$ is a Fréchet space and $\alpha$ is a continuous map satisfying the module axioms. The image of $\alpha$ equipped with \textbf{quotient topology} is denoted by $\CA\cdot \Pi$.
    \item Let $(\Pi,\alpha)$ be a Fr\'echet module of $\CA$. We call it \textbf{differentiable} if $\alpha$ is surjective and $\{m\in \Pi\mid \CA\cdot m=0\}=0$. We call a Fr\'echet algebra differentiable if it is differentiable as a left module over itself.
    \item Let $(\Pi,\alpha)$ be a Fr\'echet module of $\CA$. We call it \textbf{factorizable} if $\alpha$ restricting to $\CA\otimes \Pi$ is surjective and $\{m\in \Pi\mid \CA\cdot m=0\}=0$. We call a Fr\'echet algebra \textbf{factorizable} if it is differentiable and all differentiable modules are factorizable.
    \item We call a Fr\'echet algebra \textbf{hereditary} if any closed submodule of its differentiable module is differentiable.
     \item Let $\CA$ be a Fréchet algebra. A sequence $\{e_n\}_{n=1}^{\infty}$ in $\CA$ is called an \textbf{approximation to the identity} if, for every $a \in \CA$, the sequences $\{e_n \cdot a\}$ and $\{a \cdot e_n\}$ converge to $a$.
    \end{enumerate}
\end{definition}

The following are our prototypes for Fréchet algebras and Fréchet modules.
\begin{example}
 Let $G$ be a Nash group with a fixed Haar measure $dg$. Then $\CS(G)$ is a Fr\'echet algebra under convolution product. Let $\{f_n\}_{n=1}^{\infty}$ be a sequence in $\CS(G)$ such that 
        $$\lim_{n\to \infty}\supp(f_n)=\{e\}\text{ and }\int_{G}f_n(g)dg=1.$$
            Then it is an approximation to identity. In addition, it is a standard fact that there is a category equivalence between $\Smod_G$ and the category of differentiable $\CS(G)$-module. Hence, by Diximier-Malliavin theorem, $\CS(G)$ is a hereditary and factorizable Fr\'echet algebra.
\end{example}

\begin{example}
    Let $X$ be a Nash manifold. Then $\CS(X)$ is a Fr\'echet algebra under point-wise multiplication. Let $\{K_n\}_{n=1}^{\infty}$ be an increasing exhaustion of $X$ by open relatively compact subsets. Then $\{f_n\}_{n=1}^{\infty}\subset \CC_c^{\infty}(X)$, such that 
    \[
    \supp(f_n)\subset K_{n+1}  \text{ and } f_n(x)=1 \text{ for } x\in K_n,
    \]
     is an approximation to identity. We can find a finite Nash open covering $\{U_i\}$ such that each $U_i$ is Nash isomorphic to a closed Nash sub-manifold of $\BR^d$ for some $d$. Then each $\CS(U_i)$ is quotient algebra of $\CS(\BR^d)$, which is hereditary and factorizable by Fourier transform. Consequently, $\CS(X)$ is also hereditary and factorizable by partition of unity.
\end{example}
\begin{example}\label{Sobolev example}
    We demonstrate a Fr\'echet module of $\CS(\BR)$ that will be used in the counter-example section~\ref{counter-exa-sec}. Let $x$ be the coordinate of $\BR$, and let $\xi$ be the coordinate of $\BR^*$. Let $k$ be a positive integer and $H_k(\BR)$ be the subspace of tempered distributions $D\in\CS(\BR)'$ such that
    \[
     \left(\frac{\partial}{\partial x}\right)^i D\in L^2(\BR), \text{ for } 0\leq i\leq k.
    \]
    Since as a tempered distribution, the Fourier transform of $f\in L^2(\BR)$ still belongs to $L^2(\BR)$(\cite[Theorem 9.2.2]{FJ98}), we can define an inner product on $H_k(\BR)$ by
    \[
    \langle D_1, D_2\rangle:= \int_{\BR^*} (1+|\xi|^2)^{k}  \widehat{D_1} \ov{\widehat{D_2}} \, d\xi.
    \]
    Equipped with this inner product, $H_k(\BR)$ is a Hilbert space, which is usually called the order $k$ Sobolev space. Note that for $p>q$, there is a continuous embedding $H_p(\BR)\hookrightarrow H_q(\BR)$. Let 
    \[
    H_{\infty}(\BR): = \varprojlim_k H_k(\BR)= \bigcap_k H_k(\BR) ,
    \]
    which is equipped with the inverse limit topology. Then $H_{\infty}(\BR)$ is a Fr\'echet space.  It is obvious that $\CS(\BR)\subset H_{\infty}(\BR)$ and $H_{\infty}(\BR)$ is a Fr\'echet $\CS(\BR)$-module under point-wise multiplication. Moreover, by~\cite[Corollary 9.3.4]{FJ98}, any function in $H_{\infty}(\BR)$ is smooth and $\CS(\BR)\cdot H_{\infty}(\BR)= \CS(\BR)$. Therefore, it is not differentiable. Actually, it is also not nuclear, see Lemma~\ref{non-nuclear lem}.
\end{example}

\begin{remark}
    Let $N$ be an abelian unipotent Nash group. For any $\beta\in\Smod_N$, we regard it as a differentiable $\CS(N)$-module. By the Fourier transform, we also regard it as a differentiable $\CS(\widehat{N})$-module.
\end{remark}

We review some results in~\cite{WZ} splitting two different kinds of Fr\'echet modules in a filtration, which is essential for the rearrangement of the coarse spectral filtration.

\begin{lemma}[\cite{WZ}, Lemma 2.40]\label{limit diff}
    Let $\CA$ be a hereditary and factorizable Fr\'echet algebra. Let $\{\Pi_k,d_k: \Pi_{k+1}\to \Pi_k\}_{k=0}^{\infty}$ be an inverse system of Fr\'echet $\CA$-module such that $\Pi_0=0$ and $d_k$ is surjective for any integer $k$.
    \begin{enumerate}
        \item Suppose that each $\ker d_k$ is a differentiable $\CA$-module, then $\varprojlim_k \Pi_k$ is a differentiable $\CA$-module as well.
        \item Suppose that $\CA\cdot \Ker d_k=0$, then
        \[
        \CA\cdot \varprojlim_k \Pi_k=0
        \]
    \end{enumerate}
\end{lemma}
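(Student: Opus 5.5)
Set $\Pi:=\varprojlim_k \Pi_k$ with its inverse limit topology; it is Fréchet. Let $p_k:\Pi\to\Pi_k$ be the projections, and let $K_k:=\Ker(\Pi\to\Pi_k)$, a closed submodule. Since the $d_k$ are surjective continuous maps of Fréchet spaces, they are open, and a standard Mittag-Leffler argument shows that every $p_k$ is surjective and open. Hence $K_{k}/K_{k+1}\simeq \Ker d_k$ topologically, and $\Pi\simeq\varprojlim_k \Pi/K_k$ with $\Pi/K_k\simeq\Pi_k$. Both parts will be proved by induction on $k$ at the finite stages, followed by a passage to the limit.

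\emph{Part (2).} First show $\CA\cdot\Pi_k=0$ by induction on $k$. The case $\Pi_0=0$ is trivial. For the inductive step, let $m\in\Pi_{k+1}$ and $a,b\in\CA$. Then $d_k(b m)=b\,d_k(m)=0$, so $bm\in\Ker d_k$, and therefore $a(bm)=0$. This gives $(\CA\cdot\CA)\cdot\Pi_{k+1}=0$. Since $\CA$ is differentiable as a module over itself, $\CA\cdot\CA=\CA$ (the image of $\CA\widehat\otimes\CA$). Using continuity of the action map $\CA\widehat\otimes\Pi_{k+1}\to\Pi_{k+1}$ and density of finite sums, we conclude $\CA\cdot\Pi_{k+1}=0$. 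Since the projections are equivariant, $p_k(a m)=a\,p_k(m)=0$ for all $k$, so $am=0$ for every $m\in\Pi$.

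\emph{Part (1).} First show by induction that each $\Pi_k$ is differentiable. This reduces to an extension statement: if $0\to M'\to M\to M''\to0$ is a topologically exact sequence of Fréchet $\CA$-modules with $M',M''$ differentiable, then $M$ is differentiable.
\begin{itemize}
\item \emph{Non-degeneracy.} Suppose $\CA\cdot m=0$. The image of $m$ in $M''$ is then killed by $\CA$, so it is $0$, hence $m\in M'$, and then $m=0$.
\item \emph{Surjectivity.} Let $N:=\CA\cdot M$. It surjects onto $M''$ and contains $\CA\cdot M'=M'$. Hence $N=M$.
\end{itemize}

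For the limit, surjectivity of $\CA\widehat\otimes\Pi\to\Pi$ is the main obstacle, because $\widehat\otimes$ and $\varprojlim$ must be exchanged while keeping control of the topology. The plan is as follows.
\begin{itemize}
\item Use an approximation to the identity $\{e_n\}$ in $\CA$. Factorizability of the differentiable modules $\Pi_k$ gives $e_n\cdot x\to x$ in each $\Pi_k$.
\item Equicontinuity of the family $\{e_n\cdot\}$ (Banach–Steinhaus), applied through the seminorms pulled back from the $\Pi_k$, gives $e_n m\to m$ in $\Pi$. Hence $\CA\cdot\Pi$ is dense in $\Pi$.
\item To upgrade density to surjectivity, let $\overline{\CA\cdot\Pi}$ be the closure of the image of $\CA\widehat\otimes\Pi\to\Pi$, which by density is all of $\Pi$. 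Then run a Dixmier–Malliavin-type construction: given $m\in\Pi$, build a convergent series $m=\sum_k a_k m_k$ with $m_k\in K_k$, chosen inductively. At stage $k$, correct the error modulo $K_{k+1}$ using factorizability of $\Pi/K_{k+1}\simeq\Pi_{k+1}$, and lift through the open maps $p_{k+1}$ with seminorm control. This is possible because each stage is a finite sum $\sum a\otimes x$.
\item Summing this series in $\CA\widehat\otimes\Pi$ gives a preimage of $m$.
\end{itemize}
If the inductive correction fails to converge, the fallback is to use that $\CA$ is hereditary. Argue that $\CA\cdot\Pi$, with its quotient topology, is a Fréchet module whose image in each $\Pi_k$ is all of $\Pi_k$. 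Then apply the open mapping theorem levelwise to the inverse system $\CA\cdot\Pi\cap K_k$ to conclude $\CA\cdot\Pi=\Pi$.

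Finally, non-degeneracy in the limit is immediate. If $\CA\cdot m=0$, then $\CA\cdot p_k(m)=0$ for all $k$, so $p_k(m)=0$ for all $k$, and therefore $m=0$.
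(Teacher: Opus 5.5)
Your Part (2), the extension argument showing that each $\Pi_k$ is differentiable, and the non-degeneracy of $\Pi=\varprojlim_k\Pi_k$ are all fine. The paper gives no proof of this lemma and only cites \cite{WZ}, so I am judging your argument on its own.

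\textbf{The gap.} It lies in the one step that carries the content of (1): surjectivity of $\CA\widehat{\otimes}\Pi\to\Pi$. You sketch an inductive series but explicitly leave its convergence open. The fallback you offer does not close this. A Fr\'echet module continuously embedded in $\Pi$ whose image in every $\Pi_k$ is all of $\Pi_k$ need not equal $\Pi$; for example $\ell^2\subset\mathbb{C}^{\mathbb{N}}=\varprojlim_k\mathbb{C}^k$. Applying the open mapping theorem levelwise does not rule this out. Two further points in that paragraph are off:
\begin{itemize}
\item An approximation to the identity is not among the hypotheses, and the density it gives is not needed anyway.
\item ``Correcting the error using factorizability of $\Pi_{k+1}$'' is the wrong place to factor. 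If you factor $p_{k+1}(m_k)$ with vectors of $\Pi_{k+1}$ rather than of $\Ker d_k$, their lifts do not lie in $K_k$. You then lose exactly the property that makes the series converge.
\end{itemize}

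\textbf{The missing observation.} No seminorm control is needed. Start with $m_0=m\in K_0=\Pi$.
\begin{itemize}
\item Given $m_k\in K_k$, the vector $p_{k+1}(m_k)$ lies in $\Ker d_k\simeq K_k/K_{k+1}$. This module is differentiable, hence factorizable, so $p_{k+1}(m_k)=\sum_i a_i y_i$ with $y_i\in\Ker d_k$.
\item Lift each $y_i$ to $\tilde y_i\in K_k$; this is possible because $K_k\to\Ker d_k$ is onto.
\item Put $t_k=\sum_i a_i\otimes\tilde y_i\in\CA\otimes K_k$ and $m_{k+1}=m_k-\alpha(t_k)\in K_{k+1}$.
\end{itemize}
Now use the topology. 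The seminorms $p\otimes(q\circ p_j)$ form a fundamental system on $\CA\widehat{\otimes}\Pi$, where $p$ is a seminorm on $\CA$ and $q$ a seminorm on $\Pi_j$. Each such seminorm vanishes identically on $\CA\otimes K_k$ for $k\ge j$. Hence the partial sums of $\sum_k t_k$ are eventually stationary for every seminorm, and so converge to some $t\in\CA\widehat{\otimes}\Pi$. Since $\alpha(t_0+\dots+t_{K-1})=m-m_K$, we get $\alpha(t)=m-\lim_K m_K=m$. Here $m_K\to 0$ because $m_K\in K_K$ and the topology is the inverse-limit one. This proves (1), and it uses neither heredity nor an approximate identity.
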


\begin{lemma}[\cite{WZ}, Lemma 2.41]\label{split}
    Let $\CA$ be a differentiable Fr\'echet algebra possessing an approximation to the identity. Let 
   \[
    0\lra \Pi_1\lra \Pi\stackrel{\varphi}{\lra} \Pi_2\lra 0
    \]
     be a short exact sequence of Fr\'echet $\CA$-module. Suppose that $\CA\cdot \Pi_1=0$ and $\Pi_2$ is differentiable. Then there is a splitting of $\varphi$. 
\end{lemma}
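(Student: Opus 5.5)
We construct the splitting directly from an approximation to the identity $\{e_n\}$ of $\CA$. Since $\Pi_2$ is differentiable, every $v\in\Pi_2$ can be written (by differentiability) as a convergent sum $v=\sum_j a_j\cdot v_j$ with $a_j\in\CA$, $v_j\in\Pi_2$. The rule will be: lift each $v_j$ arbitrarily to $\tilde v_j\in\Pi$, and set $s(v):=\sum_j a_j\cdot \tilde v_j$. Independence of the choice of lifts is immediate, because two lifts differ by an element of $\Pi_1$ and $\CA\cdot\Pi_1=0$.

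To make this rigorous and continuous, we work with the multiplication maps. Consider the composite
\[
\CA\widehat{\otimes}\Pi\lra \CA\widehat{\otimes}\Pi_2 ,
\]
which is surjective since $\varphi$ is an open surjection of Fr\'echet spaces and completed projective tensor products preserve such surjections. The action map $\alpha_\Pi:\CA\widehat{\otimes}\Pi\to\Pi$ kills $\CA\widehat{\otimes}\Pi_1$, since $\CA\cdot\Pi_1=0$. The key step is to show that the kernel of $\CA\widehat{\otimes}\Pi\to\CA\widehat{\otimes}\Pi_2$ is the closure of $\CA\otimes\Pi_1$, which is exactness of $\widehat{\otimes}$ for nuclear Fr\'echet spaces. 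Granting this, $\alpha_\Pi$ descends to a continuous map
\[
\tilde s:\CA\widehat{\otimes}\Pi_2\lra\Pi .
\]
We then check that $\tilde s$ factors through $\alpha_{\Pi_2}:\CA\widehat{\otimes}\Pi_2\to\Pi_2$. Suppose $x\in\ker\alpha_{\Pi_2}$. For any $b\in\CA$ we have $b\cdot\tilde s(x)=\tilde s((b\otimes 1)\cdot x)$, and we compare this using associativity: $\tilde s(b\cdot x)$ equals the action of $b$ on a lift of $\alpha_{\Pi_2}(x)=0$. That lift lies in $\Pi_1$, and $b$ annihilates $\Pi_1$, so $b\cdot\tilde s(x)=0$. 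Taking $b=e_n$ gives $\tilde s(x)=\lim_n e_n\cdot\tilde s(x)=0$, provided $e_n\cdot w\to w$ for $w$ in the image of $\alpha_\Pi$. This holds on $\CA\cdot\Pi$ because $e_n a\to a$ and the action is continuous. Since $\alpha_{\Pi_2}$ is a surjection of Fr\'echet spaces, it is open, so $s:\Pi_2\to\Pi$ is well defined and continuous by the open mapping theorem. It is an $\CA$-module map by construction, and $\varphi\circ s=\id$ because $\varphi(\tilde s(a\otimes v))=a\cdot v$.

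The main obstacle is the exactness claim, i.e.\ identifying the kernel of $\CA\widehat{\otimes}\Pi\to\CA\widehat{\otimes}\Pi_2$ with $\CA\widehat{\otimes}\Pi_1$. For nuclear Fr\'echet spaces this follows from $\CA\widehat{\otimes}(\cdot)$ preserving short exact sequences. If nuclearity of $\CA$ is not available, we would instead argue the well-definedness of $\tilde s$ directly with the approximation to the identity: apply $e_n\otimes 1$ and use $e_n\cdot\Pi_1=0$.
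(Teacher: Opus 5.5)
Your argument is correct. The paper does not reprove this lemma; it quotes it from \cite{WZ}. So the natural comparison is with the direct argument, which follows the same pattern the paper uses in the proofs of Lemma~\ref{exa_lem} and Lemma~\ref{open-orbits split}.

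\textbf{The direct argument.} The only real input is
\[
\Pi_1\cap(\CA\cdot\Pi)=0 .
\]
This holds because an element $w=\alpha_\Pi(y)$ lying in $\Pi_1$ satisfies $e_n\cdot w=0$, while $e_n\cdot w\to w$. Hence $\varphi$ restricted to $\CA\cdot\Pi$ (a Fr\'echet space with the quotient topology from $\CA\widehat{\otimes}\Pi$) is a continuous bijection onto $\Pi_2$. It is surjective because $\varphi\circ\alpha_\Pi=\alpha_{\Pi_2}\circ(\id\widehat{\otimes}\varphi)$ and both factors are surjective. By the open mapping theorem it is an isomorphism, and its inverse followed by the inclusion into $\Pi$ is the splitting.

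\textbf{How your route compares.} Your fallback argument (``apply $e_n\otimes 1$ and use $e_n\cdot\Pi_1=0$'') is exactly this observation. It works verbatim for any $y$ with $\varphi(\alpha_\Pi(y))=0$, not just for $y\in\Ker(\id\widehat{\otimes}\varphi)$. So it already finishes the proof in one step: $\alpha_\Pi$ descends directly along the open surjection $\varphi\circ\alpha_\Pi:\CA\widehat{\otimes}\Pi\to\Pi_2$.

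Two consequences follow. First, the exactness of $\CA\widehat{\otimes}(\cdot)$ that you flag as the main obstacle, and the nuclearity behind it, are not needed. Second, your second descent through $\alpha_{\Pi_2}$ is redundant. Even if you keep it, there is a shorter check than the comparison with $b\cdot\tilde s(x)$: $\varphi\circ\tilde s=\alpha_{\Pi_2}$ by density, so for $x\in\Ker\alpha_{\Pi_2}$ you get $\tilde s(x)\in\Pi_1\cap\CA\cdot\Pi=0$ directly.

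Both routes produce the same canonical map $s(a\cdot v)=a\cdot\tilde v$, with image $\CA\cdot\Pi$. That canonicity is what makes $s$ equivariant for any group action compatible with the $\CA$-action. This is what the paper actually needs when it applies the lemma in Lemma~\ref{generalized prin lem} to obtain a reversed exact sequence of $P$-representations.

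\textbf{One step to tighten.} The claim $e_n\cdot w\to w$ for all $w\in\CA\cdot\Pi$ does not follow merely from $e_na\to a$ and continuity of the action. A general $y\in\CA\widehat{\otimes}\Pi$ is not a finite sum of tensors. You need the family of left multiplications $a\mapsto e_na$ to be equicontinuous, which holds by Banach--Steinhaus since $\CA$ is Fr\'echet. Then $(e_n\otimes 1)y\to y$ holds first on $\CA\otimes\Pi$ and then on its completion. This is \cite[Lemma 2.3.6]{Fd}, which the paper invokes for the same purpose.
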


The following lemma is crucial in proving the exactness of derivative functors.
\begin{lemma}\label{exa_lem}
    Let $\CA$ be a Fr\'echet algebra possessing an approximation to identity. Considering an exact sequence of Fr\'echet $\CA$-modules
    \[
    0\lra \pi_1\lra \pi_2\stackrel{\varphi}{\lra} \pi_3\lra 0,
    \]
    if $\CA\cdot \pi_1$ is closed in $\pi_1$, then we have a short exact sequence
    \[
    0\lra \CA\cdot \pi_1\lra \CA\cdot \pi_2\lra \CA\cdot \pi_3\lra 0.
    \]
\end{lemma}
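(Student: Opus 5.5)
Throughout, $\CA\cdot\pi$ means the image of $\alpha:\CA\widehat{\otimes}\pi\to\pi$, viewed as a set. We check exactness of
\[
0\lra \CA\cdot \pi_1\lra \CA\cdot \pi_2\lra \CA\cdot \pi_3\lra 0
\]
at each of the three places as vector spaces. Continuity of the maps is automatic: $\CA\cdot\pi_i$ carries the quotient topology from $\CA\widehat{\otimes}\pi_i$, and the maps are induced by $\id\widehat{\otimes}\iota$ and $\id\widehat{\otimes}\varphi$. The sequence is well defined because $\CA$-linearity gives $\iota(\CA\cdot\pi_1)\subset\CA\cdot\pi_2$ and $\varphi(\CA\cdot\pi_2)\subset\CA\cdot\pi_3$.

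\emph{Injectivity on the left} is inherited from $\pi_1\hookrightarrow\pi_2$.

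\emph{Surjectivity on the right.} The projective tensor product of surjections of Fréchet spaces is again a surjection. Hence $\id\widehat{\otimes}\varphi:\CA\widehat{\otimes}\pi_2\to\CA\widehat{\otimes}\pi_3$ is onto, and composing with $\alpha_3$ shows that every element of $\CA\cdot\pi_3$ is the image of an element of $\CA\cdot\pi_2$. (Here $\varphi$ is open by the open mapping theorem, since it is a continuous surjection of Fréchet spaces.)

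\emph{Exactness in the middle.} This is the main point. Let $v\in\CA\cdot\pi_2$ with $\varphi(v)=0$. Then $v\in\pi_1$, and we must show $v\in\CA\cdot\pi_1$. This is where the approximation to the identity $\{e_n\}$ and the closedness hypothesis enter.

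1. Write $v=\alpha_2(t)$ with $t\in\CA\widehat{\otimes}\pi_2$.

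2. For each $n$, the element $e_n\cdot v$ lies in $\CA\cdot\pi_1$, since $v\in\pi_1$ and $\pi_1$ is a submodule. So it suffices to show $e_n\cdot v\to v$ in $\pi_1$. Because $\pi_1$ carries the subspace topology, this is the same as convergence in $\pi_2$.

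3. To prove the convergence, note $e_n\cdot v=\alpha_2\bigl((L_{e_n}\widehat{\otimes}\id)t\bigr)$, where $L_{e_n}$ is left multiplication on $\CA$. We need $(L_{e_n}\widehat{\otimes}\id)t\to t$ in $\CA\widehat{\otimes}\pi_2$. By Banach--Steinhaus, pointwise convergence $e_na\to a$ gives equicontinuity of $\{L_{e_n}\}$. Represent $t=\sum\lambda_j a_j\otimes m_j$ with $\sum|\lambda_j|<\infty$ and $a_j\to0$, $m_j\to0$; equicontinuity then yields convergence of the series termwise uniformly. Continuity of $\alpha_2$ now gives $e_n\cdot v\to v$.

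4. This only shows $v\in\overline{\CA\cdot\pi_1}$, the closure in $\pi_1$. By hypothesis $\CA\cdot\pi_1$ is closed in $\pi_1$, so $v\in\CA\cdot\pi_1$.

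I expect step 3 to be the main obstacle, namely controlling $L_{e_n}\widehat{\otimes}\id$ on the completed tensor product. Nuclearity and the standard representation of elements of $\CA\widehat{\otimes}\pi_2$ as absolutely convergent series should make the equicontinuity argument go through.
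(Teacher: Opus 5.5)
Your proposal is correct and follows the paper's proof. Both take $v\in\CA\cdot\pi_2\cap\pi_1$, note that $e_n\cdot v\in\CA\cdot\pi_1$ and $e_n\cdot v\to v$ in $\pi_2$, and conclude from the closedness of $\CA\cdot\pi_1$ in $\pi_1$.

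The only difference is how you get $e_n\cdot v\to v$. The paper cites du Cloux's Lemma 2.3.6 for the differentiable module $\CA\cdot\pi_2$. Your equicontinuity argument for $L_{e_n}\widehat{\otimes}\mathrm{id}$ on $\CA\widehat{\otimes}\pi_2$ proves it directly instead. It needs no nuclearity, since Grothendieck's series representation holds for arbitrary Fr\'echet spaces.
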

\begin{proof}
    It suffices to prove the exactness at $\CA\cdot \pi_2$. Let $v\in \CA\cdot \pi_2$ such that $\varphi(v)=0$, then $v\in \pi_1$. Equipped $\CA\cdot \pi_2$ with quotient topology, then it is a differentiable Fr\'echet module of $\CA$. Let $\{e_n\}_{n=1}^{\infty}$ be an approximation to identity of $\CA$. By \cite[Lemma 2.3.6]{Fd}, we have $\lim_{n\to \infty} e_n\cdot v=v$ in $\CA\cdot \pi_2$, hence in $\pi_2$. Note that $\CA\cdot \pi_1$ is closed in $\pi_1$, hence closed in $\pi_2$, and $e_n\cdot v\in\CA\cdot \pi_1$. Consequently, we have $v\in \CA\cdot \pi_1$. 
\end{proof}

A key ingredient in~\cite{Fd} to describe the irreducible representation of almost linear Nash groups is the (smooth) imprimitive system. Let $X$ be a Nash manifold with a Nash action by an almost linear Nash group $G$. Then $(E,\pi)\in \Smod_G$ is called an imprimitive system based on $X$ if $E$ is equipped with a $\CS(X)$-action $\alpha$ satisfying
\[
\alpha (g\cdot f) \cdot (\pi(g)\cdot v)=  \pi(g) \cdot \alpha(f)\cdot v
\]
for any $v\in E, g\in G$ and $f\in \CS(X)$. The category of imprimitive systems is denoted by $\Rep_{G,X}^{\infty}$, whose morphisms are continuous linear maps intertwining $G$ and $\CS(X)$-action.
\begin{example}\label{imprim exa}
    Let $H$ be a subgroup of $G$, and let $\sigma\in\Smod_{H}$. Then $\SInd_{H}^G(\sigma)$ is an imprimitive system based on $H\backslash G$. Here, $\CS(H\backslash G)$ acts on $\SInd_{H}^G(\sigma)$ by point-wise multiplication.
\end{example}

\subsection{Category $\CC(\fkg,L)_f$.}
In this subsection, we recall the smooth category $\CC(\fkg,L)_f$ defined in~\cite{WZ} and introduce a class of co-standard objects of it. Readers are referred to~\cite[section 2.5]{WZ} and section~\ref{alg complete cat sec} for some basic notations and definitions.

Let $Q\supset P$ be a pair of standard parabolic subgroups with standard Levi decompositions $P=LN$ and $Q=MU$. Let $V:=M\cap N$.  Let $F_{k}$ be the increasing filtration on $\U(\fku)$ defined as the image of $\oplus_{i=0}^k T^i(\fku)$. Then the well-known PBW theorem establishes the topological isomorphism as $M$-representations
\[
   \U(\fku) \stackrel{\simeq}{\lra} \mathrm{gr}_{F} \U(\fku)\simeq \Sym(\fku),
\]
where we equip limit Fr\'echet topology on two sides. Hence, $\Sym(\fku)'\simeq \BC[[\fku]]$ as $M$-representations.

\begin{definition}
    Let $\tau$ be a Fr\'echet representation of $\fkm$. The \textbf{dual Verma module} of $\tau$ is defined as
    \[
    \CQ(\tau) := (\U(\fkg)\otimes_{\U(\fkq)} \tau')'.
    \]
    Here, $\U(\fkg)$ is equipped with the limit Fr\'echet topology, and $\U(\fkg)\otimes_{\U(\fkq)} \tau'$ is a quotient of the projective tensor product topology, which is complete. 
\end{definition}

 It is obvious that $\CQ(\cdot)$ is an exact covariant functor.
\begin{proposition}\label{dual verma prop}
    Let $\tau\in\CC(\fkm,L)_f$. Then $\CQ(\tau)\in \CC(\fkg,L)_f$. 
\end{proposition}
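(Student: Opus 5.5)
The plan is to make $\CQ(\tau)$ explicit as an $L$-representation, filter it by degree along $\fku$, and then check the defining conditions of $\CC(\fkg,L)_f$ from \cite[Section 2.5]{WZ} one at a time. From memory, these conditions are: $\fkn$-adic completeness and separatedness; Hausdorffness and the required finiteness of each $\pi/\ov{\fkn^k\pi}$ as an $L$-representation; and $\CZ(\fkg)$-finiteness. I will follow that list, though I may have the exact formulation of the definition slightly wrong.

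\textbf{Step 1: PBW description.} Let $\bar{\fku}$ be the opposite nilradical, so that $\fkg=\bar{\fku}\oplus\fkq$. PBW gives a topological isomorphism of $M$-representations
\[
\U(\fkg)\otimes_{\U(\fkq)}\tau' \simeq \U(\bar{\fku})\otimes\tau'.
\]
The filtration $F_j$ on $\U(\bar{\fku})$, defined as for $\U(\fku)$ above, then gives
\[
\CQ(\tau)\simeq\varprojlim_j\,(F_j\U(\bar{\fku})\otimes\tau')'.
\]
The successive kernels are $\Sym^j(\bar{\fku})^*\otimes\tau\simeq\Sym^j(\fku)\otimes\tau$ as $\fkm$-modules, using $\bar{\fku}^*\simeq\fku$. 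So $\CQ(\tau)$ is the space of formal power series in $\bar{\fku}$ with coefficients in $\tau$. Dually to the fact that $\fku$ lowers degree in $\U(\bar{\fku})\otimes\tau'$, the action of $\fku$ on $\CQ(\tau)$ raises the degree. Hence $\fku^k\CQ(\tau)$ lies in the part of degree $\ge k$.

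\textbf{Step 2: each graded piece lies in the $L$-category.} Each $\Sym^j(\fku)$ is a finite-dimensional algebraic representation of $M$. Tensoring an object of $\CC(\fkm,L)_f$ with a finite-dimensional representation should stay in $\CC(\fkm,L)_f$, because $\fkv$-adic completeness and the finiteness of the quotients are preserved, and $\CZ$-finiteness is preserved by Kostant's theorem. So every graded piece $\Sym^j(\fku)\otimes\tau$ lies in $\CC(\fkm,L)_f$.

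\textbf{Step 3: assembling the $\fkn$-adic structure.} Since $\fkn=\fkv\oplus\fku$, the ideal $\fkn^k$ is spanned by products of elements of $\fkv$ and $\fku$ of total length $k$. I plan to compare the $\fkn$-adic filtration with the combined filtration by $\fku$-degree together with the $\fkv$-adic filtration on each coefficient. I will show two things.

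First, for each $k$ there is $N(k)$ such that $\fkn^{N(k)}\CQ(\tau)$ is contained in the closure of the span of the terms $(\text{degree}\ge j)\cap\fkv^{k-j}(\cdots)$.

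Conversely, each such term is contained in $\fkn^k\CQ(\tau)$ modulo higher degree.

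Then $\CQ(\tau)/\ov{\fkn^k\CQ(\tau)}$ is a finite extension of quotients of $\Sym^j(\fku)\otimes\tau/\ov{\fkv^{k'}(\cdots)}$ with $j<k$. Hence it has the required finiteness as an $L$-representation, and $\varprojlim_k$ recovers $\CQ(\tau)$. Exactness of $\CQ$ and of the projective limit over the degree filtration, with surjective transition maps, let me pass between the two filtrations.

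\textbf{Step 4: $\CZ(\fkg)$-finiteness.} This is the standard argument. $\CZ(\fkg)$ acts on $\U(\fkg)\otimes_{\U(\fkq)}\tau'$ compatibly with the degree filtration, and on the associated graded it acts through the Harish-Chandra homomorphism $\CZ(\fkg)\to\CZ(\fkm)$, which is finite. Since $\tau'$ is $\CZ(\fkm)$-finite, $\CZ(\fkg)$ acts on the induced module through an ideal of finite codimension, and dualizing keeps this.

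\textbf{Main obstacle.} I expect the hardest step to be the topological part of Step 3. One needs that $\fkn^k\CQ(\tau)$ is, up to closure, cofinal with the combined degree/$\fkv$-adic filtration, and that the resulting quotients are Hausdorff. My first approach would be to reduce to finitely many $\fku$-degrees, where everything is a finite extension of objects of $\CC(\fkm,L)_f$, and use the closedness statements already available for $\tau$. The commutators $[\fkv,\fku]\subset\fku$ keep the mixing between the two filtrations triangular, and this is what should make the reduction work.
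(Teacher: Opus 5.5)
There is a genuine gap in Step 3, in the ``conversely'' direction. It is not true that the part of $\CQ(\tau)$ of $\fku$-degree $\geq j$ lies in $\ov{\fkn^k\CQ(\tau)}$ modulo higher degree. Consequently, your conclusion that $\CQ(\tau)/\ov{\fkn^k\CQ(\tau)}$ is assembled from the pieces $\Sym^j(\fku)\otimes\tau$ with $j<k$ is false.

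By duality, $\CQ(\tau)/\ov{\fkn^k\CQ(\tau)}$ is governed by the $\fkn^k$-invariants of $\U(\fkg)\otimes_{\U(\fkq)}\tau'$. A generalized Verma module can have singular vectors in arbitrarily high degree. For a concrete counterexample, take $G=\mathrm{SL}_2(\BR)$ and $P=Q$ the Borel subgroup, so $\fkv=0$ and $\fkn=\fku=\BC e$. Let $\tau$ be a character such that $\tau'$ has dominant integral weight $\mu$.
\begin{itemize}
\item Then $\U(\fkg)\otimes_{\U(\fkq)}\tau'$ is the Verma module $M(\mu)$, and $f^{\mu+1}\otimes v$ is killed by $\fkn$.
\item Hence every element of $\ov{\fkn\CQ(\tau)}$ vanishes on $f^{\mu+1}\otimes v$.
\item So the degree-$(\mu+1)$ coordinate functional is not in $\ov{\fkn\CQ(\tau)}$ plus higher degree.
\item In fact $\CQ(\tau)/\ov{\fkn\CQ(\tau)}$ is two-dimensional, with one line in degree $\mu+1$.
\end{itemize}
No triangularity coming from $[\fkv,\fku]\subset\fku$ can exclude this. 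The finiteness of the $\fkz_L$-weights of $\CQ(\tau)/\ov{\fkn^k\CQ(\tau)}$ comes from $\CZ(\fkg)$-finiteness, via a Casselman--Osborne type argument: on $\fkn$-invariants, $\CZ(\fkg)$ acts through $\CZ(\fkl)$ via the Harish-Chandra map. So your Step 4 has to be an input to Step 3, not a separate check at the end.

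The paper never touches the $\fkn$-adic filtration here. It checks only the $\fkz_L$-weight-space conditions for $\CC(\fkg,L)$:
\begin{itemize}
\item As $L$-representations, $\CQ(\tau)\simeq\prod_{\gamma\in\fkz_L^*}\bigoplus_{\alpha+\beta=\gamma}\Sym(\fku)_\alpha\otimes\tau_\beta$, where each $\Sym(\fku)_\alpha$ is finite-dimensional. Hence every weight space is Casselman--Wallach.
\item $\wt(\CQ(\tau))\subset\mathscr{S}+\Omega_{\fkn}$, which gives $\ov{\fkn}$-local finiteness of $\CQ(\tau)^{[\fkz_L]}$.
\item Combined with $\CZ(\fkg)$-finiteness, \cite[Lemma 2.24]{WZ} then gives membership in $\CC(\fkg,L)_f$.
\end{itemize}
The $\fkn$-adic properties you are trying to prove by hand are consequences derived afterwards, via \cite[Proposition 2.26]{WZ} and Lemma~\ref{inv lim lem}. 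These are that $V/\fkn^kV$ is Casselman--Wallach and that $V\simeq\varprojlim_k V/\fkn^kV$.

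A smaller point concerns Step 1. You should grade by $\fkz_L$-weights, as the paper does, not by PBW degree. When $\fku$ is non-abelian, $\fku$ need not raise PBW degree: in $\mathfrak{gl}_3$ with $Q$ the Borel, $[E_{12},E_{31}]=-E_{32}\in\ov{\fku}$. So $\fku^k\CQ(\tau)$ need not lie in PBW degree $\geq k$.
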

   \begin{proof}
       By the classification of irreducible objects, $\tau$ is $\CZ(\fkm)$-finite since $\tau \in \CC(\fkm,L)_f$. Hence $\CQ(\tau)$ is $\CZ(\fkg)$-finite. To prove the proposition, it suffices, by~\cite[Lemma 2.24]{WZ}, to show that $\CQ(\tau) \in \CC(\fkg,L)$. 

       As representations of $L$, we have
       \begin{align*}
        &  \CQ(\tau)\simeq (\U(\ov{\fku})\otimes \tau')'\simeq \left(\varprojlim_k ( \Sym(\fku)/\fku^k\Sym(\fku))\right)\widehat{\otimes} \tau   \\
         & \simeq \left(\prod_{\alpha\in \fkz_L^*} \Sym(\fku)_{\alpha} \right) \widehat{\otimes} \left( \prod_{\beta\in \fkz_L^*} \tau_{\beta} \right)\simeq \prod_{\gamma\in\fkz_L^*}\bigoplus_{\alpha+\beta =\gamma} \Sym(\fku)_{\alpha}\otimes \tau_{\beta},
       \end{align*}
      since $\Sym(\fku)_{\alpha}$ is a finite dimensional space. Note that there exists a finite set $\mathscr{S}\subset \fkz_L^*$ such that $\wt(\tau)\subset \mathscr{S}+\Omega_{\fkv}$. Thus, 
      \[
      \wt( \CQ(\tau) )\subset  \mathscr{S}+ \Omega_{\fkn}.
      \]
     Consequently, $\CQ(\tau)^{[\fkz_L]}$ is $\ov{\fkn}$-finite and for each $\gamma\in \fkz_L^*$, $\CQ(\tau)_{\gamma}$ is a Casselman-Wallach representation of $L$.
   \end{proof}

\begin{remark}
     When $\tau$ is not irreducible, $\CQ(\tau)$ is generally different from the standard dual of $\CV(\tau)$. The standard dual (\cite[(47)]{HS83b}), on the other hand, corresponds under localization to the Verdier dual. This is because $\CQ(\cdot)$ is covariant, whereas the standard dual should be contravariant. However, we will see in future work that the dual Verma module is precisely the object that appears in the computation of the Casselman-Jacquet functor for parabolically induced representations, see~\cite{WZ26}.
\end{remark}

Assume $P=Q$ from now on. In order to prove Lemma~\ref{unique sub}, we record a topological lemma.
\begin{lemma}\label{duality lem}
    Let $E$ be a nuclear Fr\'echet space. Suppose $W_1,W_2$ are closed subspaces of $V$, then in $E'$,
    \[
    (W_1 \cap W_2)^{\perp}= \ov{W_1^{\perp}+W_2^{\perp}}.
    \]
\end{lemma}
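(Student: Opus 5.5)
The identity is proved by two inclusions, and the only non-formal tool needed is the Hahn--Banach theorem in the locally convex space $E'$. (The statement's ``$V$'' should read $E$.) We equip $E'$ with the strong dual topology. Since $E$ is a nuclear Fr\'echet space, it is reflexive, so the dual of $E'$ is $E$ again.

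\emph{The inclusion $\supseteq$.} If $\lambda\in W_1^{\perp}$, then $\lambda$ vanishes on $W_1\supseteq W_1\cap W_2$. The same holds for $W_2^{\perp}$. Hence $W_1^{\perp}+W_2^{\perp}\subseteq (W_1\cap W_2)^{\perp}$. An annihilator is the intersection of the kernels of the evaluation maps $\lambda\mapsto\lambda(w)$, and these maps are continuous for the strong topology. So $(W_1\cap W_2)^{\perp}$ is closed in $E'$, and it therefore contains the closure $\ov{W_1^{\perp}+W_2^{\perp}}$.

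\emph{The inclusion $\subseteq$.} Set $Z:=\ov{W_1^{\perp}+W_2^{\perp}}$, a closed subspace of $E'$. Suppose some $\mu\in (W_1\cap W_2)^{\perp}$ does not lie in $Z$. By Hahn--Banach in $E'$, there is a continuous functional on $E'$ that vanishes on $Z$ and takes the value $1$ at $\mu$. By reflexivity, this functional is evaluation at some $v\in E$, so $\mu(v)=1$ and $\lambda(v)=0$ for all $\lambda\in Z$.

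\emph{Locating $v$.} In particular $\lambda(v)=0$ for all $\lambda\in W_1^{\perp}$. Since $W_1$ is closed, Hahn--Banach in $E$ gives $W_1=(W_1^{\perp})_{\perp}$, so $v\in W_1$. In the same way $v\in W_2$. Hence $v\in W_1\cap W_2$, and then $\mu(v)=0$, which contradicts $\mu(v)=1$. This proves $(W_1\cap W_2)^{\perp}\subseteq Z$.

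The only delicate point is identifying $(E')'$ with $E$. This needs reflexivity, which holds because nuclear Fr\'echet spaces are Montel. Alternatively, one can give $E'$ the weak-$*$ topology, whose dual is $E$ for any locally convex $E$. In that case one must also check that the weak-$*$ closure agrees with the strong closure here, which again holds because $E$ is reflexive: closed convex sets coincide in any topology of the dual pair. Everything else is a formal consequence of Hahn--Banach.
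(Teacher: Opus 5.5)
Your proof is correct and is essentially the paper's argument. The paper invokes reflexivity to get $(Z^{\perp})^{\perp}=Z$ for closed subspaces of $E$ and of $E'$, and then computes $(\ov{W_1^{\perp}+W_2^{\perp}})^{\perp}=W_1\cap W_2$; your Hahn--Banach-in-$E'$ step and your \emph{locating $v$} step spell out exactly these two facts, and you are right that the ``$V$'' in the statement should read $E$.
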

\begin{proof}
   Since the (dual) nuclear Fr\'echet space is reflexive, $(W^{\perp})^{\perp}=W$ for any closed subspace of $E$ or $E'$. Hence, the equality follows from the fact that
   \[
  ( \ov{W_1^{\perp}+W_2^{\perp}})^{\perp}= W_1\cap W_2.
   \]
\end{proof}

\begin{lemma}\label{unique sub}
    Let $\tau$ be an irreducible representation of $L$. Then $\CQ(\tau)$ has a unique non-zero irreducible closed submodule.
\end{lemma}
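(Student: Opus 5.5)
By duality I plan to turn the statement about submodules of $\CQ(\tau)$ into a statement about quotients of the Verma module $M:=\U(\fkg)\otimes_{\U(\fkp)}\tau'$. Here $\CQ(\tau)=M'$ with $M$ a dual nuclear Fr\'echet space. The closed $\fkg$-submodules $W\subset \CQ(\tau)$ correspond bijectively, via $W\mapsto W^{\perp}$, to the closed $\fkg$-submodules of $M$. This uses the reflexivity recalled in the proof of Lemma~\ref{duality lem}. Under this correspondence, irreducible closed submodules of $\CQ(\tau)$ correspond to maximal proper closed submodules of $M$. So it suffices to show that $M$ has a unique maximal proper closed submodule, i.e.\ a largest proper closed submodule $M^{\max}$. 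Then $(M^{\max})^{\perp}$ is the unique irreducible closed submodule; it is non-zero because $M^{\max}\neq M$.

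To build $M^{\max}$ I will use weights for $\fkz_L$. Since $\tau$ is irreducible, $\fkz_L$ acts on $\tau'$ by a single character $\lambda$. Therefore $M\simeq \U(\ov{\fkn})\otimes\tau'$ decomposes into $\fkz_L$-weight spaces $M_{\lambda-\mu}$ with $\mu\in\Omega_{\fkn}$. The top weight space is $M_\lambda=1\otimes\tau'$, and every other weight is strictly lower. Each weight space is finite-dimensional over $\tau'$ in the sense that it is $\Sym(\ov\fkn)_\mu\otimes\tau'$, and $M$ carries the direct-sum-type topology. For a closed submodule $W\subset M$, I want $W=\bigoplus_\mu W\cap M_{\lambda-\mu}$. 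This follows because $\fkz_L$ acts semisimply with finitely many weights in each relevant piece, so the weight projectors are continuous on $M$ and preserve $W$.

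Now $W\cap M_\lambda$ is a closed $L$-submodule of $\tau'$. The plan is to show that $\tau'$, which is the strong dual of an irreducible representation $\tau$, has no nontrivial closed $L$-stable subspaces. Such a subspace would be the annihilator of a proper closed submodule of $\tau$, so this again follows from reflexivity. Hence either $W\cap M_\lambda=0$ or $W\supset 1\otimes\tau'$. In the second case $W=M$, since $1\otimes\tau'$ generates $M$. So a closed submodule $W$ is proper exactly when $W\subset\bigoplus_{\mu\neq0}M_{\lambda-\mu}$. The sum of all proper closed submodules lies in this subspace, which is closed, so the closure of that sum is still proper. This gives the largest proper closed submodule $M^{\max}$. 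Equivalently, the intersection of the $W^\perp$ equals the closure of the sum's annihilator, by the same argument as in Lemma~\ref{duality lem}.

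I expect the main obstacle to be the topological bookkeeping. First, I must check that the weight projections are continuous, so that closed submodules really are graded. This needs the topology on $M$ to be the locally convex direct sum of the weight spaces, matching the product decomposition in the proof of Proposition~\ref{dual verma prop}. Second, I must confirm that irreducibility of $\tau$ in $\Smod_L$ gives topological irreducibility of $\tau'$. I would handle both by working in the product picture $\CQ(\tau)\simeq\prod_\gamma \CQ(\tau)_\gamma$ directly. There a closed submodule is graded by the $\fkz_L$-finiteness of each factor, and the required irreducibility statement becomes simpler: any nonzero closed submodule of $\CQ(\tau)$ has nonzero lowest-weight component, hence contains a vector annihilated by $\fkn$. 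Uniqueness then comes from the fact that the $\fkn$-invariants of $\CQ(\tau)$ in each weight are controlled by the weight-$\lambda$ piece $\tau$. I would use this as a backup to the dual argument.
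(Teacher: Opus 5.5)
Your main argument is correct and is essentially the paper's proof. Like the paper, you dualize to $\U(\fkg)\otimes_{\U(\fkp)}\tau'$, get topological irreducibility of $\tau'$ from reflexivity, and use the $\fkz_L$-weight grading together with generation by $\tau'$ to see that the closure of a sum of proper closed submodules still misses $\tau'$. The paper does this for two submodules via Lemma~\ref{duality lem}, giving $W_1\cap W_2\neq 0$; you do it for all of them at once, which also yields existence directly.

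Two small corrections. First, take $(\fkg,L)$-submodules throughout: otherwise $W\cap M_\lambda$ is only $\fkl$-stable, and you cannot use irreducibility of $\tau'$ under a possibly disconnected $L$. Second, in your backup plan the relevant vectors are $\ov{\fkn}$-invariants, not $\fkn$-invariants: $\CQ(\tau)^{\ov{\fkn}}=(M/\ov{\fkn}M)'=\tau$, whereas $\CQ(\tau)^{\fkn}$ can be zero (e.g.\ for $\mathfrak{sl}_2$ at a generic parameter).
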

\begin{proof}
    It suffices to show that for two non-zero closed submodules $W_1,W_2$, we have $W_1\cap W_2\neq 0 $. By Lemma~\ref{duality lem}, it suffices to show 
$\ov{W_1^{\perp}+W_2^{\perp}} \neq \CQ(\tau)'$. By the reflexive property of nuclear space, $\tau'$ is topologically irreducible as an $L$-representation. Note that $\CQ(\tau)'\simeq \U(\fkg)\otimes_{\U(\fkp)}\tau'$ is $\fkz_L$-finite and generated by $\tau'$ as a $\U(\fkg)$-module. Consequently, this implies $( \ov{W_1^{\perp}+W_2^{\perp}} )\bigcap \tau'=0$.
\end{proof}

Similar to the proof of Proposition~\ref{dual verma prop}, it is not hard to see this irreducible submodule lies in the category $\CC(\fkg,L)_f$. On the other hand, by the proof of Lemma~\ref{unique sub}, its minimal $\fkz_L^*$-weight space is $\tau$. In conclusion, we reach the following theorem.
\begin{theorem}\label{unique sub thm}
   $\CL(\tau)$ is the unique irreducible subobject of $\CQ(\tau)$.
\end{theorem}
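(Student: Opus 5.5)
The plan is to combine Lemma~\ref{unique sub} with an identification of the unique irreducible closed submodule $W_0\subset\CQ(\tau)$. Here $\CL(\tau)$ is, as in \cite[section 2.5]{WZ}, the irreducible object of $\CC(\fkg,L)_f$ whose minimal $\fkz_L^*$-weight space is $\tau$.

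First, I show $W_0\in\CC(\fkg,L)_f$. Since $W_0$ is a closed $\fkg$-stable, $L$-stable subspace of $\CQ(\tau)$, the weight decomposition computed in the proof of Proposition~\ref{dual verma prop} restricts to it, so
\[
W_0\simeq\prod_{\gamma}(W_0)_\gamma \quad\text{with}\quad \wt(W_0)\subset\mathscr{S}+\Omega_{\fkn}.
\]
Each $(W_0)_\gamma$ is a closed $L$-subrepresentation of the Casselman--Wallach representation $\CQ(\tau)_\gamma$, hence is Casselman--Wallach. Moreover $W_0^{[\fkz_L]}$ is $\ov{\fkn}$-finite, and $W_0$ is $\CZ(\fkg)$-finite. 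By \cite[Lemma 2.24]{WZ}, $W_0\in\CC(\fkg,L)_f$. It is irreducible in this category because any subobject is in particular a closed submodule of $\CQ(\tau)$.

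Second, I identify the minimal weight space. The proof of Lemma~\ref{unique sub} shows that for any nonzero closed submodule $W$, the annihilator $W^\perp$ meets $\tau'$ trivially, where $\tau'\subset\U(\fkg)\otimes_{\U(\fkp)}\tau'$ is the generating weight component. Dually, the pairing $W\times\tau'\to\BC$ is nonzero. So the projection of $W_0$ onto the extremal weight component $\CQ(\tau)_{\gamma_0}\simeq\tau$ is nonzero. This projection is $L$-equivariant, and its image is dense in the irreducible $\tau$.

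The step I expect to be hardest is checking that $(W_0)_{\gamma_0}$ is exactly this component, so that the minimal weight space of $W_0$ equals $\tau$ rather than merely projecting onto it. I will use the product decomposition: the weight-$\gamma_0$ part of $W_0$ is the image of $W_0$ under the $\fkz_L$-generalized eigenprojection. This projection is continuous on $\CQ(\tau)$ and preserves closed submodules, so $(W_0)_{\gamma_0}$ is a nonzero closed $L$-subrepresentation of $\tau$. Irreducibility of $\tau$ then gives $(W_0)_{\gamma_0}=\tau$. Every other weight of $W_0$ lies in $\gamma_0+\Omega_{\fkn}\setminus\{0\}$, so $\tau$ is the minimal weight space.

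Finally, the classification of irreducible objects of $\CC(\fkg,L)_f$ says an irreducible object is determined by its minimal weight space. This gives $W_0\simeq\CL(\tau)$. Uniqueness of the irreducible subobject is then exactly Lemma~\ref{unique sub}.
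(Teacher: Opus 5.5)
Your proposal is correct and follows the paper's route. You show, as in Proposition~\ref{dual verma prop}, that the unique irreducible closed submodule from Lemma~\ref{unique sub} lies in $\CC(\fkg,L)_f$; you use the proof of Lemma~\ref{unique sub} (that $W^{\perp}\cap\tau'=0$) to identify its minimal $\fkz_L^*$-weight space with $\tau$; and the classification of irreducible objects then gives $\CL(\tau)$. You supply details the paper leaves implicit, such as why the weight projections preserve closed submodules so that $(W_0)_{\gamma_0}$ is a nonzero closed subrepresentation of the irreducible $\tau$.
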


\subsection{Computation of twisted Jacquet functors}
We state a well-known lemma about the group co-invariant and Lie algebra co-invariant of a smooth moderate-growth Fr\'echet representation $\sigma$ of a  unipotent Nash group $N$. Define
\[
N\cdot \sigma:= \Span\{ \sigma(n) v-v\mid v\in\sigma,n\in N\}
\]
and $\fkn \cdot \sigma:=\Span\{ \sigma(X) \cdot v\mid X\in \fkn, v\in \sigma\}$.
\begin{lemma}[Theorem 7.7, \cite{CS21}]\label{coinv equality}
    Let $\sigma\in \Smod_N$. Then $N\cdot \sigma=\fkn \cdot \sigma$.
\end{lemma}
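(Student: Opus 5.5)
Both $N\cdot\sigma$ and $\fkn\cdot\sigma$ are linear spans taken without closure, so the statement is purely algebraic. I will prove the two inclusions separately. The plan is to induct on $\dim N$ by using a one-parameter subgroup. The hard direction uses the Dixmier--Malliavin theorem for $\CS(\BR)$ acting on a representation of $\BR$.

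\emph{Inclusion $N\cdot\sigma\subset\fkn\cdot\sigma$.} Every $n\in N$ is $\exp X$ for some $X\in\mathrm{Lie}(N)$, since $N$ is unipotent. Consider the one-parameter group $t\mapsto\sigma(\exp tX)$, which gives a smooth moderate-growth representation of $\BR$. For $v\in\sigma$ we have
\[
\sigma(\exp X)v-v=\int_0^1 \sigma(X)\sigma(\exp tX)v\,dt=\sigma(X)\int_0^1\sigma(\exp tX)v\,dt,
\]
where the integral converges in the Fréchet space $\sigma$, and $\sigma(X)$ commutes with the integral because it is a continuous operator on smooth vectors. Hence $\sigma(n)v-v\in\sigma(X)\sigma\subset\fkn\cdot\sigma$.

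\emph{Inclusion $\fkn\cdot\sigma\subset N\cdot\sigma$.} It suffices to treat $\sigma(X)w$ for a single real $X\in\mathrm{Lie}(N)$ and $w\in\sigma$, because complex combinations are handled by linearity. Restrict $\sigma$ to $A=\{\exp tX\}\simeq\BR$; this lies in $\Smod_{\BR}$. By Dixmier--Malliavin for $\CS(\BR)$, which is the factorizability recorded in the first example, we can write $w=\sum_{j=1}^m f_j\cdot w_j$ with $f_j\in\CS(\BR)$ and $w_j\in\sigma$. Then
\[
\sigma(X)(f_j\cdot w_j)=-(f_j'\cdot w_j)=-\int_{\BR}f_j'(t)\sigma(\exp tX)w_j\,dt.
\]
Since $\int f_j'=0$, the function $f_j'$ lies in the kernel of the integration map $\CS(\BR)\to\BC$. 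The key step is that every $g\in\CS(\BR)$ with $\int g=0$ has the form $g=h-\lambda_1 h$ for some $h\in\CS(\BR)$, where $\lambda_1$ is translation by $1$. On the Fourier side this means $\hat h=\hat g/(1-e^{-2\pi i\xi})$. This formula fails globally because of the zeros at all $\xi\in\BZ$, so I will instead use the antiderivative $h(t)=\int_{-\infty}^t g$, which is Schwartz exactly because $\int g=0$. This gives $g=\partial h$, and I then combine it with the identity $\partial h=\int_0^1(\text{translate by }s)(\partial h)\,ds$ correction terms. A cleaner route is to write $g=\partial h$ and note $h\cdot w_j\in\sigma$, so $f_j'\cdot w_j=-\sigma(X)(h\cdot w_j)$. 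This by itself is circular.

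I therefore prefer the following decomposition. Write $g=(g-g*\mu)+g*\mu$, where $\mu=\mathbf 1_{[0,1]}dt$. The term $g-g*\mu$ gives $\int_0^1(v-\sigma(\exp sX)v)\,ds$ with $v=g\cdot w_j$, so it lies in the closure of $N\cdot\sigma$, but not obviously in $N\cdot\sigma$ itself.

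So the real obstacle is showing that the algebraic span, with no closure, is attained. The honest plan is to prove surjectivity of $h\mapsto h-\lambda_1h$ onto $\{g:\int g=0\}$ after a preliminary Dixmier--Malliavin factorization on a \emph{second} copy of $\CS(\BR)$. Concretely, I would first show that $\mathrm{Ker}\bigl(\int:\CS(\BR)\to\BC\bigr)=\{h-\lambda_1 h\}+\{h-\lambda_{\sqrt2}h\}$. On the Fourier side this is a division problem: divide $\hat g$, which vanishes at $0$, using a smooth partition of unity adapted to the disjoint zero sets $\BZ\setminus 0$ and $\tfrac1{\sqrt2}\BZ\setminus 0$ of the two symbols. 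This requires Diophantine control on $|1-e^{2\pi i\sqrt2\xi}|$ near the integers to keep the quotients Schwartz, which I expect to be the main technical point. Granting it, each $f_j'\cdot w_j$ is of the form $(h-\lambda_1h)\cdot w_j+(k-\lambda_{\sqrt2}k)\cdot w_j$. This equals $v-\sigma(\exp X)v+u-\sigma(\exp\sqrt2X)u$ with $v=h\cdot w_j$ and $u=k\cdot w_j$, and so lies in $N\cdot\sigma$. Summing over $j$ completes the proof.
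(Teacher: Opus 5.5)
Your argument is sound, and it takes a genuinely different route from the paper. The paper gives no argument of its own: it imports the identity from \cite[Theorem 7.7]{CS21}, where it comes out of Chen--Sun's theory of Schwartz homology (its comparison with $\fkn$-homology), not from a direct computation.

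\textbf{What is already correct.} Your easy inclusion, via $\sigma(\exp X)v-v=\sigma(X)\int_0^1\sigma(\exp tX)v\,dt$, is fine. The reduction of the hard inclusion is also correct: restrict to a single real $X$, apply Dixmier--Malliavin on $A=\exp(\BR X)$, and use $\sigma(X)(f\cdot w)=-f'\cdot w$ together with $(\lambda_s h)\cdot w=\sigma(\exp sX)(h\cdot w)$. You also rightly saw that one translation cannot suffice, since $(1-\lambda_1)\CS(\BR)$ consists exactly of the $g$ with $\hat g|_{\BZ}=0$.

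\textbf{Closing the step you left as ``granted''.} It is true, and no partition of unity is needed. Put $a(\xi)=1-e^{-2\pi i\xi}$, $b(\xi)=1-e^{-2\pi i\sqrt2\xi}$ and $D=|a|^2+|b|^2=4\sin^2(\pi\xi)+4\sin^2(\pi\sqrt2\xi)$. For $g\in\CS(\BR)$ with $\int g=0$, set $\hat h=\bar a\hat g/D$ and $\hat k=\bar b\hat g/D$. Then $a\hat h+b\hat k=\hat g$, i.e.\ $g=(h-\lambda_1h)+(k-\lambda_{\sqrt2}k)$.
\begin{itemize}
\item Near $0$: $D$ vanishes only at $\xi=0$, where $D=\xi^2u(\xi)$ with $u$ real-analytic and $u(0)=12\pi^2$. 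Since $\bar a$ and $\hat g$ each vanish at $0$, the quotients $\hat h,\hat k$ are smooth.
\item Away from $0$: for $m\neq0$ one has $\mathrm{dist}(m\sqrt2,\BZ)\geq 1/(4|m|)$, from $|2m^2-p^2|\geq1$. Combined with $|\sin\pi x|\geq 2\,\mathrm{dist}(x,\BZ)$, this gives $D(\xi)\geq c(1+|\xi|)^{-2}$ for $|\xi|\geq\frac12$. Because all derivatives of $D$ are bounded, $\partial^n(1/D)=O\bigl((1+|\xi|)^{2n+2}\bigr)$.
\end{itemize}
Hence $\hat h,\hat k\in\CS(\BR)$. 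Moderate growth of $\sigma|_A$ is exactly what lets these non-compactly supported $h,k$ act on $\sigma$. With this in place, delete the two abandoned detours (the circular antiderivative and the splitting $g=(g-g*\mu)+g*\mu$).

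\textbf{What each route buys.} The cited framework is choice-free and gives comparisons of all higher homologies, not just degree $0$. Yours is elementary and confined to degree $0$. In exchange it yields a sharper statement: $\sigma(X)\sigma\subset(1-\sigma(\exp X))\sigma+(1-\sigma(\exp\sqrt2X))\sigma$ for every real $X\in\mathrm{Lie}(N)$. So two group elements per one-parameter direction suffice, and the regular representation of $\BR$ shows that one does not.
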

The computation of the twisted Jacquet functor in the literature mainly reduces to the case where the Lie algebra acts by a family of characters. In the next proposition, we extend this to the case where the Lie algebra action is twisted by a finite-dimensional nilpotent representation.
\begin{proposition}\label{twisted Jacquet computation}
       Let $M$ be a Nash manifold. Let $\fkv$ be a real abelian Lie algebra such that $\dim_{\BR} \fkv =\dim M$, and let $(\gamma,W)$ be a finite-dimensional nilpotent representation of $\fkv$. Given a Nash morphism
    \[
    \epsilon: M \lra \fkv^* 
    \]
    such that either $0\notin \epsilon(M)$ or $0 \in \fkv^*$ is a regular value of $\epsilon$. Let $X\in\fkv$ act on $f\in \CS(M,W)$ by 
    \begin{equation}\label{action eq}
         ( X\cdot f)(m):= \left(\epsilon(m)(X)  +\gamma(X)\right) f(m).  
    \end{equation}
    Then
    \begin{enumerate}
        \item $ \rmh_i(\fkv, \CS(M,W))=0$ when $i>0$, and
        \item there is a natural isomorphism $\rmh_0(\fkv, \CS(M,W))\simeq \CS(\epsilon^{-1}(0),W)$.
        \end{enumerate} 
\end{proposition}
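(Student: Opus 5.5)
We reduce to the case $W=\BC$ (trivial $\gamma$) by a filtration argument, and then handle the scalar case by a Koszul complex computation.

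\textbf{Reduction to trivial $\gamma$.} Since $\gamma$ is nilpotent and $\fkv$ is abelian, Engel's theorem gives a $\fkv$-stable flag
\[
0=W_0\subset W_1\subset\cdots\subset W_r=W
\]
with $\fkv$ acting trivially on each $W_j/W_{j-1}$. The spaces $\CS(M,W_j)$ are closed $\fkv$-submodules of $\CS(M,W)$, so we get short exact sequences
\[
0\to\CS(M,W_{j-1})\to\CS(M,W_j)\to\CS(M,W_j/W_{j-1})\to0 .
\]
On each graded piece $\CS(M,W_j/W_{j-1})\simeq\CS(M)^{\oplus}$, the action \eqref{action eq} is pure multiplication by $\epsilon(m)(X)$. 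Suppose the scalar case is known. Then the long exact sequence in $\fkv$-homology, together with induction on $j$, gives the vanishing of $\rmh_i$ for $i>0$. It also gives exactness of the sequences
\[
0\to \rmh_0(\CS(M,W_{j-1}))\to\rmh_0(\CS(M,W_j))\to \rmh_0(\CS(M,W_j/W_{j-1}))\to 0 .
\]
For the identification of $\rmh_0$, use the natural map $\CS(M,W)\to\CS(\epsilon^{-1}(0),W)$ given by restriction. This map kills $\fkv\cdot\CS(M,W)$. Indeed, on $\epsilon^{-1}(0)$ we have $(X\cdot f)|=\gamma(X)f|$, so its image in $\CS(\epsilon^{-1}(0),W)$ is only killed modulo $\gamma(\fkv)W$. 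Hence the correct target must be read as $\CS(\epsilon^{-1}(0),W)$ in which $\fkv$ still acts through $\gamma$. The statement's $\rmh_0\simeq\CS(\epsilon^{-1}(0),W)$ should then be checked via the five lemma applied to the restriction maps compatible with the flag. I would make this bookkeeping precise by comparing both sides along the flag, so that the five lemma gives the isomorphism at each step.

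\textbf{Scalar case.} Choose a basis $X_1,\dots,X_n$ of $\fkv$, where $n=\dim M$, and set $\epsilon_i:=\epsilon(\cdot)(X_i)\in\CC^\infty(M)$ (Nash functions). Then $\fkv$-homology is the homology of the Koszul complex of the sequence $(\epsilon_1,\dots,\epsilon_n)$ acting by multiplication on $\CS(M)$.

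If $0\notin\epsilon(M)$, then $\sum|\epsilon_i|^2$ is a nowhere-vanishing Nash function. Its inverse is tempered, because Nash functions satisfy Łojasiewicz-type bounds, so $u_i=\bar\epsilon_i/\sum_j|\epsilon_j|^2$ are multipliers of $\CS(M)$ with $\sum u_i\epsilon_i=1$. This gives a contracting homotopy, and all homology vanishes, consistent with $\epsilon^{-1}(0)=\emptyset$.

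If $0$ is a regular value, then $Z=\epsilon^{-1}(0)$ is a closed Nash submanifold of codimension $n$, hence of dimension $0$, so a finite set. Away from $Z$, the previous homotopy applies. Near each point $z\in Z$, $\epsilon$ is a local Nash diffeomorphism onto a neighborhood of $0\in\fkv^*$, so locally the $\epsilon_i$ are coordinates. Using a Nash partition of unity subordinate to a cover by $M\setminus Z$ and small neighborhoods of the points of $Z$, the Koszul complex on $\CS(M)$ decomposes, up to homotopy, into these local pieces. On a coordinate neighborhood, the Koszul complex of the coordinates on Schwartz (or germ) functions is exact in positive degrees by Hadamard's lemma with smooth dependence on parameters. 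Its $\rmh_0$ is $f\mapsto f(0)$, because the image of the coordinates is exactly the ideal of functions vanishing at the point. Hence $\rmh_0\simeq\CS(Z)=\BC^Z$ via restriction.

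\textbf{Main obstacle.} I expect the delicate step to be gluing the local Koszul exactness with the global Schwartz estimates. One must ensure that the homotopies built from partitions of unity and from Hadamard's lemma preserve $\CS(M)$ and are continuous, so that the images are closed and restriction is an isomorphism. I would first try to handle this using the hereditary/factorizable property of $\CS(M)$ together with tempered multipliers coming from Nash inequalities.
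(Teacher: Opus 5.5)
Your part (1) follows the paper: filter $W$ so that $\fkv$ acts trivially on the graded pieces, then use the long exact sequence to reduce to the scalar case. The paper cites [AGS15b, Lemma 6.2.2] for that case; your Koszul argument and localization near $Z=\epsilon^{-1}(0)$ are reasonable substitutes.

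The genuine gap is in part (2). As you notice yourself, $f\mapsto f|_Z$ does not kill $\fkv\cdot\CS(M,W)$ when $\gamma\neq 0$; it only descends to $\CS(Z,W/\gamma(\fkv)W)$. That is not a reason to reread the statement, because $\rmh_0(\fkv,\CS(M,W))$ really has the size of $\CS(Z,W)$. Take $M=\BR$, $\epsilon(x)(X)=x$, $W=\BC^2$ and $\gamma(X)=E_{12}$. Then $X\cdot(f_1,f_2)=(xf_1+f_2,\,xf_2)$, and the quotient is $2$-dimensional, detected by $(g_1,g_2)\mapsto(g_1(0)-g_2'(0),\,g_2(0))$. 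Plain restriction, by contrast, sends $X\cdot(0,f_2)$ to $(f_2(0),0)\neq 0$. So you have no comparison map on the middle term of your flag, and the five lemma cannot be invoked. The flag alone only gives a non-canonical isomorphism by counting dimensions. The proposition asserts a natural one, and the paper uses that naturality later to identify the $M_{n-1}$-action on $\Psi(\CS(U\cdot w,\chi\boxtimes\gamma))\simeq W$.

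The missing idea is the paper's untwisting operator. Near each $p\in Z$ the map $\epsilon$ is a chart. Let $\partial_{\epsilon_i}:=\epsilon^{-1}_*(e_i)$ be the coordinate vector fields, so that $[\partial_{\epsilon_j},\epsilon_i]=\delta_{ij}$, and put
\[
\varphi:=\exp\Bigl(-\sum_i\gamma(X_i)\,\partial_{\epsilon_i}\Bigr).
\]
This is a finite sum because the $\gamma(X_i)$ commute and are nilpotent. It is therefore an invertible Nash differential operator on $\CS(\mathscr{U}_p,W)$.

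One checks that $\varphi\circ(\epsilon_i+\gamma(X_i))=\epsilon_i\circ\varphi$, so $\varphi$ turns the twisted action into plain multiplication by $\epsilon$. The natural map is then $\mathrm{Res}_Z\circ\varphi$, that is, $f\mapsto\sum_\alpha\frac{(-1)^{|\alpha|}}{\alpha!}\bigl(\prod_i\gamma(X_i)^{\alpha_i}\partial_{\epsilon_i}^{\alpha_i}f\bigr)(p)$. In the example above this is $f\mapsto(f-\gamma(X)f')(0)$. The paper then concludes directly from the scalar case.

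Once you have this map, your flag bookkeeping also works. The operator $\varphi-\mathrm{id}$ sends $\CS(\mathscr{U}_p,W_j)$ into $\CS(\mathscr{U}_p,W_{j-1})$. Hence $\mathrm{Res}_Z\circ\varphi$ respects the flag and induces plain restriction on the graded pieces, where the scalar case and the five lemma apply.
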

\begin{proof}
    Consider a decreasing filtration of $W$ such that $\fkv$ acts on each successive quotient trivially. Then, the statement (1) follows directly from~\cite[Lemma 6.2.2]{AGS15b}. 

    The key point is to define the natural isomorphism in the statement (2). We define, locally near each point of $\epsilon^{-1}(0)$, a linear automorphism $\varphi$ on $\CS(M,W)$, such that after applying $\varphi$, $X\in\fkv$ acts on $f\in\CS(M,W)$ by
    \[
   ( X\star f)(m) := \epsilon(m)(X) f(m).
    \]
    Then by~\cite[Lemma 6.2.2]{AGS15b} the natural map is induced by 
    \[
    \CS(M,W) \stackrel{\varphi}{\lra} \CS(M,W)\stackrel{\mathrm{Res_{\epsilon^{-1}(0)}}}{\lra} \CS(\epsilon^{-1}(0),W).
    \]
    
    Now, we define $\varphi$. By assumption, $\epsilon^{-1}(0)$ is a discrete subset of $M$ and for any $p\in \epsilon^{-1}(0)$, there exists an open neighborhood $\mathscr{U}_p$ such that $\epsilon$ is invertible on $\mathscr{U}_p$. Let 
    \[
    \mathscr{U}:= \bigcup_{p\in \epsilon^{-1}(0)} \mathscr{U}_p \text{ and } \mathscr{V}:= M\setminus \epsilon^{-1}(0).
    \]
    Then by a similar covering argument for $\mathscr{U}$ and $\mathscr{V}$ as in~\cite[Corollary 4.3]{WZ}, we have
    \[
    \rmh_0(\fkv,\CS(M,W))\simeq \rmh_0(\fkv, \CS(\mathscr{U},W))= \prod_{p\in \epsilon^{-1}(0)} \rmh_0(\fkv,\CS(\mathscr{U}_p,W)).
    \]
    We identify $\fkv^*$ with the translation invariant vector field on $\fkv^*$. For a fixed $p\in \epsilon^{-1}(0)$, we have a linear map by push-forward of vector fields:
    \[
    \epsilon^{-1}_*: \fkv^* \lra \CN( \mathscr{U}_p, T\mathscr{U}_p),
    \]
    where $\CN( \mathscr{U}_p, T\mathscr{U}_p)$ consists of Nash sections of the tangent bundle. On the other hand, 
    \[
  \theta:  \Hom_{\BC}(\fkv^*, \CN( \mathscr{U}_p, T\mathscr{U}_p)) \simeq \fkv\otimes \CN( \mathscr{U}_p, T\mathscr{U}_p) \lra \End(\CS(\mathscr{U}_p,W)),
    \]
    where the second map is given by $(X\otimes D)(f):= \gamma(X) D(f)$ for $X\in \fkv$, $D\in \CN( \mathscr{U}_p, T\mathscr{U}_p)$ and $f\in \CS(\mathscr{U}_p,W)$. Hence,
    \[
     \varphi:= \exp(- \theta(\epsilon^{-1}_*) )\in \GL (\CS(\mathscr{U}_p,W))
    \]
    is what we want. We check that
    \begin{equation}\label{intertwin eq}
             X\star  \varphi (f) = \varphi (X\cdot f).
    \end{equation}
    Fix a basis $X_i$ of $\fkv$. The dual basis in $\fkv^*$ is denoted by $e_i$. Then,
    \[
    \theta(\epsilon^{-1}_*)=\sum_i X_i \otimes \epsilon^{-1}_*(e_i).
    \]
    Therefore, we only need to check equation~\eqref{intertwin eq} for $X=X_i$ and it basically follows from the equation
    \[
    \frac{1}{k} \theta(\epsilon^{-1}_*)^k(\epsilon(m)(X_i) f) =  \frac{1}{k} \epsilon(m)(X_i)\theta(\epsilon^{-1}_*)^k( f) + \theta(\epsilon^{-1}_*)^{k-1}\gamma(X_i)( f).
    \]
\end{proof}

\section{Canonical Bernstein-Zelevinsky filtration}
\subsection{General theory}\label{general theory section}
Let $G$ be a real reductive group, and let $P$ be a parabolic subgroup of $G$ such that 
\begin{itemize}
    \item The unipotent radical $N$ is abelian, and
    \item $P$ has finitely many orbits on $\widehat{N}$.
\end{itemize}
In this section, we establish the canonical Bernstein-Zelevinsky filtration for a Casselman-Wallach representation $\pi$ restricting to $P$ under the following assumption:
\begin{itemize}
    \item[$\spadesuit$]\label{item:A1} any principal series of $G$ restricted to $P$ has a \textbf{coarse spectral filtration}. 
\end{itemize}
We first introduce the \textbf{coarse spectral filtration}. Take a stratification of $\widehat{N}$:
\[
Z_{0}=\emptyset \subset Z_1=\{0\}\subset \dots \subset Z_{r}=\widehat{N}
\]
such that $Z_\ell$ is a closed subset of $\widehat{N}$ and $\CO_{\ell}:=Z_{\ell+1}\setminus Z_{\ell}$ is an open $L$-orbit in $Z_{\ell+1}$ for every $0\leq \ell\leq r-1$. We fix a set of characters $\phi_{\ell}$ such that $\phi_{\ell}\in \CO_{\ell}$. The stabilizer of $\phi_{\ell}$ is simply denoted by $S_{\ell}$. 
\begin{definition}
   Let $\sigma\in \Smod_{S_{\ell}\cap L}$. By trivial extension, it is also regarded as a representation of $S_{\ell}$. The \textbf{Mackey induction} of $\sigma$ with respect to $\phi_{\ell}$ is 
    \[
    I_{\ell}(\sigma):= \SInd_{S_{\ell}}^P (\sigma\boxtimes  \phi_{\ell}).
    \]
\end{definition}

\begin{definition}[Coarse spectral filtration]\label{coarse def}
     Let $\pi\in\Smod_P$. A coarse spectral filtration is a filtration as in Definition~\ref{def_fil}, where each successive quotient is a \textbf{Mackey induction}.
\end{definition}

Our main result of this subsection is the following.
\begin{theorem}\label{Canonical filtration}
    Let $\pi$ be a Casselman-Wallach representation of $G$. Under the assumption $\spadesuit$, $\CS(\widehat{N}\setminus Z_{\ell})\cdot \pi$ is closed in $\pi$ for every $1\leq \ell\leq r$. In other words, $\pi$ has a decreasing filtration as $P$-representations
    \begin{equation}\label{con-fil-eq}
          \pi=\pi_0\supset \pi_1\supset \dots \supset \pi_r= 0 
    \end{equation}
    such that $\pi_{\ell}\simeq \CS(\widehat{N}\setminus Z_{\ell})\cdot \pi$. Moreover, we have functorial isomorphisms
    \begin{enumerate}
        \item $\pi_0/\pi_1\simeq \mathrm{CJ}^{\infty}_N(\pi)$;
        \item and $\pi_{\ell}/\pi_{\ell+1}\simeq \SInd_{S_{\phi}}^P(\Psi_{\phi}(\pi))$ if $\CO_{\ell}$ is an open subset of $\widehat{N}$, where $\phi\in \CO_{\ell}$.
    \end{enumerate}
\end{theorem}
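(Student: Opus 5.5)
Since every Casselman--Wallach representation of $G$ embeds (Casselman's subrepresentation theorem) into a principal series, and more precisely sits in an exact sequence $0\to\pi\to I\to I/\pi\to 0$ with $I$ a finite direct sum of principal series, I would first establish the statement for principal series and then transfer it to $\pi$ by an exactness argument. For a principal series $I$, assumption $\spadesuit$ provides a coarse spectral filtration. Using the rearrangement procedure of~\cite{WZ} (Lemma~\ref{limit diff} and Lemma~\ref{split}, applied with $\CA=\CS(\widehat{N}\setminus Z_\ell)$, which is hereditary, factorizable and has an approximation to the identity), I would reorder the successive Mackey inductions so that the pieces supported on $\CO_{\ell'}$ with $\ell'\ge \ell$ come first. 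Each $I_{\ell'}(\sigma)$ is an imprimitive system based on $P/S_{\ell'}\simeq \CO_{\ell'}$ (Example~\ref{imprim exa}). It is therefore a differentiable $\CS(\widehat N\setminus Z_\ell)$-module if $\ell'\ge\ell$, and is killed by that algebra if $\ell'<\ell$. The rearranged filtration then exhibits a closed submodule $I^{(\ell)}$ which is differentiable and whose quotient is annihilated. This forces $I^{(\ell)}=\CS(\widehat N\setminus Z_\ell)\cdot I$, which is therefore closed.

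To pass to $\pi$, I would show that the functor $\sigma\mapsto \CS(\widehat N\setminus Z_\ell)\cdot\sigma$ is ``closed'' on all Casselman--Wallach representations by a dimension-shifting induction. Lemma~\ref{exa_lem} gives exactness of $\CA\cdot(-)$ once the submodule term is closed. I would argue by descending induction on $\ell$ (equivalently on support of the spectrum), or alternatively use the two-sided resolution $0\to\pi\to I\to \pi''\to0$ together with $\pi\subset I$: since $\CA\cdot\pi\subset\CA\cdot I\cap\pi$, I need the reverse inclusion. I would obtain it by applying the approximation to the identity $e_n$ to $v\in\CA\cdot I\cap\pi$. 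Here $e_n v\in \CA\cdot\pi$ and $e_nv\to v$, so this only shows $\CA\cdot I\cap \pi=\overline{\CA\cdot\pi}$. Closedness of $\CA\cdot\pi$ itself is the genuinely hard point. My first attempt would be to show $\CA\cdot\pi$ is the kernel of the continuous map $\pi\to \pi/(\pi\cap \CA\cdot I)$ by proving that $\pi\cap\CA\cdot I$ is a differentiable $\CA$-module (hereditary property, as it is a closed submodule of the differentiable module $\CA\cdot I$). Then it equals $\CA\cdot(\pi\cap\CA\cdot I)\subset\CA\cdot\pi$. This argument actually works cleanly and is the main step.

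With closedness, the isomorphisms follow. For (1), $\pi/\pi_1$ is the largest quotient on which $\CS(\widehat N\setminus\{0\})$ acts by zero, i.e.\ supported spectrally at $0$. Via Lemma~\ref{limit diff}(2) and the identification of the trivial-spectrum piece with $\varprojlim_k \pi/\overline{\fkn^k\pi}$ (which holds for principal series from the coarse filtration, whose trivial pieces are inverse limits of $\fkn$-nilpotent quotients), one gets $\pi_0/\pi_1\simeq \mathrm{CJ}^\infty_N(\pi)$; functoriality holds since both sides are defined intrinsically. For (2), when $\CO_\ell$ is open, $\pi_\ell/\pi_{\ell+1}$ is an imprimitive system based on the open orbit $\CO_\ell\simeq P/S_\phi$. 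By Mackey's imprimitivity theorem in the Schwartz setting (\cite{Fd}), it equals $\SInd_{S_\phi}^P$ of its fibre at $\phi$. That fibre is the twisted coinvariant, which coincides with $\Psi_\phi(\pi)$ because the other pieces contribute nothing at $\phi$: by Proposition~\ref{open-iso}/Corollary~\ref{open-exa} the reduction $\Psi_\phi$ is Hausdorff and exact on sequences whose submodule satisfies the closedness hypothesis just proven, and $\Psi_\phi$ kills $\pi/\pi_\ell$ and $\pi_{\ell+1}$ as their spectra avoid $\phi$. The main obstacle I expect is the identification in (1) for general $\pi$ rather than principal series; there I would again use exactness of $\mathrm{CJ}^\infty_N$ on $\mathrm{CW}_G$ (Lemma~\ref{CW CJ in finite length cat}) together with exactness of $\pi\mapsto\pi/\pi_1$ established above.
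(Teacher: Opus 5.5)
The gap is in part (1). Your closedness argument is the paper's: rearrange the coarse spectral filtration of a (generalized) principal series $I$ via Lemmas~\ref{limit diff} and~\ref{split}. Then use that $(\CS(\widehat N\setminus Z_\ell)\cdot I)\cap\pi$ is a closed submodule of a differentiable module over a hereditary algebra, so it equals $\CS(\widehat N\setminus Z_\ell)\cdot\bigl((\CS(\widehat N\setminus Z_\ell)\cdot I)\cap\pi\bigr)\subset\CS(\widehat N\setminus Z_\ell)\cdot\pi$. Part (2) also goes through, because you ultimately rely on Proposition~\ref{open-iso}. Note that du Cloux's fibre is not a priori $\Psi_\phi$; that identification is exactly what Proposition~\ref{open-iso} supplies. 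Also, when $\CO_\ell$ is open one has $\pi_\ell=\CS(\CO_\ell)\cdot\pi\oplus\pi_{\ell+1}$, so you can apply Proposition~\ref{open-iso} to $\pi$ directly without invoking exactness of $\Psi_\phi$.

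In (1), Lemma~\ref{limit diff}(2) only shows that $\pi/\pi_1$ is killed by $\CS(\widehat N\setminus\{0\})$. It does not identify $\pi/\pi_1$ with $\varprojlim_k\pi/\ov{\fkn^k\pi}$. For that you need $\bigcap_k\ov{\fkn^k\pi}=\pi_1$ and surjectivity of $\pi\to\varprojlim_k\pi/\ov{\fkn^k\pi}$.

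The inclusion $\pi_1\subset\fkn^k\pi$ is easy: divide Schwartz functions on $\widehat N\setminus\{0\}$ by coordinates. The rest does not follow from the shape of the rearranged filtration. Trivial $\fkn$-action on the successive quotients $\tau_{i,j}/\tau_{i,j+1}$ only gives $\ov{\fkn^j\tau_i}\subset\tau_{i,j}$. Going beyond the first layer requires comparing $\ov{\fkn^k\pi}\cap\tau_1$ with $\ov{\fkn^k\tau_1}$, which is an Artin--Rees statement in the Fr\'echet setting. You also need a Borel-lemma type surjectivity.

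This is exactly the content of Theorem~\ref{topological spectral short exa} (\cite[Corollary 6.2, Proposition 5.1]{CWYZ}), which holds for every $\pi\in\Smod_N$. Once $\pi_1$ is closed, it gives (1) immediately for every Casselman--Wallach $\pi$. So no principal-series base case and no transfer via exactness of $\mathrm{CJ}^\infty_N$ is needed.

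Two smaller points:
\begin{itemize}
\item Your transfer step attributes exactness of $\mathrm{CJ}^\infty_N$ on $\mathrm{CW}_G$ to Lemma~\ref{CW CJ in finite length cat}. That lemma only says $\mathrm{CJ}^\infty_N(\pi)\in\CC(\fkg,L)_f$; exactness is \cite[Theorem 12.4]{CWYZ}, and the paper notes it also follows from the present theorem.
\item Casselman's theorem embeds $\pi$ into a generalized principal series, not a finite direct sum of principal series. Your rearrangement argument applies to generalized principal series verbatim, as in the paper.
\end{itemize}
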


We first prove the theorem for principal series, and then generalize to Casselman-Wallach representations through Casselman's embedding theorem. For the preparation of the proof, we demonstrate some examples of differentiable module of Fr\'echet algebra $\CS(\widehat{N}\setminus Z_{t})$ for a fixed integer $t$.
\begin{example}\label{diff-mod-exam}
   We fix a set of characters $\phi_{\ell}$ such that $\phi_{\ell}\in \CO_{\ell}$.  Then 
   \begin{itemize}
       \item $I_{\ell}(\sigma)$ is a differentiable $\CS(\widehat{N}\setminus Z_{t})$-module if $\ell\geq t$, and
       \item $I_{\ell}(\sigma)$ is killed by $\CS(\widehat{N}\setminus Z_{t})$ if $\ell<t$.
   \end{itemize}
   We realize $I_{\ell}(\sigma)$ as a tempered bundle over $S_{\ell}\backslash P\simeq \CO_{\ell}$. Then by Fourier inversion theorem, we will see that the action of  $\CS(\widehat{N}\setminus Z_{t})$ is given by 
    \[
   ( f\cdot \xi)(x)=f(x)\xi(x), \text{ for } f\in\CS(\widehat{N}\setminus Z_t), \xi\in I_{\ell}(\sigma) \text{ and } x\in \CO_{\ell}.
    \]
    Since $\CS(\widehat{N}\setminus Z_{\ell})$ is contained in $\CS(\widehat{N}\setminus Z_{t})$ when $\ell\geq t$, the result follows from the fact that the restriction map
    \[
    \CS(\widehat{N}\setminus Z_{\ell})\lra \CS(\CO_{\ell})
    \]
    is surjective. 
\end{example}

\subsubsection{Proof of Theorem~\ref{Canonical filtration} for generalized principal series} \label{generalized_prin-subsec}

In this subsection, we assume that $\pi$ is a generalized principal series, i.e., a representation induced from a finite-dimensional representation of $P^0$. We first establish the existence of the filtration in~\eqref{con-fil-eq} by a rearranging statement stronger than~\cite[Proposition 7.11]{WZ}.
\begin{lemma}\label{generalized prin lem}
    Let $\pi$ be a generalized principal series representation of $G$. Under the assumption $\spadesuit$, the restriction $\pi|_P$ admits a filtration of $P$-representations
    \[
    \pi|_{P} = \sigma_0 \supset \sigma_1 \supset \dots \supset \sigma_r = 0
    \]
   such that the successive quotients in $\sigma_{\ell}/\sigma_{\ell+1}$ are of the form $I_{\ell}$.
\end{lemma}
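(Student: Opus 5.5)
We plan to start from the coarse spectral filtration provided by assumption $\spadesuit$ and then \emph{rearrange} it so that all Mackey inductions of the same type $I_\ell$ are grouped together. We will use the Fréchet algebras $\CA_t:=\CS(\widehat{N}\setminus Z_t)$ as sorting tools, and define candidate terms
\[
\sigma_t:=\ov{\CA_t\cdot \pi}, \qquad 1\le t\le r ,
\]
with $\sigma_0=\pi$. (For $t=r$, $\widehat N\setminus Z_r=\emptyset$, so $\sigma_r=0$.) By Example~\ref{diff-mod-exam}, every successive quotient $I_\ell(\sigma)$ of the coarse filtration is a differentiable $\CA_t$-module when $\ell\ge t$, and is killed by $\CA_t$ when $\ell<t$. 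The aim is to show that $\sigma_t/\sigma_{t+1}$ carries a level $\le r$ filtration whose successive quotients are all of the form $I_t$.

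The main step is an induction on the level of the filtration. For a level $\le 1$ piece
\[
\sigma_i/\sigma_{i+1}\simeq \varprojlim_j \sigma_i/\sigma_{i,j},
\]
Lemma~\ref{limit diff} implies that the subquotients of types $\ell\ge t$, respectively $\ell<t$, assemble into inverse limits that are differentiable over $\CA_t$, respectively killed by $\CA_t$. Whenever an extension has sub $\Pi_1$ with $\CA_t\cdot\Pi_1=0$ and quotient $\Pi_2$ differentiable, Lemma~\ref{split} splits it as Fréchet $\CA_t$-modules. Iterating this reorders each extension so that the differentiable part comes first and the killed part comes after. Once the filtration is reordered this way, $\CA_t\cdot\pi$ equals the differentiable part: the differentiable pieces satisfy $\CA_t\cdot \Pi=\Pi$ by factorizability, and the killed pieces contribute nothing. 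In particular $\CA_t\cdot\pi$ is closed, and it carries a filtration with quotients $I_\ell$, $\ell\ge t$, while $\pi/\CA_t\cdot\pi$ has quotients $I_\ell$, $\ell<t$.

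Two points need care in this step.
\begin{enumerate}
\item The splittings are only $\CA_t$-linear, not $P$-equivariant, so they cannot be used to build the filtration itself. They are used only to identify the subspace $\CA_t\cdot\pi$ and its closedness. The filtration on $\sigma_t$ is then the intersection of the original coarse filtration with $\sigma_t$, which is $P$-stable because $\CA_t$ is $P$-stable under the coadjoint action.
\item To show $\sigma_t\cap(\text{piece})$ behaves correctly, we will use Lemma~\ref{exa_lem}, which gives exactness of $\CA_t\cdot(-)$ once $\CA_t\cdot\pi_1$ is closed. This closedness is supplied by the inductive hypothesis on deeper pieces of the filtration.
\end{enumerate}

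Finally, since $\sigma_{t+1}\subset\sigma_t$ and both have filtrations from the same sorted coarse data, $\sigma_t/\sigma_{t+1}$ retains exactly the pieces with $\ell=t$. These are all of the form $I_t$, which proves the lemma. We expect the main obstacle to be the inverse-limit steps in the level $\le 1$ data: there the splitting of Lemma~\ref{split} must be compatible with the projective system so that it passes to $\varprojlim_j$. We would first try to apply Lemma~\ref{limit diff} to the kernels before splitting, so that each limit is of pure type, and only then split.
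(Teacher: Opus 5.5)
Your proposal is correct and is essentially the paper's argument: you sort the coarse spectral filtration using the action of $\CA_t=\CS(\widehat{N}\setminus Z_t)$ via Example~\ref{diff-mod-exam}, Lemma~\ref{limit diff} and Lemma~\ref{split}, and identify $\sigma_t$ with $\CA_t\cdot\pi$. Note also, as the paper does, that $\spadesuit$ is assumed only for principal series, so you should first concatenate their coarse filtrations along a filtration of the generalized principal series by principal series. Your caution about $P$-equivariance is more than is needed, and your intersection-plus-Lemma~\ref{exa_lem} bookkeeping is a valid but unnecessary detour: the image of the $\CA_t$-linear section of an extension $0\to\Pi_1\to\Pi\to\Pi_2\to 0$ (killed sub, differentiable quotient) is exactly $\CA_t\cdot\Pi$, which is $P$-stable, so the paper gets the reversed extension directly as $P$-representations. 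Your plan for the inverse-limit steps fills in a point the paper passes over with ``applying this argument successively'': apply Lemma~\ref{limit diff} to the pure-type systems $\CA_t\cdot(\sigma_i/\sigma_{i,j})$ and their cokernels, built from the already-sorted finite truncations.
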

\begin{proof}
   Since every generalized principal series admits a filtration whose subquotients are principal series, it follows that any generalized principal series also possesses a coarse spectral filtration. Let $\beta$ be a subquotient of coarse spectral filtration of $\pi|_P$ with a short exact sequence
    \begin{equation}\label{reversed eq}
            0\lra \beta_1 \lra \beta\lra \beta_2\lra 0
    \end{equation}
    in coarse spectral filtration of $\pi|_P$. Suppose that the successive quotients in $\beta_1$ are of the form $I_0$ and the successive quotients in $\beta_2$ are of the form $I_{\ell},\ell\neq 0$. Then by Example~\ref{diff-mod-exam} and Lemma~\ref{limit diff}, we have $\beta_1$ is killed by $\CS(\widehat{N}\setminus \{0\})$ and $\beta_2$ is a differentiable $\CS(\widehat{N}\setminus \{0\})$-module. Therefore, by Lemma~\ref{split}, we have a reversed short exact sequence as $P$-representations
    \[
    0\lra \beta_2 \lra \beta\lra \beta_1\lra 0.
    \]
    Applying this argument successively, we can get $\sigma_1$. Inductively, we can obtain the desired subrepresentation $\sigma_{\ell+1}$ through the $\CS(\widehat{N}\setminus Z_{\ell+1})$-action on the filtration of $\sigma_{\ell}$.
\end{proof}
The filtration~\eqref{con-fil-eq} follows from Lemma~\ref{generalized prin lem} directly, since $\CS(\widehat{N}\setminus Z_{\ell})\cdot\pi=\sigma_{\ell}$. The statement (1) follows from the following theorem, which is one of the main results in~\cite{CWYZ}.
\begin{theorem}[Corollary 6.2, Proposition 5.1,~\cite{CWYZ}]\label{topological spectral short exa}
    Let $\pi\in\Smod_{N}$. Then there is a short exact sequence
    \[
    0 \lra \ov{\CS(\widehat{N}\setminus\{0\})\cdot \pi} \lra \pi \lra \mathrm{CJ}_N^{\infty}(\pi) \lra 0.
    \]
\end{theorem}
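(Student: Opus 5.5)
The plan is to identify $\mathrm{CJ}_N^{\infty}(\pi)=\varprojlim_k \pi/\ov{\fkn^k\pi}$ with the quotient of $\pi$ by the closure of the part of $\pi$ supported away from the trivial character. By the Fourier transform, $\pi$ is a differentiable $\CS(\widehat{N})$-module. Under this identification, $\fkn$ acts on the spectral side by multiplication by the linear coordinate functions $\xi\mapsto \xi(X)$ for $X\in\fkn$. Thus $\fkn^k\pi$ corresponds to multiplication by the ideal $\mathfrak{m}_0^k$ of polynomials vanishing to order $k$ at $0$. The sequence is then expected to be exact in three steps.

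\emph{Step 1: the composite map kills the closed submodule.} Take $f\in\CS(\widehat{N}\setminus\{0\})$. Such an $f$ vanishes to infinite order at $0$, so by a Hadamard/Malgrange-type division $f$ lies in $\mathfrak{m}_0^k\cdot\CS(\widehat{N})$ for every $k$. Using factorizability (Dixmier--Malliavin), this gives $f\cdot\pi\subset\fkn^k\pi$ for all $k$. Passing to closures, $\ov{\CS(\widehat{N}\setminus\{0\})\cdot\pi}\subset\bigcap_k\ov{\fkn^k\pi}$, which is exactly the kernel of $\pi\to\mathrm{CJ}_N^{\infty}(\pi)$.

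\emph{Step 2: the reverse inclusion.} Let $v\in\bigcap_k\ov{\fkn^k\pi}$ and choose $\chi\in\CC_c^\infty(\widehat{N})$ with $\chi\equiv1$ near $0$. Write $v=\chi v+(1-\chi)v$; the second term lies in $\ov{\CS(\widehat{N}\setminus\{0\})\cdot\pi}$ by the approximation to the identity. The task is to show $\chi v$ also lies there. The idea is that $\chi v$ is a vector ``supported at $0$ to infinite order''. Testing against continuous functionals $\lambda$ that annihilate $\CS(\widehat{N}\setminus\{0\})\cdot\pi$, each such $\lambda$ becomes an $\pi'$-valued distribution supported at $\{0\}$. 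Because $\pi$ has moderate growth, such a distribution has finite order, so $\lambda$ kills $\fkn^k\pi$ for some $k$ and hence kills $v$. Hahn--Banach then finishes the argument. I expect this finite-order claim, which is a uniform finiteness statement for the dual, to be the main obstacle. I would obtain it from nuclearity and the Fréchet structure via a Baire category argument on $\pi'$ seen as a union of the closed subspaces $(\fkn^k\pi)^\perp$.

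\emph{Step 3: surjectivity and topology.} Each map $\pi\to\pi/\ov{\fkn^k\pi}$ is surjective. One must show the map to the inverse limit is onto, i.e.\ that compatible sequences lift. This amounts to a Borel-lemma (Mittag-Leffler) statement: given Taylor data at $0$ of every order, one can realize it by a single vector. I would construct the preimage as $\sum_k \chi_{\epsilon_k}\cdot u_k$, where the $u_k\in\ov{\fkn^k\pi}$ are lifts of the successive differences and the $\chi_{\epsilon_k}$ are cutoffs with rapidly shrinking supports. The seminorm estimates coming from moderate growth should make this series converge, exactly as in the classical proof of Borel's lemma. Finally, the open mapping theorem for Fréchet spaces gives that the induced bijection $\pi/\ov{\CS(\widehat{N}\setminus\{0\})\pi}\to\mathrm{CJ}_N^\infty(\pi)$ is a topological isomorphism.
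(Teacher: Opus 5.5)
The paper does not prove this statement itself; it quotes \cite{CWYZ} (Corollary 6.2 for the kernel, Proposition 5.1 for surjectivity), so your sketch has to stand on its own. Steps 1 and 2 are sound. In Step 2 you do not need a Baire argument on $\pi'$, which is not a Baire space anyway. Joint continuity of $\CS(\widehat N)\widehat\otimes\pi\to\pi$ gives, for fixed $\lambda$, a bound $|\lambda(f\cdot v)|\le C\|f\|_{(M)}q(v)$, where $\|\cdot\|_{(M)}$ is a Schwartz seminorm of order $M$. Hence all the point-supported distributions $f\mapsto\lambda(f\cdot v)$ have order $\le M$, uniformly in $v$.

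\textbf{The gap is in Step 3.} Your Borel-lemma decay $q(\chi_\epsilon\cdot u)\to 0$ does hold for $u\in\fkn^k\pi$ with $k$ large relative to $q$, since then $u=\sum_i(\xi^{\alpha_i}f_i)\cdot w_i$ is a finite sum. But your $u_k$ lie only in $\ov{\fkn^k\pi}$, and $\fkn^k\pi$ need not be closed. The operators $u\mapsto\chi_\epsilon\cdot u$ are not equicontinuous (their norms grow like $\epsilon^{-m}$), so the estimate does not pass to the closure, and it is false in general. Here is a counterexample:
\begin{itemize}
\item Take $N=\BR$ and $\pi=C_b((0,1])\otimes\BC[s]/(s^2)$, with $\fkn$ acting by $i(\xi+s)$. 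This is a smooth Banach representation with $\|\pi(t)\|\le 1+|t|$.
\item A cutoff $\chi$ acts by $\chi(\xi)+\chi'(\xi)s$, and $\ov{\fkn^k\pi}=C_0((0,1])\otimes\BC[s]/(s^2)$ for all $k\ge 2$.
\item Let $u=1/\log(e/\xi)$ and let $\chi$ be any cutoff with $\chi=1$ near $0$ and support in $[-1,1]$. Then $1\le\int_0^1|\chi'|\le \sup|\chi' u|\cdot\int_0^1u^{-1}=2\sup|\chi'u|$, so $\|\chi\cdot u\|\ge 1/2$.
\item The choice $w_k=(k-1)u$ gives admissible lifts of $0\in\varprojlim_k\pi/\ov{\fkn^k\pi}$. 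With these lifts every $u_k$ equals $u$, and $\sum_k\chi_{\epsilon_k}\cdot u_k$ diverges whatever cutoffs you pick.
\end{itemize}
So convergence must come from choosing the $u_k$ well, and your sketch has no tool for that.

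\textbf{The missing idea} is your Step 2 finite-order observation, applied to seminorms instead of functionals.
\begin{itemize}
\item Suppose $q(f\cdot w)\le C\|f\|_{(m)}q'(w)$ and $f$ vanishes to order $m+1$ at $0$. Then $f\cdot w-((1-\chi_\epsilon)f)\cdot w=(\chi_\epsilon f)\cdot w$ has $q$-seminorm tending to $0$.
\item Since $((1-\chi_\epsilon)f)\cdot w\in\CS(\widehat N\setminus\{0\})\cdot\pi$, the quotient seminorm $\bar q$ on $\bar\pi:=\pi/\ov{\CS(\widehat N\setminus\{0\})\cdot\pi}$ vanishes on $\fkn^{m+1}\bar\pi$ (using Dixmier--Malliavin as in Step 2).
\item So every continuous seminorm of $\bar\pi$ factors through some $\bar\pi/\ov{\fkn^k\bar\pi}=\pi/\ov{\fkn^k\pi}$, the equality holding by Step 1.
\item Hence $\bar\pi\to\mathrm{CJ}_N^\infty(\pi)$ is a topological embedding. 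Its image is dense and complete, so the map is onto.
\end{itemize}
This proves Steps 2 and 3 at once, with no cutoff series and no separate open-mapping step. Equivalently, in your series you may correct each $u_k$ by an element of $\ov{\CS(\widehat N\setminus\{0\})\cdot\pi}$, so that in each fixed seminorm the corrected terms are eventually $2^{-k}$-small.
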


We then prove statement (2) of Theorem~\ref{Canonical filtration}. 
\begin{lemma}\label{open-orbits split}
    Let $s$ be an integer such that $\CO_{\ell}$ is an open orbit in $\widehat{N}$ for $\ell\geq s$. Let $\Omega_s:=\bigcup_{\ell\geq s} \CO_{\ell}$, then
    \[
    \CS(\Omega_s)\cdot \pi \simeq \bigoplus_{\ell\geq s} \CS(\CO_{\ell})\cdot \pi.
    \]
\end{lemma}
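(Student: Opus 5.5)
The plan is to use the fact that $\Omega_s$ is a disjoint union of the open orbits $\CO_\ell$, $\ell\geq s$. It is open in $\widehat{N}$, and each $\CO_\ell$ is open and closed in $\Omega_s$. So the characteristic functions $\chi_\ell$ of $\CO_\ell$ are Nash functions on $\Omega_s$, and we get a topological decomposition
\[
\CS(\Omega_s)\simeq\bigoplus_{\ell\geq s}\CS(\CO_\ell)
\]
as Fr\'echet algebras, via $f\mapsto (\chi_\ell f)_\ell$. Each $\CS(\CO_\ell)$ is a closed two-sided ideal, and the product of different summands is zero. The first step is to make this precise. The orbits are finitely many semialgebraic open sets, so each is a union of connected components of $\Omega_s$, and multiplying by $\chi_\ell$ preserves the Schwartz seminorms. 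Hence the decomposition is topological.

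Next, consider the differentiable $\CS(\Omega_s)$-module $M:=\CS(\Omega_s)\cdot\pi$, with its quotient Fr\'echet topology. Here $\pi$ is the generalized principal series, and by Lemma~\ref{generalized prin lem} we have $M=\sigma_s$, which is closed. Put $M_\ell:=\CS(\CO_\ell)\cdot\pi$. Using factorizability (or Dixmier--Malliavin), every $v\in M$ can be written as a finite sum $\sum f_i v_i$. Splitting $f_i=\sum_\ell\chi_\ell f_i$ gives $M=\sum_\ell M_\ell$. For directness, define the continuous projections $e_\ell: M\to M$ by $e_\ell(\sum f_iv_i)=\sum(\chi_\ell f_i)v_i$. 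The key point is well-definedness. I would obtain it by viewing $M$ as a quotient of $\CS(\Omega_s)\widehat{\otimes}\pi$ and checking that multiplication by $\chi_\ell$ on the first factor preserves the kernel of the action map. If $\sum f_iv_i=0$, then acting by an approximation to the identity $\{g_n\}$ of $\CS(\CO_\ell)$ gives $\sum (g_nf_i)v_i=0$. Letting $n\to\infty$, we have $g_nf_i\to\chi_\ell f_i$ in $\CS(\Omega_s)$, since $g_n\chi_\ell$ is an approximation to the identity of $\CS(\CO_\ell)$ and $\chi_\ell f_i\in\CS(\CO_\ell)$. So $\sum(\chi_\ell f_i)v_i=0$. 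In fact, more simply, $e_\ell$ is just the module action of the element $\chi_\ell$, which extends to differentiable modules through the multiplier algebra.

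The maps $e_\ell$ are then continuous, orthogonal idempotents that sum to the identity, with image $M_\ell$. This gives the topological isomorphism $M\simeq\bigoplus_\ell M_\ell$. Each $M_\ell$ is closed, being the kernel of $1-e_\ell$, and each is $P$-stable because $P$ preserves every orbit $\CO_\ell$, so $g\cdot\chi_\ell=\chi_\ell$.

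The main obstacle is justifying that the multiplier $\chi_\ell$ acts continuously and compatibly on $\CS(\Omega_s)\cdot\pi$, given that $M$ carries the quotient topology. The approximation-to-identity argument above, together with \cite[Lemma 2.3.6]{Fd}, should handle this.
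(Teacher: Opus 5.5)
Your proposal is correct and is essentially the paper's argument: both rest on the splitting $\CS(\Omega_s)=\bigoplus_{\ell\geq s}\CS(\CO_\ell)$ together with an approximation to the identity of $\CS(\CO_i)$, which acts as the identity on $\CS(\CO_i)\cdot\pi$ (by \cite[Lemma 2.3.6]{Fd}) and kills $\CS(\CO_\ell)\cdot\pi$ for $\ell\neq i$. The paper phrases this as injectivity of the continuous surjective addition map $\bigoplus_{\ell}\CS(\CO_\ell)\cdot\pi\to\CS(\Omega_s)\cdot\pi$ followed by the open mapping theorem, whereas you construct the continuous idempotents $e_\ell$ directly; the closedness of $\sigma_s$ you invoke is not needed, since $\CS(\Omega_s)\cdot\pi$ is already Fr\'echet in its quotient topology.
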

\begin{proof}
 On the one hand, we have a continuous surjective map given by addition
\[
\varphi:\bigoplus_{\ell\geq s} \CS(\CO_{\ell}) \cdot \pi\lra \CS(\Omega_s)\cdot \pi.
\]
Suppose that $(v_{\ell})\in \Ker\varphi$ such that there exists $i\geq s$ such that $v_i\neq 0$. Then in $\CS(\Omega_s)\cdot \pi$, $v_i = \sum_{\ell\neq i} v_{\ell}$. Let $\{e_n\}$ be an approximation to the identity of $\CS(\CO_i)$. Then, $\{e_n\cdot v_i\}$ converges to $v$ by~\cite[Lemma 2.3.6]{Fd}. However, $e_n\cdot v_{\ell}=0$ for any $n$ and $\ell\neq i$. This leads to a contradiction. 

Consequently, $\varphi$ is injective and hence an isomorphism by the open mapping theorem.    
\end{proof}

Let $\CO_{\ell}$ be an open orbit, and let $\phi\in \CO_{\ell}$. Lemma~\ref{open-orbits split} demonstrates that
\[
 \pi_{\ell}/\pi_{\ell+1}\simeq \CS(\CO_{\ell}) \cdot \pi.
\]

In what follows, we show that the twisted Jacquet functor of open orbits is always Hausdorff (without the assumption of $\spadesuit$) and we give a description of this spectrum using the twisted Jacquet functor and Schwartz induction. We remark that any unipotent Nash group is connected~\cite[Theorem 1.2(1)]{Sun15}.
\begin{proposition}\label{open-iso}
      Let $P$ be an almost linear Nash group such that $N$ is abelian. Let $\pi\in\Smod_P$. Suppose that $\CO$ is an open $P$-orbit in $\widehat{N}$. Then $\Psi_{\phi}(\pi)$ is Hausdorff and there is a functorial isomorphism
    \[
   \Xi: \CS(\CO)\cdot \pi\simeq \SInd_{S_{\phi}}^P(\Psi_{\phi}(\pi)).
    \]
\end{proposition}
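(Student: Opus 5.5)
The plan is to realize $\CS(\CO)\cdot\pi$ as an imprimitive system over $\CO\simeq S_\phi\backslash P$ and then apply the smooth imprimitivity theorem of \cite{Fd}. Since $N$ is abelian, Fourier transform makes $\pi$ a differentiable $\CS(\widehat N)$-module. Then $\CS(\CO)\cdot\pi$, with the quotient topology, is a differentiable $\CS(\CO)$-module, and it is $P$-stable because $\CO$ is $P$-stable. The compatibility $\alpha(p\cdot f)(p v)=p\,\alpha(f)v$ follows from the $P$-equivariance of the Fourier transform. So $\CS(\CO)\cdot\pi\in\Rep^\infty_{P,\CO}$. Since $\CO$ is a single $P$-orbit, the imprimitivity theorem (\cite[Theorem 1.1-type statement]{Fd}) gives
\[
\CS(\CO)\cdot\pi\simeq \SInd_{S_\phi}^P(\tau),
\]
where $\tau$ is the fiber at $\phi$. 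Concretely, $\tau=(\CS(\CO)\cdot\pi)/\overline{\CS(\CO)_{\phi}\cdot(\CS(\CO)\cdot\pi)}$, with $\CS(\CO)_\phi$ the functions vanishing at $\phi$, and twisted by the modular character normalization $\delta_{S_\phi\cap L}$ coming from our normalized induction. In the fiber description the quotient is automatically Hausdorff.

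The second step is to identify this fiber with $\Psi_\phi(\pi)$. Since $\CO$ is open in $\widehat N$, $\dim \CO=\dim\fkn_\BR$. The ideal of functions vanishing at $\phi$ is generated, as a module over $\CS(\CO)$, by the linear coordinates $\xi\mapsto \xi(X)-\phi(X)$, $X\in\fkn$. Under Fourier transform these coordinates act on $\pi$ as $X-\phi(X)$. Thus the vanishing-at-$\phi$ submodule should equal $\Span\{Xv-\phi(X)v\}$ inside $\CS(\CO)\cdot\pi$. I would justify this via Proposition~\ref{twisted Jacquet computation} with $W$ trivial and $\epsilon$ the translate by $-\phi$, which has $0$ as a regular value. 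Applied locally (Schwartz sections of the module, by factorizability of $\CS(\CO)$ and Dixmier–Malliavin), it identifies $\rmh_0(\fkn,\,\cdot\otimes\phi^{-1})$ with evaluation at $\phi$ and shows the image is closed. The last comparison is
\[
(\CS(\CO)\cdot\pi)/\fkn_\phi(\CS(\CO)\cdot\pi)\;\simeq\;\pi/\fkn_\phi\pi .
\]
For it, write $\pi=\ov{\CS(\CO)\cdot\pi}\oplus$ "rest" microlocally. Take $f\in\CS(\widehat N)$ with $f\equiv1$ near $\phi$ and support in $\CO$. Then $v-f\cdot v$ is killed by a function that is $\equiv 1$ near $\phi$, so it lies in the span of $(X-\phi(X))\pi$. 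This uses the division property: $1-f$ vanishes near $\phi$, so it can be written as $\sum_i(\xi(X_i)-\phi(X_i))g_i$ with $g_i$ tempered multipliers, and these act on $\pi$ because $\pi$ is moderate growth. Hence the inclusion induces a surjection, and injectivity follows by applying multiplication by $f$. In particular $\fkn_\phi\pi$ is closed, so $\Psi_\phi(\pi)$ is Hausdorff.

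Functoriality of $\Xi$ is clear, since every construction commutes with $P$-maps, which preserve $\CS(\CO)\cdot(\cdot)$ and $\fkn_\phi(\cdot)$. I expect the main obstacle to be the division step. One has to show that tempered (not Schwartz) multipliers like $(1-f)/(\xi-\phi)$ act continuously on an arbitrary $\pi\in\Smod_P$ through the $N$-action, i.e.\ as convolution by tempered distributions on $N$. If this is delicate, I would instead restrict to a small open ball $U\ni\phi$ in $\CO$. There one works only with $\CS(U)\cdot\pi$, and Proposition~\ref{twisted Jacquet computation} is applied there, using the covering argument of \cite[Corollary 4.3]{WZ} as in its proof.
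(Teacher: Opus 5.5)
Your argument is correct, but it is organized differently from the paper's.

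The paper never applies the imprimitivity theorem to $\CS(\CO)\cdot\pi$ itself. Its steps are:
\begin{enumerate}
\item It writes down the action map $\widetilde a\colon \SInd_{S_\phi}^{P}(\pi)\simeq\CS(\CO)\widehat{\otimes}\pi\to\pi$ and checks by a change of variables that $\widetilde a$ kills $\SInd_{S_\phi}^{P}(N(\pi\otimes\phi^{-1}))$.
\item It uses \cite[Theorem 2.5.8]{Fd} to write $\ker\widetilde a=\SInd_{S_\phi}^{P}(\kappa)$ with $\kappa$ closed.
\item It identifies $\kappa=N(\pi\otimes\phi^{-1})$ by applying $\Psi_\phi$ (via \cite[Corollary 7.6]{WZ}) and a Fourier-inversion computation.
\item Hausdorffness then comes from closedness of $\kappa$ together with Lemma~\ref{coinv equality}.
\end{enumerate}
You instead take du Cloux's fiber functor and its automatic Hausdorffness (\cite[Lemma 2.5.7, Theorem 2.5.8]{Fd}) as the black box. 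You then identify the fiber with $\Psi_\phi(\pi)$ entirely at the Lie-algebra level. Openness of $\CO$ gives $\CS(\CO)_\phi=\sum_i(\xi_i-\phi_i)\CS(\CO)$, where $\xi_i=\xi(X_i)$ for a basis $X_i$ of $\fkn$. A cutoff $f$ then moves between $\CS(\CO)\cdot\pi$ and $\pi$.

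What each route buys:
\begin{itemize}
\item Yours avoids group coinvariants, Lemma~\ref{coinv equality} and the computation of $\Psi_\phi\circ\SInd$. It also essentially identifies du Cloux's functor with $\Psi_\phi$ at an open orbit without the Hausdorff hypothesis in the paper's Remark.
\item The paper's realizes $\Xi^{-1}$ as the descent of an explicit integral operator. This makes functoriality and Corollary~\ref{open-exa} immediate.
\end{itemize}

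Two details to tighten.

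First, your ``main obstacle'' disappears if you factor $v=\sum_j h_j\cdot w_j$ with $h_j\in\CS(\widehat N)$ (Dixmier--Malliavin). Then $k_{ij}:=(\xi_i-\phi_i)(1-f)h_j/|\xi-\phi|^2\in\CS(\widehat N)$, and $v-f\cdot v=\sum_{i,j}(X_i-\phi(X_i))(k_{ij}\cdot w_j)$, with no tempered multipliers needed. The same factorization inside $\CS(\CO)\cdot\pi$ gives $\fkm_\phi\cdot(\CS(\CO)\cdot\pi)=\fkn_\phi(\CS(\CO)\cdot\pi)$. So Proposition~\ref{twisted Jacquet computation} is not needed.

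Second, closedness of $\fkn_\phi\pi$ does not follow ``in particular'' from a linear bijection. The space $\CS(\CO)\cdot\pi$ carries a finer topology than the subspace one, and it need not even be closed in $\pi$ (cf.\ subsection~\ref{counter-exa-sec}). Argue instead as follows.
\begin{itemize}
\item The map $v\mapsto[f\cdot v]$ is continuous from $\pi$ to the Hausdorff quotient $(\CS(\CO)\cdot\pi)/\fkn_\phi(\CS(\CO)\cdot\pi)$.
\item Its kernel is exactly $\fkn_\phi\pi$: applying $f$ to a relation lands in $\fkn_\phi(\CS(\CO)\cdot\pi)$, and conversely $v-f\cdot v\in\fkn_\phi\pi$.
\item The induced map is inverse to the one induced by inclusion, which also gives the topological isomorphism.
\end{itemize}
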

\begin{proof}
 \textbf{Step 1: Prove that the twisted Jacquet functor is Hausdorff by the action map $a: \CS(\CO)\widehat{\otimes}\pi \lra \pi$.}

When $\phi$ is clear from the context, we will omit the subscript $\phi$ for simplicity. Consider the following composition of continuous $P$-equivariant maps
\begin{equation}\label{identification}
    \SInd_{S}^{P}(\pi)\stackrel{\alpha}{\lra } \SInd_{S}^{P}(\BC)\widehat{\otimes}\pi \stackrel{\beta}{\lra } \CS(\CO)\widehat{\otimes}\pi,
\end{equation}
where $\alpha$ is the isomorphism given by
\[
f\mapsto (p\mapsto p^{-1}f(p))\text{ for }p\in P,f\in \SInd_{S}^{P}(\pi)
\]
and $\beta$ is the isomorphism given by the Nash isomorphism $\mathrm{Is}:S_{\phi}\backslash P\simeq  \CO_{\phi}$. The pull-back of the measure on $\CO_{\phi}$ is denoted by $dp$. Since 
$$\ov{\SInd_{S}^{P}(N(\pi\otimes \phi^{-1}))}\subset \SInd_{S}^{P}(\ov{N(\pi\otimes \phi^{-1})}), $$ 
it has a dense subspace 
\[
\Span_{\BC}\left\{f_w:g\mapsto \int_{S} f(hg)h^{-1}\cdot w dh\mid f\in\CS(P),w\in  \fkn(\pi\otimes \phi)\right\}.
\]
\begin{itemize}
    \item Claim 1: Let $\widetilde{a}:=  a\circ\beta\circ \alpha$. Then $\SInd_{S}^{P}(N(\pi\otimes \phi^{-1}))\subset \ker(\widetilde{a})$.
\end{itemize}
The claim follows from the detailed computation:
\begin{equation*}
    \begin{split}
   a\circ\beta&\circ \alpha(f_w)=\int_{N} \int_{S\backslash P} n p^{-1}f_w(p)(p\cdot \phi)(n)^{-1}dp dn\\
   &=\int_{N}\int_{S\backslash P}\int_{S} n(hp)^{-1} \cdot wf(hp)(p\cdot \phi)(n)^{-1} dh dp dn.
    \end{split}
\end{equation*}
Let $w=\sum_{j=1}^J \phi(n_j)^{-1}n_jt_j-t_j$, where $n_j\in N$ and $t_j\in\pi$. Since the integration is linear on $w$, $\widetilde{a}(f_w)=0$ follows from
\begin{equation*}
    \begin{split}
       & \int_{N}\int_{S\backslash P}\int_{S} n(hp)^{-1} n_j\cdot t_j f(hp)(p\cdot \phi)(n)^{-1}\phi(n_j)^{-1} dh  dp  dn\\
       = \int_{N}\int_{S\backslash P}&\int_{S}  n \Ad_{(hp)^{-1}}(n_j) (hp)^{-1}t_j f(hp)(p\cdot\phi)( n \Ad_{(hp)^{-1}}(n_j))^{-1}dh  dp  dn\\
       =\int_{N}&\int_{S\backslash P}\int_{S} n(hp)^{-1} \cdot t_jf(hp)(p\cdot \phi)(n)^{-1} dh dp dn,
    \end{split}
\end{equation*}
where the third equality follows from the change of variable $n\mapsto n\Ad_{(hp)^{-1}}(n_j^{-1})$. 
\\

Note that $\widetilde{a}$ is a morphism in the category of imprimitive system $\Rep^{\infty}_{P,S\backslash P}$, see Example~\ref{imprim exa}.  Since $\SInd_{S}^{P}(\pi)$ is a differentiable $\CS(S\backslash P)$-module and $\CS(S\backslash P)$ is a hereditary algebra, by~\cite[Theorem 2.5.8]{Fd}, there is a closed $S$-subrepresentation $\kappa$ of $\pi$ such that
\[
 \ker(\widetilde{a})= \SInd_{S}^{P}(\kappa). 
\]
\begin{itemize}
    \item Claim 2: $\kappa= N(\pi\otimes \phi^{-1})$.
\end{itemize}
\begin{proof}[Proof of the claim]
    The action map $\widetilde{a}$ induces $\SInd_{S}^P(\pi/\kappa)\to \pi$. Taking the twisted Jacquet functor (with respect to group $N$) on the two sides, and by \textbf{Claim 1} and the proof of~\cite[Corollary 7.6]{WZ}, we obtain
    \[
    \pi/\kappa \xleftarrow[\simeq]{ev_{e}} \Psi(\SInd_{S}^P(\pi/\kappa))\xrightarrow{\Psi(\widetilde{a})}  \Psi(\pi),
    \]
    where $ev_{e}$ is evaluation at the identity point. Hence, to prove Claim 2, it suffices to show that $\Psi(\widetilde{a}) \circ ev_{e}^{-1}$ is induced by the identity map on $\pi$. Let $f\in \SInd_{S}^P(\pi)$ such that $f(e)=v\in \pi$. Then in $\Psi(\pi)$, we have
     \begin{equation*}
        \begin{split}
       \widetilde{a}&(f)=\int_{N}\int_{S\backslash P}  n\alpha(f)(p)(p\cdot\phi)(n)^{-1}dpdn=\int_{N}\int_{S\backslash P}\phi(n)\alpha(f)(p)(p\cdot\phi)(n)^{-1} dpdn\\
       &=\int_{N}\int_{\widehat{N}}(\alpha\circ\beta)(f)(\xi) (-\xi+\phi)(n)d\xi dn =(\alpha\circ\beta)(f)(\phi)=f(e)=v,
        \end{split}
    \end{equation*}
    where the last line follows from the Fourier inversion theorem.
\end{proof}

\textbf{Step 2: Prove the functorial isomorphism $\Xi:\CS(\CO)\cdot \pi\simeq \SInd_{S_{\phi}}^P(\Psi_{\phi}(\pi))$.}

Lemma~\ref{coinv equality} and \textbf{Claim 2} ensure that $\ker(\widetilde{a})= \SInd_{S}^P(\fkn(\pi\otimes \phi))$. Hence, by the exactness of the Schwartz induction, the desired isomorphism is induced by $\widetilde{a}$.

    Lastly, we explain the funcotriality. Let $\pi,\tau$ be two nuclear representations in $\Smod_P$ with a continuous $P$-equivariant map $\varphi:\pi\lra \tau$. Then it will induce two $P$-equivariant continuous maps
    \[
   \varphi_o: \CS(\CO)\cdot \pi\lra \CS(\CO)\cdot \tau\text{ and } \varphi_s: \SInd_S^P(\Psi(\pi))\lra \SInd_S^P(\Psi(\tau)) 
    \]
    such that the following commutative diagram holds
    \begin{center}
        \begin{tikzcd}
            \CS(\CO)\cdot \pi\ar[r,"\Xi"]\ar[d,"\varphi_o"] & \SInd_S^P(\Psi(\pi)) \ar[d,"\varphi_s"] \\
             \CS(\CO)\cdot \tau\ar[r,"\Xi"] & \SInd_S^P(\Psi(\tau)) .
        \end{tikzcd}
    \end{center}
    This is deduced from the functoriality of the map $\widetilde{a}$.
\end{proof}
\begin{remark}[Du Cloux's functor vs. twisted Jacquet functor]
    In the setting of Proposition~\ref{open-iso}, Du Cloux defines an analogue of the twisted Jacquet functor in~\cite[Lemma 2.5.7]{Fd}, which is automatically Hausdorff. For a long time, it has been believed that these two functors are identical, and this would imply that the twisted Jacquet functor is Hausdorff. 

    We provide evidence for this insight: when the (unnormalized) twisted Jacquet functor is known to be Hausdorff, we can prove that the two functors coincide. Let 
\[
\widetilde{\CS(\CO)} := \CS(\CO) \oplus \BC e
\]
be the algebra obtained by adjoining an identity element $e$.  Du Cloux's functor is defined as
\[
 \pi(\phi):= \pi/(\mathfrak{m}_{\phi}\cdot\pi),
\]
where $\mathfrak{m}_{\phi}:=\{(\varphi,-\varphi(\phi))\in\widetilde{\CS(\CO)} \mid \varphi\in \CS(\CO)\}$. The element in $\widetilde{\varphi}=(\varphi,-\varphi(\phi))\in\mathfrak{m}_{\phi}$ is determined by the $\varphi\in\CS(\CO)$. Thus, we will simply use $\varphi$ to denote $\widetilde{\varphi}$.

On the one hand, we can prove $\mathfrak{m}_{\phi}\cdot\pi \subset N(\pi\otimes \phi^{-1})$. Let $f\in \CS(N)$ such that $\widehat{f}\in \CS(\CO)$. Then
\[
\widehat{f}\cdot v = \int_N f(n) (\pi(n)-\phi(n))v \, dn \in N(\pi\otimes \phi^{-1})
\]
when $N(\pi\otimes \phi^{-1})$ is closed in $\pi$. For the reverse inclusion, we use the description of $\mathfrak{m}_{\phi}\cdot\pi$ in the proof of~\cite[Lemma 2.5.7]{Fd}. According to his proof, there exists a Nash open neighborhood of $\phi$ called $Y$, and a Nash open neighborhood $\mathscr{U}$ of $e\in P$ which is invariant under the left action of $S$ such that the restriction of the orbit map 
\[
  P \lra \CO \quad p\mapsto \phi\cdot p
\] 
to $\mathscr{U}$ is a Nash principal $S$-bundle over $Y$. Then there is an $\CS(Y)$-module isomorphism
\[
\CS(Y)\cdot \pi \stackrel{\beta}{\simeq} \SInd_{S}^{\mathscr{U}}(F)
\]
for some $F\in \Smod_S$. Fix an $\alpha\in \CC_c^{\infty}(Y)$ such that $\alpha(\xi)=1$ around $\phi$, then part (d) of the proof shows that
\begin{equation}\label{description du cloux}
     \mathfrak{m}_{\phi}\cdot\pi = \{v\in \pi\mid \beta(\alpha\cdot v)(1)=0\}.
\end{equation}
We observe that 
\[
\fkn(\pi\otimes (-\phi))= (\fkn (\CS(N)\otimes (-\phi)))\cdot \pi= \mathfrak{I}_{\phi}\cdot \pi ,
\]
where $\mathfrak{I}_{\phi}:= \{\varphi\in \CS(\widehat{N})\mid \varphi(\phi)=0\}$. It is not hard to see that $\mathfrak{I}_{\phi}\cdot \pi$ is contained in the right hand side of~\eqref{description du cloux}.

\end{remark}
This proposition has a corollary on the exactness of twisted Jacquet functor.
\begin{corollary}\label{open-exa}
    Under the same assumption as Proposition~\ref{open-iso}, if 
    \[
    0 \lra \pi_1 \lra \pi_2\lra \pi_3\lra 0
    \]
    is a short exact sequence in $\Smod_P$, and $\CS(\CO)\cdot \pi_1$ is closed in $\pi_1$, then $\Psi_{\phi}$ is exact. Namely, we have a short exact sequence
    \[
     0 \lra \Psi_{\phi}(\pi_1) \lra \Psi_{\phi}(\pi_2)\lra \Psi_{\phi}(\pi_3)\lra 0.
    \]
\end{corollary}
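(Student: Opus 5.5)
We reduce the corollary to Lemma~\ref{exa_lem} and Proposition~\ref{open-iso}. Take $\CA=\CS(\CO)$, which has an approximation to the identity, and view $\pi_1,\pi_2,\pi_3$ as Fr\'echet $\CS(\CO)$-modules via the Fourier transform on $\widehat{N}$, extending functions in $\CS(\CO)$ by zero to $\widehat{N}$. Since $\CS(\CO)\cdot\pi_1$ is closed in $\pi_1$ by hypothesis, Lemma~\ref{exa_lem} gives an exact sequence
\[
0\lra \CS(\CO)\cdot\pi_1\lra \CS(\CO)\cdot\pi_2\lra \CS(\CO)\cdot\pi_3\lra 0,
\]
with each term carrying the quotient Fr\'echet topology and all maps continuous and $P$-equivariant, since the $\CS(\CO)$-action is compatible with $P$ because $\CO$ is $P$-stable.

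Next we apply the functorial isomorphism $\Xi$ of Proposition~\ref{open-iso} to each term. Naturality of $\Xi$ with respect to $P$-equivariant maps, established at the end of that proof, turns the sequence into an exact sequence
\[
0\lra \SInd_{S_\phi}^P(\Psi_\phi(\pi_1))\lra \SInd_{S_\phi}^P(\Psi_\phi(\pi_2))\lra \SInd_{S_\phi}^P(\Psi_\phi(\pi_3))\lra 0,
\]
in which the maps are $\SInd_{S_\phi}^P$ applied to $\Psi_\phi(\pi_1)\to\Psi_\phi(\pi_2)\to\Psi_\phi(\pi_3)$. Here the proposition also tells us that each $\Psi_\phi(\pi_i)$ is Hausdorff, so these are genuine Fr\'echet $S_\phi$-representations. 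There is one technical point: the proof of Proposition~\ref{open-iso} was phrased for nuclear representations. If needed, we either assume nuclearity, as the paper does throughout, or note that the argument there only used the Fr\'echet module structure.

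Finally, we recover exactness of $\Psi_\phi$ from exactness after Schwartz induction, using evaluation at the identity. By the proof of Claim 2 in Proposition~\ref{open-iso}, applying $\Psi_\phi$ to $\SInd_{S_\phi}^P(\tau)$ and evaluating at $e$ gives a natural isomorphism with $\tau$. For a surjective map of Schwartz inductions, evaluation at $e$ is surjective, so $\Psi_\phi(\pi_2)\to\Psi_\phi(\pi_3)$ is onto. For injectivity on the left, an element $v\in\Psi_\phi(\pi_1)$ mapping to zero yields, for every $f\in\CS(S\backslash P)$, a section $f\cdot v$ of the first induction that maps to zero; injectivity of the induced sequence then forces $v=0$. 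For exactness in the middle, take $w\in\Psi_\phi(\pi_2)$ mapping to $0$. Choose a section $F$ with $F(e)=w$ that is supported near $e$, for example using a local trivialization of $P\to S_\phi\backslash P$. The image of $F$ is a section whose values are $g\cdot(\text{image of } w)=0$ near $e$; after shrinking the support it is zero, so $F$ comes from the first induction and $w$ lies in the image. The main obstacle is making this local-section argument rigorous. The cleanest approach is to use the fibre-wise description $\SInd_{S}^{P}(\tau)\simeq\CS(S\backslash P)\widehat{\otimes}\tau$ given by $\alpha$ and $\beta$ in \eqref{identification}, and then evaluate at the point $\phi$. That evaluation is a continuous surjection natural in $\tau$, and it commutes with the maps, so exactness transfers directly.
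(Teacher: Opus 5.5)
\textbf{Verdict: correct, but by a different route in the last step.} Your first two steps are the paper's: Lemma~\ref{exa_lem} with $\CA=\CS(\CO)$, then the natural isomorphism $\Xi$ of Proposition~\ref{open-iso}, yield the exact sequence of Schwartz inductions. You differ in how you descend from $\SInd_{S_\phi}^P(\Psi_\phi(\pi_i))$ back to $\Psi_\phi(\pi_i)$.

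The paper takes the long exact sequence of twisted $\fkn$-homology. It uses the computation from \cite[Corollary 7.6]{WZ} that $\rmh_0(\fkn,\SInd_{S_\phi}^P(\tau)\otimes(-\phi))\simeq\tau$ via $ev_e$, and that $\rmh_i$ vanishes for $i\neq 0$. The vanishing of $\rmh_1$ of the third term is exactly what gives injectivity on the left.

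You instead show directly, by evaluating at $e$, that $\SInd_{S_\phi}^P$ reflects exactness. Your route is elementary, needs no homological input, and works for any sequence of Schwartz inductions induced by $S_\phi$-maps. The paper's route is a one-line consequence of a computation it needs anyway (Claim 2, Lemma~\ref{vanish lem}), and it also records that these induced modules are acyclic.

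\textbf{A correction.} The ``cleanest approach'' at the end of your proposal does not work as stated. The identification $\SInd_S^P(\tau)\simeq\CS(S\backslash P)\widehat{\otimes}\tau$ via $\alpha,\beta$ in \eqref{identification} uses $f\mapsto(p\mapsto p^{-1}f(p))$. So it requires $\tau$ to be the restriction of a $P$-representation, whereas $\Psi_\phi(\pi_i)$ is only an $S_\phi$-representation. Moreover, the bundle $P\to S_\phi\backslash P$ need not be trivial; in the mirabolic case there is generally no global section.

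Your local argument is the right one, and it is easy to make rigorous:
\begin{itemize}
\item \emph{Construction.} Take a local Nash section $\sigma:U\to P$ of $P\to S_\phi\backslash P$ with $\sigma(x_0)=e$. For $\psi\in\CC_c^\infty(U)$ and $v\in\tau$, set $F_{\psi,v}(s\sigma(x)):=\delta_P^{-1/2}(s)\delta_{S_\phi}^{1/2}(s)\,\psi(x)\,s\cdot v$, extended by zero.
\item \emph{Properties.} $F_{\psi,v}$ is a Schwartz section with $F_{\psi,v}(e)=\psi(x_0)v$. For any $S_\phi$-map $g$ one has $\SInd(g)(F_{\psi,v})=F_{\psi,g(v)}$.
\item \emph{Surjectivity and left injectivity} then follow exactly as you say.
\item \emph{Middle exactness.} If $g(w)=0$, then $\SInd(g)(F_{\psi,w})=F_{\psi,0}=0$ identically, so no shrinking of supports is needed. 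Hence $F_{\psi,w}=\SInd(f)(G)$ for some $G$, and with $\psi(x_0)=1$ this gives $w=f(G(e))$.
\end{itemize}
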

\begin{proof}
    By Proposition~\ref{open-iso} and Lemma~\ref{exa_lem}, the following sequence is exact:
    \[
    0 \lra \SInd_{S_{\phi}}^P( \Psi_{\phi}(\pi_1)) \lra  \SInd_{S_{\phi}}^P( \Psi_{\phi}(\pi_2))\lra  \SInd_{S_{\phi}}^P( \Psi_{\phi}(\pi_3))\lra 0.
    \]
    The argument in~\cite[Corollary 7.6]{WZ} shows that
    \[
    \rmh_i(\fkn, \SInd_{S_{\phi}}^P( \Psi_{\phi}(\pi_1))\otimes (-\phi))=\begin{cases}
        \Psi_{\phi}(\pi_1), \quad &\text{ if } i=0\\
        0, \quad &\text{ if } i\neq 0.
    \end{cases}
    \]
    This completes the proof.
\end{proof}

\subsubsection{Proof of Theorem~\ref{Canonical filtration}} 
In this subsection, we prove the Theorem~\ref{Canonical filtration} in full generality. Let $\pi$ be a Casselman-Wallach representation of $G$. By Casselman's embedding theorem, there is a generalized principal series $I$ and a closed embedding $\pi\hookrightarrow I$. By Lemma~\ref{generalized prin lem}, for any integer $\ell$, $\CS(\widehat{N}\setminus Z_{\ell})\cdot I$ is closed in $I$. Therefore, 
\[\left(\CS(\widehat{N}\setminus Z_{\ell})\cdot I\right)\bigcap \pi \text{ is closed in } \CS(\widehat{N}\setminus Z_{\ell})\cdot I. \]
Since $\CS(\widehat{N}\setminus Z_{\ell})\cdot I$ is a differentiable $\CS(\widehat{N}\setminus Z_{\ell})$-module and $\CS(\widehat{N}\setminus Z_{\ell})$ is hereditary, we have
\[
 \CS(\widehat{N}\setminus Z_{\ell})\cdot \left((\CS(\widehat{N}\setminus Z_{\ell})\cdot I)\bigcap \pi\right)= \left(\CS(\widehat{N}\setminus Z_{\ell})\cdot I\right)\bigcap \pi.
\]
On the other hand, a priori,
\[
\left(\CS(\widehat{N}\setminus Z_{\ell})\cdot I\right)\bigcap \pi \supset \CS(\widehat{N}\setminus Z_{\ell})\cdot\pi\supset \CS(\widehat{N}\setminus Z_{\ell})\cdot \left((\CS(\widehat{N}\setminus Z_{\ell})\cdot I)\bigcap \pi\right).
\]
Consequently, 
\[
\left(\CS(\widehat{N}\setminus Z_{\ell})\cdot I\right)\bigcap \pi = \CS(\widehat{N}\setminus Z_{\ell})\cdot\pi,
\]
which implies that $\CS(\widehat{N}\setminus Z_{\ell})\cdot\pi$ is closed in $\pi$. 

Consequently, the statement (1) follows from Theorem~\ref{topological spectral short exa}, and the statement (2) follows from Proposition~\ref{open-iso}. 

\begin{remark}
    We want to mention that under the assumption $\spadesuit$, the exactness of Casselman-Jacquet functor is a direct consequence of Theorem~\ref{Canonical filtration} and Lemma~\ref{exa_lem}.
\end{remark}

\subsection{Mirabolic case}
In this subsection, we specialize to the case that $G=GL_n$ and $P=M_n$, the mirabolic subgroup. Recall that $M_n$ has a standard Levi decomposition $M_n=\GL_{n-1}\ltimes V_n$ such that $\widehat{V_n}$ consists of a unique open orbit and a unique closed orbit. The stabilizer of the standard character $\psi_n$ in the open orbit is $M_{n-1}$. The Mackey induction of the open orbit is simply denoted as $I(\cdot)$ and the Mackey induction of the closed orbit is denoted as $E(\cdot)$.

We first recall various derivative functors that will be involved in the canonical Bernstein-Zelevinsky filtration. Define the absolute value for Archimedean local field as $|x|_{\mathbb{R}}=|x|$ for $x\in \mathbb{R}$, while $|x|_{\mathbb{C}}=|x|^2$ for $x\in \mathbb{C}$.
\begin{definition}\label{def-der-gl}
    Let $\sigma$ be a smooth moderate growth Fr\'echet representation of $M_n$, we define
    $$\Psi(\sigma):=|\det|_{\mathbb{K}}^{-1/2} \otimes \sigma/\Span\{\alpha v -\psi_n(\alpha)v \mid v\in\sigma, \alpha\in \fkv_n\}$$
and
$$\Phi(\sigma):= \varprojlim_{l} \sigma /\Span\{\kappa v\mid v  \in \sigma ,\kappa \in ({\mathfrak{v}}_{n})^{\otimes l} \}$$
For a positive integer $k$, the \textbf{$k$-th derivative} of $\sigma$ is defined to be $D^k(\sigma):=\Phi\circ\Psi^{k-1}(\sigma)$. The \textbf{depth} of representation $\sigma$ is the maximal positive integer $k_0$ such that $D^{k_0}(\sigma)\neq 0$, and $D^{k_0}(\sigma)$ is called the \textbf{highest derivative} of $\sigma$, denoted by $\sigma^{-}$. The corresponding opposite functors $\ov{\Psi}$, $\ov{\Phi}$, and $\ov{D}^k$ are defined via the transpose of the Lie algebra and the corresponding character.
\end{definition}
We recall that for a Casselman-Wallach representation $\pi$ of $\GL_n$, $\pi|_{M_n}$ admits a Bernstein-Zelevinsky filtration
\begin{align*}
     & \pi|_{M_n}=\sigma_0\supset  \sigma_1\supset \dots \supset \sigma_m \text{ with }\\
      \sigma_i=&\sigma_{i,0}\supset \sigma_{i,1}\supset \dots \supset \sigma_{i,j} \supset \dots \supset \sigma_{i+1},
\end{align*}
which is finer than coarse spectral filtration; see~\cite[Theorem 1.1 and Definition 3.2]{WZ} for details. Moreover, $\fkv^l_n\pi$ is closed in $\pi$ for any integer $l$ by~\cite[Theorem 1.4]{WZ}. Thus, $D^{k+1}$ is a functor $\mathrm{CW}_{\GL_n}\to \Smod_{M_{n-k}}$ for any non-negative integer $k$.

We first record a lemma which is useful in our canonical filtration Theorem~\ref{canon-fil-GL_n}.
\begin{lemma}\label{BZ-fil-psi-lem}
    The datum $\{\Psi^k(\sigma_i),\Psi^k(\sigma_{i,j})\}$ consists of a Bernstein-Zelevinsky filtration on $\Psi^k(\pi)$ for any integer $k$.
\end{lemma}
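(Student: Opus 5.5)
We argue by induction on $k$; the case $k=0$ is the given Bernstein--Zelevinsky filtration of $\pi|_{M_n}$, so it suffices to treat $k=1$ for an arbitrary representation carrying such a filtration, and then iterate on $M_{n-1}$ with the filtration produced. By~\cite[Definition 3.2]{WZ}, the data $\{\sigma_i,\sigma_{i,j}\}$ form a Bernstein--Zelevinsky filtration when the $\sigma_i$ and $\sigma_{i,j}$ are closed, $\sigma_i/\sigma_{i+1}\simeq\varprojlim_j\sigma_i/\sigma_{i,j}$, and each successive quotient $\sigma_{i,j}/\sigma_{i,j+1}$ is either of the form $I(\tau)$, or of the form $E(\tau)$ on which $\fkv_n$ acts nilpotently through the co-normal structure. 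We verify three things: $\Psi$ preserves the exact sequences formed by the filtration terms, it sends the successive quotients to objects of the required shape, and it commutes with the inverse limits in level one.

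\textbf{Exactness.} The key tool is Corollary~\ref{open-exa}: for a short exact sequence $0\to\beta_1\to\beta_2\to\beta_3\to0$ in $\Smod_{M_n}$ with $\CS(\CO)\cdot\beta_1$ closed in $\beta_1$, $\Psi$ is exact, where $\CO$ is the open orbit of $\widehat{V_n}$. Every filtration term we apply it to is a closed subrepresentation of $\pi$. By Theorem~\ref{Canonical filtration}, $\CS(\CO)\cdot\pi$ is closed in $\pi$, and the argument in its proof (intersection with a closed differentiable submodule plus the hereditary property) shows that $\CS(\CO)\cdot\sigma$ is closed for each such term $\sigma$. Hence $0\to\Psi(\sigma_{i+1})\to\Psi(\sigma_i)\to\Psi(\sigma_i/\sigma_{i+1})\to0$ is exact, and likewise for the $\sigma_{i,j}$. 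In particular $\Psi(\sigma_{i+1})\to\Psi(\sigma_i)$ is injective with closed image, because the quotient $\Psi(\sigma_i/\sigma_{i+1})$ is Hausdorff by Proposition~\ref{open-iso}.

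\textbf{Successive quotients.} For the successive quotients we use Proposition~\ref{open-iso} together with the $\fkn$-homology computation in~\cite[Corollary 7.6]{WZ}. If $\sigma_{i,j}/\sigma_{i,j+1}\simeq I(\tau)$, then $\Psi(I(\tau))\simeq|\det|^{-1/2}\otimes\tau$, a representation of $M_{n-1}$. If it is $E(\tau)$, then $\CS(\CO)$ kills it by Example~\ref{diff-mod-exam}, so $\Psi(E(\tau))=0$. Terms with vanishing quotient are dropped or merged. What remains is to identify $\Psi(I(\tau))=\tau$ restricted to $M_{n-1}$ with the successive quotients of the filtration of $\tau$ itself, and this is how the inductive step on $M_{n-1}$ is fed.

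\textbf{Inverse limits (main obstacle).} The hardest step is to show that $\Psi(\sigma_i/\sigma_{i+1})\simeq\varprojlim_j\Psi(\sigma_i/\sigma_{i,j})$ topologically. Here we use Lemma~\ref{open-orbits split} and Proposition~\ref{open-iso} to write $\Psi(\beta)\simeq\Psi(\CS(\CO)\cdot\beta)$, together with the identification $\CS(\CO)\cdot\beta\simeq\SInd(\Psi(\beta))$. By Lemma~\ref{limit diff}, in level one the layers are either all differentiable or all killed by $\CS(\CO)$. In the killed case both sides vanish. In the differentiable case the kernels form a Mittag-Leffler system of $\CS(\CO)$-modules, and exactness of Schwartz induction and of $\Psi$ on such sequences lets us commute $\Psi$ with $\varprojlim$. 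The point requiring care is surjectivity of the transition maps, which follows from the exactness established in the previous step.
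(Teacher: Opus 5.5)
Your three-part skeleton matches the paper's proof: exactness of $\Psi$ on the sequences cut out by the filtration, $\Psi$ on the leaves, and commutation with the level-one limits. The paper imports all three facts from \cite{WZ} (Lemma 2.17, Proposition 4.1, Lemma 2.10). Deriving exactness from Corollary~\ref{open-exa} and the leaves from Proposition~\ref{open-iso} is fine in substance.

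The gap is in the step you flag as the main obstacle. First, Lemma~\ref{limit diff} does not give you that the layers of a level-one chain are all differentiable or all killed. It assumes this and only describes the limit. More seriously, even granting that dichotomy, ``exactness of Schwartz induction and of $\Psi$ lets us commute $\Psi$ with $\varprojlim$'' is not an argument. Write $\beta=\sigma_i/\sigma_{i+1}$ and $\beta_j=\sigma_i/\sigma_{i,j}$.
\begin{itemize}
\item $\Psi$ is a cokernel, and finite-stage exactness says nothing about the kernel of $\beta\to\varprojlim_j\Psi(\beta_j)$.
\item Routing through $\beta\simeq\SInd(\Psi(\beta))$ requires knowing that $\SInd$ or $\Psi$ commutes with the limit, which is the original problem.
\item Surjectivity of $\Psi(\beta_{j+1})\to\Psi(\beta_j)$ is automatic by right exactness; it is not the delicate point.
\end{itemize}

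The missing idea is to use Hausdorffness of $\Psi$ on the limit object itself; with it, no case distinction is needed.
\begin{itemize}
\item Put $K(X):=\Span\{\alpha v-\psi_n(\alpha)v\mid v\in X,\ \alpha\in\fkv_n\}$. It is closed in $X$ by Proposition~\ref{open-iso}.
\item The surjective equivariant maps $p_j\colon\beta\to\beta_j$ and $\beta_{j+1}\to\beta_j$ carry $K(\beta)$ and $K(\beta_{j+1})$ onto $K(\beta_j)$.
\item Hence the limit of $0\to K(\beta_j)\to\beta_j\to\Psi(\beta_j)\to 0$ is still exact, giving $\varprojlim_j\Psi(\beta_j)\simeq\beta/\varprojlim_jK(\beta_j)$.
\item By the definition of the projective-limit topology, $K(\beta)$ is dense in the closed subspace $\varprojlim_jK(\beta_j)=\bigcap_jp_j^{-1}(K(\beta_j))$ of $\beta$.
\item $K(\beta)$ is itself closed (Proposition~\ref{open-iso} applied to $\beta$), so $K(\beta)=\varprojlim_jK(\beta_j)$.
\item Therefore $\Psi(\beta)\simeq\varprojlim_j\Psi(\beta_j)$, topologically by the open mapping theorem. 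Combined with your exactness step, this is exactly $\Psi(\sigma_i)/\Psi(\sigma_{i+1})\simeq\varprojlim_j\Psi(\sigma_i)/\Psi(\sigma_{i,j})$.
\end{itemize}

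One smaller repair concerns the iteration to $k\geq 2$. There the terms $\Psi^{k-1}(\sigma_{i,j})$ no longer sit inside a Casselman--Wallach representation. So the closedness of $\CS(\CO)\cdot\sigma$ needed for Corollary~\ref{open-exa} cannot come from the Casselman-embedding argument in the proof of Theorem~\ref{Canonical filtration}. It should instead come from the rearrangement argument of Lemma~\ref{generalized prin lem}, applied to the BZ-filtration that each term inherits.
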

\begin{proof}
    We observe that each $\sigma_{i,j}$ and $\pi/\sigma_{i,j}$ inherit a BZ-filtration from $\pi$. Hence, we get a short exact sequence
    \[
    0\lra \Psi^k(\sigma_{i,j})\lra \Psi^k(\pi)\lra \Psi^k(\pi/\sigma_{i,j})\lra 0,
    \]
    where $\Psi^k$ is exact and $\Psi^k(\sigma_{i,j})$ is Hausdorff by~\cite[Lemma 2.17, Proposition 4.1]{WZ}. In other words, $\Psi^k(\sigma_{i,j})$ is a closed subspace of $\Psi^k(\pi)$. Moreover, for a successive quotient isomorphic to $I^{d}E(\tau)$ in the BZ-filtration of $\pi$, where $\tau$ is an irreducible representation of $\GL_{n-d-1}$, we have
    \[
    \Psi^k(I^dE(\tau))\simeq \begin{cases}
        I^{d-k}E(\tau) & d\geq k\\
        0 & d<k
    \end{cases}
    \]
    by~\cite[Proposition 4.1]{WZ} again. Consequently, we obtain the topological isomorphism
    \[
    \Psi^k(\sigma_i)/\Psi^k(\sigma_{i+1})\simeq \Psi^k(\sigma_i/\sigma_{i+1})\simeq \Psi^k(\mathop{\varprojlim}\limits_j \sigma_i/\sigma_{i,j}) \simeq \mathop{\varprojlim}\limits_j\Psi^k(\sigma_i)/\Psi^k(\sigma_{i,j})
    \]
    by~\cite[Lemma 2.10]{WZ} and the open mapping theorem.
 \end{proof}
 Lemma~\ref{BZ-fil-psi-lem} has a direct corollary, which also provides an affirmative answer to the open question~\cite[3.1, (3)]{AGS15a}.
\begin{corollary}\label{derivative exact}
    The derivative functor
    \[
    D^{k+1}:\mathrm{CW}_{\GL_n}\lra \Smod_{M_{n-k}}
    \]
    is an exact functor for any non-negative integer $k$.
\end{corollary}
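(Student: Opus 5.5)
Since $D^{k+1}=\Phi\circ\Psi^{k}$, I will prove exactness in two steps: first show that $\Psi^k$ is exact on Casselman--Wallach representations, then show that $\Phi$ is exact on the representations of the form $\Psi^k(\pi)$. Let
\[
0\lra \pi_1\lra \pi_2\lra \pi_3\lra 0
\]
be a short exact sequence in $\mathrm{CW}_{\GL_n}$.

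\textbf{Step 1: exactness of $\Psi^k$.} I would take this from \cite[Lemma 2.17, Proposition 4.1]{WZ}, exactly as it was used in the proof of Lemma~\ref{BZ-fil-psi-lem}. That reference gives exactness of $\Psi^k$ on representations carrying a Bernstein--Zelevinsky filtration, and the Hausdorffness of $\Psi^k(\pi_1)$. An alternative route is Corollary~\ref{open-exa}, applied one step at a time for the open orbit of $\widehat{V_m}$. Its closedness hypothesis, that $\CS(\CO)\cdot\pi_1$ is closed, comes from Theorem~\ref{Canonical filtration}, which holds for $\GL_n$ by \cite[Theorem 3.11]{WZ}. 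This route needs the intermediate representations $\Psi^j(\pi_i)$ to keep the relevant closedness, and Lemma~\ref{BZ-fil-psi-lem} supplies this because they inherit BZ-filtrations.

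\textbf{Step 2: exactness of $\Phi$.} Write $\tau_i:=\Psi^k(\pi_i)$, a representation of $M_{n-k}$ with unipotent radical $V:=V_{n-k}$. By Lemma~\ref{BZ-fil-psi-lem}, $\tau_1$ carries a BZ-filtration with the same shape as for Casselman--Wallach representations. That filtration is a coarse spectral filtration for the trivial and open orbits of $\widehat{V}$. So the rearrangement argument of Lemma~\ref{generalized prin lem}, which uses Example~\ref{diff-mod-exam}, Lemma~\ref{limit diff} and Lemma~\ref{split}, gives the following: $\CS(\widehat{V}\setminus\{0\})\cdot\tau_1$ is closed, and the quotient of $\tau_1$ by it is a limit of pieces of the form $E(\cdot)$. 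Theorem~\ref{topological spectral short exa} then identifies
\[
\Phi(\tau_i)=\mathrm{CJ}^\infty_{V}(\tau_i)=\tau_i\big/\ov{\CS(\widehat V\setminus\{0\})\cdot\tau_i}.
\]
Lemma~\ref{exa_lem} then gives exactness of
\[
0\lra \CS(\widehat{V}\setminus\{0\})\cdot\tau_1\lra \CS(\widehat{V}\setminus\{0\})\cdot\tau_2\lra \CS(\widehat{V}\setminus\{0\})\cdot\tau_3\lra 0 .
\]
Next I need the middle and right terms to be closed. The same filtration argument applies to $\tau_2$ and $\tau_3$ via Lemma~\ref{BZ-fil-psi-lem}, because each $\tau_i$ is $\Psi^k$ of a Casselman--Wallach representation. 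With all three terms closed, the snake lemma on the quotients gives exactness of
\[
0\lra \Phi(\tau_1)\lra \Phi(\tau_2)\lra \Phi(\tau_3)\lra 0 .
\]
The resulting maps are continuous. By the open mapping theorem they are also topological, since every space involved is Fréchet.

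The main obstacle is Step 2: Theorem~\ref{Canonical filtration} is stated only for Casselman--Wallach representations of $G$, but $\tau_i$ is merely a representation of $M_{n-k}$. The point to check is that the rearrangement and closedness argument uses only the existence of a coarse spectral filtration on $\tau_i$ itself, and not Casselman's embedding theorem, and that Lemma~\ref{BZ-fil-psi-lem} supplies exactly such a filtration.
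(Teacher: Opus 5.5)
Your proposal is correct and follows essentially the same route as the paper. The paper's proof also combines exactness of $\Psi^k$ from \cite{WZ}, the BZ-filtration on $\Psi^k(\pi)$ from Lemma~\ref{BZ-fil-psi-lem}, closedness of $\CS(\widehat{V_{n-k}}\setminus\{0\})\cdot\Psi^k(\pi)$ with quotient $D^{k+1}(\pi)$, and then Lemma~\ref{exa_lem} followed by the quotient (snake-lemma) argument. Your remark that this closedness comes from applying the rearrangement argument directly to the coarse spectral filtration on $\Psi^k(\pi)$, rather than from Casselman's embedding theorem, is the justification the paper leaves implicit when it cites Theorem~\ref{Canonical filtration}.
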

\begin{proof}
     By Lemma~\ref{BZ-fil-psi-lem}, for any Casselman-Wallach representation $\pi$, $\Psi^k(\pi)$ has a BZ-filtration. Hence, Theorem~\ref{Canonical filtration} shows that there is a short exact sequence
    \[
    0\lra \CS(\widehat{V_{n-k}}\setminus\{0\})\cdot \Psi^k(\pi)\lra \Psi^k(\pi )\lra D^{k+1} (\pi)\lra 0.
    \]
    Consequently, this corollary follows from Lemma~\ref{exa_lem} since $\Psi^k$ is exact.
\end{proof}

 Our next theorem is an Archimedean counterpart of the filtration appearing in~\cite{BZ77}.
\begin{theorem}\label{canon-fil-GL_n}
    Let $\pi$ be a Casselman-Wallach representation of $\GL_n$. Then $\pi|_{M_n}$ has a decreasing filtration
    \[
    \pi|_{M_n}=\pi_0\supset \pi_1\supset \dots\supset \pi_n= 0
    \]
    such that
    \[
    \pi_k/\pi_{k+1}\simeq I^{k} ( D^{k+1}(\pi)) \text{ for}\quad 0\leq k\leq n-1.
    \]
\end{theorem}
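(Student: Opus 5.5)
I will build the filtration by induction on $n$, applying Theorem~\ref{Canonical filtration} to the pair $(\GL_n, M_n)$ and then iterating along the chain $M_n\supset M_{n-1}\supset\cdots$ via the functor $\Psi$.

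\textbf{Base step.} By Lemma~\ref{BZ-fil-psi-lem} with $k=0$, the Bernstein--Zelevinsky filtration of $\pi$ from \cite{WZ} is a coarse spectral filtration for $V_n$. This is the input that $\spadesuit$ was used to provide. Here $\widehat{V_n}$ has only the closed orbit $\{0\}$ and the open orbit, so Theorem~\ref{Canonical filtration} (its proof only needs the existence of such a filtration on $\pi$) applies. Set $\pi_1:=\CS(\widehat{V_n}\setminus\{0\})\cdot\pi$. Then $\pi_1$ is closed in $\pi$, and
\[
\pi_0/\pi_1\simeq \mathrm{CJ}^{\infty}_{V_n}(\pi)=\Phi(\pi)=D^1(\pi).
\]
Proposition~\ref{open-iso} then gives $\pi_1\simeq \SInd_{M_{n-1}V_n}^{M_n}(\Psi_{\psi_n}(\pi))$. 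After matching the normalizations (the $|\det|^{-1/2}$ twist in Definition~\ref{def-der-gl} against $\delta_{S_\phi\cap L}$ and the normalized Schwartz induction), this is $\pi_1\simeq I(\Psi(\pi))$.

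\textbf{Inductive step.} $\Psi(\pi)$ is a representation of $M_{n-1}$. By Lemma~\ref{BZ-fil-psi-lem} with $k=1$, it carries a Bernstein--Zelevinsky filtration whose successive quotients are of the form $I^{d}E(\tau)$, viewed inside $M_{n-1}$. I would therefore run the same argument on $\Psi^k(\pi)$ for $M_{n-k}$. Theorem~\ref{Canonical filtration} is stated for restrictions of Casselman--Wallach representations, but its proof uses only two things: closedness of $\CS(\cdot)\cdot(\,)$, obtained from a filtration of the appropriate shape, and Proposition~\ref{open-iso}, which holds for any $\sigma\in\Smod_P$.

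For closedness in $\Psi^k(\pi)$, I would use Lemma~\ref{generalized prin lem}: rearrange its filtration via Lemma~\ref{split} and Lemma~\ref{limit diff}, using Example~\ref{diff-mod-exam} to see that the quotients $E(\cdot)$ are killed by $\CS(\widehat{V}\setminus\{0\})$ while the quotients $I(\cdot)$ are differentiable. The filtration is already given on $\Psi^k(\pi)$ itself, so no Casselman embedding step is needed. This yields an exact sequence
\[
0\to I(\Psi^{k+1}(\pi))\to \Psi^k(\pi)\to D^{k+1}(\pi)\to 0.
\]
Set $\pi_k:=I^k(\Psi^k(\pi))$, so $\pi_0=\pi$. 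Applying the exact functor $I^k$ (Schwartz induction is exact) to this sequence gives a closed embedding $\pi_{k+1}\subset\pi_k$ with $\pi_k/\pi_{k+1}\simeq I^k(D^{k+1}(\pi))$. For $k=n-1$, $M_1$ is trivial and $V_1=0$, so $\Psi^{n-1}(\pi)=D^n(\pi)$ and $\pi_n=0$.

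\textbf{Main obstacle.} Two points need care. First, $\pi_{k+1}$ must be closed in $\pi$, not merely in $\pi_k$. This follows from transitivity: a closed subspace of a closed subspace is closed, and exact Schwartz induction carries closed embeddings to closed embeddings. Second, and harder, is the rearrangement on $\Psi^k(\pi)$. One must check that the inherited filtration from Lemma~\ref{BZ-fil-psi-lem} has exactly the level $\le 1$ structure required by Lemma~\ref{limit diff}: the inverse limits along $\sigma_{i,j}$ must be compatible with $\Psi^k$, as that lemma asserts. One must also check that the normalizing characters match up, so that the open-orbit piece is literally $I(\Psi^{k+1}(\pi))$ and not a twist of it.
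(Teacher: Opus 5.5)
Your proposal is correct and follows the paper's proof. The paper also sets $\pi_1=\CS(\widehat{V_n}\setminus\{0\})\cdot\pi\simeq I(\Psi(\pi))$ using Theorem~\ref{Canonical filtration}. It then applies that theorem inductively to $\Psi^k(\pi)$, justified by Lemma~\ref{BZ-fil-psi-lem}, and pushes the resulting sequence $0\to I(\Psi^{k+1}(\pi))\to\Psi^k(\pi)\to D^{k+1}(\pi)\to 0$ forward by the exact functor $I^k$.

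Your extra remarks spell out what the paper compresses into ``we can apply Theorem~\ref{Canonical filtration} to $\Psi^k(\pi)$'': the closedness argument uses only the inherited filtration, with no Casselman embedding, and the normalizing twists must be checked.
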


\begin{proof}
    Let $\pi_1=\CS(\widehat{V_n}\setminus \{0\})\cdot \pi\simeq I(\Psi(\pi))$. Then by Theorem~\ref{Canonical filtration}, we have a short exact sequence
    \[
    0\lra \pi_1\lra \pi|_{M_n} \lra D^1(\pi) \lra 0.
    \]
    In what follows, we inductively demonstrate a filtration of $\pi_1$. More precisely, we inductively realize $\pi_k:=I^{k}(\Psi^{k}(\pi))$ as a $M_n$-subrepresentation of $\pi$. Suppose that we have done for some integer $k$. We proceed to prove for integer $k+1$. By Lemma~\ref{BZ-fil-psi-lem}, we can apply Theorem~\ref{Canonical filtration} to $\Psi^k(\pi)$ and obtain the short exact sequence
    \[
    0\lra  I(\Psi^{k+1}(\pi))\simeq \CS(V_{n-k}\setminus\{0\})\cdot \Psi^k(\pi) \lra \Psi^k(\pi) \lra  D^{k+1}(\pi)\lra 0.
    \]
    Consequently, applying the exact functor $I^k$ to above exact sequence, we get
    \[
    0\lra \pi_{k+1}\lra \pi_k\lra I^kD^{k+1}(\pi)\lra 0,
    \]
    which finishes the proof.
\end{proof}

As already observed in the proof of~\cite[Theorem 1.6]{WZ}, it is helpful to consider the canonical filtration of $\pi|_{\ov{M_n}}$. By~\cite[Proposition 3.17]{WZ}, for any Casselman-Wallach representation $\pi$ of $\GL_n$, $\pi|_{\ov{M_n}}$ has an opposite Bernstein-Zelevinsky filtration
\begin{align*}
     & \pi|_{\ov{M_n}}=\ov{\sigma_0}\supset  \ov{\sigma_1}\supset \dots \supset \ov{\sigma_m} \text{ with }\\
      \ov{\sigma}_i=&\ov{\sigma_{i,0}}\supset \ov{\sigma_{i,1}}\supset \dots \supset \ov{\sigma_{i,j}} \supset \dots \supset \ov{\sigma_{i+1}}.
\end{align*} Consequently, the same argument as in Lemma~\ref{BZ-fil-psi-lem} leads to the following result.
\begin{lemma}
    The datum $\{\ov{\Psi}^k(\ov{\sigma_i}),\ov{\Psi}^k(\ov{\sigma_{i,j}})\}$ consists of a Bernstein-Zelevinsky filtration on $\ov{\Psi}^k(\pi)$ for any integer $k$.
\end{lemma}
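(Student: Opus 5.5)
The plan is to transport the proof of Lemma~\ref{BZ-fil-psi-lem} to the opposite side, using the transpose. Conjugation by the transpose-inverse (composed with the long Weyl element if needed) is an automorphism of $\GL_n$ that maps $M_n$ onto $\ov{M_n}$ and $V_n$ onto $\ov{V_n}$. It also turns $\psi_n$ into the character defining $\ov{\Psi}$, and it preserves the category $\mathrm{CW}_{\GL_n}$. Under this automorphism, the opposite Bernstein--Zelevinsky filtration of~\cite[Proposition 3.17]{WZ} corresponds to an ordinary one. I would first check this dictionary carefully, and then rerun the three steps below.

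\textbf{Step 1.} Each $\ov{\sigma_{i,j}}$ and each quotient $\pi/\ov{\sigma_{i,j}}$ inherits an opposite BZ-filtration from $\pi$. By the transported versions of~\cite[Lemma 2.17, Proposition 4.1]{WZ}, $\ov{\Psi}^k$ is exact on such objects and $\ov{\Psi}^k(\ov{\sigma_{i,j}})$ is Hausdorff. This gives a short exact sequence
\[
0\lra \ov{\Psi}^k(\ov{\sigma_{i,j}})\lra \ov{\Psi}^k(\pi)\lra \ov{\Psi}^k(\pi/\ov{\sigma_{i,j}})\lra 0,
\]
so each $\ov{\Psi}^k(\ov{\sigma_{i,j}})$ is a closed subrepresentation of $\ov{\Psi}^k(\pi)$.

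\textbf{Step 2.} Compute $\ov{\Psi}^k$ on the successive quotients $\ov{I}^d\ov{E}(\tau)$. This is the transpose of~\cite[Proposition 4.1]{WZ}: the result is $\ov{I}^{d-k}\ov{E}(\tau)$ if $d\ge k$, and $0$ otherwise. Hence the successive quotients of the image data are again of opposite Mackey type, as the definition requires.

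\textbf{Step 3.} Prove the inverse-limit condition. The required chain is
\[
\ov{\Psi}^k(\ov{\sigma_i})/\ov{\Psi}^k(\ov{\sigma_{i+1}})\simeq \ov{\Psi}^k\bigl(\varprojlim_j \ov{\sigma_i}/\ov{\sigma_{i,j}}\bigr)\simeq \varprojlim_j \ov{\Psi}^k(\ov{\sigma_i})/\ov{\Psi}^k(\ov{\sigma_{i,j}}).
\]
The first isomorphism uses exactness. The second uses the commutation of $\ov{\Psi}^k$ with these inverse limits, i.e.\ the transposed~\cite[Lemma 2.10]{WZ}. The open mapping theorem then makes the isomorphism topological.

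\textbf{Main obstacle.} The main risk is confirming that the cited results of~\cite{WZ}, which are stated for $M_n$, $\psi_n$ and the ordinary filtration, genuinely transfer under the automorphism. The concern is that $\ov{\Psi}$ involves $\ov{\fkv}_n$ with the transposed character, and the normalizing factor $|\det|^{-1/2}$ must match. I would handle this by checking that the automorphism intertwines $\Psi$ with $\ov{\Psi}$ up to a twist by a character. Such a twist does not affect exactness, Hausdorffness or the limit statements.
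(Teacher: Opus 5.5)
Your proposal is correct and is essentially the paper's proof. The paper takes the opposite filtration of \cite[Proposition 3.17]{WZ} and says the argument of Lemma~\ref{BZ-fil-psi-lem} carries over unchanged: inherited filtrations, exactness and Hausdorffness of $\ov{\Psi}^k$, the computation on $\ov{I}^d\ov{E}(\tau)$, then the inverse-limit and open-mapping step. These are exactly your Steps 1--3. Your transport via $g\mapsto (g^{-1})^{t}$ is a valid extra justification, and two small corrections apply to it. No long Weyl element is needed, since this map already sends $M_n$ onto $\ov{M_n}$ and $V_n$ onto $\ov{V_n}$. The character changes by a sign, which an inner automorphism by an element of $M_n\cap\ov{M_n}$ removes. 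With these, you could also obtain the lemma directly by applying Lemma~\ref{BZ-fil-psi-lem} to $\pi$ composed with that automorphism.
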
 

On the other hand, by Theorem~\cite[Theorem 1.3]{WZ}, we know that $L^i\ov{B}^k(\pi)$ is Casselman-Wallach for any integer $i$ and $k$, which implies that
\[
\ov{\Psi}^k(\pi)/ \ov{\fkv_{n-k}}^{j}\ov{\Psi}^k(\pi)
\]
is Hausdorff for any integer $k,j$. Consequently, the argument in Theorem~\ref{canon-fil-GL_n} provides the proof for the opposite canonical filtration.
\begin{theorem}
     Let $\pi$ be a Casselman-Wallach representation of $\GL_n$. Then $\pi|_{\ov{M_n}}$ has a decreasing filtration
    \[
    \pi|_{\ov{M_n}}=\ov{\pi_0}\supset \ov{\pi_1}\supset \dots\supset \ov{\pi_n}= 0
    \]
    such that
    \[
   \ov{\pi_k}/\ov{\pi_{k+1}}\simeq \ov{I}^{k} ( \ov{D}^{k+1}(\pi)) \text{ for} \quad 0\leq k\leq n-1.
    \]
\end{theorem}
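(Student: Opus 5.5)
We mirror the proof of Theorem~\ref{canon-fil-GL_n}, with $M_n$, $V_n$, $\Psi$, $\Phi$, $I$ replaced by their opposites $\ov{M_n}$, $\ov{V_n}$, $\ov{\Psi}$, $\ov{\Phi}$, $\ov{I}$.

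\textbf{Base case.} Put
\[
\ov{\pi_1}:=\CS(\widehat{\ov{V_n}}\setminus\{0\})\cdot\pi .
\]
We need three facts.
\begin{enumerate}
\item $\ov{\pi_1}$ is closed in $\pi$.
\item $\ov{\pi_1}\simeq \ov{I}(\ov{\Psi}(\pi))$.
\item $\pi/\ov{\pi_1}\simeq \ov{D}^1(\pi)=\ov{\Phi}(\pi)$.
\end{enumerate}

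For (1), we do not invoke the $\spadesuit$ assumption. Instead we argue directly from the opposite Bernstein--Zelevinsky filtration of~\cite[Proposition 3.17]{WZ}. Its successive quotients are opposite Mackey inductions $\ov{I}^d\ov{E}(\tau)$, so the rearrangement argument of Lemma~\ref{generalized prin lem} applies verbatim: Example~\ref{diff-mod-exam}, then Lemma~\ref{limit diff}, then Lemma~\ref{split}. This splits off a closed subrepresentation $\ov{\sigma}$ with two properties. First, $\ov{\sigma}$ is a differentiable $\CS(\widehat{\ov{V_n}}\setminus\{0\})$-module. Second, the quotient $\pi/\ov{\sigma}$ is killed by this algebra. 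Hence $\ov{\sigma}=\CS(\widehat{\ov{V_n}}\setminus\{0\})\cdot\pi$, which gives (1).

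For (2), note that $\widehat{\ov{V_n}}\setminus\{0\}$ is a single open orbit. So Proposition~\ref{open-iso} gives $\ov{\pi_1}\simeq\ov{I}(\ov{\Psi}(\pi))$, and in particular $\ov{\Psi}(\pi)$ is Hausdorff.

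For (3), Theorem~\ref{topological spectral short exa} identifies $\pi/\ov{\pi_1}$ with $\mathrm{CJ}^\infty_{\ov{V_n}}(\pi)=\ov{\Phi}(\pi)=\ov{D}^1(\pi)$. Here we use that the quotients $\pi/\ov{\fkv_n}^{j}\pi$ are Hausdorff, which follows from the Casselman--Wallach property of $L^i\ov{B}^k(\pi)$ recalled just above.

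\textbf{Induction.} Suppose $\ov{\pi_k}\simeq\ov{I}^k(\ov{\Psi}^k(\pi))$ has been realized as a closed $\ov{M_n}$-subrepresentation. By the preceding lemma, $\ov{\Psi}^k(\pi)$ carries an opposite Bernstein--Zelevinsky filtration. Moreover, $\ov{\Psi}^k(\pi)/\ov{\fkv_{n-k}}^{j}\ov{\Psi}^k(\pi)$ is Hausdorff for all $j$. So the base-case argument applies to $\ov{\Psi}^k(\pi)$ as a representation of $\ov{M_{n-k}}$, producing a short exact sequence
\[
0\lra \ov{I}(\ov{\Psi}^{k+1}(\pi))\lra \ov{\Psi}^k(\pi)\lra \ov{D}^{k+1}(\pi)\lra 0.
\]
Schwartz induction is exact, so $\ov{I}^k$ is exact. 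Applying it yields
\[
0\to\ov{\pi_{k+1}}\to\ov{\pi_k}\to\ov{I}^k\ov{D}^{k+1}(\pi)\to 0 .
\]
The process stops at $k=n$, where $\ov{V}$ is trivial and so $\ov{\pi_n}=0$.

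\textbf{Main obstacle.} The delicate step is closedness together with the identification of the quotient with $\ov{\Phi}$ at each stage. Theorem~\ref{Canonical filtration} was stated for Casselman--Wallach representations of $G$, whereas $\ov{\Psi}^k(\pi)$ is only a representation of $\ov{M_{n-k}}$. So we must verify that the proof of Theorem~\ref{Canonical filtration} uses only the existence of a coarse (here, opposite BZ) filtration on $\ov{\Psi}^k(\pi)$. It does not need Casselman's embedding in this case, because the filtration is already present on the object itself rather than on an ambient principal series. We also need the Hausdorffness input so that Theorem~\ref{topological spectral short exa} matches $\ov{\Phi}$ with the completed quotient.
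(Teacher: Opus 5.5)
Your proposal is correct and follows essentially the same route as the paper. The paper likewise combines three inputs: the opposite Bernstein--Zelevinsky filtration on $\ov{\Psi}^k(\pi)$ (the lemma immediately preceding the statement), and the Hausdorffness of $\ov{\Psi}^k(\pi)/\ov{\fkv_{n-k}}^{j}\ov{\Psi}^k(\pi)$, deduced from the Casselman--Wallach property of $L^i\ov{B}^k(\pi)$; it then reruns the inductive argument of Theorem~\ref{canon-fil-GL_n}, namely rearrangement for closedness, Proposition~\ref{open-iso} for the open spectrum, Theorem~\ref{topological spectral short exa} for the quotient, and exactness of $\ov{I}^k$. Your remark that only the filtration on $\ov{\Psi}^k(\pi)$ itself is needed, not Casselman's embedding, matches how the paper applies Theorem~\ref{Canonical filtration} in this setting.
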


Let $j$ be the maximal integer such that the derivative $\ov{D}^j(\pi)\neq 0$. Then $\ov{D}^j(\pi)$ is called the left highest derivative and is denoted by ${}^-\pi$. We provide a remark on the highest derivative. 
\begin{remark}
    Let $\pi$ be Casselman-Wallach representation of depth $d$. By~\cite[Theorem A]{AGS15a} and~\cite{GS15}, we know that $d$ is the minimal positive integer $d$ such that $X^d=0$ for any $X\in \mathfrak{gl}_n^*\simeq \mathfrak{gl}_n$ in the annihilated variety of $\pi$. Moreover, $D^d(\pi)=\Psi^{d-1}(\pi)$, which is a Casselman-Wallach representation of $\GL_{n-d}$. By a similar argument, one can show that $d$ is the maximal integer that $\ov{D}^d(\pi)\neq 0$ and $\ov{D}^d(\pi)=\ov{\Psi}^{d-1}(\pi)$, which is a Casselman-Wallach representation of $\GL_{n-d}$. 
\end{remark}

\subsection{The highest derivative of contragredient representations}
We first introduce a notion of bilinear pairing following~\cite{BZ77}.
\begin{definition}
    Let $\sigma_1,\sigma_2$ be two representations in $\Smod_G$, where $G$ is an almost linear Nash group. Define the vector space of bilinear pairings $\Bil(\sigma_1,\sigma_2)$, which consists of $G$-equivariant continuous bilinear maps
    \[
    B: \sigma_1\times \sigma_2 \lra \delta_G^{-1},
    \]
    where $G$ diagonally acts on the left hand side. We say a bilinear pairing $B$ is \textbf{non-degenerate} with respect to $\sigma_1$ if for any non-zero $v_1\in\sigma_1$, there is some $v_2\in\sigma_2$ such that $B(v_1,v_2)\neq 0$.
\end{definition}

 The following lemma concerns the bilinear pairing between Mackey induction and trivial extension.
\begin{lemma}\label{bil-lem}
   Let $\gamma\in\Smod_{M_n}$ such that $\CS(\widehat{V_n}\setminus\{0\})\cdot \gamma=0$,  and let $\sigma, \sigma^{\flat}\in\Smod_{M_{n-1}}$. Then
    \begin{enumerate}
        \item $\Bil(I(\sigma), I(\sigma^{\flat}))\simeq \Bil (\sigma,\sigma^{\flat})$, which preserves non-degeneracy;
        \item $\Bil(I(\sigma),\gamma)=0$.
    \end{enumerate}
\end{lemma}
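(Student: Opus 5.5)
Both parts come down to turning $M_n$-equivariant bilinear pairings on $I(\sigma)=\SInd_{M_{n-1}V_n}^{M_n}(\sigma\boxtimes\psi_n)$ into data at the base point $\psi_n$ of the open orbit $\CO=\widehat{V_n}\setminus\{0\}$. I would handle the $V_n$-spectrum first, using the $\CS(\widehat{V_n})$-module structure, and then apply Frobenius reciprocity for Schwartz induction.

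For (2), take $B\in\Bil(I(\sigma),\gamma)$. Equivariance under $V_n$ gives $B(f\cdot\xi,v)=B(\xi,\check f\cdot v)$ for $f\in\CS(\widehat{V_n})$, where $\check f(\chi)=f(\chi^{-1})$; this is the usual transfer of a convolution from one factor to the other. The involution $\chi\mapsto\chi^{-1}$ preserves $\widehat{V_n}\setminus\{0\}$. By Example~\ref{diff-mod-exam}, $I(\sigma)$ is a differentiable $\CS(\widehat{V_n}\setminus\{0\})$-module, and factorizability lets us write every $\xi$ as a finite sum of elements $f\cdot\xi'$ with $f\in\CS(\widehat{V_n}\setminus\{0\})$. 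Then
\[B(f\cdot\xi',v)=B(\xi',\check f\cdot v)=0,\]
since $\CS(\widehat{V_n}\setminus\{0\})\cdot\gamma=0$. So $B=0$.

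For (1), the same transfer identity shows that a pairing $B$ on $I(\sigma)\times I(\sigma^\flat)$ only couples the fiber at $\chi$ with the fiber at $\chi^{-1}$. I would therefore realize both inductions as Schwartz sections over $\CO\simeq M_{n-1}V_n\backslash M_n$ (via the maps $\alpha,\beta$ of Proposition~\ref{open-iso}). The candidate pairing attached to $b\in\Bil(\sigma,\sigma^\flat)$ is
\[B_b(\xi,\eta)=\int_{M_{n-1}V_n\backslash M_n} b\bigl(\xi(g),\eta(g)\bigr)\,dg .\]
Here the characters $\psi_n$ and $\psi_n^{-1}$ cancel on $V_n$. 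I expect the normalizations (the factors $\delta^{\pm1/2}$ in $\SInd$ together with the target $\delta^{-1}$ of the definition of $\Bil$) to combine exactly into the density needed for integrating over the quotient; checking this is routine. Note that the statement pairs $\sigma^\flat$ under induction from $\psi_n$, so a change of variables $\chi\mapsto\chi^{-1}$ (conjugating by $-1$ in the last coordinate) should identify the relevant fibers; I would absorb this into the identification rather than dwell on it.

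For the converse, I would view $B$ as an equivariant map $I(\sigma)\to I(\sigma^\flat)'$, twisted by $\delta^{-1}$. The key step is to show that $B$ factors through restriction to the diagonal over $\CO$, i.e. that $B$ is a distribution supported on the diagonal of $\CO\times\CO$. This follows from the $\CS(\CO)$-transfer identity, because $B(f\xi,\eta)=B(\xi,\check f\eta)$ forces the support condition. Once $B$ is known to be of this form, $M_n$-invariance under the transitive action reduces it to a single $M_{n-1}$-invariant pairing at the base point; this is a Bruhat/Frobenius-type argument for Schwartz induction.

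I expect the main obstacle to be this distributional step: a diagonal-supported invariant distribution might involve transversal derivatives. I would rule those out by using the full $\CS(\CO)$-bilinearity, which makes $B$ a $\CS(\CO)$-balanced pairing, i.e. a functional on $I(\sigma)\widehat{\otimes}_{\CS(\CO)}I(\sigma^\flat)$. That balanced tensor product is $\SInd(\sigma\widehat{\otimes}\sigma^\flat)$ with the diagonal action, and Frobenius reciprocity for Schwartz induction then gives the bijection $B\leftrightarrow b$. Non-degeneracy is preserved in both directions. If $b$ is non-degenerate and $\xi\neq0$, pick $g$ with $\xi(g)\neq0$, a vector $w$ with $b(\xi(g),w)\neq0$, and a section $\eta$ concentrated near $g$ with $\eta(g)=w$; then $B_b(\xi,\eta)\neq0$. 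Conversely, if $b$ is degenerate with $b(v,\cdot)=0$, then any $\xi$ with values in $\mathbb{C}v$ gives $B_b(\xi,\cdot)=0$.
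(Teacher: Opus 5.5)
Your proposal is correct. Part (2), and the construction $b\mapsto B_b$ in part (1), coincide with the paper's. The real difference is in how you invert $b\mapsto B_b$.

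\textbf{The paper's route.} It dualizes one factor and regards $B$ as $\Gamma\in\Hom_{M_n}(I(\sigma)\delta_{M_n},I(\sigma^{\flat})')$. From $\mathrm{Im}\,\Gamma\subseteq\CS(M_n)\cdot I(\sigma^{\flat})'$ and a regularity theorem of Schwartz, it places the image in ${}^{\infty}\Ind_{M_{n-1}V_n}^{M_n}((\sigma^{\flat})'\otimes\psi_n)$. It upgrades this to $\SInd$ using that $I(\sigma)$ is a differentiable $\CS(\widehat{V_n}\setminus\{0\})$-module. Frobenius reciprocity, through evaluation at the identity, then gives $\widehat{\Gamma}\colon\sigma\to(\sigma^{\flat})'$. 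A separate bump-function argument is needed to prove $\widehat{\Gamma}$ continuous. Bijectivity is read off from $\epsilon\circ\iota=\id$ and injectivity of $\epsilon$.

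\textbf{Your route.} You keep both factors, use $\CS(\CO)$-balancedness to push $B$ to the diagonal, and apply reciprocity for invariant functionals on Schwartz induction (Shapiro's lemma in degree $0$). This buys three things:
\begin{enumerate}
\item Continuity of $b$ is automatic, since it is a continuous functional on $\sigma\widehat{\otimes}\sigma^{\flat}$.
\item You get the bijection directly.
\item The normalizations settle themselves. With $S=M_{n-1}V_n$ and $\mu=\delta_{M_n}^{-1/2}\delta_S^{1/2}$, the diagonal restriction is $\SInd_S^{M_n}(\sigma\widehat{\otimes}\sigma^{\flat}\otimes\mu)$ with $V_n$ acting trivially. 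Reciprocity turns $\delta_{M_n}^{-1}$-equivariant functionals on it into $\delta_{M_n}^{-1}|_{M_{n-1}}=\delta_{M_{n-1}}^{-1}$-equivariant functionals on $\sigma\widehat{\otimes}\sigma^{\flat}$, i.e.\ exactly $\Bil(\sigma,\sigma^{\flat})$.
\end{enumerate}

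\textbf{The step you must write out.} You only assert that the Hausdorff $\CS(\CO)$-balanced tensor product is restriction to the diagonal. Concretely, the kernel of restriction must be the closed span of the elements $f\xi\otimes\eta-\xi\otimes f\eta$. This is true. Locally it is Hadamard's lemma in $\CS(\CO\times\CO)\widehat{\otimes}\sigma\widehat{\otimes}\sigma^{\flat}$, with $f$ cut-off coordinate functions, plus density of compactly supported sections. Equivalently, a vector-valued distribution $T$ with $(x_i-y_i)T=0$ has the form $\delta_{\Delta}\otimes T_0$. Nuclearity is used here, and this step is what kills the transversal derivatives, so it is the heart of your argument.

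\textbf{Small corrections.}
\begin{enumerate}
\item The maps $\alpha,\beta$ of Proposition~\ref{open-iso} require the inducing module to be a representation of the whole group, which $\sigma$ is not. Use the tempered-bundle realization of Example~\ref{diff-mod-exam} instead.
\item The sign issue is handled by the central element $z=-I_{n-1}$ of $\GL_{n-1}$. It commutes with $M_{n-1}$ and acts by $-1$ on $V_n$, so $\eta\mapsto\eta(z\,\cdot)$ is an $M_n$-isomorphism from induction via $\psi_n$ to induction via $\psi_n^{-1}$. (The paper simply defines $I(\sigma^{\flat})$ via $\psi_n^{-1}$.) After this, the coupling is literally along the diagonal.
\item With these normalizations, $g\mapsto b(\xi(g),\eta(g))$ is left $S$-invariant rather than a density. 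For $n\geq 3$ the quotient $S\backslash M_n$ has no invariant measure, because $\delta_S/\delta_{M_n}=\delta_{M_{n-1}}\neq 1$ on $M_{n-1}$. So $B_b$ must be defined with a relatively invariant measure, or equivalently after inserting the character $\delta_{M_n}(g)$. That eigencharacter is what produces the $\delta_{M_n}^{-1}$-equivariance.
\item In the degeneracy direction, a section cannot take values in $\BC v$, since its values move under $M_{n-1}$. Use instead sections in $\SInd$ of the radical $\{v\in\sigma : b(v,\cdot)=0\}$. This radical is a closed $M_{n-1}$-stable subspace, so these sections lie in $I(\sigma)$ and pair to zero with everything.
\end{enumerate}
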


\begin{proof}
    (2) follows from the fact that $I(\sigma)$ is a differentiable $\CS(\widehat{V_n}\setminus\{0\})$-module and the fact that if $B\in \Bil(I(\sigma),\gamma)$, then
    \[
    B(f\cdot x_1,x_2)=B(x_1, \tilde{f}\cdot x_2),
    \]
    where $f,\tilde{f}\in \CS(\widehat{V_n}\setminus\{0\})$ and $\tilde{f}(\xi)=f(-\xi)$ for $\xi\in \widehat{V_n}\setminus\{0\}$.

    We proceed to prove part (1). Firstly, there is an injective linear map
    \[
    \iota:\Bil (\sigma,\sigma^{\flat}) \lra \Bil(I(\sigma), I(\sigma^{\flat}))
    \]
    defined as follows. To a bilinear pairing $B\in \Bil (\sigma,\sigma^{\flat})$, we associate the bilinear pairing
    \[
    (f_1,f_2)\mapsto \int_{H_{n,2}\backslash M_n} B(f_1(m),f_2(m))dm,
    \]
    where $f_1\in I(\sigma)$ and $f_2\in I(\sigma^{\flat})$. Here, we define the Mackey induction $I(\sigma^{\flat})$ through the unitary character $\psi_n^{-1}$. Then we define an injective inverse map $\epsilon$. Note that there is an injective map
    \[
    \Bil(I(\sigma), I(\sigma^{\flat}))\lra \Hom_{M_n}(I(\sigma)\delta_{M_n}, I(\sigma^{\flat})').
    \]
    Let $\Gamma\in \Hom_{M_n}(I(\sigma)\delta_{M_n}, I(\sigma^{\flat})')$. Then 
    \[
    \mathrm{Im} \ \Gamma\subseteq \CS(M_n)\cdot I(\sigma^{\flat})' \subseteq  {}^{\infty}\Ind_{M_{n-1}\ltimes V_n}^{M_n}((\sigma^{\flat})'\otimes \psi_n),
    \]
    where the last inclusion follows from~\cite[Chapter 6, Theorem 19]{Sch}. On the other hand, since $I(\sigma)$ is a differentiable $\CS(\widehat{V_n}\setminus\{0\})$-module, 
    \[
    \mathrm{Im} \ \Gamma\subseteq\SInd_{M_{n-1}\ltimes V_n}^{M_n}((\sigma^{\flat})'\otimes\psi_n).
    \]
    Therefore, by Frobenius reciprocity, $\Gamma$ induces a linear map
    \[
   I(\sigma)\delta_{M_n} \lra \sigma\delta_{M_{n-1}} ^2\stackrel{\widehat{\Gamma}}{\lra}  (\sigma^{\flat})'\delta_{M_{n-1}},
    \]
    where the first map is evaluation at the identity point.  It suffices to show that $\widehat{\Gamma}$ is continuous. 

    Suppose that a sequence $\{t_i\} \subset \sigma \delta_{M_{n-1}}$ converges to zero. Choose a function $f \in \CC^{\infty}_c(T_{n-1} \cdot \ov{U_{n-2,1}})$ that is constant on an open neighborhood $\mathscr{U}$ of the identity. Here, $T_{n-1}$ is the rank-$1$ torus in the bottom right corner of $\GL_{n-1}$, and $T_{n-1} \cdot \ov{U_{n-2,1}}$ is an affine open neighborhood of the identity that trivializes the Mackey induction. Then $f_i := f \cdot t_i \in I(\sigma)\delta_{M_n}$ is a sequence converging to zero in $I(\sigma)\delta_{M_n}$. Moreover, for any $X \in \U(\fkt_{n-1} + \ov{\fku_{n-2,1}})$, we have
    \[
    X\cdot \Gamma(f_i) =\Gamma(X\cdot f_i),
    \]
   which implies that $\Gamma(f_i)$ is also constant on $\mathscr{U}$.

Now assume, for contradiction, that $\widehat{\Gamma}(t_i)$ does not converge. By definition, there exists a bounded set $\mathscr{S} \subset \sigma^{\flat}$ such that
    \[
    \sup_{s\in\mathscr{S}} |\widehat{\Gamma}(t_i)(s)|
    \]
 does not converge to zero. Define a bounded set
    \[
    \mathscr{T}:=\{\ov{f} \cdot s| s\in \mathscr{S}\}\subset I(\sigma^{\flat}).
    \]
   Then $\Gamma(f_i)$ fails to converge with respect to the semi-norm induced by $\mathscr{T}$, which contradicts the convergence of $f_i$.

Consequently, the inverse map $\epsilon$ is well-defined. The desired statement follows from the identity $\epsilon \circ \iota = \id$.
\end{proof}

\begin{proposition}\label{pairing-decent}
    Let $\sigma_1, \sigma_2 \in \Smod_{M_n}$ be representations that admit a canonical Bernstein–Zelevinsky filtration. If there is a non-degenerate pairing $B$ with respect to $\sigma_1$ between $\sigma_1$ and $\sigma_2$, then there is a non-degenerate pairing with respect to $D^d(\sigma_1)$ between $D^d(\sigma_1)$ and $D^d(\sigma_2)$, where $d$ is the depth of $\sigma_1$.
\end{proposition}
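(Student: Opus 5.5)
We argue by induction on the depth $d$ of $\sigma_1$, following the strategy of~\cite[Section 3]{BZ77}. We exploit the canonical filtrations $\sigma_a=\pi^{(a)}_0\supset\pi^{(a)}_1\supset\cdots\supset\pi^{(a)}_n=0$ of Theorem~\ref{canon-fil-GL_n} (in the form available for representations admitting a canonical Bernstein--Zelevinsky filtration), with $\pi^{(a)}_1=\CS(\widehat{V_n}\setminus\{0\})\cdot\sigma_a\simeq I(\Psi(\sigma_a))$ and $\sigma_a/\pi^{(a)}_1\simeq D^1(\sigma_a)=\Phi(\sigma_a)$. Since $d$ is the depth of $\sigma_1$, we have $D^k(\sigma_1)=0$ for $k>d$. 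In particular $\pi^{(1)}_{d}=0$, and $\pi^{(1)}_{d-1}\simeq I^{d-1}(D^d(\sigma_1))$ is the bottom (smallest nonzero) piece of the filtration of $\sigma_1$.

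The case $d=1$ is trivial, since then $\sigma_1=D^1(\sigma_1)$ is killed by $\CS(\widehat{V_n}\setminus\{0\})$. In this case the pairing descends to $\Phi(\sigma_2)$, by the identity $B(f\cdot x_1,x_2)=B(x_1,\tilde f\cdot x_2)$ used in Lemma~\ref{bil-lem}(2). Indeed, $B(x_1,\tilde f\cdot x_2)=0$ for all such $f$, so $B$ vanishes on $\sigma_1\times\ov{\CS(\widehat{V_n}\setminus\{0\})\cdot\sigma_2}$, and $\sigma_2$ modulo this closed subspace is $\Phi(\sigma_2)$ by Theorem~\ref{topological spectral short exa}. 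Non-degeneracy with respect to $\sigma_1$ is preserved.

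For $d\ge 2$, we restrict $B$ to $\pi^{(1)}_1\times\sigma_2$. This restriction is still non-degenerate with respect to $\pi^{(1)}_1$. By the same adjointness, pairing $\pi^{(1)}_1=\CS(\widehat{V_n}\setminus\{0\})\cdot\pi^{(1)}_1$ against $\sigma_2$ factors through $\sigma_2\to\sigma_2/\ker$, and it vanishes on pairs coming from the part of $\sigma_2$ killed by $\CS(\widehat{V_n}\setminus\{0\})$. Concretely, for $x_1=f\cdot y_1$ we have $B(x_1,x_2)=B(y_1,\tilde f x_2)$, and $\tilde f x_2\in\pi^{(2)}_1$. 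We thus intend to show that the restricted pairing comes from a non-degenerate pairing $\pi^{(1)}_1\times\pi^{(2)}_1\to\delta^{-1}$, using the approximation to the identity of $\CS(\widehat{V_n}\setminus\{0\})$ together with~\cite[Lemma 2.3.6]{Fd} to extend from $\CS\cdot\pi^{(2)}_1$. Non-degeneracy survives because, given $x_1\neq0$, we can choose $e_m\cdot x_1\neq 0$ and use $B(e_m x_1,x_2)=B(x_1,\tilde e_m x_2)$.

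Next we apply Lemma~\ref{bil-lem}(1) with $\sigma=\Psi(\sigma_1)$ and $\sigma^\flat=\Psi(\sigma_2)$. This gives a non-degenerate pairing between $\Psi(\sigma_1)$ and $\Psi(\sigma_2)$, where $\sigma_2$'s Mackey induction is taken with $\psi_n^{-1}$; conjugating by a diagonal element identifies the two. Both $\Psi(\sigma_a)$ admit canonical BZ filtrations by Lemma~\ref{BZ-fil-psi-lem} and the argument of Theorem~\ref{canon-fil-GL_n}. The depth of $\Psi(\sigma_1)$ is $d-1$, and $D^{d-1}(\Psi(\sigma_a))=D^d(\sigma_a)$. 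Induction then finishes the argument.

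The main obstacle is the continuity and non-degeneracy of the descended pairing on $\pi^{(1)}_1\times\pi^{(2)}_1$. In particular, we must verify that the pairing extends continuously from $\CS(\widehat{V_n}\setminus\{0\})\otimes\pi^{(2)}_1$ to the quotient-topologized $\pi^{(2)}_1$; we plan to handle this via the factorizability of $\CS(\widehat{V_n}\setminus\{0\})$. A second delicate point is matching the characters $\psi_n$ versus $\psi_n^{-1}$ in Lemma~\ref{bil-lem}.
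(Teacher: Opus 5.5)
Your proposal is correct and is essentially the paper's argument: induction on the depth, the case $d=1$ via the adjointness $B(f\cdot x_1,x_2)=B(x_1,\tilde f\cdot x_2)$ underlying Lemma~\ref{bil-lem}(2), and for $d\geq 2$ passing to a non-degenerate pairing between $I(\Psi(\sigma_1))$ and $I(\Psi(\sigma_2))$, then applying Lemma~\ref{bil-lem}(1) and induction. The continuity issue you flag as the main obstacle is not really one: since $\pi^{(2)}_1=\CS(\widehat{V_n}\setminus\{0\})\cdot\sigma_2$ is closed in $\sigma_2$, you simply restrict $B$ to $\pi^{(1)}_1\times\pi^{(2)}_1$, and non-degeneracy follows by picking $x_2$ with $B(x_1,x_2)\neq 0$ and using $e_m\cdot x_1\to x_1$ in $\sigma_1$, which gives $B(x_1,\tilde e_m\cdot x_2)=B(e_m\cdot x_1,x_2)\neq 0$ for large $m$.
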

\begin{proof}
    We argue by induction on $d$. Suppose $d=1$, then $\sigma= D^1(\sigma)$. Then the result follows from part (2) of Lemma~\ref{bil-lem}. Suppose $d> 1$, then $B$ will induce a non-degenerate pairing between $I(\Psi(\sigma_1))$ and $I(\Psi(\sigma_2))$. Hence, by part (1) of Lemma~\ref{bil-lem}, there is a non-degenerate pairing between $\Psi(\sigma_1)$ and $\Psi(\sigma_2)$. Then the result follows from the inductive hypothesis on $d-1$.
\end{proof}
In particular, this proposition directly implies the following corollary.
\begin{corollary}\label{pairing-coro}
    Let $\pi$ be a Casselman-Wallach representation of $\GL_n$ with depth $d$. Then there is a non-degenerate $M_{n-d+1}$-equivariant bilinear map
    \[
    \pi^{-}\times  (\pi^{\vee})^- \lra \BC.
    \]
\end{corollary}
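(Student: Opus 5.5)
We apply Proposition~\ref{pairing-decent} with $\sigma_1=\pi|_{M_n}$ and $\sigma_2=\pi^{\vee}|_{M_n}$, where $\pi^{\vee}$ is the Casselman--Wallach contragredient. The plan has three steps: verify the hypotheses of Proposition~\ref{pairing-decent}, check that both representations have the same depth, and then normalize the target of the resulting pairing.

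\textbf{Hypotheses.} Both $\pi$ and $\pi^{\vee}$ are Casselman--Wallach representations of $\GL_n$. By Theorem~\ref{canon-fil-GL_n}, together with Lemma~\ref{BZ-fil-psi-lem} applied to the iterates $\Psi^k$ used in the induction, both admit a canonical Bernstein--Zelevinsky filtration.

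\textbf{The initial pairing.} The canonical $\GL_n$-invariant pairing $\pi\times\pi^{\vee}\to\BC$ is continuous and non-degenerate with respect to $\pi$. Indeed, $\pi^{\vee}$ is the globalization of the $K$-finite dual, and a non-zero smooth vector is detected by $K$-finite vectors of the dual via the Harish-Chandra module. Restricting to $M_n$ gives an $M_n$-invariant pairing. Definition of $\Bil$ asks for equivariance into $\delta_{M_n}^{-1}$, so I would twist $\sigma_2=\pi^{\vee}|_{M_n}$ by the character $\delta_{M_n}^{-1}$, or equivalently absorb it. Since $\delta_{M_n}$ extends to the character $|\det|^{\pm1}$ of $\GL_n$ restricted to $M_n$, this twist preserves the Casselman--Wallach property and changes derivatives only by a character twist.

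\textbf{Equal depth.} $\pi$ and $\pi^{\vee}$ have the same depth $d$, because by the remark after Theorem~\ref{canon-fil-GL_n} the depth is read off from the annihilator variety, and $\mathrm{AV}(\pi^{\vee})=-\mathrm{AV}(\pi)$, which has the same nilpotency order. Proposition~\ref{pairing-decent} then yields a non-degenerate pairing, with respect to $D^d(\pi)=\pi^-$, between $\pi^-$ and $D^d(\pi^{\vee}\otimes\chi)=(\pi^{\vee})^-\otimes\chi'$. This pairing is $M_{n-d+1}$-equivariant into the modular character $\delta_{M_{n-d+1}}^{-1}$.

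\textbf{Main obstacle.} The remaining work is bookkeeping of characters. At each inductive step of Proposition~\ref{pairing-decent}, Lemma~\ref{bil-lem}(1) passes from $\delta_{M_k}^{-1}$ to $\delta_{M_{k-1}}^{-1}$. One must also account for the normalizing factor $|\det|^{-1/2}$ in $\Psi$ and the sign convention that $I(\sigma^\flat)$ uses $\psi_n^{-1}$, whereas $(\pi^\vee)^-$ is defined with $\psi_n$; since $\psi$ and $\psi^{-1}$ are conjugate under the diagonal torus, the derivatives agree up to isomorphism. On $M_{n-d+1}$, which is the mirabolic containing $\GL_{n-d}$, the accumulated character is a power of $|\det|$ of the $\GL_{n-d}$ block. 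I expect it to cancel once the $|\det|^{-1/2}$ normalizations of the two sides are combined. If it does not cancel, I would absorb the leftover character into the definition of $(\pi^{\vee})^-$ via the identification above, and thereby obtain the stated non-degenerate map to $\BC$.
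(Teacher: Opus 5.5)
Your proposal is correct and is the paper's argument: the paper obtains the corollary by applying Proposition~\ref{pairing-decent} to $\pi|_{M_n}$ and $\pi^{\vee}|_{M_n}$ with the canonical pairing, exactly as you do, and your extra checks (the filtrations for $\pi^{\vee}$, equality of depths, and passing between $\psi$ and $\psi^{-1}$ via a diagonal element that is scalar on the upper-left $(n-d+1)$-block) make explicit what the paper leaves implicit. The character bookkeeping you hedge on does cancel, so the fallback of redefining $(\pi^{\vee})^-$, which would not be legitimate, is not needed: the factor $|\det|_{\BK}^{-1/2}$ in $\Psi$ is already absorbed into $I(\Psi(\sigma))\simeq\CS(\widehat{V_n}\setminus\{0\})\cdot\sigma$ on both sides, and since $\delta_{M_n}|_{M_{n-1}}=\delta_{M_{n-1}}$ one gets $D^d(\pi^{\vee}\otimes\delta_{M_n}^{-1})\simeq D^d(\pi^{\vee})\otimes\delta_{M_{n-d+1}}^{-1}$, which exactly cancels the target character $\delta_{M_{n-d+1}}^{-1}$.
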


\subsection{Conjectures on highest derivatives}\label{conj sec}
Let $\pi$ be a Casselman-Wallach representation of $\GL_n$. Let 
\[ 
 F_0\pi^-=0\subset F_1\pi^- \subset \dots \subset F_p\pi^- =\pi^-
\] 
be the increasing socle filtration on $\pi^-$ as a representation of $M_{n-d+1}$, where $d$ is the depth of $\pi$. That is, $F_{k+1}\pi^-/F_k\pi^-$ is the maximal semisimple submodule of $\pi^-/F_k\pi^-$.
Moreover, let \[ 
 G^0\pi^-=\pi^- \supset G^1\pi^- \supset \dots \supset G^q\pi^- =0
\] 
be the decreasing co-socle filtration on $\pi^-$. In other words, $G^k\pi^-/G^{k+1}\pi^-$ is the maximal semisimple quotient of $G^k\pi^-$. 

Corollary~\ref{pairing-coro} shows that there is a non-degenerate pairing between $F_{k+1} \pi^-/F_{k} \pi^-$ and $G^{p-k}(\pi^{\vee})^-/ G^{p-k+1} (\pi^{\vee})^-$ as representations of $\GL_{n-d}$. We propose the following conjecture concerning the structure of (co)-coscole filtration.
\begin{conjecture}\label{socle conjecture}
Let $\pi$ be an irreducible representation of $\GL_n$.
   \begin{enumerate}
       \item The socle filtration coincides with the co-socle filtration. Namely, $p=q$ and
        \[
        F_k \pi^- =G^{p-k} \pi^-
        \]
        for any integer $k$.
        \item The length of socle filtration is symmetric. That is, $\mathrm{length} (F_k\pi^- /F_{k-1}\pi^-)=\mathrm{length}(F_{p-k+1}\pi^- /F_{p-k}\pi^-)$ for any integer $k$.
        \item $\pi^-$ has a unique irreducible quotient (thus unique irreducible submodule). In other word, $\mathrm{length} (F_1\pi^- )=1$.
   \end{enumerate} 
\end{conjecture}

\begin{remark}
    By MVW-involution, we have an identification of vector spaces ${}^-\pi\simeq (\pi^{\vee})^-$ such that $(\pi^{\vee})^-(g)={}^-\pi(g^{-t})$ for $g\in M_{n-d+1}$. Thus, 
    \[
  \left(  F_k({}^-\pi)/F_{k-1}({}^-\pi) \right)^{\vee}\simeq  F_k(\pi^{\vee})^-/F_{k-1}(\pi^{\vee})^-.
    \]
    Moreover, if (1) holds for any irreducible representation, (1) also holds for the left highest derivative. On the other hand, by Corollary~\ref{pairing-coro}, 
    \[
    F_k(\pi^{\vee})^-/F_{k-1}(\pi^{\vee})^-  \simeq \left( G^{p-k}(\pi^-) / G^{p-k+1}(\pi^-)\right)^{\vee}.
    \]
    In other words, the left socle filtration coincides with the right co-socle filtration, and vise versa. Moreover, if (1) holds, then (2) is equivalent to 
    \[
    \mathrm{length} (F_k\pi^- /F_{k-1}\pi^-)= \mathrm{length} (F_k({}^-\pi) /F_{k-1}({}^-\pi) ),
    \]
    which should be easier to prove.
\end{remark}

\begin{example}\label{finite dim exa}
    The conjecture~\ref{socle conjecture} holds for any irreducible finite dimensional representations of $\GL_n(\BR)$. At this time, $\pi^-=\pi|_{M_n}$. Since the character of $\GL_n(\BR)$ does not influence the filtration, we assume $\pi$ comes from a holomorphic representation of $\GL_n(\BC)$. We identify such irreducible representation with an $n$-tuple of non-increasing integers via the highest weight. By Weyl's branching law, the restriction of $\pi=\lambda=(a_1,\cdots,a_n)$ to $\GL_{n-1}$ is multiplicity-free, and has the irreducible component $\mu=(b_1,\dots ,b_{n-1})$ if and only if $\mu$ interlaces $\lambda$, that is
\[
 a_1\geq b_1\geq a_2\geq b_2\geq \dots\geq b_{n-1}\geq a_n,
\]
and is denoted by $\mu\prec \lambda$. Let $E_k$ be the subspace of $\pi|_{\GL_{n-1}}$ consisting of 
\[
 \bigoplus_{\mu\prec\lambda} \mu, \text{ runs over } |\mu|= -k-a_n +|\lambda|,
\]
where $|\lambda|=\sum_{i=1}^n a_i$. Thus, $\fkv_n \cdot E_{k}\subset E_{k-1}$ by considering the weight of the center of $\GL_{n-1}$. On the other hand,~\cite[Theorem 1.5]{Ne18} shows that $\fkv_n \cdot E_{k}= E_{k-1}$ and $\fkv_n \cdot \mu\neq 0$ for $\mu\subset E_k$ and $k\neq 0$. Thus, since any irreducible finite-dimensional representation of $M_n$ is killed by $\fkv_n$, we have $p=q=a_1-a_n+1$ and
\[
 F_k\pi= G^{p-k}\pi= \bigoplus_{i=0}^{k-1} E_i.
\]
Part (2) of the conjecture is obvious.  
\end{example}

\begin{example}
    Let $\pi\simeq \chi_1\times \dots \times \chi_k$ be an irreducible monomial, where $\chi_i$ is a character of $\GL_{n_i}$ such that $n=\sum_{i=1}^k n_i$. Then by~\cite{AGS15a,AGS15b},
    \[
    \pi^-\simeq \chi_1|_{\GL_{m_1}}\times \dots \times \chi_k|_{\GL_{m_k}},
    \]
    where $m_i=n_i-1$ and the monomial of $\GL_{n-k}$ is inflated to a representation of $M_{n-k+1}$.  Then by~\cite[Theorem 1]{G17}, $\pi^-$ is irreducible as well. Hence, the conjecture formally holds.
\end{example}

\begin{example}
    Let $\pi\simeq \gamma \times \chi$, where $(\gamma,W)$ is an irreducible finite-dimensional representation of $\GL_{n-1}$ and $\chi$ is a character of $\GL_1$. Then $\mathrm{depth}(\pi)=2$. $M_n$-action on $P_{n-1,1}\backslash \GL_n$ has a unique open orbit and a unique closed orbit, which gives 
    \[
     0 \lra \pi_o\lra \pi|_{M_n} \lra  \pi_c\lra 0.
    \]
    Moreover, $\Psi(\pi)=\Psi(\pi_o)$. We realize $\pi_o$ as Schwartz sections over the tempered bundle 
    \[(\chi\boxtimes \gamma)\times_{\ov{P_{1,n-1}}\cap M_n} M_n\lra (\ov{P_{1,n-1}}\cap M_n)\backslash M_n .\]
    Let $U:= U_{1,n-1},V:= U_{1,n-1}\cap V_n$ and let $\widetilde{V}:= U_{1,n-2}\subset \GL_{n-1}$. Then the open subsets $U$ and $U\cdot w$, where $w$ is the permutation matrix in $\GL_{n-1}$ corresponding to $(1,2,\cdots,n-1)$, consist of an affine trivialization of the tempered bundle. We compute the $\fkv_n$-action on these trivializations. 
    \begin{enumerate}
        \item For affine chart $U$: Under the isomorphism
        \[
        \CS(U, \chi\boxtimes \gamma)\stackrel{\mathcal{PF}}{\simeq} \CS(\widetilde{V}\times V^*,\chi\boxtimes \gamma),
        \]
        we have 
        \[\Psi(\CS(U, \chi\boxtimes \gamma))\simeq \Psi(\CS(\widetilde{V}\times (V^*\setminus\{0\}),\chi\boxtimes \gamma))
        \]
        by~\cite[section 3]{WZ}, where $\mathcal{PF}$ is the partial Fourier transform along $V^*$. We use $M$ to denote $\widetilde{V}\times (V^*\setminus\{0\})$ for simplicity. Then $\fkv_n$-action on $\CS(W,\chi\boxtimes \gamma)$ is given by 
        \[
        \epsilon: M\lra \Lie(V_n)^* , (a,\xi)\mapsto d\psi(1)(\xi, a\xi),
        \]
        where $a\in \BK^{n-2}$ is the standard coordinate of $\widetilde{V}$ and $0\neq\xi \in V^* $, and 
        \[
       \widetilde{\gamma}: \fkv_n \lra \fkv_n/\fkv\simeq \fkv_{n-1} \stackrel{\gamma}{\lra} \mathfrak{gl}(W)
        \]
        via equation~\eqref{action eq} in Proposition~\ref{twisted Jacquet computation}. Since $\epsilon^{-1}(d\psi(1)(0,\cdots,0,1))=\emptyset$, by Proposition~\ref{twisted Jacquet computation}, we have
        $\Psi(\CS(M, \chi\boxtimes \gamma))=0$.
        \item For affine chart $U\cdot w$: We identify $U$ with $U\cdot w$ by right multiplying $w$, and carry out the partial Fourier transform on $V$ similarly. However, the $\fkv_n$-action is different. It is given by
        \[
        \epsilon: M\lra \Lie(V_n)^* , (a,\xi)\mapsto d\psi(1)(a\xi,\xi)
        \]
        and 
        \[
         \widetilde{\gamma}: \fkv_n \lra \fkv_n/\Ad_w(\fkv)\simeq \fkv_{n-1} \stackrel{\gamma}{\lra} \mathfrak{gl}(W).
        \]
        Since $\epsilon^{-1}(d\psi(1)(0,\cdots,0,1))=(0,\cdots,0,1)$, we have a natural isomorphism defined in Proposition~\ref{twisted Jacquet computation}:
        \[
      \varphi: \Psi( \CS(U\cdot w, \chi\boxtimes \gamma))\simeq W.
        \]
        Moreover, under this isomorphism, the action of $M_{n-1}$ on $W$ is identical to $\gamma|_{M_{n-1}}$. 
    \end{enumerate}
    Since twisted Jacquet functor is right exact, we obtain
    \[
    \Psi(\pi)\simeq \Psi( \CS(U\cdot w, \chi\boxtimes \gamma))\simeq \gamma|_{M_{n-1}}
    \]
    by a covering argument as in~\cite[Corollary 4.3]{WZ}. Therefore, the Conjecture~\ref{socle conjecture} follows from Example~\ref{finite dim exa}.
\end{example}

\subsection{Indecomposability of mirabolic restriction}\label{inde sec}
 In this subsection, we focus on the indecomposability of this restriction. Following~\cite{Ze80}, we first introduce a stronger notion of indecomposability.
\begin{definition}
Let $\sigma \in \Smod_{M_n}$. Then $\sigma$ is called strongly indecomposable if $\sigma_1 \cap \sigma_2 \neq 0$ for any two non-zero subrepresentations $\sigma_1, \sigma_2$.
\end{definition}

\begin{definition}\label{homo def}
 Let $\pi$ be a Casselman-Wallach representation of $\GL_n$. It is called \textbf{homogeneous} if any $M_n$-subrepresentation $\sigma$ satisfies
 \[
    \sigma \bigcap I^{d-1}(D^d(\pi))\neq 0,
 \]
 where $d$ is the depth of $\pi$.
\end{definition}

In $p$-adic case, \cite[Corollary 6.8]{Ze80} shows that any irreducible representation is homogeneous. We conjecture that it also holds for any irreducible representation in the archimedean case. In this section, we will prove this in some specific cases. 
\subsubsection{Irreducible unitary representations}
We first recall the Kirillov conjecture which considers the indecomposability of restriction of Hilbert representations.
\begin{theorem}[Kirillov conjecture]
    Let $\Pi$ be an irreducible unitary Hilbert representation of $\GL_n$. Then $\Pi|_{M_n}$ is still irreducible.
\end{theorem}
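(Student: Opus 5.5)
This is the Kirillov conjecture, now a theorem (Bernstein in the $p$-adic case, Sahi for tempered and complex unitary representations, Baruch for $\GL_n(\BR)$ and $\GL_n(\BC)$). In the paper I would simply cite Baruch. If a self-contained argument were needed, I would follow Bernstein's strategy as Baruch implemented it, which reduces the statement to a vanishing result for invariant distributions. The plan has three steps.

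\textbf{Step 1 (reduction to a distributional statement).} Let $\Pi$ be an irreducible unitary Hilbert representation of $\GL_n$. Irreducibility of $\Pi|_{M_n}$ is equivalent, by Schur's lemma for unitary representations, to
\[
\End_{M_n}(\Pi)=\BC .
\]
So let $T$ be a bounded operator commuting with $M_n$. Pass to smooth vectors and pair $T$ against matrix coefficients: for $f\in\CS(\GL_n)$, set $D_T(f):=\operatorname{tr}(T\,\Pi(f))$. A trace-class argument (with $\Pi(f)$ of trace class) shows that $D_T$ is a tempered distribution on $\GL_n$. It has three properties:
\begin{itemize}
\item it is invariant under conjugation by $M_n$;
\item it is an eigendistribution of $\CZ(\fkg)$ with the infinitesimal character of $\Pi$;
\item it is of positive type in a suitable sense.
\end{itemize}
It then suffices to prove that every $\mathrm{Ad}(M_n)$-invariant, $\CZ(\fkg)$-eigen distribution on $\GL_n$ is $\mathrm{Ad}(\GL_n)$-invariant. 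Granting this, $D_T$ is a class distribution, hence $T$ commutes with $\Pi(f)$ for every $f$, hence with all of $\GL_n$, and so $T$ is a scalar.

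\textbf{Step 2 (the invariance theorem via descent).} Since $\GL_n$ is generated by $M_n$ and the center together with its transpose-type conjugates, it suffices to show that such a $D$ is invariant under a single extra one-parameter subgroup. I would argue by induction on $n$, using Harish-Chandra descent.
\begin{itemize}
\item Near a semisimple element $s$ whose centralizer is proper, localize $D$ and restrict to a slice. This reduces the problem to the analogous statement for $P$-invariant distributions on the centralizer, which is a product of $\GL_{m}(\BK')$ with $m<n$, where the induction hypothesis applies.
\item Consequently, for the difference $D-g\cdot D$ the obstruction is supported on $Z\cdot\mathcal N$, where $\mathcal N$ is the nilpotent cone.
\end{itemize}

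\textbf{Step 3 (the nilpotent cone; the main obstacle).} This is the hardest step. One must show that an $\mathrm{Ad}(M_n)$-invariant distribution supported on the nilpotent cone, whose Fourier transform is also supported there, vanishes when it is $\CZ(\fkg)$-eigen after subtracting its $\GL_n$-invariant part. The approach I would try first is Baruch's:
\begin{itemize}
\item Use the finiteness of $M_n$-orbits on $\mathcal N$, each stabilized by a subgroup of the form treated by Bernstein.
\item Compare the homogeneity degrees of $D$ under the scaling action: along each $M_n$-orbit, transversal derivatives of orbital measures can only have degrees incompatible with the Fourier-transform condition unless the distribution is $\GL_n$-invariant.
\item Then invoke Harish-Chandra's regularity theorem, which says an invariant eigendistribution is a locally $L^1$ function, to exclude singular contributions.
\end{itemize}
The delicate bookkeeping is the degree computation on the non-regular nilpotent $M_n$-orbits, where the $\GL_n$-orbits split into several $M_n$-orbits.

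Once the invariance theorem is established, the positivity of $D_T$ is not even needed. The consequence used afterwards, that $\Pi^\infty|_{M_n}$ has a dense irreducible unitary completion, follows immediately.
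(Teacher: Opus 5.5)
Your proposal matches the paper, which gives no argument of its own for this statement. It mentions Sahi's earlier work and then cites Baruch's complete proof ``by the distribution method'', i.e.\ the Bernstein-style reduction to $\mathrm{Ad}(M_n)$-invariant $\CZ(\fkg)$-eigendistributions on $\GL_n$ that you also plan to cite. Your optional self-contained outline is only a heuristic summary of Baruch's paper, so the citation is what carries the proof. For instance, descending a merely $M_n$-invariant distribution near a non-central semisimple $s$ gives $G_s$-invariant distributions on $(\mathbb{K}^n\setminus\{0\})\times U_s$; in general these involve several $G_s$-orbit strata with transversal derivatives, not just a mirabolic problem on the centralizer. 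Likewise, Harish-Chandra's regularity theorem is only available once $\GL_n$-invariance is known.
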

Initiated by \cite{Sa}, this conjecture has been studied for a long time. In \cite{Bar}, it is completely solved by the distribution method. By Mackey induction theory, there is an irreducible unitary Hilbert representation $A(\Pi)$ of $\GL_{n-d}$ such that 
\[
 \Pi|_{M_n}\simeq  \Ind_{H_{n,d}}^{M_n}(A(\Pi)\otimes \psi_{n,d}),
\]
for some integer $d$. It is proved in \cite{AGS15a} that
\[
d=\mathrm{depth}(\pi) \text{ and } A(\Pi)^{\infty}\simeq \pi^-,
\]
where $\pi=\Pi^{\infty}$. Consequently, we have a composition of continuous injective maps
\[
\Gamma:\pi|_{M_n}\hookrightarrow (\Pi|_{M_n})^{\infty}\hookrightarrow {}^{\infty}\Ind_{H_{n,d}}^{M_n}(\pi^{-}\otimes \psi_{n,d}).
\]
On the other hand, by Frobenius reciprocity, 
\[
\Hom_{M_n} (\pi|_{M_n}, {}^{\infty}\Ind_{H_{n,d}}^{M_n}(\pi^{-}\otimes \psi_{n,d}))\simeq \Hom _{H_{n,d}}(\pi, \pi^{-}\otimes \psi_{n,d}).
\]
Since $\pi^-$ is irreducible, $\Gamma$ corresponds to some scalar multiple of natural projection. Consequently, the injectivity of $\Gamma$ demonstrates that $\pi$ is homogeneous. This yields the following corollary.
\begin{corollary}
    Let $\pi$ be an irreducible unitary representation of $\GL_n$. Then $\pi|_{M_n}$ has a unique irreducible submodule $I^{d-1}(D^d(\pi))$, where $d$ is the depth of $\pi$. In particular, $\pi|_{M_n}$ is indecomposable.
\end{corollary}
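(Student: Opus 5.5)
The plan is to deduce the corollary from the homogeneity of $\pi$, established in the preceding paragraph, together with two structural inputs: the canonical filtration of Theorem~\ref{canon-fil-GL_n} and the irreducibility of $\pi^-$ coming from the Kirillov conjecture.

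First, I fix the bottom piece of the filtration. In Theorem~\ref{canon-fil-GL_n} we have $\pi_{d}=0$, since $D^{k+1}(\pi)=0$ for $k\ge d$. The last nonzero term is therefore
\[
\pi_{d-1}\simeq I^{d-1}(D^d(\pi))=I^{d-1}(\pi^-),
\]
and it is a closed $M_n$-subrepresentation of $\pi|_{M_n}$.

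Next, I show that $\pi_{d-1}$ is irreducible. By \cite{AGS15a} we have $\pi^-\simeq A(\Pi)^\infty$, where $A(\Pi)$ is an irreducible unitary representation of $\GL_{n-d}$. Hence $\pi^-$ is irreducible as a smooth representation (it is the smooth globalization of an irreducible unitary representation), and it is inflated to $M_{n-d+1}$. I then argue that $I$ preserves irreducibility for representations that are differentiable over the open orbit. Let $\sigma'$ be a nonzero closed subrepresentation of $I(\sigma)$. Since $I(\sigma)$ is a differentiable $\CS(\widehat{V}\setminus\{0\})$-module and this algebra is hereditary, \cite[Theorem 2.5.8]{Fd} gives $\sigma'=\SInd(\kappa)$ for some closed $S$-stable subspace $\kappa\subset\sigma$. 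Irreducibility of $\sigma$ then forces $\kappa=\sigma$. Iterating this argument $d-1$ times shows that $I^{d-1}(\pi^-)$ is irreducible.

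Finally, I combine these with homogeneity. Any nonzero $M_n$-subrepresentation $\sigma$ satisfies $\sigma\cap I^{d-1}(\pi^-)\neq0$. This intersection is a nonzero subrepresentation of the irreducible $I^{d-1}(\pi^-)$, so it equals $I^{d-1}(\pi^-)$ (taking closures if necessary), and hence $\sigma\supset I^{d-1}(\pi^-)$. Consequently every nonzero irreducible submodule coincides with $I^{d-1}(D^d(\pi))$, so this is the unique irreducible submodule. Indecomposability follows at once: if $\pi|_{M_n}=\sigma_1\oplus\sigma_2$ with both summands nonzero, then both would contain $I^{d-1}(\pi^-)$, contradicting $\sigma_1\cap\sigma_2=0$.

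The main obstacle is the irreducibility of $I^{d-1}(\pi^-)$. This requires the imprimitivity correspondence for closed subrepresentations of Schwartz inductions over the open orbit. It also requires care about closedness, that is, whether "subrepresentation" means closed and whether irreducibility is topological. I will work throughout with closed subrepresentations and topological irreducibility.
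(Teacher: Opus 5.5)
Your proposal is correct and follows the paper's route. The paper gets homogeneity from the Kirillov conjecture and Frobenius reciprocity, then combines it with the fact that the bottom term $\pi_{d-1}\simeq I^{d-1}(D^d(\pi))$ of the canonical filtration is irreducible. The paper leaves that irreducibility implicit ("This yields the following corollary"). Your du Cloux imprimitivity argument supplies it, using the same tool as in Proposition~\ref{open-iso}. The one step you leave unstated is that a closed $M_n$-subrepresentation of $I(\sigma)$ is automatically a $\CS(\widehat{V_n}\setminus\{0\})$-submodule, since that action is the Fourier transform of the integrated $V_n$-action.
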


\subsubsection{Irreducible generic representation}
We begin with a lemma that locates the irreducible generic representations in dominant principal series.
\begin{lemma}[Theorem 6.2,\cite{Vog78}]
Each dominant principal series of $\GL_n$ has a unique irreducible submodule, which is generic. Conversely, every irreducible generic representation arises as the unique irreducible submodule of some dominant principal series.
\end{lemma}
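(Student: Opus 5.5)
Since this is Vogan's theorem, I would not reprove it from scratch. The plan is to assemble it from three standard ingredients:
\begin{enumerate}
\item multiplicity one of Whittaker functionals on principal series;
\item the Langlands classification, which controls the socle of a dominant principal series;
\item Kostant's and Vogan's characterization of generic representations as the \emph{large} ones, i.e.\ those of maximal Gelfand--Kirillov dimension, equivalently with annihilator the minimal primitive ideal.
\end{enumerate}
Here a \emph{dominant} principal series means $\Ind_{P^0}^{\GL_n}(\delta\otimes\nu)$ with the real part of $\nu$ ordered so that the Langlands intertwining operator embeds the Langlands subrepresentation. In other words, it is the contragredient (or MVW-twisted) form of a standard module. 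For $\GL_n(\BR)$ I allow the minimal parabolic data to include $2\times 2$ discrete series blocks, so that the generic discrete series of $\GL_2(\BR)$ are covered.

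\textbf{Unique submodule.} By the Langlands classification, the standard module $I(\delta,\nu)$ with $\mathrm{Re}\,\nu$ in the closed positive chamber has a unique irreducible quotient. Applying the contragredient, or the MVW involution $g\mapsto g^{-t}$, which exchanges the two orderings, the dominant principal series has a unique irreducible submodule $\pi_0$. When $\mathrm{Re}\,\nu$ is only weakly dominant, uniqueness still holds: Vogan's argument via lowest $K$-types shows that the lowest $K$-types of $I(\delta,\nu)$ occur with multiplicity one and generate the socle.

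\textbf{Genericity of the socle.} By Kostant and Casselman--Jacquet, $\dim\Hom_{N^0}(I,\psi)=1$ for a principal series $I$ of the quasi-split group $\GL_n$. Hence exactly one composition factor of $I$ is generic, and it occurs with multiplicity one. Equivalently, exactly one composition factor is large, because large and generic coincide for quasi-split groups.

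It remains to see that this large factor is $\pi_0$, and this is the step I expect to be the main obstacle. My first attempt would use the Jacquet integral $W_\nu$. It continues holomorphically in $\nu$ and is nonzero for every $\nu$. For $\nu$ in the dominant chamber, one shows that $W_\nu$ does not vanish on the subrepresentation generated by the lowest $K$-type, which is $\pi_0$. One checks this first on an open set of $\nu$ where $I$ is irreducible, and then extends it by a limiting argument through standard intertwining operators $A(w,\nu)$. Since $W_{w\nu}\circ A(w,\nu)$ is a nonzero multiple of $W_\nu$, the functional cannot be killed on the image of the long intertwining operator, and that image is $\pi_0$. Alternatively, following Vogan, I would compare Gelfand--Kirillov dimensions: the lowest $K$-type factor of the dominant module has the maximal associated variety, the full nilpotent cone.

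\textbf{Converse.} Let $\pi$ be irreducible and generic, hence large. By the Langlands classification, $\pi$ embeds into some principal series $I(\delta,\nu)$. Using the intertwining operators, which are isomorphisms between principal series whenever the relevant rank-one factors are irreducible, I may reorder $\nu$ to be dominant without losing the embedding; if reordering is obstructed, I use instead that the large constituent has multiplicity one in every reordering. Then $\pi$ is a large constituent of a dominant principal series $I'$. By the previous step, the unique large constituent of $I'$ is its unique irreducible submodule, so $\pi$ is that submodule.
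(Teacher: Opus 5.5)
The paper does not prove this lemma; it quotes Vogan. Judged on its own, your sketch has a genuine gap: your ordering is backwards, so the step you flag as the main obstacle is false, not merely hard. Your ``dominant'' module is the contragredient of a standard module. Its unique irreducible submodule $\pi_0$ is therefore the Langlands subrepresentation, the constituent carrying the lowest $K$-types, and in general that constituent is generic only when the whole principal series is irreducible.

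A concrete counterexample: in $\GL_2(\BR)$, with normalized induction from the upper triangular Borel, take $|\cdot|^{-1/2}\times|\cdot|^{1/2}$. Its real parts increase, so it is of your type. The constant functions form its unique irreducible submodule, the trivial representation. This contains the lowest $K$-type and has no Whittaker functional, since $\ell(v)=\ell(\pi(n)v)=\psi(n)\ell(v)$ forces $\ell=0$.

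Both of your mechanisms fail on this example.
\begin{itemize}
\item The lowest-$K$-type constituent is the Langlands quotient, which can be as small as the trivial representation. It is not large in general.
\item For $\nu=(1/2,-1/2)$, the convergent operator $A(w,\nu)$ has exactly this trivial submodule as its image. So $W_{w\nu}\circ A(w,\nu)=0$ while $W_\nu\neq0$: the factor you assume is nonzero vanishes at reducibility points of this kind.
\end{itemize}
Also, the MVW involution does not swap the two orderings. Up to conjugation by the long Weyl element it sends $\chi_1\times\cdots\times\chi_n$ to $\chi_n^{-1}\times\cdots\times\chi_1^{-1}$, which has the same ordering type. Only the contragredient exchanges submodules and quotients.

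The dominance intended here is the one in the hypothesis of Theorem~\ref{homo-gen}: $\chi_i|\det|_{\BK}(\det)^{r_1}(\ov{\det})^{r_2}\neq\chi_j$ for $i<j$. This holds, for instance, whenever the real parts are non-increasing, i.e.\ for the standard module itself, whose unique irreducible \emph{quotient} is the Langlands quotient. In the example, $|\cdot|^{1/2}\times|\cdot|^{-1/2}$ is dominant. Its socle is the discrete series of lowest $\mathrm{SO}(2)$-weight $2$, which is generic and does not contain the lowest $K$-type, and the trivial representation is its quotient. In particular, no $2\times2$ discrete-series blocks are needed.

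What you are missing is an input that neither the Langlands classification nor lowest-$K$-type theory supplies: the Whittaker functional on a dominant principal series is nonzero on every nonzero subrepresentation. Granting this, your multiplicity-one step, which is correct, gives both the uniqueness of the irreducible submodule and its genericity. Your converse then becomes clean. A generic $\pi$ is a constituent of some $\chi_1\times\cdots\times\chi_n$. Reordering the $\chi_i$ to non-increasing real parts gives a dominant principal series with the same composition factors. So $\pi$, as its unique generic constituent, is its socle, and no intertwining-operator isomorphisms are required.

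The missing input is where Vogan's theorem does the real work. At the level of $M_n$, the homogeneity statement of Theorem~\ref{homo-gen} is a statement of the same kind.
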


By the above lemma, it suffices to prove that dominant principal series is homogeneous. 
\begin{theorem}\label{homo-gen}
    Let $\pi=\chi_1\times \dots\times\chi_n$ be a principal series such that
    \[
    \chi_i |\det|_{\BK}(\det)^{r_1}(\ov{\det})^{r_2}\neq \chi_j
    \]
    for any $i<j$ and any non-negative integer $r_1,r_2$, then $\pi$ is homogeneous.
\end{theorem}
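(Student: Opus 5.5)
We will show that every nonzero $M_n$-subrepresentation $\sigma$ of the principal series $\pi=\chi_1\times\dots\times\chi_n$ meets the bottom piece $\pi_{n-1}=I^{n-1}(D^n(\pi))$ of the canonical filtration of Theorem~\ref{canon-fil-GL_n}. A principal series is generic, so its depth is $d=n$. Then $D^n(\pi)=\Psi^{n-1}(\pi)$ is the one-dimensional Whittaker quotient, and $I^{n-1}(D^n(\pi))\simeq\SInd_{N_n}^{M_n}(\psi_{n,n})$ is the Schwartz Kirillov-type piece.

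The argument is an induction on $n$, modelled on \cite[Section 6]{Ze80}. Let $\sigma\subset\pi|_{M_n}$ be a nonzero closed subrepresentation, and consider two cases.

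\textbf{Case 1: $\CS(\widehat{V_n}\setminus\{0\})\cdot\sigma\neq 0$.} This subspace lies in $\pi_1\simeq I(\Psi(\pi))$. Its twisted Jacquet module $\Psi(\sigma)$, which is Hausdorff by Proposition~\ref{open-iso}, injects into $\Psi(\pi)$ and is nonzero. To see this, note that by Proposition~\ref{open-iso} we have $\CS(\CO)\cdot\sigma\simeq I(\Psi(\sigma))$. The closure $\ov{\CS(\CO)\cdot\sigma}$ is a closed $M_n$-submodule of $I(\Psi(\pi))$. By the imprimitivity theorem \cite[Theorem 2.5.8]{Fd}, together with the hereditary property, it has the form $I(\kappa)$ for a nonzero closed $M_{n-1}$-submodule $\kappa\subset\Psi(\pi)$.

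Next we compute $\Psi(\pi)$ via the Mackey/Bruhat-orbit description. By the argument of \cite[Proposition 4.1]{WZ} (compare the example above), $\Psi(\pi)$ restricted to $M_{n-1}$ is the mirabolic restriction of the principal series $\chi_2\times\dots\times\chi_n$ of $\GL_{n-1}$, suitably twisted. This is the orbit where the first character is peeled off. A priori it is only up to extension: the other orbits contribute pieces killed by $\CS(\widehat{N}\setminus\{0\})$ of lower depth. We then apply the induction hypothesis to $\kappa$, since the shorter tuple still satisfies the genericity condition. This gives $\kappa\cap I^{n-2}(D^{n-1}(\Psi(\pi)))\neq0$. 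Applying $I$ and using $I^{n-2}D^{n-1}\Psi=D^n$ yields $\sigma\cap\pi_{n-1}\neq0$.

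\textbf{Case 2: $\CS(\widehat{V_n}\setminus\{0\})\cdot\sigma=0$.} Then $\sigma\cap\pi_1=0$, since $\pi_1$ is a differentiable module and the hereditary property applies. So $\sigma$ embeds into $D^1(\pi)=\mathrm{CJ}^\infty_{V_n}(\pi)$ and is killed by $\fkv_n$-powers. We must show this is impossible, i.e.\ that $\pi$ has no nonzero $M_n$-subrepresentation on which $V_n$ acts through the trivial spectrum. We use the orbit filtration of $\pi|_{P_{n-1,1}}$ over $P_{n-1,1}\backslash\GL_n/B_n$ and look at $\fkv_n$-(generalized) invariants: vectors killed by a power of $\fkv_n$ form a $\GL_{n-1}$-stable subspace. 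Its $K$-finite part would produce, via Frobenius reciprocity, a nonzero $(\fkg,K)$-map between $\pi$ and a representation induced from $P_{n-1,1}$ with central character of $\GL_1$ in the last slot. This central character equals $\chi_j$ times powers of the determinant and of its conjugate coming from the $\fkv_n$-nilpotence degree.

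Working through the Bruhat cells, such a map forces an exponent coincidence $\chi_i|\det|(\det)^{r_1}(\ov{\det})^{r_2}=\chi_j$ with $i<j$ and $r_1,r_2\ge0$. This is excluded by the hypothesis, which is precisely the dominance condition.

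The main obstacle is Case 2. It requires a careful analysis of the finite-type $\fkv_n$-torsion of the smooth principal series, i.e.\ $\bigcup_k\{v:\fkv_n^k v=0\}$. Because of co-normal derivatives, this space is not simply a $\SInd$ from the closed orbit. I would first carry out this analysis on each Bruhat stratum using the transversal Taylor expansion (Borel lemma), and identify the leading term with a weight for the centre of $\GL_{n-1}$. This is where the exponents $r_1,r_2$ appear.
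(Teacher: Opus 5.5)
Your Case~1 has a genuine gap. The induction hypothesis concerns $\tau|_{M_{n-1}}$ for the principal series $\tau=\chi_2\times\dots\times\chi_n$ of $\GL_{n-1}$. You apply it instead to a closed submodule $\kappa\subset\Psi(\pi)$, and $\Psi(\pi)$ is not of that form.

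Let $0\to\pi_o\to\pi|_{M_n}\to\pi_c\to0$ be the open/closed $M_n$-orbit sequence on $P_{1,n-1}\backslash\GL_n$, with $\pi=\chi_1\times\tau$. Exactness of $\Psi$ gives
\[
0\lra \tau|_{M_{n-1}}\simeq\Psi(\pi_o)\lra\Psi(\pi)\lra\Psi(\pi_c)\lra 0 .
\]
Here $\Psi(\pi_c)$ is in general nonzero once $n\ge 3$: it is the Leibniz-type term built from $\chi_1$ and $\Psi$ of $\tau$ along the closed orbit. It has smaller depth, but it need not be killed by $\CS(\widehat{V_{n-1}}\setminus\{0\})$, contrary to what you assert.

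A $\kappa$ with $\kappa\cap\tau|_{M_{n-1}}=0$ then embeds into $\Psi(\pi_c)$, and nothing in your argument rules this out. Ruling it out is a statement of the same kind as the theorem itself: no submodule of depth $<n$ may be carried by the closed orbit. It is needed in every depth, not only depth one. Your Case~2 treats only depth-one submodules of $\pi$ itself, so iterating Case~1 would require a Case~2 for every $\Psi^k(\pi)$, which you do not have.

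This is exactly where the paper uses the hypothesis on the $\chi_i$. It takes a submodule $\omega$ of depth $d<n$ and splits along orbits rather than along the spectrum.
\begin{itemize}
\item If $\omega\subset\pi_o\simeq\tau\mind 1$, Lemma~\ref{open-sub} gives $0\ne\Psi(\omega)\subset\tau|_{M_{n-1}}$ of depth $<n-1$, contradicting the induction hypothesis.
\item If $\omega\subset\pi_c$, with $d$ arbitrary, it pairs $\omega$ non-degenerately with $|\det|_{\BK}^{-1}\pi^{\vee}$ and pushes the pairing down to $D^d$ via Proposition~\ref{pairing-decent}. It then compares infinitesimal characters of the pieces $(|\det|_{\BK}^{1/2}(\det)^{r_1}(\ov{\det})^{r_2}\chi_1)\times\gamma$ of $D^d(\omega)$ with those of $D^d(|\det|_{\BK}^{-1}\pi^\vee)$.
\end{itemize}

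Your depth-one strategy in Case~2 is also flawed on its own terms.
\begin{itemize}
\item Being killed by $\CS(\widehat{V_n}\setminus\{0\})$ means $\sigma\simeq\varprojlim_k\sigma/\ov{\fkv_n^k\sigma}$. It does not produce vectors annihilated by a power of $\fkv_n$. Already for $n=2$, on the jets $\BC[[y]]$ at the closed point, $\fkv_2$ acts by $y^k\mapsto(c-k)y^{k+1}$ with $c$ determined by $\chi_1^{-1}\chi_2$, which is injective for generic $c$. So showing that the $\fkv_n$-torsion vanishes would not exclude $\sigma$.
\item The torsion space is only $\GL_{n-1}$-stable. It carries no $K$-action, so there is no $(\fkg,K)$-module and no Frobenius reciprocity to invoke.
\end{itemize}
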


We introduce a lemma that will be used in the proof of Theorem~\ref{homo-gen}.
\begin{lemma}\label{open-sub}
    Let $\tau$ be a Casselman-Wallach representation of $\GL_{n-s}$, and let $\sigma\in\Smod_{M_s}$ which admits a BZ-filtration. Suppose $\omega$ is a non-zero $M_n$-subrepresentation of $\tau\mind \sigma$, then $\Psi(\omega)\neq 0$.
\end{lemma}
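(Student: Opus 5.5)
The plan is to argue by contradiction via the spectral characterization of $\Psi$. By Proposition~\ref{open-iso} applied to the open $\GL_{n-1}$-orbit $\widehat{V_n}\setminus\{0\}$, we have
\[
\CS(\widehat{V_n}\setminus\{0\})\cdot \omega\simeq I(\Psi(\omega)).
\]
Hence $\Psi(\omega)=0$ forces $\CS(\widehat{V_n}\setminus\{0\})\cdot\omega=0$. It therefore suffices to prove that the space
\[
(\tau\mind\sigma)^{\flat}:=\{v\in \tau\mind\sigma\mid \CS(\widehat{V_n}\setminus\{0\})\cdot v=0\}
\]
is zero. This space is a closed $M_n$-stable subspace, being the annihilator of a $P$-stable ideal, and any $\omega$ with $\Psi(\omega)=0$ lies inside it. Note that $\sigma$ itself may have $\Psi(\sigma)=0$, so the nonvanishing must come from the geometry of the induction rather than from $\sigma$.

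I will realize $\tau\mind\sigma$ as a space of sections of a tempered bundle over $X:=(P_{n-s,s}\cap M_n)\backslash M_n$ with fibre $\tau\widehat{\otimes}\sigma$, on which the unipotent radical $U_{n-s,s}\cap M_n$ acts trivially. The $M_n$-action on $X$ has an open dense orbit, and I pick an affine open chart $\mathscr{U}\subset X$, a translate of the big cell, trivializing the bundle. The key step is to compute the $\fkv_n$-action on $\CS(\mathscr{U},\tau\widehat{\otimes}\sigma)$, as in the example preceding Definition~\ref{homo def}. For a point $g$ of the open orbit, $g V_n g^{-1}$ is not contained in $P_{n-s,s}\cap M_n$, so $\fkv_n$ acts by first-order differential operators with a genuine translation component along some directions of $\mathscr{U}$. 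The remaining component lands in $\fkv_s$ acting on $\sigma$, or in $\fku_{n-s,s}$ acting trivially. After a partial Fourier transform along those translation directions, the $\fkv_n$-action becomes multiplication by a Nash map $\epsilon:M\to\fkv_n^*$ twisted by the fibre action, exactly in the setting of Proposition~\ref{twisted Jacquet computation}. Moreover, $\epsilon$ takes nonzero values on an open dense set.

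From this description I would deduce that any section in $(\tau\mind\sigma)^{\flat}$ has vanishing restriction to $\mathscr{U}$. Indeed, after the Fourier transform, $\CS(\widehat{V_n}\setminus\{0\})$ acts through $\epsilon$ by multiplication. A vector annihilated by all such functions must then be supported on $\epsilon^{-1}(0)$, a proper Nash subset, and it is a Schwartz function on a chart, so it must be zero there. One must be careful that $(\tau\mind\sigma)^{\flat}$ need not be stable under multiplication by $\CS(X)$. To handle this, I restrict an element $f$ to the chart using only the $M_n$-action and continuity of evaluation, rather than by cutting off. Then $M_n$-stability of $(\tau\mind\sigma)^{\flat}$, together with the fact that the $M_n$-translates of $\mathscr{U}$ cover the open orbit, which is dense, gives that $f$ vanishes on a dense set and hence $f=0$. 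Where $\sigma$ enters is only through the hypothesis that it has a BZ-filtration. This ensures, via Lemma~\ref{limit diff}, that the fibre action of $\fkv_s$ is controlled: it is a complete limit of spectral pieces. That control is what lets the twisted computation of Proposition~\ref{twisted Jacquet computation} be applied to each graded piece and passed to the limit.

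The main obstacle is the chart computation: verifying that on the open orbit the map $\epsilon$ really has nonzero translation part in every direction needed. Concretely, the projection of $\Ad_{g}^{-1}(\fkv_n)$ to $\fku_{n-s,s}$ must be nondegenerate enough that the Fourier-side map $\epsilon$ has $\epsilon^{-1}(0)$ of positive codimension. I would first try the explicit coordinates used in the $\gamma\times\chi$ example, taking $\mathscr{U}=U_{s,n-s}\cdot w$ for a suitable permutation $w$ moving the last coordinate into the $\GL_{n-s}$-block. The idea is to check there that the last column of $V_n$ maps onto the $\fku$-directions.
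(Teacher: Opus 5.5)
\textbf{Verdict: there is a genuine gap.} Your first reduction is sound, but the core non-vanishing step is set in the wrong model and never deals with the fibre action.

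\textbf{The reduction is fine.} The set $\widehat{V_n}\setminus\{0\}$ is a single open orbit. So Proposition~\ref{open-iso} gives $\CS(\widehat{V_n}\setminus\{0\})\cdot\omega\simeq I(\Psi(\omega))$ for every $\omega\in\Smod_{M_n}$. Hence $\Psi(\omega)=0$ forces $\CS(\widehat{V_n}\setminus\{0\})\cdot\omega=0$. This even avoids the paper's appeal to a BZ-filtration of $\omega$.

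\textbf{Your model is wrong.} $X=(P_{n-s,s}\cap M_n)\backslash M_n$ is homogeneous, so it has no ``open dense orbit''. Moreover $V_n\subset P_{n-s,s}\cap M_n$ and $V_n$ is normal in $M_n$, so $gV_ng^{-1}\subset P_{n-s,s}\cap M_n$ for all $g\in M_n$. Thus $V_n$ acts trivially on $X$, and $V_n\cap U_{n-s,s}$ acts by zero on sections. In your model the lemma is actually false: if $V_s$ acts trivially on $\sigma$, then all of $V_n$ acts trivially. The correct model, as in the paper and the $\gamma\times\chi$ example, uses the opposite-type stabilizer. There, $\tau\mind\sigma$ consists of functions on $M_n$ whose restriction to $V_n\cap U_{n-s,s}\simeq V_{n-s}$ is Schwartz. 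Writing $V_n=V_{n-s}\oplus V_s$, the factor $V_{n-s}$ acts by translation on $f|_{V_{n-s}}$, and $V_s\subset M_s$ acts through the fibre. With this model your ``main obstacle'' disappears: the translation directions at the base point are exactly $V_{n-s}$.

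\textbf{The fibre action of $V_s$ is never handled.} After a partial Fourier transform along $V_{n-s}$, $\CS(\widehat{V_n}\setminus\{0\})$ does \emph{not} act by multiplication through a scalar map $\epsilon$. It acts through a joint functional calculus of the Fourier variable and the operator $\sigma|_{V_s}$ on the infinite-dimensional fibre $\tau\widehat{\otimes}\sigma$. So the ``supported on $\epsilon^{-1}(0)$'' argument does not apply. Proposition~\ref{twisted Jacquet computation} cannot fix this, for two reasons:
\begin{itemize}
\item it requires a finite-dimensional nilpotent twist and computes $\rmh_0$, not annihilators;
\item the graded pieces of a BZ-filtration of $\sigma$ are Mackey inductions with nontrivial $V_s$-spectrum.
\end{itemize}
``Apply to each graded piece and pass to the limit'' is therefore not an argument.

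\textbf{The missing idea: product test functions $g=g_1\otimes g_2$ on $\widehat{V_n}=\widehat{V_{n-s}}\oplus\widehat{V_s}$.}
\begin{enumerate}
\item Translate $f$ by $M_n$ so that $f(e)\neq 0$.
\item Choose $g_2\in\CS(\widehat{V_s})$ with $g_2\cdot f(e)\neq 0$, using an approximate identity for the smooth $V_s$-action on the fibre.
\item Then $F:=(g_2\cdot f)|_{V_{n-s}}$ is a nonzero Schwartz function. So $\widehat F(\phi_1)\neq 0$ for some $\phi_1\neq 0$, since $\widehat F$ is continuous and nonzero on an open set.
\item Take $g_1\geq 0$ supported in a small neighbourhood of $\phi_1$ that avoids $0$.
\item Then $g_1\otimes g_2\in\CS(\widehat{V_n}\setminus\{0\})$, whatever $g_2$ is.
\item Finally $(g\cdot f)(e)=\int g_1\widehat F\neq 0$. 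To avoid cancellation, pair with a continuous functional $\ell$ that is positive on $\widehat F$ near $\phi_1$.
\end{enumerate}
This uses nothing about $\sigma$ beyond smoothness. No spectral analysis of the fibre, and no BZ-filtration of $\sigma$, is needed at this step.
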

\begin{proof}
    By the assumption and argument in~\cite[section 3]{WZ}, we know that $\tau\mind \sigma$ admits a BZ-filtration. Consequently, $\omega$ also admits a BZ-filtration, and in particular, $\CS(\widehat{V_n}\setminus\{0\})\cdot \omega=0$ if $\Psi(\omega)=0$. Note that $\tau\mind \sigma$ is a subspace of $\CC^{\infty}(M_n, \tau\boxtimes \sigma)$ consisting of functions whose restriction to $V_n \cap U_{n-s,s}$ are Schwartz functions. Let $f\in \omega$ such that $f(e)\neq 0$. We write 
    \[
    V_n =(V_n \cap U_{n-s,s})\oplus (V_n\cap M_s)\simeq V_{n-s}\oplus V_s \quad v\mapsto (v_1,v_2)
    \]
    and 
    \[
    \widehat{V_n}\simeq \widehat{V_{n-s}}\oplus \widehat{V_s} \quad \phi\mapsto (\phi_1,\phi_2).
    \]
    Let $g_2\in \CS(\widehat{V_s})$ such that $g_2\cdot f(e)\neq 0$. Let $F$ be the non-zero Schwartz section on $V_{n-s}$ defined by $F(v_1):=g_2\cdot f(v_1)$. Hence $\widehat{F}\neq 0$, and in particular, there is some $\phi_1\neq 0$ such that $\widehat{F}(\phi_1)\neq 0$. Let $\ell$ be a continuous real functional on $\tau\boxtimes \sigma$ such that $\ell(\widehat{F}(\phi_1))> 0$. Consider an open neighborhood $\mathscr{U}$ of $\phi_1$ and a positive Schwartz function $g_1$ supported on $\mathscr{U}$ such that $0\notin \mathscr{U}$ and $\ell(\widehat{F}(\mathscr{U}))\subset \BR_{>0}$. Then for 
    \[g(\phi_1,\phi_2):=g_1(\phi_1)\cdot g_2(\phi_2)\in \CS(\widehat{V_n}\setminus\{0\})\],
    we have
    \[
   ( g\cdot f)(e)=\int_{\widehat{V_{n-s}}} g_1(\phi_1) \widehat{F}(\phi_1) d\phi_1 \neq 0.
    \]
    This leads to a contradiction.
\end{proof}
We are now ready to prove Theorem~\ref{homo-gen}.
\begin{proof}[Proof of Theorem~\ref{homo-gen}]
    We prove by induction on $n$. When $n=1$, the statement is trivial. We proceed to prove the statement assuming $n-1$ holds. Suppose $\omega$ is a non-zero $M_n$-submodule of $\pi|_{M_n}$ with $\mathrm{depth}(\omega)=d<n$. Write $\pi$ as $\chi_1\times \tau$, where $\tau=\chi_2\times \dots\times\chi_n$. Then the $M_n$-action on $P_{1,n-1}\backslash \GL_n$ gives rise to
    \[
    0\lra \pi_o \lra \pi|_{M_n}\lra \pi_c\lra 0.
    \]
    Considering $\omega\cap \pi_o$ and the image of $\omega$ in $\pi_c$, it suffices to deal with following two cases.
    \begin{itemize}
        \item \textbf{Case 1.} $\omega$ is a submodule of $\pi_o\simeq \tau\mind 1$, where $1$ is the trivial representation of the trivial group. By Lemma~\ref{open-sub}, we know that $\Psi(\omega)$, which is a submodule of $\Psi(\pi_o)\simeq \tau|_{M_{n-1}}$, is non-zero. Consequently, it contradicts to the induction hypothesis on $n$.
        \item \textbf{Case 2.} $\omega$ is a submodule of $\pi_c$. Let 
        \[
        \ov{\pi}:=|\det|_{\BK}^{-1} \pi^{\vee}= (|\det|_{\BK}\chi_1)^{-1}\times \dots\times (|\det|_{\BK}\chi_n)^{-1}.
        \]
        Then there is a non-generate bilinear pairing with respect to $\omega$ between $\omega$ and $\ov{\pi}|_{M_n}$. By Proposition~\ref{pairing-decent}, there is a non-generate bilinear pairing with respect to $D^d(\omega)$ between $D^d(\omega)$ and $D^d(\ov{\pi}|_{M_n})$. By explcit computation in~\cite[Theorem 3.12]{WZ}, we know that $D^d(\omega)$ has a filtration of $\GL_{n-d}$-representation, whose successive quotients has form
        \begin{equation}\label{side 1}
              \left(   |\det|_{\BK}^{1/2}(\det)^{r_1}(\ov{\det})^{r_2}\chi_1\right)\times \gamma 
        \end{equation}
        for some non-negative integer $r_1,r_2$ and some $\gamma\in \mathrm{CW}_{\GL_{n-d-1}}$. On the other hand, we know that $D^d(\ov{\pi}|_{M_n})$ has a filtration of $\GL_{n-d}$-representation, whose successive quotients has form
                \begin{equation}\label{side 2}
                       \lambda_1\times\dots\times \lambda_{n-d},
        \end{equation}
        where for each $i$, $\lambda_i\simeq \chi^{-1}_j|\det|_{\BK}^{-1/2}(\det)^{k_1}(\ov{\det})^{k_2} $ for some $j$ and some non-negative integer $k_1,k_2$. Consequently, by comparing infinitesimal characters, we observe that there is no non-generate pairing between any irreducible component in $\eqref{side 1}$ and irreducible component in $\eqref{side 2}$, which leads to a contradiction.
    \end{itemize}
\end{proof}

\begin{corollary}
   Let $\pi$ be an irreducible generic representation of $\GL_n$. Then $\pi|_{M_n}$ contains the Gelfand-Grev representation $I^{n-1}(\BC)$ as the unique irreducible submodule. In particular, $\pi|_{M_n}$ is indecomposable. 
\end{corollary}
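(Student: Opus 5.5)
The plan is to deduce the corollary from Theorem~\ref{homo-gen} and the preceding lemma of Vogan. Let $\pi$ be irreducible generic. By that lemma, $\pi$ embeds as the unique irreducible submodule of a dominant principal series $\Pi=\chi_1\times\dots\times\chi_n$. Dominance should give exactly the hypothesis of Theorem~\ref{homo-gen}, namely $\chi_i|\det|_{\BK}(\det)^{r_1}(\ov{\det})^{r_2}\neq\chi_j$ for $i<j$. Checking this from the definition of dominance, in Vogan's normalization, is the first step; so $\Pi$ is homogeneous. A principal series of $\GL_n$ has depth $n$, since its annihilator variety is the regular nilpotent orbit. Its highest derivative $D^n(\Pi)=\Psi^{n-1}(\Pi)$ is a representation of the trivial group $\GL_0$. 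By genericity and uniqueness of Whittaker models this space is one-dimensional, so $I^{n-1}(D^n(\Pi))\simeq I^{n-1}(\BC)$, the Gelfand--Graev representation.

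Next I transfer from $\Pi$ to $\pi$. Since $\pi$ is generic, $\Psi^{n-1}(\pi)\neq0$, so the depth of $\pi$ is also $n$. The functor $D^n=\Psi^{n-1}$ is exact by Corollary~\ref{derivative exact}, and $\dim\Psi^{n-1}(\Pi)=1$. Hence $D^n(\pi)\simeq\BC$ and $D^n(\Pi/\pi)=0$. By Theorem~\ref{canon-fil-GL_n} the bottom piece of the canonical filtration is functorial: $\pi_{n-1}=I^{n-1}(D^n(\pi))$, and likewise for $\Pi$. The inclusion $\pi\hookrightarrow\Pi$ therefore induces an isomorphism $I^{n-1}(D^n(\pi))\simeq I^{n-1}(D^n(\Pi))$. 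The functoriality follows from $\pi_{n-1}=\CS(\widehat{V}\setminus\{0\})\cdots\pi$, which is built from Proposition~\ref{open-iso} inductively. Now let $\omega\subset\pi|_{M_n}$ be a nonzero $M_n$-subrepresentation. It is also a subrepresentation of $\Pi|_{M_n}$, so homogeneity of $\Pi$ gives $\omega\cap I^{n-1}(\BC)\neq0$. Thus $\pi$ is homogeneous.

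Finally, $I^{n-1}(\BC)=\SInd_{N_n}^{M_n}\psi$ is irreducible as an $M_n$-representation; this is the standard irreducibility of the Gelfand--Graev representation of the mirabolic group. I would prove it by induction on $n$ using Lemma~\ref{open-sub}-type arguments. Alternatively, any nonzero closed subrepresentation $\omega'$ of $I^{n-1}(\BC)$ satisfies $\Psi^{n-1}(\omega')\neq0$ by iterating Lemma~\ref{open-sub}. Exactness of $\Psi^{n-1}$ on $I^{n-1}(\BC)$, whose Mackey structure makes $\CS(\CO)\cdot\omega'$ closed, then forces the quotient to vanish. Combining this irreducibility with homogeneity, every nonzero submodule contains $I^{n-1}(\BC)$, so it is the unique irreducible submodule. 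Indecomposability follows, because a decomposition $\sigma_1\oplus\sigma_2$ would give $\sigma_1\cap\sigma_2=0$.

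The main obstacle is this irreducibility of $I^{n-1}(\BC)$ in the smooth Fréchet setting, with closed submodules, together with the reduction from the dominance condition of Vogan's lemma to the hypothesis of Theorem~\ref{homo-gen}. I expect to handle the former via the open-orbit machinery (Proposition~\ref{open-iso}, Corollary~\ref{open-exa}) applied $n-1$ times.
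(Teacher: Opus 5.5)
Your proposal is correct and follows the paper's own route: the paper states only that ``by the above lemma, it suffices to prove that dominant principal series is homogeneous,'' i.e.\ embed $\pi$ into a dominant principal series $\Pi$ via Vogan's lemma and apply Theorem~\ref{homo-gen} to $\Pi$; you additionally supply the details it leaves unstated ($D^n(\Pi)\simeq\BC$, the transfer to $\pi$, and irreducibility of $I^{n-1}(\BC)$). Your transfer step via exactness of $D^n$ and functoriality of the canonical filtration works but can be shortened: applying homogeneity of $\Pi$ to $\omega=\pi$ itself gives $\pi\cap I^{n-1}(D^n(\Pi))\neq 0$, and irreducibility of $I^{n-1}(D^n(\Pi))\simeq I^{n-1}(\BC)$ then forces $I^{n-1}(\BC)\subset\pi$ directly.
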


The Gelfand-Kazhdan conjecture in~\cite{GK72} also follows from the theorem.
\begin{corollary}
    Let $\pi$ be an irreducible generic representation of $\GL_n$. Then there is a unique Kirillov model of $\pi$.
\end{corollary}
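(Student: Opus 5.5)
The plan is to interpret ``unique Kirillov model'' in the usual sense. There is a unique, up to scalar, continuous Whittaker functional $\lambda\in\Hom_{N_n}(\pi,\psi_{n,n})$. Moreover, the associated map
\[
\mathcal K:\pi\lra {}^{\infty}\Ind_{H_{n,n}}^{M_n}(\psi_{n,n}),\qquad v\mapsto \bigl(m\mapsto \lambda(m\cdot v)\bigr),
\]
is injective. Both facts will be read off from the canonical Bernstein--Zelevinsky filtration of Theorem~\ref{canon-fil-GL_n} together with the preceding corollary on generic representations.

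\textbf{Step 1 (uniqueness of the functional).} By Frobenius reciprocity and the definition of $\Psi$, the space $\Hom_{N_n}(\pi,\psi_{n,n})$ is the continuous dual of $\Psi^{n-1}(\pi)=D^n(\pi)$, a representation of $\GL_0$, i.e.\ a vector space. Using Vogan's lemma, embed $\pi$ as the unique irreducible submodule of a dominant principal series $I=\chi_1\times\dots\times\chi_n$. By the explicit computation \cite[Theorem 3.12]{WZ}, $D^n(I)\simeq\BC$. By Corollary~\ref{derivative exact}, $D^n$ is exact, so $D^n(\pi)\hookrightarrow D^n(I)=\BC$. Since $\pi$ is generic, $D^n(\pi)\neq 0$. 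Hence $D^n(\pi)\simeq\BC$, and $\lambda$ is unique up to scalar.

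\textbf{Step 2 (injectivity of $\mathcal K$).} The kernel $\omega:=\ker\mathcal K$ is a closed $M_n$-subrepresentation of $\pi|_{M_n}$. Suppose $\omega\neq 0$. By the previous corollary (equivalently, by homogeneity from Theorem~\ref{homo-gen}), we then have
\[
\omega\cap I^{n-1}(D^n(\pi))=\omega\cap I^{n-1}(\BC)\neq 0,
\]
where $I^{n-1}(\BC)=\pi_{n-1}$ is the bottom piece of the filtration in Theorem~\ref{canon-fil-GL_n}. It therefore suffices to show that $\mathcal K$ restricted to $\pi_{n-1}\simeq\SInd_{H_{n,n}}^{M_n}(\psi_{n,n})$ is injective. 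The plan is to show that this restriction coincides, up to a nonzero scalar, with the natural inclusion of Schwartz induction into smooth induction. This inclusion is obviously injective, so the two facts together give a contradiction and prove $\omega=0$.

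\textbf{Main obstacle.} The key point, and the one I expect to be hardest, is the compatibility invoked in Step 2. One must check that the composite
\[
I^{n-1}(\BC)\hookrightarrow \pi \twoheadrightarrow \Psi^{n-1}(\pi)=\BC
\]
is evaluation at the identity, rather than zero or some other functional. I would argue inductively through the isomorphisms $\Xi$ of Proposition~\ref{open-iso}, which are induced by the action map $\widetilde a$. Claim~2 in that proof shows precisely that $\Psi(\widetilde a)\circ ev_e^{-1}$ is the identity on $\Psi(\pi)$. Iterating this along $\pi_k=I^k(\Psi^k(\pi))$ shows that $\lambda|_{\pi_{n-1}}$ is $ev_e$ composed with a nonzero functional on $\BC$, which is nonzero by Step 1. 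Then, for $f\in \SInd_{H_{n,n}}^{M_n}(\psi_{n,n})$, we get $\mathcal K(f)(m)=c\,f(m)$ with $c\neq0$, which is the required identification.
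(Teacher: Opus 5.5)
Your proposal is correct and matches the argument the paper intends when it says the Gelfand--Kazhdan conjecture follows from Theorem~\ref{homo-gen}: the kernel of the Kirillov map is a closed $M_n$-subrepresentation, so by homogeneity it would have to meet the Gelfand--Graev piece $\pi_{n-1}\simeq I^{n-1}(\BC)$. On that piece the Whittaker functional is, up to a nonzero scalar, evaluation at the identity, exactly as Claim~2 in the proof of Proposition~\ref{open-iso} gives, so the kernel must be zero. One small point is worth making explicit: homogeneity of the dominant principal series $I$ passes to $\pi$ because your Step~1 isomorphism $D^n(\pi)\simeq D^n(I)$ forces $\pi_{n-1}=I_{n-1}$ inside $I$.
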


For general irreducible representations, we propose the following conjecture.
\begin{conjecture}\label{homo conj}
    Let $\pi$ be an irreducible representation of $\GL_n$. Then $\pi$ is homogeneous and $\pi|_{M_n}$ is indecomposable.
\end{conjecture}

\subsection{A counter-example}\label{counter-exa-sec}
In this subsection, we exhibit an example of $\pi\in\Smod_{M_n}$ such that the exact sequence in Theorem~\ref{Canonical filtration} does not hold. In \cite{CWYZ}, we conjecture that the exact sequence should hold under nuclear assumptions and provide some evidence. Here, we also show that the counter-example is not nuclear, which provides further evidence. We first recall a well-known result on irreducibility.
\begin{lemma}
    Let $G$ be a Lie group. If $E$ is an irreducible unitary Hilbert representation of $G$, then its smooth vector $E^{\infty}$ equipped with the topology defined in~\eqref{smooth vector top} is irreducible as well.
\end{lemma}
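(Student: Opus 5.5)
The plan is to show that any nonzero closed $G$-invariant subspace $W\subset E^{\infty}$ is dense in $E$, and then that its closure in $E^{\infty}$ is all of $E^{\infty}$. The second step is where the real work lies.

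\textbf{Step 1: density in $E$.} Let $W\subset E^{\infty}$ be a nonzero closed $G$-invariant subspace. The inclusion $E^{\infty}\hookrightarrow E$ is continuous, so the closure $\ov{W}^{E}$ of $W$ in $E$ is a closed $G$-invariant subspace of $E$. Since $E$ is irreducible and $W\neq 0$, this closure is all of $E$. In particular $W^{\perp}=0$ in $E$.

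\textbf{Step 2: upgrading to density in $E^{\infty}$.} We use the Gårding/Dixmier–Malliavin structure. Recall that $E^{\infty}$ is a differentiable module over the Fréchet algebra $\CS(G)$ (or $\CC_c^{\infty}(G)$ for a general Lie group), and that $E^{\infty}=\CC_c^{\infty}(G)\cdot E$. For $f\in\CC_c^{\infty}(G)$, the operator $\pi(f):E\to E^{\infty}$ is continuous when $E^{\infty}$ carries the topology \eqref{smooth vector top}. This holds because $q_X(\pi(f)v)=\|\pi(Xf)v\|\le \|Xf\|_{L^1}\|v\|$, where $Xf$ denotes the left-invariant derivative of $f$.

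Now let $v\in E^{\infty}$. By Dixmier–Malliavin, write $v=\sum_{j}\pi(f_j)v_j$ with $f_j\in\CC_c^{\infty}(G)$ and $v_j\in E^{\infty}\subset E$. By Step 1, choose $w_{j,k}\in W$ with $w_{j,k}\to v_j$ in $E$. Then $\pi(f_j)w_{j,k}\to \pi(f_j)v_j$ in $E^{\infty}$ by the continuity just noted. Moreover $\pi(f_j)w_{j,k}\in W$: it is a Bochner/Riemann integral of $G$-translates of $w_{j,k}$, and these integrals converge in the Fréchet topology of $E^{\infty}$ because $g\mapsto \pi(g)w$ is smooth into $E^{\infty}$ and $W$ is closed. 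Hence $v\in\ov{W}=W$, so $W=E^{\infty}$.

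The main point to check is the continuity estimate $\pi(f):E\to E^{\infty}$ together with the fact that $W$ is stable under $\pi(f)$. Both reduce to differentiating under the integral sign. If one prefers to avoid Dixmier–Malliavin, a simpler alternative for Step 2 is to use an approximate identity $f_n$: first $\pi(f_n)v\to v$ in $E^{\infty}$, and then each $\pi(f_n)v$ is approximated by $\pi(f_n)w_k$ as above.
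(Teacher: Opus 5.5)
Your proof is correct and follows the paper's argument essentially verbatim. Both use density of $W$ in $E$ by irreducibility, surjectivity of the action map $\CC_c^{\infty}(G)\otimes E^{\infty}\to E^{\infty}$ (Dixmier--Malliavin), and the estimate $\|X\pi(f)(w-e)\|\le \|Xf\|_{L^1}\|w-e\|$ to approximate each $\pi(f)e$ by $\pi(f)w_k$; beyond this, you also justify that $\pi(f)w_k$ lies in the closed subspace $W$, which the paper uses only implicitly, and your approximate-identity variant is a valid way to avoid Dixmier--Malliavin.
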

\begin{proof}
    Let $V$ be a non-zero closed sub-representation of $E^{\infty}$. Note that $V$ is dense in $E$ since $E$ is irreducible. Since the action map $a:\CC^{\infty}_c(G)\otimes E^{\infty}\lra E^{\infty}$ is surjective, it suffices to show that for any $f\in \CC^{\infty}_c(G)$ and $e\in E^{\infty}$, there exists a sequence of elements $\{v_n\}_{n=1}^{\infty}\subset V$ converging to $a(f\otimes e)$. 

    Let $\{w_n\}_{n=1}^{\infty}$ be a sequence of elements in $V$ which converges to $e$ in $E$. Then for any $X\in\U(\fkg)$,
    \[
    \lVert X(a(f\otimes w_n)-a(f\otimes e))\rVert \leq \lVert X(f)\rVert_{L^1} \cdot \lVert w_n-e\rVert .
    \]
    Hence, $v_n=a(f\otimes w_n)$ is a sequence we want.
\end{proof}

Let $E=\Ind_{V_2}^{M_2}(\psi)$ be the $L^2$-induction of the unitary character $\psi$. By Mackey induction theory, it is an irreducible Hilbert representation of $M_2$. Hence, its smooth vector $E^{\infty}$ equipped with the topology defined in~\eqref{smooth vector top} is an irreducible representation in $\Smod_{M_2}$. 

We realize $\Ind_{V_2}^{M_2}(\psi)$ as $L^2$-functions on $$\BR^{\times}\simeq \left\{ \begin{pmatrix}
    x & 0\\
    0 & 1\\
\end{pmatrix}\mid x\in\BR^{\times}\right\},$$
whose right invariant measure is $d\mu=\frac{dx}{|x|}$. Let $\CD$ be the subalgebra of the Nash differential operators on $\BR^{\times}$ generated by $x\frac{d}{dx}$ and polynomial functions $\BC [x]$. Then 
\[
E^{\infty}=\{f\in\CC^{\infty}(\BR^{\times})\mid Df\in L^2(\BR^{\times},d\mu), \forall D\in \CD\}.
\]
Now, we show that $\SInd_{V_2}^{M_2}(\psi)\subsetneq E^{\infty}\subsetneq \CO\Ind_{V_2}^{M_2}(\psi)$. 
\begin{enumerate}
    \item Let $\rho\in\CC^{\infty}(\BR)$ be a cutoff function such that $\supp(\rho)\subset [-2,2]$ and $\rho=1$ on $[-1,1]$. Then $\rho x\in E^{\infty}$ but $\rho x\notin \CS(\BR^{\times})$. 
    \item By the definition of tempered functions, it suffices to show that for any $f\in E^{\infty}$ and any positive integer $n$, there exists some positive integer $N$ such that
    \[
    |x^N \frac{d^n}{dx^n}(f)| \text{ is bounded.}
    \]
    Since $x\frac{d}{dx}(f)\in V^{\infty}$, it suffices to prove when $n=0$. We observe that 
    \[
    \frac{d}{dx}((x^mf)^2)=2m(x^mf)(x^{m-1}f)+2(x^mf)(x^m\frac{d}{dx}f)
    \]
 , and hence when $m\geq 1$
    \[
   \bigg |\frac{d}{dx}((x^mf)^2)\bigg|_{L^1} \leq 2m |x^mf|_{L^2}\cdot |x^{m-1}f|_{L^2}+ 2|x^mf|_{L^2}\cdot |x^m\frac{d}{dx}(f)|_{L^2}<+\infty.
    \]
   Here, the norm is with respect to the measure $dx$. Since $\big|\frac{d}{dx}((x^mf)^2)\big|_{L^1}$ is larger than the total variation of $(x^mf)^2$, which implies that $|x^mf|$ is bounded.
\end{enumerate}
Moreover, it is not hard to see that the restriction map in $x=1$ induces 
$$\Psi(E^{\infty})\simeq E^{\infty}/\fkm_{\psi} \cdot E^{\infty}\simeq \BC,$$ 
 where $\fkm_{\psi}$ is the maximal ideal in $\CS(\BR^{\times})$ consisting of functions that is zero at $x=1$. On the other hand, 
$\bigcap_k \ov{\fkv_2^k E^{\infty}}=E^{\infty}$ since $\fkv_2^k E^{\infty}\supset \SInd_{V_2}^{M_2}(\psi)$ for any integer $k$. Consequently, the exact sequence in theorem~\ref{Canonical filtration} fails in this case. This example also shows that for a fixed open $L$-orbit $\CO\subset\widehat{N}$, $\CS(\CO)\cdot \pi$ is not closed for arbitrary $\pi\in\Smod_P$.

The failure of the nuclear property is localized in a neighborhood of $0$. For simplicity, we only consider $\BR_{>0}$ and identify the space with 
\[
 \{f\in L^2(\BR,dt)\mid (\frac{d}{dt})^k(e^{mt}f)\in L^2,\forall k, m\in \BZ_{\geq 0}\}
\]
via the exponential map. Around $t\to -\infty$, the topology is determined by semi-norms $p_k(f):=\sum_{i=0}^k|(\frac{d}{dt})^if|_{L^2}$, which is essentially the topology of infinite-order Sobolev space, see Example~\ref{Sobolev example}. Then the claim follows from the lemma.
\begin{lemma}\label{non-nuclear lem}
  Let $E = H_{\infty}(\mathbb{R})$ be the infinite-order Sobolev space on $\mathbb{R}$. Then the completion of $E$ with respect to $p_k$ is $H_k(\mathbb{R})$. Moreover, for any $k < s$, the continuous inclusion $H_s \hookrightarrow H_k$ is not compact, and therefore not nuclear.
\end{lemma}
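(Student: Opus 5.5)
The plan is to pass to the Fourier side and use the Plancherel theorem. Under the Fourier transform, $H_s(\BR)$ is isometric to the weighted space $L^2(\BR^*,(1+|\xi|^2)^s d\xi)$, and $p_k$ is equivalent to the $H_k$-norm $\|D\|_k^2=\int(1+|\xi|^2)^k|\widehat D|^2d\xi$. Indeed, $|(\frac{d}{dt})^iD|_{L^2}=\|\xi^i\widehat D\|_{L^2}$ and $\sum_{i\le k}|\xi|^{2i}\asymp(1+|\xi|^2)^k$.

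For the first assertion, we show that $H_\infty(\BR)$ is dense in $H_k(\BR)$ for the norm $p_k$. Since $H_k$ is complete (it is a weighted $L^2$ space), this identifies the completion of $(E,p_k)$ with $H_k$. Density follows because $\CS(\BR)\subset H_\infty(\BR)$, and $\CS(\BR^*)$ (indeed $\CC_c^\infty(\BR^*)$) is dense in $L^2(\BR^*,(1+|\xi|^2)^kd\xi)$. We then pull this density back through the Fourier transform.

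For non-compactness of $H_s\hookrightarrow H_k$ with $k<s$, the cleanest route exploits translation invariance. Fix $0\neq\varphi\in\CS(\BR)$ and put $\varphi_m(x):=\varphi(x-m)$. Then $\|\varphi_m\|_s=\|\varphi\|_s$ for all $m$, so the sequence is bounded in $H_s$. In $H_k$ the translates converge weakly to $0$ as $m\to\infty$: pairing against a fixed element reduces, on the Fourier side, to $\int e^{-im\xi}\widehat\varphi\,\overline{\widehat g}(1+|\xi|^2)^kd\xi\to0$ by Riemann–Lebesgue. However, their norms stay constant and equal to $\|\varphi\|_k\neq0$. So no subsequence converges in $H_k$, since the only possible limit is $0$. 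Hence the image of the unit ball of $H_s$ is not relatively compact in $H_k$.

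An alternative, if one prefers frequency-localized bumps, is a sequence with $\widehat\varphi_m$ supported in disjoint unit intervals near a fixed frequency. This gives an orthogonal family with comparable $H_s$ and $H_k$ norms. Finally, a nuclear map is in particular compact, so the inclusion is not nuclear.

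This also shows that the Fréchet space $E=\varprojlim H_k$ is not nuclear. For a nuclear Fréchet space defined by Hilbert seminorms $p_k$, for every $k$ there would be some $s>k$ such that the canonical map between the local Banach spaces $\widehat{E}_{p_s}\to\widehat{E}_{p_k}$, i.e. $H_s\to H_k$, is nuclear. This contradicts the previous step. The point needing the most care is this last reduction, namely that the local Banach spaces are exactly $H_k$, which is what the first assertion provides. The rest is routine Plancherel bookkeeping.
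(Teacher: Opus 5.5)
Your argument is correct and is essentially the paper's. The paper also uses translates, namely $f_n(x)=f(x+n)$ for a bump $f$ supported in $[-\frac{1}{2},\frac{1}{2}]$; these are pairwise orthogonal in every $H_k$ with constant norm. You reach the same conclusion for Schwartz translates via weak convergence to $0$ (Riemann--Lebesgue) plus constant norm, and you also supply the density argument and the reduction from the nuclearity of $E$ to nuclearity of $H_s\hookrightarrow H_k$, both of which the paper leaves implicit.

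One slip in your optional alternative: infinitely many disjoint \emph{unit} intervals cannot all stay near a fixed frequency. If they escape to infinity, the $H_k$-norms of $H_s$-normalized bumps tend to $0$. You would instead need shrinking disjoint intervals in a bounded set, e.g.\ $[\frac{1}{m+1},\frac{1}{m}]$. Your main argument does not depend on this.
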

\begin{proof}
    We construct a uniformly bounded sequence in $H_s$ that does not have a convergent subsequence in $H_k$. Let $f\neq 0\in \CC_c^{\infty}(\BR)$ whose support is on $[-\frac{1}{2},\frac{1}{2}]$. Let $f_n(x)=f(x+n)$. Then $\{f_n\}$ is a sequence we want.
\end{proof}

\section{Category $\HC(\fkg,K_L)$}\label{alg complete cat sec}
In this section, $P$ is a standard parabolic subgroup whose unipotent radical $N$ is \textbf{not assumed to be abelian}. In the following sections, if the parabolic subgroup is clear, we will omit $N$ from the subscript of the Casselman-Jacquet functor for simplicity. We define a category $\HC(\fkg,K_L)$ which is an algebraic analogy of the category $\CC(\fkg,L)_f$ introduced in~\cite{WZ}. We will show that this category is equivalent to the category $\CC(\fkg,L)_f$ through a certain globalization functor. 

We first recall some notation in~\cite{WZ}. For a $(\fkl,K_L)$-module $\tau$, we define the generalized $\fkz_L$-weight subspace of weight $\alpha\in\fkz_L^*$ by
\[
 \tau_{\alpha}:= \{v\in\tau\mid (X-\alpha(X))^kv=0\ \text{ for some } k\in\BZ_{> 0}, \forall X\in \fkz_L\}.
\]
Moreover, let $\mathrm{wt}(\tau)$ denote the set of generalized $\fkz_L$-weights of $\tau$ such that the weight space is non-zero. The set of $\fkz_L$-weights in $\U(\fkn)$ is denoted by $\Omega$. We define a partial order on $\fkz_L^*$ as follows:
\begin{equation}\label{partial_order}
\alpha \leq \kappa \quad \text{ if and only if } \quad \kappa-\alpha\in \Omega.
\end{equation}
\begin{definition}
    A $(\fkg,K_L)$-module is a vector space equipped with compatible $\U(\fkg)$-action and $K_L$-action. Let $\HC(\fkg,K_L)$ be the category of $(\fkg,K_L)$-modules $V$ such that
   \begin{enumerate}
       \item  $V/\fkn V$ is a Harish-Chandra module of $(\fkl,K_L)$.
       \item The $\U(\fkg)$-action is $\fkn$-complete. That is, the canonical map 
       $$V\lra \varprojlim_k \ V/\fkn^kV$$
       is an isomorphism. Morphisms in this category are $\U(\fkg)$ and $K_L$-equivariant linear maps.
   \end{enumerate}
\end{definition}

Assume that $V\in\HC(\fkg,K_L)$. Let $V^{[\fkz_L]}$ be the submodule of $V$ consisting of $\fkz_L$-finite vectors. Then $\ov{\fkn}$-action on $V^{[\fkz_L]}=\oplus _{\kappa\in\fkz_L^*} V_{\kappa}$ is $\ov{\fkn}$-locally finite. For every $k$ and every $\kappa\in\fkz_L^*$, we have a natural projection map $V/\fkn^k V\lra (V/\fkn^k V)_{\kappa}$ since $V/\fkn^k V$ is a Harish-Chandra module. By condition (2) of the definition, this defines a $(\fkl,K_L)$-equivariant projection map
\[
pr_{\kappa}: V\lra V_{\kappa} \text{ for all } \kappa\in\fkz_L^*.
\]
Moreover, the projection map gives the isomorphism
\[
V\simeq \prod_{\kappa\in \fkz_L^*} V_{\kappa} \simeq \varprojlim_{S\subset \fkz_L^*\text{ finite}} \bigoplus_{\kappa\in S} V_{\kappa}.
\]
For a fixed $\kappa\in \fkz_L^*$, we have the decomposition
\[
 V \simeq V_{\kappa} \bigoplus (\prod_{\alpha\neq \kappa} V_{\alpha} ), \quad  v \mapsto ( v_{\kappa}, v_{\neq \kappa}).
\]

We first study the action of $\CZ(\fkg)$ on objects of $\HC(\fkg,K_L)$. Recall that the natural projection of $\CZ(\fkg)$ to $\U(\fkp)$ under 
\begin{equation}\label{decom}
    \U(\fkg)\simeq \U(\fkp)\oplus \U(\fkg)\ov{\fkn} \qquad X\mapsto (X_{\fkp},X_{\ov{\fkn}})
\end{equation}
is injective with the image contained in $\CZ(\fkl)$. We introduce the notion of $\CZ(\fkg)$-finite following~\cite{Wal92}.
\begin{definition}
    A $(\fkg,K_L)$-module $M$ is called $\CZ(\fkg)$-finite if there exists an ideal $I\subset \CZ(\fkg)$ of finite codimension which annihilates $M$.
\end{definition}
We need the following lemma to prove that objects in $\HC(\fkg,K_L)$ are $\CZ(\fkg)$-finite.
\begin{lemma}\label{inf-lem}
    Let $V\in\HC(\fkg,K_L)$, and let $M$ be an irreducible $(\fkl,K_L)$-submodule. Suppose that there exists a positive integer $k$ such $\ov{\fkn}^k M=0$, then there exist finitely many algebra homomorphisms $\alpha_1,\dots,\alpha_s:\CZ(\fkg)\lra \BC$ such that 
    \[
    (z-\alpha_1(z))\cdots (z-\alpha_s(z))m=0, \forall m\in M \text{ and } z\in \CZ(\fkg).
    \]
\end{lemma}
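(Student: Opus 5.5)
The plan is to reduce to the Harish-Chandra homomorphism for $\fkl$, using the decomposition \eqref{decom}. For $z\in\CZ(\fkg)$ write $z=z_{\fkp}+z_{\ov{\fkn}}$ with $z_{\fkp}\in\CZ(\fkl)$, where we use the opposite convention: expand $\U(\fkg)\simeq \U(\fkp^-)\oplus \fkn\U(\fkg)$ or $\U(\fkl)\oplus(\ov{\fkn}\U(\fkg)+\U(\fkg)\fkn)$ as convenient. Since $M$ is killed by $\ov{\fkn}^k$ rather than by $\fkn$, the right choice is the decomposition
\[
\U(\fkg)=\U(\fkl)\oplus\big(\fkn\U(\fkg)+\U(\fkg)\ov{\fkn}\big).
\]
Let $\gamma:\CZ(\fkg)\to\CZ(\fkl)$ be the corresponding projection, i.e. $z\equiv \gamma(z)$ modulo $\U(\fkg)\ov{\fkn}$ and modulo $\fkn\U(\fkg)$. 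Since $M$ is irreducible and Harish-Chandra (it sits inside a $\fkz_L$-weight space, which is Harish-Chandra because $V/\fkn^jV$ is), $\CZ(\fkl)$ acts on $M$ by a character $\chi_M$.

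First I consider the finite-dimensional $\U(\ov{\fkn})$-span $W:=\U(\ov{\fkn})M=\sum_{j<k}\ov{\fkn}^jM$. This is an $(\fkl,K_L)$-stable subspace with finitely many $\fkz_L$-weights, namely $\wt(M)-\Omega_{<k}$. I filter $W$ by $W_j:=\ov{\fkn}^{j}M$, so that $W_0=M\supset$ nothing; better, I use the increasing filtration $F_i:=\{w\in W:\ov{\fkn}^i w=0\}$. Each graded piece $F_i/F_{i-1}$ is a finitely generated $(\fkl,K_L)$-module killed by $\ov{\fkn}$ modulo lower terms. It is a quotient of $\ov{\fkn}^{\,k-i}\otimes M$, which is $\CZ(\fkl)$-finite by Kostant's theorem on tensoring with finite-dimensional representations. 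Hence $\CZ(\fkl)$ acts on $W$ through finitely many generalized characters $\chi_1,\dots,\chi_t$ of $\CZ(\fkl)$.

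Next I take $m\in M$ and $z\in\CZ(\fkg)$. I write $z=\gamma(z)+\sum_a X_a u_a$ with $X_a\in\fkn$; equivalently, modulo $\U(\fkg)\ov{\fkn}$ I expand by PBW as $\U(\fkn)\U(\fkl)\U(\ov{\fkn})$. Acting on $m$, the $\U(\ov{\fkn})$ factors land in $W$, but the $\fkn$-factors leave $W$. Induction along the $\ov{\fkn}$-filtration does not directly work, so instead I use the $\fkz_L$-weight projection $pr_\kappa$. Because $z$ commutes with $\fkz_L$, $z$ preserves $V_\kappa$, so $zm\in V_\kappa$ for $\kappa$ the weight of $m$. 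In the PBW expansion $\sum n_a l_a \bar n_a$ of $z$, each term has total weight zero. Terms with $n_a\neq1$ have $\bar n_a$ of positive degree. A term $n_al_a\bar n_a$ applied to $m$ equals $n_a$ applied to an element of $W$, and I regard it inside $\U(\fkg)W$. So $z$ acts on the $\U(\fkg)$-module $\U(\fkg)W$, which is a quotient of $\U(\fkg)\otimes_{\U(\fkl+\ov{\fkn})}W$, a generalized Verma module induced from the finite-length, $\CZ(\fkl)$-finite module $W$. The standard Harish-Chandra argument for such induced modules then applies: $z$ acts on $\U(\fkg)\otimes_{\U(\ov{\fkp})}W$ through $\gamma(z)$ acting on $W$, up to the $\rho$-shift. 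Therefore $\prod_i(z-\chi_i(\gamma(z)))^{r}$ kills it, giving $\alpha_i:=\chi_i\circ\gamma$, which are algebra homomorphisms. The exponent $r$ can be absorbed by listing the $\alpha_i$ with multiplicity.

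The main obstacle is making the generalized Verma step rigorous. The module $\U(\fkg)\otimes_{\U(\ov{\fkp})}W$ is induced from $\ov{\fkp}$ with $\ov{\fkn}$ acting nilpotently, but not trivially, on $W$. To handle this, I filter $W$ by $\ov{\fkp}$-submodules $F_i$ on which $\ov{\fkn}$ acts trivially on the graded pieces. By exactness of induction, the induced module is filtered by honest generalized Verma modules $\U(\fkg)\otimes_{\U(\ov{\fkp})}(F_i/F_{i-1})$. On each of these, $z$ acts via $\gamma(z)$, and that action is annihilated by a product over the finitely many characters of $\CZ(\fkl)$. Multiplying these annihilators over the $k$ layers gives the desired product, which kills $M$ because $M$ maps into $\U(\fkg)W$.
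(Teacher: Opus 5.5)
Your argument is correct, but it is organized differently from the paper's. The paper stays inside $V$ and inducts on $k$. It writes $z=z_{\fkp}+\sum_j X_jY_j$ with $z_{\fkp}\in\CZ(\fkl)$ and $Y_j\in\ov{\fkn}$, and takes $P(z)$ killing $\ov{\fkn}M$ from the inductive hypothesis. It then computes $P(z)(z-\alpha(z))m=\sum_j X_j\,P(z)Y_jm=0$, using only that $P(z)$ is central and so can be moved past the $\fkn$-factors $X_j$. So your remark that induction along the $\ov{\fkn}$-filtration ``does not directly work'' because the $\fkn$-factors leave $W$ is mistaken: centrality makes it irrelevant where $X_jY_jm$ lies.

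You instead pass to $\U(\fkg)\otimes_{\U(\ov{\fkp})}W$, which surjects onto $\U(\fkg)W\supset M$. You filter $W$ by the $\ov{\fkp}$-stable layers $F_i$, so that the induced module is filtered by genuine generalized Verma modules on which $z$ acts through $\gamma(z)$, and you get the finitely many characters from Kostant's theorem. The paper's route is shorter and avoids Kostant. Yours handles all of $W=\U(\ov{\fkn})M$ at once and works verbatim for any $\CZ(\fkl)$-finite $M$. That generality matters slightly. The paper's inductive step applies the hypothesis to $\ov{\fkn}M$, which is Harish-Chandra but usually not irreducible, so strictly one needs the finite-length version, obtained by running the same computation along a composition series. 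Likewise, Lemma~\ref{basic property}(2) applies the lemma to a non-irreducible $M$.

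A few statements in your write-up should be corrected, though none affects the final argument:
\begin{itemize}
\item $W$ is not finite-dimensional.
\item $F_i/F_{i-1}$ need not be a quotient of $\ov{\fkn}^{\otimes(k-i)}\otimes M$. What you actually need, and what is true, is that $W$ is an $\fkl$-equivariant quotient of $\bigl(\U(\ov{\fkn})/\ov{\fkn}^k\U(\ov{\fkn})\bigr)\otimes M$. This gives $\CZ(\fkl)$-finiteness of $W$ and of all its subquotients.
\item $z$ does not act through $\gamma(z)$ on the whole induced module when $\ov{\fkn}$ acts nontrivially on $W$. It does so only on the graded pieces, which is what your final paragraph correctly uses.
\item No $\rho$-shift enters, since you define $\alpha_i:=\chi_i\circ\gamma$ directly.
\end{itemize}
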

\begin{proof}
    Let $\alpha:\CZ(\fkl)\lra \BC$ be the infinitesimal character of $M$. We also regard it as an algebra homomorphism  
    \[ \CZ(\fkg)\lra \CZ(\fkl)\lra \BC
    \]
    by isomorphism~\ref{decom}. We prove the statement by induction on $k$.
    When $k=1$, we have
    \[
    (z-\alpha(z))m= (z_{\fkp} -\alpha(z_{\fkp}))m=0.
    \]
    Suppose that the statement holds for $k$, we proceed to prove the statement assuming that $\ov{\fkn}^{k+1} M=0$. 

    Since $\ov{\fkn}M$ is a Harish-Chandra module, by induction hypothesis, there exists 
    \[
    P(z)=\prod_{i=1}^s (z-\alpha_i(z))  ,
    \]
    where $\alpha_i$ is an algebra homomorphism $\CZ(\fkg)\to \BC$ such that $P(z) x=0$ for $\forall x\in \ov{\fkn}M$. Then 
 \begin{equation*}
     \begin{split}
         P(z)(z-\alpha(z))m&=P(z)(z_{\fkp}-\alpha(z_{\fkp}))m +\sum_{j=1}^t P(z) X_j Y_j m\\
        &=\sum_{j=1}^t X_j P(z)(Y_j m )=0
     \end{split}
 \end{equation*}
 for $\forall m\in M$, where $Y_j\in \ov{\fkn}$ and $\sum_{j=1}^t X_j Y_j= z_{\ov{\fkn}}$. 
\end{proof}

We list some basic properties of the objects in $\HC(\fkg,K_L)$.
\begin{lemma}\label{basic property}
    Let $V\in\HC(\fkg,K_L)$. Then:
    \begin{enumerate}
        \item $V^{[\fkz_L]}$ is $\U(\fkp)$-finitely generated. And the action of $K_L$ on $V^{[\fkz_L]}$ is locally finite.
        \item $V$ is $\CZ(\fkg)$-finite.
        \item Let $M$ be a submodule of $V$. Then the subobject generated by $M$ is 
\[
\ov{M}:=\varprojlim_{S\subset \fkz_L^*\text{ finit}} \bigoplus_{\kappa\in S} pr_{\kappa}(M).
\]
    \end{enumerate}
\end{lemma}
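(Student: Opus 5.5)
For (1), I will work weight by weight. Put $H:=V/\fkn V$, a Harish-Chandra module of $(\fkl,K_L)$. Then $H$ is $\CZ(\fkl)$-finite and finitely generated, so $\wt(H)$ is finite and each $H_\kappa$ is finitely generated over $\U(\fkl)$.

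Choose finitely many $\fkz_L$-weight vectors $v_1,\dots,v_m\in V^{[\fkz_L]}$ whose images generate $H$; this is possible because $pr_\kappa$ is compatible with the quotient maps. Let $W:=\U(\fkp)\langle v_1,\dots,v_m\rangle$. By a graded Nakayama argument, $W+\fkn^kV\supset V_\kappa$ modulo deeper terms for every $k$. Concretely, $\fkn$ raises $\fkz_L$-weights in the order \eqref{partial_order}, and $\fkn^kV$ has weights in $\wt(H)+\Omega_{\geq k}$, where $\Omega_{\ge k}$ denotes weights of $\fkn^k$. A fixed weight $\kappa$ meets only finitely many of the layers $\fkn^kV/\fkn^{k+1}V$. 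Each layer is a quotient of $\fkn^{\otimes k}\otimes H$, hence weight-finite-dimensional over $\U(\fkl)$. So $V_\kappa=pr_\kappa(V)$ equals $(W)_\kappa$ once $k$ is large, by the completeness in condition (2).

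This gives $V^{[\fkz_L]}=W$, which is finitely generated over $\U(\fkp)$. Local $K_L$-finiteness follows because $W$ is generated under $\U(\fkp)$ by $K_L$-finite vectors: the images in $H$ are $K_L$-finite, and the $v_i$ can be chosen inside $K_L$-finite subspaces, using the $K_L$-equivariance of $pr_\kappa$ on each $V/\fkn^kV$, which is a Harish-Chandra module by induction on extensions.

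For (2), I will apply Lemma~\ref{inf-lem}. Each $V_\kappa$ has a finite filtration by the images of $\fkn^kV$, with Harish-Chandra graded pieces, so it has finite length as an $(\fkl,K_L)$-module. Also $\ov{\fkn}^kV_\kappa=0$ for large $k$, because $\ov{\fkn}$ lowers weights and $\wt(V)\subset\wt(H)+\Omega$ is bounded below. Lemma~\ref{inf-lem} then gives a polynomial annihilating each irreducible piece, and multiplying these over a composition series annihilates $V_\kappa$.

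To get a single finite-codimension ideal, it suffices to treat the finitely many generators $v_i$ from (1), since $\CZ(\fkg)$ commutes with $\U(\fkp)$. That gives one ideal $I$ killing $V^{[\fkz_L]}$. Since $\CZ(\fkg)$ commutes with each $pr_\kappa$ and $V=\prod V_\kappa$, the ideal $I$ kills $V$.

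For (3), I will take the subobject generated by $M$ to be the smallest $\fkn$-complete submodule containing $M$. The space $\ov M$ contains $M$ and is closed under $\U(\fkg)$, because $\fkg$ shifts weights by finitely many amounts. It is $\fkn$-complete as a product of weight spaces, each of which stabilizes. Minimality holds because any complete submodule containing $M$ contains each $pr_\kappa(M)$: the projection $pr_\kappa$ is given by a $\CZ$-type polynomial in $\fkz_L$ on each finite quotient. It then contains their product, by completeness.

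The main obstacle is the Nakayama/stabilization step in (1). It requires that each weight space receives contributions from only finitely many $\fkn$-layers and that each layer's weight space is finitely generated, and these facts need care in the completed setting.
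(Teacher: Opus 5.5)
Your part (1) is essentially the paper's argument: $V/\fkn V$ has finitely many weights, each $V_\kappa$ meets only finitely many layers $\fkn^kV/\fkn^{k+1}V$ and so is Harish-Chandra, and the low weight spaces generate.

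The genuine gap is in (3). A subobject of $V$ in $\HC(\fkg,K_L)$ must satisfy \emph{both} defining conditions. You never verify condition (1), that $\ov{M}/\fkn\ov{M}$ is a Harish-Chandra $(\fkl,K_L)$-module. This is the main content of the paper's proof, and it is not automatic: a priori $\ov{M}/\fkn\ov{M}$ could have infinitely many generalized $\fkz_L$-weights. The paper uses (2) here:
\begin{itemize}
\item $\ov{M}\subset V$ is $\CZ(\fkg)$-finite.
\item $\CZ(\fkg)$ acts on $\fkn$-coinvariants through its Harish-Chandra projection into $\CZ(\fkl)$, and $\CZ(\fkl)$ is finite over that image. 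So $\ov{M}/\fkn\ov{M}$ is $\CZ(\fkl)$-finite and hence has only finitely many weights.
\item The claim that $\ov{M}_\alpha\to(\ov{M}/\fkn\ov{M})_\alpha$ is surjective (proved by inverting $(z-\alpha(z))^k$ on the components of weight $\neq\alpha$) exhibits each weight space as a quotient of $pr_\alpha(M)\subset V_\alpha$. That module is Harish-Chandra as a submodule of $V_\alpha$.
\end{itemize}
Your completeness sentence has the same hole. The $\fkn$-adic filtration of $\ov{M}$ is $\{\fkn^k\ov{M}\}$, not $\{\ov{M}\cap\fkn^kV\}$. Identifying $\varprojlim_k\ov{M}/\fkn^k\ov{M}$ with $\prod_\kappa pr_\kappa(M)$ requires that each $\ov{M}/\fkn^k\ov{M}$ have finitely many weights and that $(\ov{M}/\fkn^k\ov{M})_\kappa\simeq pr_\kappa(M)$ for $k\gg0$. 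Both rest on the facts above. Your minimality argument is fine, and the paper leaves it implicit.

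In (2), your reduction to the finitely many generators matches the paper, but the per-weight step misuses Lemma~\ref{inf-lem}. That lemma concerns an irreducible $(\fkl,K_L)$-\emph{submodule} $M$ of $V$, and its proof uses $\ov{\fkn}M\subset V$. The composition factors of $V_\kappa$ are only subquotients. Moreover, $\CZ(\fkg)$ need not preserve an $(\fkl,K_L)$-composition series of $V_\kappa$, so multiplying the resulting polynomials over the series is not justified.

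The repair is to rerun the induction of Lemma~\ref{inf-lem} directly for a Harish-Chandra submodule $M\subset V$ with $\ov{\fkn}^kM=0$, for instance $M=\bigoplus_{\kappa\in\wt(V/\fkn V)}V_\kappa$. In the base case, $z$ acts on $M$ through $z_{\fkp}\in\CZ(\fkl)$. Hence $\prod_j(z-\beta_j(z_{\fkp}))$ kills $M$, where the $\beta_j$ are the infinitesimal characters of the composition factors of $M$. The inductive step then goes through verbatim. The paper's own proof of (2) tacitly needs this same extension, since its $M$ is not irreducible either.
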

\begin{proof}
    \begin{enumerate}
        \item Since $V/\fkn V$ is a Harish-Chandra module, there exists a sufficiently large $k$ such that $\wt(V/\fkn V)\cap \wt(\fkn^kV/\fkn^{k+\ell}V)=\emptyset$ for any positive integer $\ell$. In other words, $V_{\kappa}$ is a Harish-Chandra module for any $\kappa\in \fkz_L^*$. Hence, $V^{[\fkz_L]}$ is generated by the finitely $\U(\fkl)$-generated module
        \[
        \bigoplus_{\kappa\in \wt(V/\fkn V)} V_{\kappa}
        \]
       under $\U(\fkn)$-action. Moreover, by the condition (2) in the definition, $V^{[\fkz_L]}=\oplus_{\kappa\in\fkz_L^*} V_{\kappa}$. Therefore, it is $K_L$-locally finite.
       
       \item  To prove that $V$ is $\CZ(\fkg)$-finite, it suffices to prove that $V^{[\fkz_L]}$ is $\CZ(\fkg)$-finite. By (1) of Lemma~\ref{basic property}, we can find a submodule $M$ which is a Harish-Chandra $(\fkl,K_L)$-module and generates $V^{[\fkz_L]}$ under $\U(\fkn)$-action. Moreover, there exists some positive integer $k$ such that $\ov{\fkn}^kM=0$ since $V^{[\fkz_L]}$ is $\ov{\fkn}$-locally finite. Consequently, the result follows from Lemma~\ref{inf-lem}.
       
       \item  It is not hard to see that $\ov{M}$ is a $(\fkg,K_L)$-module. Hence, it suffices to show that $\ov{M}\in\HC(\fkg,K_L)$. We first claim that 
       \[
     \ov{M}_{\alpha}\lra (\ov{M}/\fkn\ov{M})_{\alpha}
       \]
       is surjective for any $\alpha\in \fkz_L^*$. Let $x\in \ov{M}$ such that $\ov{x}$, the image of $x$ in $\ov{M}/\fkn\ov{M}$, has generalized $\fkz_L$-weight $\alpha$. Then there exists some positive integer $k$ such that
       \[
       (z-\alpha(z))^k x_{\neq \alpha}=(z-\alpha(z))^k (x-x_{\alpha})\in \fkn\ov{M} \text{ for } \forall z\in\fkz_L. 
       \]
       Since there exists some $z\in\fkz_L$ such that the action of $(z-\alpha(z))^k$ on $\prod_{\kappa\neq \alpha} pr_{\kappa} (M)$ is invertible, we have
       \[
       x_{\neq \alpha}\in \fkn\ov{M}.
       \]
       Thus, the claim follows.
       
       Note that $\ov{M}$ is $\CZ(\fkg)$-finite by (2). Hence, $\ov{M}/\fkn\ov{M}$ is $\CZ(\fkl)$-finite. In particular, it is $\fkz_L$-finite. In other words, there exist finitely many weights $\kappa_1,\dots,\kappa_s$, such that 
       \[
       \ov{M}/\fkn\ov{M}= \bigoplus_{i=1}^s (\ov{M}/\fkn\ov{M})_{\kappa_i}.
       \]
       Since $V_{\kappa_i}$ is Harish-Chandra for each $i$, $\ov{M}/\fkn\ov{M}$ is also Harish-Chandra. Now, we can describe the natural map
       \[
       \ov{M}\lra \varprojlim_{k} \ov{M}/\fkn^k\ov{M}
       \]
       more explicitly. It is given by the projective limit of
       \[
       \ov{M} \stackrel{pr}{\lra} \bigoplus_{\kappa\in \wt(\ov{M}/\fkn^k \ov{M})} \ov{M}_{\kappa} \lra \ov{M}/\fkn^k \ov{M},
       \]
       where the last map is the quotient map. Moreover, we can construct an inverse. Let $\kappa\in \fkz_L^*$. Since $V_{\kappa}\simeq (V/\fkn^k V)_{\kappa}$ for a sufficiently large $k$, $(\ov{M}/\fkn^k\ov{M})_{\kappa}\simeq \ov{M}_{\kappa}$ by a similar argument as the claim. Then the inverse is given by the projective limit of
       \[
       \ov{M}/\fkn^k \ov{M}\lra (\ov{M}/\fkn^k \ov{M})_{\kappa}\simeq \ov{M}_{\kappa}.
       \]  
    \end{enumerate}
\end{proof}

We give an equivalent characterization of the category $\HC(\fkg,K_L)$. We observe that objects in the category $\HC(\fkg,K_L)$ admit an action of a partial completion of $\U(\fkg)$. Define
\[
 \CE_{\fkn}(\fkg):= \varprojlim_k \U(\fkg)/\fkn^k \U(\fkg) .
\]
When $\fkn$ is clear from the context, we omit this subscript.  Let $a=(a_k), b=(b_k)\in \CE(\fkg)$. We define an element $ab \in \CE(\fkg)$ as follows. For any positive integer $k$, we fix a pre-image $X$ of $a_k$ in $\U(\fkg)$. Then there exists a sufficiently large positive integer $k'$ such that $X\fkn^{k'}\subset \fkn^k \U(\fkg)$. Let $Y$ be a pre-image of $b_{k'}$ in $\U(\fkg)$. Then
\[
 (ab)_k := p_k(  XY), 
\]
 where $p_k$ is the canonical projection $\U(\fkg) \to \U(\fkg)/\fkn^k \U(\fkg)$. It is not hard to see that the definition is independent of the choice of $X,k',Y$. Moreover, it defines an algebra structure on $\CE_(\fkg)$, such that the canonical embedding $\U(\fkg) \hookrightarrow \CE(\fkg)$
 is an algebra homomorphism. $\CE(\fkg)$ contains a subalgebra
 \[
 \CE(\fkp)=\CE_{\fkn}(\fkp):= \varprojlim_k \U(\fkp)/\fkn^k \U(\fkp) .
 \]
 Note that $\{\fkn^k \U(\fkp)= (\fkn\U(\fkp))^k\}$ is a decreasing filtration of two-sided ideals in $\U(\fkp)$. Hence, by passing to associated graded algebra, $\CE(\fkp)$ is a Noetherian ring by the fact that $\U(\fkp)$ is Noetherian, see~\cite[Theorem 10.26]{AM18}. 

Let $V$ be a $\U(\fkg)$-module. Then $\CE(\fkg)$ acts on $\varprojlim_k V/\fkn^k V$ via the projective limit of the $\U(\fkg)/\fkn^k \U(\fkg)$-actions on $V/\fkn^k V$. Note that $K_L$ acts on $\CE(\fkg)$ by the adjoint action. Hence, by a $(\CE(\fkg), K_L)$-module we mean a vector space equipped with compatible actions of $\CE(\fkg)$ and $K_L$ that is $K_L$-locally finite.

 \begin{proposition}\label{equivalent char}
     \begin{enumerate}
     \item If $V\in\HC(\fkg,K_L)$, then $V$ is finitely generated over $\CE(\fkp)$.
     \item If $V$ is a finitely generated $\CE(\fkp)$-module, then $V\simeq \varprojlim_k V/\fkn^k V$.
 \end{enumerate} 
 \end{proposition}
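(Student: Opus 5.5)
For (1), I use the finiteness of $V/\fkn V$ and lift it. Since $V/\fkn V$ is a Harish-Chandra $(\fkl,K_L)$-module, it is finitely generated over $\U(\fkl)$. Choose finitely many $v_1,\dots,v_s\in V$ whose images generate $V/\fkn V$. By the weight decomposition $V\simeq\prod_\kappa V_\kappa$ and Lemma~\ref{basic property}(1), I may take the $v_i$ in $\bigoplus_{\kappa\in\wt(V/\fkn V)}V_\kappa$.

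Let $W:=\sum_i\CE(\fkp)v_i$. First, I check that the $\CE(\fkp)$-action on $V$ is well defined: $V=\varprojlim_k V/\fkn^kV$, and $\U(\fkp)/\fkn^k\U(\fkp)$ acts on $V/\fkn^kV$ because $\fkn^k\U(\fkp)=\U(\fkp)\fkn^k$, as $\fkn$ is an ideal in $\fkp$. Next, by a Nakayama-type induction, $\U(\fkp)\{v_i\}+\fkn^kV=V$ for all $k$. The step $\fkn^kV=\fkn^k(\U(\fkp)\{v_i\}+\fkn V)\subset \U(\fkp)\{v_i\}+\fkn^{k+1}V$ uses $\fkn^k\U(\fkp)\subset\U(\fkp)\fkn^k$.

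So every $v\in V$ has compatible expressions $v\equiv\sum_i a^{(k)}_iv_i \bmod \fkn^kV$. The main obstacle is choosing the coefficients $a^{(k)}_i\in\U(\fkp)$ compatibly, so that they converge in $\CE(\fkp)$. I would correct them inductively: if $a^{(k+1)}_i$ and $a^{(k)}_i$ give the same class mod $\fkn^kV$, then their difference applied to the $v_i$ lies in $\fkn^kV=\fkn^k(\sum\U(\fkp)v_i+\fkn V)$. Hence one can replace $a^{(k+1)}_i$ by $a^{(k)}_i$ plus an element of $\fkn^k\U(\fkp)$, up to an error in $\fkn^{k+1}V$. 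This is the usual Mittag-Leffler/Artin–Rees style argument for complete filtered modules, and the resulting elements $(a^{(k)}_i)_k$ define elements of $\CE(\fkp)$. Completeness of $V$ (condition (2) of the definition) then gives $v=\sum_ia_iv_i$.

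For (2), I use that $\CE(\fkp)$ is Noetherian and complete with respect to the ideal $J:=\varprojlim_k\fkn\U(\fkp)/\fkn^k\U(\fkp)$, whose powers are $J^k$ with $\CE(\fkp)/J^k\simeq\U(\fkp)/\fkn^k\U(\fkp)$. For a finitely generated $V$, $\fkn^kV=J^kV$. Present $V$ as $\CE(\fkp)^m/R$. The free module $\CE(\fkp)^m$ is $J$-adically complete. By the Artin–Rees lemma, which holds since $J$ is generated by the finitely many elements of a basis of $\fkn$ and the associated graded ring is Noetherian as noted before the proposition, the submodule $R$ is closed and carries the induced $J$-adic topology. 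The standard argument, as in \cite[Theorem 10.26]{AM18} and the surrounding results, then shows that completion is exact on finitely generated modules and $\widehat{V}\simeq\widehat{\CE(\fkp)}\otimes V=V$. That is, $V\to\varprojlim_kV/\fkn^kV$ is an isomorphism.

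The delicate point in (2) is that $J$ need not be central, so Artin–Rees must be applied in its noncommutative form. This needs $\mathrm{gr}_J\CE(\fkp)\simeq\bigoplus_k\fkn^k\U(\fkp)/\fkn^{k+1}\U(\fkp)$ to be Noetherian, which holds because it is a quotient of $\U(\fkl)\otimes\Sym(\fkn)$.
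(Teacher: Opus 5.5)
Your proposal is correct. Part (2) follows the paper's own route: a finite presentation over the Noetherian ring $\CE(\fkp)$, completeness of free $\CE(\fkp)$-modules, and Artin--Rees to make the completed presentation exact.

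Part (1) takes a different route from the one the paper indicates. The paper points to the weight argument of Lemma~\ref{basic property}(1). There each $V_\kappa$ is Harish-Chandra, and $V^{[\fkz_L]}$ is generated over $\U(\fkn)$ by the finitely many $V_\kappa$ with $\kappa\in\wt(V/\fkn V)$. Since $V\simeq\prod_\kappa V_\kappa$, one sums the weight components of the coefficients. This sum converges in $\CE(\fkp)$ automatically, because for each $k$ only finitely many components lie outside $\fkn^k\U(\fkp)$. You instead run a complete-Nakayama argument: lift $\U(\fkl)$-generators of $V/\fkn V$, show $V=\U(\fkp)\{v_i\}+\fkn^kV$ for all $k$, correct the coefficients inductively into a compatible system, and conclude by separatedness. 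This is valid. It uses only $\fkn$-adic completeness and finite generation of $V/\fkn V$, so placing the $v_i$ in weight spaces is unnecessary. The paper's route additionally gives that $V^{[\fkz_L]}$ is finitely generated over $\U(\fkp)$.

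One justification in (2) needs fixing. The graded ring $\bigoplus_k\fkn^k\U(\fkp)/\fkn^{k+1}\U(\fkp)$ is not a quotient of $\U(\fkl)\otimes\Sym(\fkn)$ when $\fkn$ is non-abelian, which this section allows. The symbols of $X,Y\in\fkn$ commute only up to the class of $[X,Y]$ in $\fkn^2\U(\fkp)/\fkn^3\U(\fkp)$, and that class can be nonzero. For example, take the Borel subgroup of $\GL_3$ and grade by $\mathrm{ad}\,\mathrm{diag}(1,0,-1)$. Then $E_{13}=[E_{12},E_{23}]$ has degree $2$, while $\fkn^3\U(\fkp)$ lives in degrees $\geq 3$.

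The correct statement is that this graded ring is a quotient of $\U(\fkl\ltimes\mathrm{gr}\,\fkn)$. Here $\mathrm{gr}\,\fkn=\bigoplus_j\fkn_{(j)}/\fkn_{(j+1)}$ is the graded Lie algebra of the lower central series, with $\fkn_{(1)}=\fkn$, $\fkn_{(j+1)}=[\fkn,\fkn_{(j)}]$, and $\fkn_{(j)}\subset\fkn^j$. This ring is still Noetherian. For a complete filtration, a Noetherian associated graded ring is what the noncommutative Artin--Rees (Zariskian filtration) argument requires, so your strategy is unaffected.
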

\begin{proof}
(1) follows by an argument similar to that of (1) Lemma~\ref{basic property}. We leave the proof for readers. We prove (2). Since $\CE(\fkp)$ is a Noetherian ring, we have a finite presentation of $V$
\[
\begin{tikzcd}
    \bigoplus_{i=1}^p \CE(\fkp) \ar[r]\ar[d] & \bigoplus_{j=1}^q \CE(\fkp) \ar[r]\ar[d]& V\ar[r]\ar[d] &0\\
     \bigoplus_{i=1}^p \CE(\fkp) \ar[r] &\bigoplus_{j=1}^q \CE(\fkp) \ar[r]& \varprojlim_k V/\fkn^k V \ar[r] &0.
\end{tikzcd}
\]
In the diagram, we use the fact that $\CE(\fkp)\simeq \varprojlim_k \CE(\fkp)/\fkn^k \CE(\fkp)$. By the Artin-Rees lemma, the bottom horizontal complex is exact, which implies the result.

\end{proof}

\begin{corollary}\label{abelian cate alg}
     The category $\HC(\fkg,K_L)$ is an abelian category, where kernels and cokernels are the same as those in the ambient category of $(\fkg,K_L)$-modules.
\end{corollary}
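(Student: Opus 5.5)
The plan is to identify $\HC(\fkg,K_L)$ with a full subcategory of $(\CE(\fkg),K_L)$-modules that is closed under kernels and cokernels. Proposition~\ref{equivalent char} gives this identification: an object of $\HC(\fkg,K_L)$ is essentially a $K_L$-locally finite, finitely generated $\CE(\fkp)$-module carrying a compatible $\U(\fkg)$-action. Since $\CE(\fkp)$ is Noetherian, kernels and cokernels of $\CE(\fkp)$-linear maps between finitely generated modules are again finitely generated, and the argument then reduces to checking the two defining conditions.

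Let $f:V\to W$ be a morphism in $\HC(\fkg,K_L)$. The first point is that $f$ is automatically $\CE(\fkg)$-linear, hence $\CE(\fkp)$-linear. Indeed, $f$ maps $\fkn^kV$ into $\fkn^kW$, so it induces compatible maps $V/\fkn^kV\to W/\fkn^kW$. Condition (2) identifies $V$ and $W$ with the corresponding inverse limits, on which $\CE(\fkg)$ acts through the quotients $\U(\fkg)/\fkn^k\U(\fkg)$.

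Set $\ker f$ and $\Coker f$ to be the kernel and cokernel computed as $(\fkg,K_L)$-modules. They are $\CE(\fkp)$-submodules and quotients of finitely generated modules over the Noetherian ring $\CE(\fkp)$ (Proposition~\ref{equivalent char}(1)), so both are finitely generated over $\CE(\fkp)$. By Proposition~\ref{equivalent char}(2), both are then $\fkn$-complete, which is condition (2). Local finiteness of the $K_L$-action passes to submodules and quotients.

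The remaining, and main, step is condition (1): $X/\fkn X$ must be a Harish-Chandra $(\fkl,K_L)$-module for $X=\ker f$ and $X=\Coker f$.
\begin{itemize}
\item For the cokernel this is easy, because $\Coker f/\fkn\Coker f$ is a quotient of $W/\fkn W$, and Harish-Chandra modules are closed under quotients.
\item For the kernel I would argue as follows. $\ker f/\fkn\ker f$ is a finitely generated module over $\CE(\fkp)/\fkn\CE(\fkp)\simeq \U(\fkl)$; the right-hand isomorphism holds because $\fkn\U(\fkp)$ is the kernel of $\U(\fkp)\to\U(\fkl)$. So this module is finitely generated over $\U(\fkl)$. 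It is $K_L$-locally finite as a quotient of a $K_L$-locally finite module. It is $\CZ(\fkl)$-finite because $\ker f\subset V$ and $V$ is $\CZ(\fkg)$-finite by Lemma~\ref{basic property}(2). Alternatively, one can use the weight decomposition $V\simeq\prod_\kappa V_\kappa$: the quotient $\ker f/\fkn\ker f$ is a finite sum of generalized $\fkz_L$-weight spaces, as in the proof of Lemma~\ref{basic property}(3), and each $V_\kappa$ is Harish-Chandra.
\end{itemize}
A finitely generated, $K_L$-locally finite, $\CZ(\fkl)$-finite $(\fkl,K_L)$-module is Harish-Chandra, which settles condition (1) for the kernel.

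It then follows that $\HC(\fkg,K_L)$ is a full additive subcategory of the abelian category of $(\fkg,K_L)$-modules that is closed under kernels and cokernels; it is additive because finite direct sums clearly preserve both conditions. Such a subcategory is abelian, with the same kernels and cokernels as the ambient category.

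The delicate point is the $\CE(\fkp)$-linearity and finite generation of kernels: one must make sure that the abstract $\CE(\fkg)$-action on objects of $\HC(\fkg,K_L)$ is intrinsic, so that the ambient kernel is a genuine $\CE(\fkp)$-submodule. Once that is in place, everything follows from the Noetherian property and the Artin--Rees argument of Proposition~\ref{equivalent char}.
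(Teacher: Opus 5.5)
You handle the cokernel the same way the paper does. For the kernel, deducing $\fkn$-completeness from Noetherianity of $\CE(\fkp)$ and Proposition~\ref{equivalent char} is a legitimate alternative route. The paper argues differently: $f$ commutes with every weight projection $pr_\kappa$, so $pr_\kappa(\ker f)=(\ker f)_\kappa\subset V_\kappa$ and $\ker f=\prod_\kappa(\ker f)_\kappa=\ov{\ker f}$. Lemma~\ref{basic property}(3) then gives both defining conditions at once, with no Noetherian input needed for the kernel.

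The genuine gap is in condition (1) for the kernel. You claim $\ker f/\fkn\ker f$ is $K_L$-locally finite ``as a quotient of a $K_L$-locally finite module''. But objects of $\HC(\fkg,K_L)$ are in general not $K_L$-locally finite. Each $V\simeq\prod_\kappa V_\kappa$ is an infinite product of Harish-Chandra modules, and Lemma~\ref{basic property}(1) only gives local finiteness on $V^{[\fkz_L]}$. Already for $f=0$ and $V=\CV(\tau)$ with $\tau$ infinite-dimensional, an element whose components lie in infinitely many distinct $K_L$-types has infinite-dimensional $K_L$-span. Your opening description of objects as $K_L$-locally finite $\CE(\fkp)$-modules has the same defect. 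Without $K_L$-finiteness, finite generation over $\U(\fkl)$ plus $\CZ(\fkl)$-finiteness does not give a Harish-Chandra module, so the step fails as written.

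You can repair it inside your own framework using the Artin--Rees property that underlies Proposition~\ref{equivalent char}. For $k\gg0$ one has $\fkn^kV\cap\ker f\subset\fkn\ker f$. Hence $\ker f/\fkn\ker f$ is a quotient of the image of $\ker f$ in $V/\fkn^kV$. That image is an $(\fkl,K_L)$-submodule of a Harish-Chandra module, which gives finite generation, $\CZ(\fkl)$-finiteness and $K_L$-finiteness simultaneously.

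Your ``alternatively'' remark points toward the other repair, but as stated it omits the essential input. You need that $\ker f$ is stable under all $pr_\kappa$ and is the full product of its weight spaces. That is exactly what makes the surjectivity $(\ker f)_\alpha\to(\ker f/\fkn\ker f)_\alpha$ in the proof of Lemma~\ref{basic property}(3) work. Once you supply it, you have the paper's proof, and the Noetherian detour becomes unnecessary for kernels.
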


\begin{proof}
   
     Suppose that there is an exact sequence of $(\fkg,K_L)$-modules
    \[
    0 \lra M \lra V \lra N 
    \]
    such that $N,V\in\HC(\fkg,K_L)$. We observe that
      \[
      0\lra M_{\kappa}\lra V_{\kappa}\lra N_{\kappa}
      \]
      is exact for any $\kappa\in\fkz_L^*$. Thus, for any $\kappa\in \fkz_L^*$, $pr_{\kappa}(M)= M_{\kappa}$. 
      Hence, we have $M=\ov{M}$, which implies the result by Lemma~\ref{basic property} (3).  

       The fact that the cokernel exists in this category directly follows from Proposition~\ref{equivalent char}.
\end{proof}
\begin{example}
    When $P$ is a minimal parabolic subgroup of $G$, then there is a functor
    \[
    \HC(\fkg,K_L) \lra \CO^{\fkp} \quad V \mapsto V^{[\fkz_L]},
    \]
    where $\CO^{\fkp}$ is the parabolic category $\CO$, and $ V^{[\fkz_L]}$ forgets the $K_L$-action.
\end{example}

\begin{proposition}
   Suppose that there is a short exact sequence of $(\CE(\fkg),K_L)$-modules 
   \[
   0\lra M\lra V\lra N\lra 0
   \]
   such that $M,N\in\HC(\fkg,K_L)$. Then $V\in\HC(\fkg,K_L)$.
\end{proposition}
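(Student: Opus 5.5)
Since $V$ is a $(\CE(\fkg),K_L)$-module, it is in particular a $\CE(\fkp)$-module, so I will use the characterization in Proposition~\ref{equivalent char}. The plan is to show that $V$ is finitely generated over $\CE(\fkp)$; part (2) of Proposition~\ref{equivalent char} then gives $\fkn$-completeness, i.e.\ condition (2) in the definition of $\HC(\fkg,K_L)$. After that only condition (1) remains: that $V/\fkn V$ is a Harish-Chandra $(\fkl,K_L)$-module.

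\textbf{Finite generation.} By Proposition~\ref{equivalent char}(1), $M$ and $N$ are finitely generated over $\CE(\fkp)$. I choose generators $m_1,\dots,m_p\in M$, choose generators $\bar n_1,\dots,\bar n_q\in N$, and lift the latter to $n_1,\dots,n_q\in V$. The exactness of $0\to M\to V\to N\to 0$ as $\CE(\fkp)$-modules then shows that $\{m_i\}\cup\{n_j\}$ generates $V$ over $\CE(\fkp)$. Proposition~\ref{equivalent char}(2) now yields $V\simeq\varprojlim_k V/\fkn^kV$.

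\textbf{Condition (1).} Applying the right exact functor $\cdot/\fkn\cdot$ gives an exact sequence
\[
M/\fkn M\lra V/\fkn V\lra N/\fkn N\lra 0
\]
of $(\fkl,K_L)$-modules. The outer terms are Harish-Chandra modules. The image of $M/\fkn M$ is a quotient of a Harish-Chandra module and is therefore Harish-Chandra. So $V/\fkn V$ is an extension of Harish-Chandra modules, and it only remains to check that this extension is again Harish-Chandra:
\begin{itemize}
\item Finite generation over $\U(\fkl)$ is clear for an extension.
\item $K_L$-local finiteness is part of the hypothesis that $V$ is a $(\CE(\fkg),K_L)$-module, which includes local finiteness.
\item $\CZ(\fkl)$-finiteness holds because if ideals $I_1,I_2$ of finite codimension annihilate the image of $M/\fkn M$ and $N/\fkn N$ respectively, then $I_1I_2$ annihilates $V/\fkn V$.
\end{itemize}
Admissibility follows from these three properties.

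\textbf{Expected obstacle.} The main point needing care is the compatibility of the $K_L$-action with the $\CE(\fkp)$-module structure. The proof of Proposition~\ref{equivalent char}(2) only concerns the $\CE(\fkp)$-module, so the isomorphism $V\simeq\varprojlim_k V/\fkn^kV$ should automatically be $K_L$- and $\U(\fkg)$-equivariant. The reason is that the canonical map is natural and the $\U(\fkg)$-action on $V$ factors through $\CE(\fkg)$. I would state this explicitly rather than leave it implicit.

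I would also verify that the Artin--Rees argument in Proposition~\ref{equivalent char} really applies to $\CE(\fkp)$ with the two-sided ideal $\fkn\CE(\fkp)$. The proposition is given earlier in the paper, so I will simply invoke it.
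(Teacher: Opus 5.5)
Your proposal is correct and is essentially the paper's proof. The paper uses right exactness of $\cdot/\fkn\cdot$ to see that $V/\fkn V$ is Harish-Chandra, and Proposition~\ref{equivalent char} for $\fkn$-completeness. You add the two steps the paper leaves implicit: finite generation of $V$ over $\CE(\fkp)$ as an extension of finitely generated modules, and closure of Harish-Chandra modules under extensions.
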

\begin{proof}
  By the right exactness of the Jacquet functor, $V/\fkn V$ is a Harish-Chandra module. Then the result follows from Proposition~\ref{equivalent char}.
\end{proof}

\subsection{Classification of irreducible objects}\label{classify irr subsec}
Similar to category $\CC(\fkg,L)$, we first define a family of standard objects in category $\HC(\fkg,K_L)$.
\begin{definition}
    Given a Harish-Chandra module $\tau$ of $(\fkl,K_L)$, the \textbf{formal Verma module} $\CV(\tau)$ is defined as the inverse limit
    \[
    \varprojlim_{k\geq 0}( \U(\fkg)\otimes_{\U(\ov{\fkp})}\tau) /(\fkn^k\U(\fkg)\otimes_{\U(\ov{\fkp})}\tau),
    \]
    where $\tau$ extends trivially to be a $\U(\ov{\fkp})$-module.
\end{definition}

It is easy to check that $\CV(\tau)\in\HC(\fkg,K_L)$ and the functor ``$\tau\mapsto \CV(\tau)$" is an exact functor from $\HC(\fkl,K_L)$ to $\HC(\fkg,K_L)$. 

\begin{lemma}
  Let $\tau$ be an irreducible Harish-Chandra module of $(\fkl,K_L)$. Then the formal Verma module $\CV(\tau)$ has a unique maximal proper subobject, and hence a unique non-zero irreducible quotient object.
\end{lemma}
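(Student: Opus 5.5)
The plan is the standard highest-weight argument, run through the $\fkz_L$-weight decomposition of objects in $\HC(\fkg,K_L)$. Write $V=\CV(\tau)$ and $\kappa_0$ for the $\fkz_L$-weight by which $\fkz_L$ acts on $\tau$; since $\tau$ is irreducible, $\fkz_L$ acts on it by a scalar. Our convention is that $\fkn$ acts trivially on $\tau$ viewed as a $\U(\ov{\fkp})$-module. Hence, by PBW,
\[
\U(\fkg)\otimes_{\U(\ov{\fkp})}\tau\simeq \U(\fkn)\otimes \tau
\]
as $(\fkl,K_L)$-modules. So $V\simeq \prod_{\kappa} V_\kappa$, where $V_\kappa=\U(\fkn)_{\kappa-\kappa_0}\otimes\tau$. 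In particular $V_{\kappa_0}=\tau$, and every weight $\kappa$ of $V$ satisfies $\kappa\geq\kappa_0$ in the partial order \eqref{partial_order}. Moreover $\kappa_0$ is minimal: $0$ is the only weight of $\U(\fkn)$ whose negative is also a weight, because $\fkn$ has positive weights for a suitable element of $\fkz_L$.

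\textbf{Step 1.} We show that a subobject $W\subset V$ in $\HC(\fkg,K_L)$ is proper if and only if $W_{\kappa_0}=0$. By Corollary~\ref{abelian cate alg} and Lemma~\ref{basic property}, $W$ is a closed submodule with $pr_\kappa(W)=W_\kappa\subset V_\kappa$ for every $\kappa$. Since $W_{\kappa_0}$ is an $(\fkl,K_L)$-submodule of the irreducible $\tau$, either $W_{\kappa_0}=0$ or $W_{\kappa_0}=\tau$. In the second case $W\supset \U(\fkg)\tau$, which is the dense submodule $\U(\fkn)\otimes\tau=V^{[\fkz_L]}$. Then Lemma~\ref{basic property}(3) gives $W=\ov{W}\supset \ov{V^{[\fkz_L]}}=V$.

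\textbf{Step 2.} Let $W_{\max}$ be the sum of all proper subobjects, more precisely the subobject they generate. Concretely, take the sum $\Sigma$ of all proper subobjects inside $V$ and form $\ov{\Sigma}$ as in Lemma~\ref{basic property}(3). Because $pr_{\kappa_0}$ is linear and vanishes on each proper subobject, $pr_{\kappa_0}(\Sigma)=0$, so $(\ov{\Sigma})_{\kappa_0}=pr_{\kappa_0}(\Sigma)=0$. By Step~1, $\ov{\Sigma}$ is proper, so it is the unique maximal proper subobject. Then $V/\ov{\Sigma}$ is a non-zero irreducible object and is the unique irreducible quotient, since any irreducible quotient has a maximal proper kernel. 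The quotient is a legitimate object because cokernels exist by Corollary~\ref{abelian cate alg}.

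The main obstacle is the closure step, namely checking that the object generated by a possibly infinite sum of subobjects still has zero $\kappa_0$-component. This is settled by the explicit description in Lemma~\ref{basic property}(3): the $\kappa_0$-component of $\ov{M}$ is exactly $pr_{\kappa_0}(M)$, so taking the closure does not create new vectors in weight $\kappa_0$. One also needs that each weight space $V_\kappa$ is finite length, or at least that $pr_\kappa$ is the projection onto a direct factor. This holds because $\U(\fkn)_{\kappa-\kappa_0}$ is finite-dimensional.
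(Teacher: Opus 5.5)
Your proof is correct and follows essentially the same route as the paper. Both arguments use that $\CV(\tau)$ is generated by its lowest weight space $\tau=\CV(\tau)_{\kappa_0}$, so a subobject is proper exactly when its $pr_{\kappa_0}$-component vanishes, and then invoke Lemma~\ref{basic property}(3) to see that the subobject generated by the sum of all proper subobjects still has zero $\kappa_0$-component. One small slip: it is $\ov{\fkn}$, not $\fkn$, that acts trivially on $\tau$ as a $\U(\ov{\fkp})$-module, but your subsequent identification $\U(\fkg)\otimes_{\U(\ov{\fkp})}\tau\simeq\U(\fkn)\otimes\tau$ is correct and the argument is unaffected.
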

\begin{proof}
    Note that $\CV(\tau)$ is generated as an object in $\HC(\fkg,K_L)$ by $\tau$ under $\U(\fkn)$-action. Hence, for any proper subobject $M_1,M_2$, we have $pr_{\kappa}(M_1)=pr_{\kappa}(M_2)=0$ when $\kappa=\wt(\tau)$. This implies that $pr_{\kappa}(\ov{M_1+M_2})=0$. In other words, the sum of all proper subobjects is the unique maximal proper subobject.
\end{proof}

We denote the unique irreducible quotient of $\CV(\tau)$ by $\CL(\tau)$. Note that $\CL(\tau_1)\simeq \CL(\tau_2)$ if and only if $\tau_1\simeq \tau_2$ as $L$-representation. On the other hand, we have the following lemma, the proof of which is the same as \cite[Lemma 2.22]{WZ}:
\begin{lemma}
    Let $V$ be an irreducible object in $\HC(\fkg,K_L)$. Then there exists an irreducible Harish-Chandra module $\tau$ of $(\fkl,K_L)$, such that $V$ is a quotient of the formal Verma module $\CV({\tau})$.
\end{lemma}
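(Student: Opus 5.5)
Let $V$ be an irreducible object of $\HC(\fkg,K_L)$. The plan is to produce an irreducible $(\fkl,K_L)$-module $\tau$ together with a nonzero morphism $\CV(\tau)\to V$; irreducibility of $V$ then forces this map to be surjective, because its image is a nonzero subobject (images are computed in the ambient category of $(\fkg,K_L)$-modules by Corollary~\ref{abelian cate alg}).

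\textbf{Step 1: choose a lowest weight.} Since $V\neq 0$ and $V\simeq \varprojlim_k V/\fkn^kV$, we have $V/\fkn V\neq 0$. This quotient is a Harish-Chandra module of $(\fkl,K_L)$, so it is $\fkz_L$-finite. As in the proof of Lemma~\ref{basic property}(1), every weight of $V$ lies in $\wt(V/\fkn V)+\Omega$. So $\wt(V)$ is contained in finitely many translates of the cone $\Omega$, and we can pick a weight $\kappa\in\wt(V)$ that is minimal for the partial order \eqref{partial_order}. Minimality gives $\ov{\fkn}\, V_\kappa=0$, because $\ov{\fkn}$ lowers $\fkz_L$-weights by elements of $-\Omega\setminus\{0\}$. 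By Lemma~\ref{basic property}(1), $V_\kappa$ is a Harish-Chandra $(\fkl,K_L)$-module, so it has an irreducible $(\fkl,K_L)$-submodule $\tau$.

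\textbf{Step 2: construct the map.} Because $\ov{\fkn}\tau=0$, the inclusion $\tau\hookrightarrow V$ is $\U(\ov{\fkp})$-equivariant, with $\tau$ extended trivially over $\ov{\fkn}$. Frobenius reciprocity therefore gives a $(\fkg,K_L)$-map
\[
\U(\fkg)\otimes_{\U(\ov{\fkp})}\tau\lra V.
\]
This map sends $\fkn^k\U(\fkg)\otimes\tau$ into $\fkn^kV$, so it induces compatible maps on the quotients by these subspaces. Passing to the inverse limit and using the $\fkn$-completeness of $V$, we obtain a morphism $\CV(\tau)\to V$ in $\HC(\fkg,K_L)$. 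Its restriction to $\tau$ is the original inclusion, so it is nonzero, and by the opening remark it is surjective.

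\textbf{Main obstacle.} The delicate point is Step 1: we must make sure that the weights of $V$ are bounded below, so that a minimal $\kappa$ exists. For this I would use the decomposition $V\simeq\prod_\kappa V_\kappa$ together with the finiteness of $\wt(V/\fkn V)$, arguing exactly as in the proof of Lemma~\ref{basic property}. This is also the argument of \cite[Lemma 2.22]{WZ}.
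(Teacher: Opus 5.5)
Your proposal is correct and takes essentially the same route as the paper, which simply defers to the argument of \cite[Lemma 2.22]{WZ}. That argument finds a nonzero $\ov{\fkn}$-invariant piece of $V^{[\fkz_L]}$ (your minimal weight space $V_\kappa$), takes an irreducible $(\fkl,K_L)$-submodule $\tau$ inside it, and then uses the universal property of $\CV(\tau)$ together with the irreducibility of $V$ to obtain a surjection $\CV(\tau)\to V$.
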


Hence, we get the classification of irreducible objects in $\HC(\fkg,K_L)$:
\begin{corollary}
    There is a one-to-one correspondence between irreducible Harish-Chandra $(\fkl,K_L)$-modules and irreducible objects in $\HC(\fkg,K_L)$, given by the map
    \[
    \tau \mapsto \CL(\tau).
    \]
\end{corollary}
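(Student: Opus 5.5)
The corollary follows by combining the two preceding lemmas. The plan is to check that $\tau\mapsto\CL(\tau)$ is well defined, injective on isomorphism classes, and surjective onto isomorphism classes of irreducible objects of $\HC(\fkg,K_L)$.

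\emph{Well-definedness.} For an irreducible Harish-Chandra $(\fkl,K_L)$-module $\tau$, the formal Verma module $\CV(\tau)$ lies in $\HC(\fkg,K_L)$. By the lemma on the unique maximal proper subobject, $\CV(\tau)$ has a unique irreducible quotient. Since $\HC(\fkg,K_L)$ is abelian (Corollary~\ref{abelian cate alg}), this quotient is an object $\CL(\tau)$ of $\HC(\fkg,K_L)$, and it is irreducible there.

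\emph{Injectivity.} The key step is to recover $\tau$ from $\CL(\tau)$ intrinsically. I would use the lowest generalized $\fkz_L$-weight. Let $\kappa_0$ be the weight of $\tau$; it is a single weight because $\fkz_L$ acts on the irreducible $\tau$ by a character. The weights of $\CV(\tau)$ lie in $\kappa_0+\Omega$, and the $\kappa_0$-weight space of $\CV(\tau)$ is exactly $\tau$. The maximal proper subobject $M$ satisfies $pr_{\kappa_0}(M)=0$, as shown in the proof of that lemma. Hence $\CL(\tau)_{\kappa_0}\simeq\tau$, and $\kappa_0$ is the unique minimal weight of $\CL(\tau)$ with respect to the partial order~\eqref{partial_order}. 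This gives a second description: $\CL(\tau)_{\kappa_0}$ is the quotient $\CL(\tau)/\fkn\CL(\tau)$ restricted to its minimal weight, or equivalently the space of vectors of minimal weight. It is functorial in isomorphisms of $\HC(\fkg,K_L)$. Therefore $\CL(\tau_1)\simeq\CL(\tau_2)$ forces $\tau_1\simeq\tau_2$ as $(\fkl,K_L)$-modules, which is the remark made just before the corollary.

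\emph{Surjectivity.} This is exactly the preceding lemma, whose proof follows \cite[Lemma 2.22]{WZ}. Every irreducible $V\in\HC(\fkg,K_L)$ is a quotient of some $\CV(\tau)$ with $\tau$ irreducible. By uniqueness of the irreducible quotient of $\CV(\tau)$, we get $V\simeq\CL(\tau)$.

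For that lemma, the plan would be as follows. Choose a weight $\kappa$ of $V/\fkn V$ that is minimal among weights of $V$; such a weight exists since $V/\fkn V$ is Harish-Chandra and hence has finitely many weights. Then $V_\kappa$ is a nonzero Harish-Chandra module killed by $\ov{\fkn}$. Pick an irreducible $(\fkl,K_L)$-submodule or quotient $\tau$ of $V_\kappa$, and use the universal property of $\CV(\tau)$ together with $\fkn$-completeness of $V$ to produce a nonzero map $\CV(\tau)\to V$. This map is surjective because $V$ is irreducible and the image is a subobject by Lemma~\ref{basic property}(3).

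The only delicate point is this last step. One must obtain the map out of the completed module $\CV(\tau)$, which requires $\fkn$-completeness of $V$ to extend the map from $\U(\fkg)\otimes_{\U(\ov{\fkp})}\tau$. One must also pass to an irreducible submodule rather than a quotient of $V_\kappa$, so that $\tau$ maps into $V$; this works because a nonzero Harish-Chandra module of finite length has an irreducible submodule.
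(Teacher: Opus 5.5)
Your proposal is correct and follows the paper's route: the corollary comes from combining the unique-irreducible-quotient lemma for $\CV(\tau)$, the fact that $\CL(\tau)$ determines $\tau$, and the lemma that every irreducible object of $\HC(\fkg,K_L)$ is a quotient of some $\CV(\tau)$. The paper states the second fact without proof and defers the third to \cite[Lemma 2.22]{WZ}; you supply standard arguments for both, namely recovering $\tau$ as the minimal $\fkz_L$-weight space $\CL(\tau)_{\kappa_0}$, and mapping $\CV(\tau)$ onto $V$ from an irreducible submodule of a minimal weight space killed by $\ov{\fkn}$, using $\fkn$-completeness of $V$ and the abelian structure of $\HC(\fkg,K_L)$.
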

Moreover, if the infinitesimal character of $\tau$ is $\chi_{\lambda}$, then the infinitesimal character of $\CV(\tau)$ is $\chi_{\overline{\lambda-\rho+\rho_{\fkl}}}$, where $\overline{\lambda}$ is the image of $\lambda$ under the following natural projection:
\begin{equation*}
    \fka^*  \dslash W_L \lra \fka^* \dslash W .
\end{equation*}
For an infinitesimal character $\chi_{\mu}$ of $\fkg$, we use $\CT_{\mu}$ to denote the set of irreducible Harish-Chandra $(\fkl,K_L)$-modules $\tau$, for which $\CV(\tau)$ has infinitesimal character $\chi_{\mu}$. Then $\CT_{\mu}$ is a finite set.

\begin{proposition}
    The length of the objects in $\HC(\fkg,K_L)$ is finite.
\end{proposition}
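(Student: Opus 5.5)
We will bound the length of $V\in\HC(\fkg,K_L)$ by a weighted count of composition factors of finitely many Harish-Chandra modules of $L$. By Lemma~\ref{basic property}(2), $V$ is $\CZ(\fkg)$-finite, so only finitely many infinitesimal characters $\chi_{\mu_1},\dots,\chi_{\mu_s}$ occur among the irreducible subquotients of $V$. Set $\CT:=\bigcup_i \CT_{\mu_i}$, a finite set of irreducible Harish-Chandra $(\fkl,K_L)$-modules, and let $S:=\{\wt(\tau)\mid \tau\in\CT\}\subset\fkz_L^*$, which is also finite. By the classification above, every irreducible subquotient of $V$ is isomorphic to some $\CL(\tau)$ with $\tau\in\CT$. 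For such a $\tau$, the weight space $\CL(\tau)_{\wt(\tau)}$ contains $\tau$ and is therefore nonzero.

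Define the invariant
\[
\ell(V):=\sum_{\kappa\in S}\mathrm{length}_{(\fkl,K_L)}(V_\kappa).
\]
This is finite: the proof of Lemma~\ref{basic property}(1) shows that every $V_\kappa$ is a Harish-Chandra $(\fkl,K_L)$-module, and Harish-Chandra modules have finite length.

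The key step is that the functor $V\mapsto V_\kappa$ is exact on $\HC(\fkg,K_L)$. The proof of Corollary~\ref{abelian cate alg} gives left exactness on weight spaces. For right exactness, apply $pr_\kappa$ to a surjection $V\to N$. A preimage $v$ of $n_\kappa$ satisfies $pr_\kappa(v)\mapsto pr_\kappa(n_\kappa)=n_\kappa$, because the projections are compatible with morphisms: they are defined through the Harish-Chandra quotients $V/\fkn^kV$, on which morphisms preserve generalized weight spaces.

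With exactness in hand, let $0=V_0\subsetneq V_1\subsetneq\cdots\subsetneq V_m=V$ be any chain of subobjects. Each quotient $V_j/V_{j-1}$ is a nonzero object of $\HC(\fkg,K_L)$ with the same $\CZ(\fkg)$-finiteness, so it has an irreducible quotient $\CL(\tau)$ with $\tau\in\CT$. Exactness of weight spaces then gives $(V_j/V_{j-1})_{\wt(\tau)}\neq0$, hence $\ell(V_j/V_{j-1})\ge1$. By additivity of $\ell$ under exactness, $m\le\ell(V)$. So every chain is bounded, and $V$ has finite length.

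One point needs care. A nonzero object must have an irreducible quotient before we can invoke the above. By Proposition~\ref{equivalent char}, $V$ is finitely generated over the Noetherian ring $\CE(\fkp)$, so Zorn's lemma produces a maximal proper $\CE(\fkp)$-stable subobject. Alternatively, avoid this entirely: take a nonzero weight space of $V_j/V_{j-1}$ and pick an irreducible $(\fkl,K_L)$-submodule $M$ of a maximal weight with respect to the order~\eqref{partial_order}, so that $\ov{\fkn}M=0$. Lemma~\ref{inf-lem} then forces its infinitesimal character into the finite list, so $M\in\CT$ and $\ell\geq1$. This second route is the one I would write, and it is the main obstacle. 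The subtle point is to ensure that a maximal weight exists among those weights whose spaces are nonzero. This follows because $\wt(V^{[\fkz_L]})$ is contained in finitely many translates of $\Omega$ (Lemma~\ref{basic property}(1)), and these translates have maximal elements with respect to lowering by $\ov{\fkn}$.
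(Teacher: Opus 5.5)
Your argument is correct and is essentially the paper's proof: $\CZ(\fkg)$-finiteness leaves only finitely many candidate $\tau$, and each $\CL(\tau)$ can occur at most $\mathrm{length}(V_{\wt(\tau)})$ times because weight spaces are Harish-Chandra and, as you verify, the weight-space functor is exact; your invariant $\ell(V)$ simply makes this multiplicity count explicit. One small slip: in the order~\eqref{partial_order}, $\fkn$ raises weights, so the weight at which $\ov{\fkn}M=0$ must be \emph{minimal} (lowest), not maximal, which is what your final sentence actually describes.
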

\begin{proof}
    Let $V\in\HC(\fkg,K_L)$. Since $V$ is $\CZ(\fkg)$-finite, there are finitely many irreducible $(\fkl,K_L)$-modules $\tau$ such that $\CL(\tau)$ appears in irreducible components of $V$. On the other hand, since $V_{\alpha}$ is Harish-Chandra for any $\alpha\in\fkz_L^*$, each $\CL(\tau)$ appears only finite many times. Consequently, the length of $V$ is finite.
\end{proof}

We fix the datum $(L_{\BC},\iota)$ such that $L_{\BC}$ is a complex connected reductive Lie group and $\iota: L\lra L_{\BC}$ is a Lie group homomorphism satisfying
\begin{itemize}
    \item The Kernel of $d\iota$ is contained in the center of $\Lie(L)$;
    \item the image of $d\iota$ is a real form of $\Lie(L_{\BC})$.
\end{itemize}
For example, $L_{\BC}=\Ad(\fkl)$ with $\iota$ the adjoint representation is one of the choices. Let $Q$ be the lattice of algebraically integral weights of $\fkg$ which are analytic integral with respect to $L_{\BC}$. We identify $Q$ as a lattice in $\fka^*$. Let $\Rep(\fkg,Q)$ be the category of finite dimensional $\fkg$-representation whose weights are contained in $Q$. Then it is the same as the category of finite dimensional $\fkg$-representation which can be integrated as a holomorphic $L_{\BC}$-representation. We also regard it as a $(\fkg,L)$-module through $\iota$.

In future work, we will use coherent continuation to compute the Casselman-Jacquet functor. The following lemma provides the initial idea.
\begin{lemma}
    Let $V$ be a $(\fkg,K_L)$-module and $\beta\in \Rep(\fkg,Q)$. Then we have a natural isomorphism
    \[
    \mathrm{CJ}(V\otimes \beta)\simeq \mathrm{CJ}(V)\otimes \beta.
    \]
    In particular, when $V\in \HC(\fkg,K_L)$, it shows that $V\otimes \beta\in\HC(\fkg,K_L)$.
\end{lemma}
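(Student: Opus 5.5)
The plan is to compare the $\fkn$-adic filtrations of $V\otimes\beta$ and of $V$ tensored with a suitable finite filtration of $\beta$, and show that the two families of subspaces are cofinal in each other. Since $\beta$ is finite dimensional, $\mathrm{CJ}(V)\otimes\beta\simeq\varprojlim_k (V/\fkn^kV)\otimes\beta$, because tensoring with a finite-dimensional space commutes with inverse limits. It therefore suffices to show that the two filtrations $\{\fkn^k(V\otimes\beta)\}_k$ and $\{(\fkn^kV)\otimes\beta\}_k$ of $V\otimes\beta$ define the same topology. Once that holds, the two inverse limits are canonically isomorphic, and the isomorphism is compatible with the $\U(\fkg)$- and $K_L$-actions. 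Here $\fkg$ acts diagonally on the tensor product, and $K_L$ acts on $\beta$ via $\iota$; this is where $\beta\in\Rep(\fkg,Q)$ is used.

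The key input is that $\fkn$ acts nilpotently on $\beta$. Decomposing $\beta$ into $\fkz_L$-weights, $\fkn$ strictly raises the weight with respect to the order \eqref{partial_order}. Hence there is an integer $m$ with $\fkn^m\beta=0$, and more precisely a filtration $\beta=\beta_0\supset\beta_1\supset\dots\supset\beta_m=0$ with $\fkn\beta_j\subset\beta_{j+1}$.

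The two inclusions are then proved as follows.

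\textbf{Inclusion $\fkn^k(V\otimes\beta)\subset(\fkn^{k-m}V)\otimes\beta$.} Expand $X_1\cdots X_k(v\otimes b)$ by the Leibniz rule, where $X_i\in\fkn$. A term in which more than $m$ of the $X_i$ act on $b$ vanishes, so at least $k-m$ of the $X_i$ act on $v$. These $X_i$ appear in the term as an ordered product, since the term is an ordered subproduct of $X_1\cdots X_k$. Therefore each surviving term lies in $(\fkn^{k-m}V)\otimes\beta$.

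\textbf{Inclusion $(\fkn^{k+m}V)\otimes\beta\subset\fkn^k(V\otimes\beta)$.} Use the identity $(Xv)\otimes b=X(v\otimes b)-v\otimes Xb$, and argue by downward induction along the filtration $\beta_j$. For $b\in\beta_j$, show that $(\fkn^{k+m-j}V)\otimes\beta_j\subset\fkn^k(V\otimes\beta)$; this is the step I expect to need the most care. Write $X_1\cdots X_r v\otimes b$ and move the $X_i$ off $v$ one at a time. Each move produces either a term with one more $\fkn$ applied on the outside, or a correction term of the form $(\text{shorter word})\,v\otimes X b$ with $Xb\in\beta_{j+1}$. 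The correction terms lose only one power of $\fkn$ on $v$ while gaining one level in the $\beta$-filtration, so the inductive hypothesis absorbs them. Since $\beta_m=0$, the induction terminates after $m$ steps. Doing this bookkeeping cleanly is the only real content of the argument, and I would organize it as a double induction on $(k,m-j)$.

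Combining the two inclusions, the two topologies agree, and therefore
\[
\mathrm{CJ}(V\otimes\beta)\simeq\varprojlim_k (V/\fkn^kV)\otimes\beta\simeq\mathrm{CJ}(V)\otimes\beta .
\]
This isomorphism is natural in $V$.

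For the last assertion, take $V\in\HC(\fkg,K_L)$. Then $\mathrm{CJ}(V)=V$, so $V\otimes\beta$ is $\fkn$-complete. Moreover, $(V\otimes\beta)/\fkn(V\otimes\beta)$ is a quotient of $(V/\fkn^{m+1}V)\otimes\beta$. Each $V/\fkn^{j}V$ is a Harish-Chandra $(\fkl,K_L)$-module, being an iterated extension of quotients of $(\fkn^iV/\fkn^{i+1}V)$. Hence $(V/\fkn^{m+1}V)\otimes\beta$ is Harish-Chandra, and so is its quotient. Thus $V\otimes\beta\in\HC(\fkg,K_L)$.
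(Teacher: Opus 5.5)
Your proposal is correct and is essentially the paper's proof. Both arguments take a finite filtration $\beta=\beta_0\supset\dots\supset\beta_m=0$ with $\mathfrak{n}\beta_j\subset\beta_{j+1}$, prove the two inclusions $\mathfrak{n}^{k+m}(V\otimes\beta)\subset(\mathfrak{n}^kV)\otimes\beta$ and $(\mathfrak{n}^{k+m}V)\otimes\beta\subset\mathfrak{n}^k(V\otimes\beta)$, and conclude by cofinality of the two filtrations. The only small difference is in the last assertion: you realize $(V\otimes\beta)/\mathfrak{n}(V\otimes\beta)$ as a quotient of $(V/\mathfrak{n}^{m+1}V)\otimes\beta$, using that each $\mathfrak{n}^iV/\mathfrak{n}^{i+1}V$ is a quotient of $\mathfrak{n}^{\otimes i}\otimes V/\mathfrak{n}V$, whereas the paper applies right exactness of $\mathfrak{n}$-coinvariants to the graded pieces $(V/\mathfrak{n}V)\otimes(\beta_i/\beta_{i+1})$; both routes work.
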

\begin{proof}
  Let 
  \[
  \beta=\beta_0\supset \beta_1\supset \dots\supset \beta_r=0
  \]
  be a finite $(\fkp,L)$-filtration such that $\fkn$-action on $\beta_{i}/\beta_{i+1}$ is trivial. For any fixed $k$, it is not hard to see 
  \[
  \fkn^{k+r} (V\otimes \beta)\subset \fkn^k V\otimes \beta,
  \]
  and 
  \[
 \fkn^{k+r} V\otimes \beta \subset \fkn^k(V\otimes \beta).
  \]
  Consequently,  
  \[
  \varprojlim_k (V\otimes \beta)/\fkn^k(V\otimes \beta)\simeq (\varprojlim_k V/\fkn^k V)\otimes \beta. \]
  Moreover, when $V\in \HC(\fkg,K_L)$, we observe that $(V\otimes \beta)/\fkn(V\otimes\beta)$ is Harish-Chandra since
  \[
 ( V\otimes (\beta_i/\beta_{i+1}))/\fkn ( V\otimes (\beta_i/\beta_{i+1}))\simeq (V/\fkn V )\otimes (\beta_i/\beta_{i+1})
  \]
  is Harish-Chandra for any integer $i$.
\end{proof}

\subsection{A commutative diagram}
It is not hard to see that $\mathrm{CJ}(\pi)\in\HC(\fkg,K_L)$ when $\pi\in \HC(\fkg,K)$. Hence, the Casselman-Jacquet functor is a functor
\[
\mathrm{CJ}: \HC(\fkg,K)\lra \HC(\fkg,K_L),
\]
which is exact by the Artin-Rees lemma in~\cite{CO78}. On the other hand, following lemma shows that $\mathrm{CJ}^{\infty}$ is a functor
\[
\mathrm{CJ}^{\infty}: \mathrm{CW}_G\lra  \CC(\fkg,L)_f,
\]
which is also exact by~\cite[Theorem 12.4]{CWYZ}.
\begin{lemma}\label{CW CJ in finite length cat}
    Let $\pi$ be a Casselman-Wallach representation of $G$. Then $\mathrm{CJ}^{\infty}(\pi)\in\CC(\fkg,L)_f$.
\end{lemma}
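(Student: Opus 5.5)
The plan is to reduce to the criterion already used in the proof of Proposition~\ref{dual verma prop}. By~\cite[Lemma 2.24]{WZ}, an object of $\CC(\fkg,L)$ that is $\CZ(\fkg)$-finite lies in $\CC(\fkg,L)_f$. So it suffices to show two things: that $\mathrm{CJ}^{\infty}(\pi)$ is $\CZ(\fkg)$-finite, and that it belongs to $\CC(\fkg,L)$.

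\textbf{$\CZ(\fkg)$-finiteness.} This part is immediate. The Casselman--Wallach representation $\pi$ is $\CZ(\fkg)$-finite, and $\CZ(\fkg)$ commutes with $\fkn$, so an ideal of finite codimension annihilating $\pi$ also annihilates each quotient $\pi/\ov{\fkn^k\pi}$. Hence it annihilates their inverse limit.

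\textbf{Membership in $\CC(\fkg,L)$.} Recall the defining conditions from~\cite[section 2.5]{WZ}:
\begin{itemize}
\item the object is $\fkn$-complete;
\item it decomposes as $\prod_{\gamma}$ of generalized $\fkz_L$-weight spaces;
\item each weight space is a Casselman--Wallach representation of $L$;
\item the weights lie in $\mathscr{S}+\Omega_{\fkn}$ for a finite set $\mathscr{S}$.
\end{itemize}

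First I would show that each $\pi/\ov{\fkn^k\pi}$ is a Casselman--Wallach representation of $L$. Each successive quotient $\ov{\fkn^j\pi}/\ov{\fkn^{j+1}\pi}$ is a quotient of $\fkn^{\otimes j}\otimes \pi/\ov{\fkn\pi}$. The space $\pi/\ov{\fkn\pi}$ is the smooth Jacquet module, which is a Casselman--Wallach representation of $L$ (Casselman). Finite length is preserved under tensoring with the finite-dimensional $\fkn^{\otimes j}$ and under Hausdorff quotients, and an extension of Casselman--Wallach representations is again one. So the claim follows by induction on $k$.

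For the weight bound, the $\fkz_L$-weights of $\pi/\ov{\fkn^k\pi}$ lie in $\wt(\pi/\ov{\fkn\pi})+\Omega_{\fkn}$, and $\wt(\pi/\ov{\fkn\pi})$ is finite. This gives the required finite set $\mathscr{S}$.

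\textbf{Stabilization, the main obstacle.} The key point is that for each fixed $\gamma$ the weight space $(\pi/\ov{\fkn^k\pi})_\gamma$ stabilizes once $k$ is large. The reason is that the weights of $\ov{\fkn^k\pi}/\ov{\fkn^{k+1}\pi}$ lie in $\mathscr{S}+(\text{weights of } \Sym^{k}\fkn)$, and these leave any fixed $\gamma$ as $k\to\infty$, since the weights of $\fkn$ pair positively with a suitable element of $\fkz_L$. I would run this exactly as in the proof of Lemma~\ref{basic property}(1).

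Granting stabilization, the limit is
\[
\mathrm{CJ}^{\infty}(\pi)\simeq \prod_{\gamma} \varprojlim_k (\pi/\ov{\fkn^k\pi})_\gamma,
\]
and each factor is eventually constant, hence a Casselman--Wallach representation of $L$.

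The delicate topological point is that the Fréchet inverse-limit topology agrees with the product topology. This should follow because each $\pi/\ov{\fkn^k\pi}$ is a finite direct sum of closed generalized weight spaces, the projections being polynomials in $\fkz_L$. Finally, $\fkn$-completeness holds by construction, since $\mathrm{CJ}^{\infty}(\pi)/\ov{\fkn^k \mathrm{CJ}^{\infty}(\pi)}\simeq \pi/\ov{\fkn^k\pi}$, which is checked weight by weight.
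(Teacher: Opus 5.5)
Your outline follows the same checklist as the paper. You reduce via \cite[Lemma 2.24]{WZ} to membership in $\CC(\fkg,L)$, bound the weights by a finite set plus $\Omega$, note that each generalized $\fkz_L$-weight space is already attained at a finite stage $\pi/\ov{\fkn^s\pi}$, and get completeness from cofinality with the weight-complement filtration. You diverge in the source of the key analytic input. The paper takes the weight bound and the Casselman--Wallach property of the finite stages from \cite[Theorem 12.1]{CWYZ}; you bootstrap from $k=1$, and that bootstrap has a gap as written.

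The map $\fkn^{\otimes j}\otimes \pi/\ov{\fkn\pi}\to \ov{\fkn^j\pi}/\ov{\fkn^{j+1}\pi}$ is well defined and continuous. However, its image is $(\fkn^j\pi+\ov{\fkn^{j+1}\pi})/\ov{\fkn^{j+1}\pi}$, which is a priori only dense. Calling the target a quotient of the source is a closed-range claim, exactly the kind that fails for general smooth representations (compare Section~\ref{counter-exa-sec}). So ``finite length is preserved under Hausdorff quotients'' cannot be invoked until surjectivity is known. Your base case has the same issue: for a general parabolic, the Casselman--Wallach property of $\pi/\ov{\fkn\pi}$ also needs an argument, not just a citation to Casselman.

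The missing ingredient is the Casselman--Wallach uniqueness theorem. Suppose a Casselman--Wallach representation $\sigma$ of $L$ maps continuously and equivariantly, with dense image, into a smooth moderate-growth Fr\'echet $L$-representation $W$. For each $K_L$-type $\delta$, the continuous isotypic projection shows that $W(\delta)$ is the closure of the image of the finite-dimensional space $\sigma(\delta)$, hence equals that image. Thus $W_{K_L}$ is a quotient of $\sigma_{K_L}$, hence Harish-Chandra, so $W\simeq (W_{K_L})^{\infty}$. The map is then a morphism of Casselman--Wallach representations with dense image, hence surjective.

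It is cleaner to apply this once, to the dense image of $\pi_K/\fkn^k\pi_K$ in $\pi/\ov{\fkn^k\pi}$; the source is a Harish-Chandra $(\fkl,K_L)$-module by Osborne's lemma and the Casselman--Osborne theorem. This gives the Casselman--Wallach property for every $k$ at once. It also gives the weight bound with $\mathscr{S}=\wt(\pi_K/\fkn\pi_K)$, since a continuous weight projection vanishing on a dense subspace vanishes identically. Your weight bound and stabilization steps only ever need this density.

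With this repair your route is somewhat more economical than the paper's. It uses only the automatic surjectivity of $(\pi_K/\fkn^k\pi_K)^{\infty}\to\pi/\ov{\fkn^k\pi}$. It does not need the injectivity statement $\pi_K\cap\ov{\fkn^k\pi}=\fkn^k\pi_K$ packaged in \cite[Theorem 12.1]{CWYZ}, which the paper does need later for Proposition~\ref{commutative dia prop} but not for this lemma.
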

\begin{proof}
    By~\cite[Lemma 2.24]{WZ}, it suffices to show that $\mathrm{CJ}^{\infty}(\pi)\in\CC(\fkg,L)$. For simplicity, let $V:=\mathrm{CJ}^{\infty}(\pi)$. On the one hand, 
    \[
    V^{[\fkz_L]}=\bigoplus_{\alpha\in\fkz_L^*} V_{\alpha}.
    \]
    On the other hand, by~\cite[Theorem 12.1]{CWYZ}, we know that \[
    \wt(V)\subset \wt(\pi/\ov{\fkn\pi})+\Omega.
    \]
    Hence, $V^{[\fkz_L]}$ is $\ov{\fkn}$-locally finite. Moreover, for any $\alpha\in\fkz_L^*$, there exists a positive integer $s$ such that 
    \[
    V_{\alpha}= (\pi/\ov{\fkn^s\pi})_{\alpha},
    \]
    which is Casselman-Wallach. Moreover, since two directed sets 
    \[
    \{ \ov{\bigoplus_{\alpha\in\fkz_L^*\setminus S} V_{\alpha}}\mid S\subset \fkz_L^* \text{ finite}\} \text{ and } \{\ov{\fkn^k V}\}
    \]
    are co-finial, we have
    \[
    V\simeq \varprojlim_{k} V/\ov{\fkn^k V}\simeq \mathop{\varprojlim}\limits_{S\subset \fkz_L^* \text{ finite}}V/\overline{ \mathop{\oplus}\limits_{\alpha\in \fkz_L^*\setminus S}V_{\alpha}}.
    \]
\end{proof}

Now, we want to define a functor $\widehat{\cdot}$ such that the following diagram is commutative.
    \begin{equation}\label{func-comm-dia}
        \begin{tikzcd}
    \HC(\fkg,K)\ar[r,"\cdot^{\infty}"]\ar[d,"\mathrm{CJ}"] & \mathrm{CW}_G \ar[d,"\mathrm{CJ}^{\infty}"] \\
             \HC(\fkg,K_L)\ar[r,"\widehat{\cdot}"] & \CC(\fkg,L)_f .
        \end{tikzcd}
    \end{equation}
    We first give a definition based on the Casselman-Wallach globalization of Harish-Chandra $(\fkl,K_L)$-module.
\begin{definition}
 Let $V\in\HC(\fkg,K_L)$. We define the $(\fkg,L)$-module $\widehat{V}$ as 
    \[
    \widehat{V}:=\varprojlim_k (V/\fkn^k V)^{\infty}.
    \]
    equipped with the inverse limit topology.
\end{definition}
We specify the action of the Lie algebra. It is similar as~\cite[Lemma 9.1]{CWYZ}. 
\begin{itemize}
    \item The $\fkn$-action: For every $k$, there is a $L$-equivariant continuous map
    \[
   \mu_k:  \fkn \otimes (V/\fkn^k V)^{\infty} \lra (V/\fkn^{k+1} V)^{\infty},
    \]
    which is the Casselman-Wallach globalization of the action map $\fkn \otimes (V/\fkn^k V) \to (V/\fkn^{k+1} V)$, since $\left(\fkn \otimes (V/\fkn^k V)\right)^{\infty}=\fkn \otimes (V/\fkn^k V)^{\infty}$. The $\fkn$-action is defined as 
    \[
    X\cdot (x_1,x_2,\cdots):= (0,\mu_1(X\otimes x_1),\mu_2(X\otimes x_2),\cdots)
    \]
     for $X\in \fkn$ and $(x_1,x_2,\cdots)\in\widehat{V}$.
    \item The $\ov{\fkn}$-action: For every $k$, there is a sufficiently large $n_k$, such that $\ov{\fkn} \fkn^{n_k}\subset \fkn^k\U(\fkg)$. Thus, by Casselman-Wallach globalization, there is a $L$-equivariant continuous map
    \[
   \lambda_k:  \ov{\fkn} \otimes (V/\fkn^{n_k} V)^{\infty} \lra (V/\fkn^{k} V)^{\infty}.
    \]
    The $\ov{\fkn}$-action is defined as 
    \[
    Y\cdot (x_1,x_2,\cdots):= (0,\lambda_1(X\otimes x_{n_1}),\lambda_2(X\otimes x_{n_2}),\cdots)
    \]
     for $Y\in \ov{\fkn}$ and $(x_1,x_2,\cdots)\in\widehat{V}$. The $\fkn$ and $\ov{\fkn}$-actions glue to be a Lie algebra action, namely, $[X,Y]\cdot x= X\cdot Y\cdot x- Y\cdot X\cdot x$ for any $X\in\fkn,Y\in\ov{\fkn}$ and $x\in\widehat{V}$ by the Casselman-Wallach globalization of the following commutative diagram
     \[
     \begin{tikzcd}
         \fkn \otimes \ov{\fkn} \otimes V/\fkn^N V \ar[r,"\alpha"]\ar[d,"\beta"] & V/\fkn^k V \ar[d,"id"]\\
      ( \fkn \oplus \ov{\fkn} \oplus \fkl)  \otimes V/\fkn^N V \ar[r,"\gamma"] &  V/\fkn^k V ,
     \end{tikzcd}
     \]
     where $N$ is a sufficiently large integer, 
     $$\alpha(X\otimes Y\otimes v):= \mu_{k-1} (X\otimes \lambda_{k-1}(Y\otimes v))- \lambda_k( Y\otimes \mu_{N}(X\otimes v)),$$
     $\beta(X\otimes Y\otimes v):=[X,Y]\otimes v$, and $\gamma$ is the direct sum of action maps. All of the maps involved are $(\fkl,K_L)$-equivariant.
\end{itemize}
By comparing the generalized weight space, we obtain
     \begin{equation}\label{glob-iso}
           \widehat{V}\simeq \varprojlim_{S\subset \fkz_L^* \text{ finite}} \bigoplus_{\kappa\in S} (V_{\kappa})^{\infty} \simeq \prod_{\kappa\in \fkz_L^*} (V_{\kappa})^{\infty}
     \end{equation}
    In particular, the functor $\widehat{\cdot}$ is exact. We show that this $(\fkg,L)$-module is in the category $\CC(\fkg,L)_f$.
 \begin{lemma}
     Let $V\in\HC(\fkg,K_L)$. Then $\widehat{V}\in\CC(\fkg,L)_f$.
 \end{lemma}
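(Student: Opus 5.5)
The plan is to imitate the proof of Lemma~\ref{CW CJ in finite length cat} and Proposition~\ref{dual verma prop}. By~\cite[Lemma 2.24]{WZ}, and since $V$ is $\CZ(\fkg)$-finite by Lemma~\ref{basic property}(2), it will suffice to show that $\widehat{V}\in\CC(\fkg,L)$. Note that $\CZ(\fkg)$-finiteness passes to $\widehat{V}$. Indeed, each $(V/\fkn^kV)^{\infty}$ is annihilated by the same finite-codimension ideal $I\subset\CZ(\fkg)$ that kills $V$, because globalization commutes with the $\CZ(\fkg)$-action once it is defined through the maps $\mu_k,\lambda_k$. Therefore $I$ kills the inverse limit.

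We then verify the defining conditions of $\CC(\fkg,L)$ using the weight decomposition~\eqref{glob-iso}, namely $\widehat{V}\simeq\prod_{\kappa}(V_\kappa)^{\infty}$.

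\textbf{Step 1: weight spaces.} We identify the generalized $\fkz_L$-weight spaces of $\widehat{V}$. Since $\fkz_L$ acts on the factor $(V_\kappa)^{\infty}$ with the single generalized weight $\kappa$, with uniformly bounded nilpotency order because $V_\kappa$ is Harish-Chandra, we get $\widehat{V}_\kappa=(V_\kappa)^{\infty}$. By Lemma~\ref{basic property}(1) each $V_\kappa$ is a Harish-Chandra $(\fkl,K_L)$-module, so $\widehat{V}_\kappa$ is a Casselman-Wallach representation of $L$. It then follows that $\widehat{V}^{[\fkz_L]}=\bigoplus_\kappa(V_\kappa)^{\infty}$.

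\textbf{Step 2: $\ov{\fkn}$-local finiteness.} By the proof of Lemma~\ref{basic property}(1), $\wt(V)\subset\wt(V/\fkn V)+\Omega$, and $\wt(V/\fkn V)$ is finite. Hence $\wt(\widehat{V})$ lies in a finite union of translates of $\Omega$. Since $\ov{\fkn}$ lowers weights, any vector of $\widehat{V}^{[\fkz_L]}$ is killed by a large power of $\ov{\fkn}$. This gives $\ov{\fkn}$-local finiteness.

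\textbf{Step 3: completeness.} It remains to show that $\widehat{V}$ is complete in the required sense:
\[
\widehat{V}\simeq\varprojlim_k\widehat{V}/\ov{\fkn^k\widehat{V}}\simeq\varprojlim_{S}\widehat{V}/\ov{\textstyle\bigoplus_{\kappa\notin S}\widehat{V}_\kappa}.
\]
The second limit is literally $\prod_\kappa(V_\kappa)^{\infty}$ by~\eqref{glob-iso}. For the first, I will show that the two directed families of closed subspaces are cofinal, as in Lemma~\ref{CW CJ in finite length cat}. On one side, $\fkn^k$ raises weights by elements of $\Omega$ of height at least $k$, so $\ov{\fkn^k\widehat{V}}$ is contained in the closure of the sum of weight spaces outside a finite set. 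On the other side, for $k$ large, the $\kappa$-part of $V/\fkn^kV$ equals $V_\kappa$. The key step is to show that $\ov{\fkn^k\widehat{V}}$ equals the kernel of $\widehat{V}\to(V/\fkn^kV)^{\infty}$. Exactness of Casselman-Wallach globalization gives $(\fkn^kV/\fkn^{k'}V)^{\infty}=\ov{\fkn^k\,(V/\fkn^{k'}V)^{\infty}}$. This uses that $\fkn^k\cdot$ is the image of a map of Harish-Chandra modules $\fkn^{\otimes k}\otimes V/\fkn^{k'-k}V\to V/\fkn^{k'}V$, and that Casselman-Wallach globalization sends images to closed images. We then pass to the limit over $k'$. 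This last step is where I expect the main obstacle: closures and inverse limits do not commute in general, so I will argue weight space by weight space using the product decomposition, where every limit stabilizes.
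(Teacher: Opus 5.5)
Your proposal is correct and follows the paper's route. You reduce via \cite[Lemma 2.24]{WZ} to showing $\widehat{V}\in\CC(\fkg,L)$, and read off the required properties from the product decomposition \eqref{glob-iso} together with $\widehat{V}_\kappa=(V_\kappa)^{\infty}$; the paper obtains $\CZ(\fkg)$-finiteness slightly faster, from density of $V$ in $\widehat{V}$ and continuity of the action.

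Your Step 3 does more than the paper needs, since \eqref{glob-iso} already gives the weight-space completeness and the $\fkn$-adic version then follows as in Lemma~\ref{inv lim lem}. The obstacle you anticipate there is also mild: $\fkn^k\widehat{V}$ surjects onto every stage $(\fkn^kV/\fkn^{k'}V)^{\infty}$, so it is dense in the kernel of $\widehat{V}\to(V/\fkn^kV)^{\infty}$.
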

 \begin{proof}
    By isomorphism~\eqref{glob-iso}, we know that $V$ is dense in $\widehat{V}$. Hence $\widehat{V}$ is $\CZ(\fkg)$-finite. By \cite[Lemma 2.24]{WZ}, it suffices to show that $\widehat{V}\in\CC(\fkg,L) $. Then it follows from the fact that $\widehat{V}_{\kappa}=(V_{\kappa})^{\infty}$.
 \end{proof}

 Now, we can prove that the diagram~\eqref{func-comm-dia} is commutative.
\begin{proposition}\label{commutative dia prop}
    The diagram~\eqref{func-comm-dia} is commutative. That is, there exists an isomorphism of functors
    \[
    \widehat{\cdot}\circ \mathrm{CJ}\simeq \mathrm{CJ}^{\infty}\circ \cdot ^{\infty}.
    \]
\end{proposition}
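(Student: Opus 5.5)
Fix a Harish-Chandra module $E$ of $G$ and write $\pi=E^{\infty}$. By definition,
\[
\widehat{\mathrm{CJ}(E)}=\varprojlim_k \bigl(\mathrm{CJ}(E)/\fkn^k\mathrm{CJ}(E)\bigr)^{\infty}\simeq \varprojlim_k (E/\fkn^k E)^{\infty}.
\]
Here we use that $\mathrm{CJ}(E)/\fkn^k\mathrm{CJ}(E)\simeq E/\fkn^kE$. This holds by the Artin--Rees lemma of~\cite{CO78}, or directly from Proposition~\ref{equivalent char}, since $E/\fkn^kE$ is a finitely generated $\U(\fkp)$-module killed by $\fkn^k$. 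On the other side, $\mathrm{CJ}^{\infty}(\pi)=\varprojlim_k \pi/\ov{\fkn^k\pi}$. So the proposal is to construct, for each $k$, a natural topological isomorphism of $L$-representations
\[
\iota_k:(E/\fkn^kE)^{\infty}\isoarrow \pi/\ov{\fkn^k\pi},
\]
compatible with the transition maps, and then pass to the inverse limit.

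The key input is~\cite[Theorem 12.1]{CWYZ}, the comparison between the smooth and algebraic generalized Jacquet functors. It says that $\pi/\ov{\fkn^k\pi}$ is a Casselman--Wallach representation of $L$ whose $K_L$-finite vectors are exactly $E/\fkn^kE$. The map is induced by the inclusion $E\hookrightarrow \pi$, which sends $\fkn^kE$ into $\fkn^k\pi$. Uniqueness of Casselman--Wallach globalization then gives $\iota_k$. Naturality in $E$ and compatibility with $\pi/\ov{\fkn^{k+1}\pi}\to \pi/\ov{\fkn^k\pi}$ both follow from functoriality of the globalization: each $\iota_k$ is the unique continuous extension of a canonical algebraic map. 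Taking limits gives a topological linear $L$-isomorphism $\iota:\widehat{\mathrm{CJ}(E)}\to \mathrm{CJ}^{\infty}(\pi)$.

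It remains to check that $\iota$ intertwines the $\fkg$-actions. The $\fkn$- and $\ov{\fkn}$-actions on $\widehat{\mathrm{CJ}(E)}$ were defined through the globalized maps $\mu_k$ and $\lambda_k$. The corresponding maps on $\mathrm{CJ}^{\infty}(\pi)$ are the continuous maps $\fkn\otimes\pi/\ov{\fkn^k\pi}\to \pi/\ov{\fkn^{k+1}\pi}$ and $\ov{\fkn}\otimes \pi/\ov{\fkn^{n_k}\pi}\to\pi/\ov{\fkn^k\pi}$ induced by the action on $\pi$. Both families are continuous and agree on the dense subspaces $E/\fkn^kE$. Hence they agree after the identification $\iota_k$, and therefore $\iota$ is $\fkg$-equivariant. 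Here density follows from the weight decomposition~\eqref{glob-iso}, or simply from $K_L$-finite vectors being dense.

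The main obstacle is the first step: identifying $\pi/\ov{\fkn^k\pi}$ as the Casselman--Wallach globalization of $E/\fkn^kE$. This requires in particular that the closure $\ov{\fkn^k\pi}$ has $K_L$-finite part exactly $\fkn^kE$ and that nothing extra is killed. I would cite~\cite[Theorem 12.1]{CWYZ} for this, as the introduction indicates. If that theorem is only stated for $k=1$ or for $\fkn$-homology, I would reduce to it by filtering $\pi/\ov{\fkn^k\pi}$ by $\ov{\fkn^j\pi}/\ov{\fkn^{j+1}\pi}$ and using exactness of $\mathrm{CJ}^{\infty}$ from~\cite[Theorem 12.4]{CWYZ} together with the five lemma on Casselman--Wallach globalizations.
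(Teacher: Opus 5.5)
Your proposal is correct and follows the paper's proof. The paper likewise writes the two sides as $\varprojlim_k (E/\fkn^k E)^{\infty}$ and $\varprojlim_k E^{\infty}/\ov{\fkn^k E^{\infty}}$, and obtains the isomorphism levelwise from \cite[Theorem 12.1]{CWYZ}; your compatibility and $\fkg$-equivariance checks (by density of $K_L$-finite vectors) fill in details the paper leaves implicit, and since the paper applies that theorem directly for every $k$, your fallback filtering argument is not needed.
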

\begin{proof}
    Let $\pi\in \HC(\fkg,K)$. Then 
    \[
 \widehat{   \mathrm{CJ}(\pi)} = \varprojlim_k (\pi/\fkn^k \pi)^{\infty} 
    \]
    and 
    \[
    \mathrm{CJ}^{\infty}(\pi^{\infty}) = \varprojlim_k \pi^{\infty}/\ov{\fkn^k \pi^{\infty}}.
    \]
    Then, the natural isomorphism is given by~\cite[Theorem 12.1]{CWYZ}.
\end{proof}

\begin{example}\label{Verma exa}
    The functor sends formal Verma modules to formla Verma modules. Let $\tau$ be a Harish-Chandra module of $(\fkl,K_L)$, then
    \[
    \widehat{\CV(\tau)}\simeq \CV(\tau^{\infty}).
    \]
\end{example}

\subsection{Category equivalence}
In this subsection, we prove that the functor $\widehat{\cdot}$ defined in the last subsection is a category equivalence. 

Let $V\in\CC(\fkg,L)_f$. By the argument in \cite[Proposition 2.26, Remark 2.31]{WZ}, we know that $V/\fkn^k V$ is a Casselman-Wallach representation of $L$. In particular, $\fkn^k V$ is closed in $V$. In order to prove the category equivalence, we need the following lemma.
 \begin{lemma}\label{inv lim lem}
     If $V\in \CC(\fkg,L)_f$, then 
     \[
     V\simeq \varprojlim_k  V/ \fkn^k V.
     \]
 \end{lemma}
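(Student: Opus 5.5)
We will use the description of objects of $\CC(\fkg,L)_f$ as products of their generalized $\fkz_L$-weight spaces. From the definition of $\CC(\fkg,L)$ recalled in \cite[Section 2.5]{WZ}, as in the proof of Lemma~\ref{CW CJ in finite length cat}, an object $V$ satisfies
\[
V\simeq \mathop{\varprojlim}\limits_{S\subset \fkz_L^* \text{ finite}} V/\overline{\mathop{\oplus}\limits_{\alpha\in \fkz_L^*\setminus S}V_{\alpha}},
\]
with each $V_\alpha$ a Casselman--Wallach representation of $L$. It therefore suffices to show that the two directed families of closed subspaces
\[
\{\fkn^k V\}_k \quad\text{and}\quad \Big\{\ov{\textstyle\bigoplus_{\alpha\notin S} V_\alpha}\Big\}_{S \text{ finite}}
\]
are cofinal in each other. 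Once cofinality is established, the two inverse limits agree, and hence so do the topologies. Here $\fkn^kV$ is closed because $V/\fkn^kV$ is Casselman--Wallach, as recalled just before the lemma.

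\emph{First direction: each $\fkn^kV$ contains some $\ov{\oplus_{\alpha\notin S}V_\alpha}$.} We first use that $V/\fkn^kV$ is a Casselman--Wallach representation of $L$, hence $\CZ(\fkl)$-finite. In particular it has only finitely many generalized $\fkz_L$-weights, forming a set $S_k$. The projection $V\to V/\fkn^kV$ is $\fkz_L$-equivariant, so it kills $V_\alpha$ for $\alpha\notin S_k$: a generalized weight vector of weight $\alpha$ must map to the $\alpha$-weight space, which is zero. By continuity and closedness of $\fkn^kV$, we get $\ov{\oplus_{\alpha\notin S_k}V_\alpha}\subset \fkn^kV$.

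\emph{Second direction: for finite $S$, some $\fkn^kV\subset \ov{\oplus_{\alpha\notin S}V_\alpha}$.} Fix $\kappa_0$ in the finite set $\mathscr S$ such that $\wt(V)\subset \mathscr S+\Omega_\fkn$. This containment is part of the definition of $\CC(\fkg,L)_f$, used as in Proposition~\ref{dual verma prop}. Weights of $\fkn^k\U(\fkn)$ are sums of at least $k$ roots of $\fkz_L$ in $\fkn$, all lying in an open half space. So for $k$ large, $\mathscr S+\Omega_{\fkn^k}$ is disjoint from $S$. Then $\fkn^k$ maps the $\fkz_L$-finite part $V^{[\fkz_L]}=\oplus V_\alpha$ into $\oplus_{\alpha\notin S}V_\alpha$. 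This part is dense in $V$ by the product description, and the $\fkn$-action is continuous. Hence $\fkn^kV$ lies in the closure $\ov{\oplus_{\alpha\notin S}V_\alpha}$.

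\emph{Main obstacle.} The delicate point is the last step, since $\fkn^kV$ consists of finite sums $\sum X_iv_i$ with $v_i$ arbitrary rather than $\fkz_L$-finite. We handle this by approximating each $v_i$ by $\fkz_L$-finite vectors and using continuity of each $X_i\in\U(\fkn)$. The topological identification $V\simeq\varprojlim_kV/\fkn^kV$ then follows because the canonical map is a continuous bijection between Fréchet spaces, which is open by the open mapping theorem.
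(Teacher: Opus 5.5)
Your proposal is correct and follows the same route as the paper. Both arguments prove that the directed families $\{\fkn^kV\}_k$ and $\{\ov{\oplus_{\alpha\notin S}V_\alpha}\}_S$ are mutually cofinal, using that $V/\fkn^kV$ is Casselman--Wallach (so it has only finitely many $\fkz_L$-weights) and that $\fkz_L$-finite vectors are dense. The paper phrases the density as density of $(\fkn^kV)^{[\fkz_L]}$ in $\fkn^kV$, which is exactly your approximation argument. You additionally make explicit the lower weight bound $\wt(V)\subset\mathscr{S}+\Omega$ behind $\fkn^kV\subset\ov{\oplus_{\alpha\notin S}V_\alpha}$, which the paper leaves implicit. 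Your final open-mapping step is harmless but unnecessary, since mutual cofinality already gives a topological isomorphism of the inverse limits.
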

 \begin{proof}
     On the one hand, we have the short exact sequence
     \[
     0\lra (\fkn^k V)_{\kappa} \lra V_{\kappa}\lra (V/\fkn^k V)_{\kappa}
     \]
     for any $\kappa\in\fkz_L^*$. On the other hand, since $V^{[\fkz_L]}$ is dense in $V$, $(\fkn^k V)^{[\fkz_L]}$ is also dense in $\fkn^k V$ for any $k$. Hence, two directed sets
      \[
    \{ \ov{\bigoplus_{\kappa\in\fkz_L^*\setminus S} V_{\kappa}}\mid S\subset \fkz_L^* \text{ finite}\} \text{ and } \{\fkn^k V\}
    \]
    are co-finial, which implies the result.
 \end{proof}

 Now, we define the quasi-inverse of the functor $\widehat{\cdot}$. 
 \begin{definition}
     Let $V\in\CC(\fkg,L)_f$. We define the $(\fkg,K_L)$-module $V^{\flat}$ as
     \[
     V^{\flat}:=\varprojlim_k (V/\fkn^k V)^{[K_L]}.
     \]
 \end{definition}
The Lie algebra action is defined similarly to the globalization functor $\widehat{\cdot}$. Details are left for interested readers.
 \begin{lemma}\label{equi-lem}
      Let $
V\in \CC(\fkg,L)_f$. Then $V^{\flat}/\fkn^k V^{\flat}\simeq (V/\fkn^k V)^{[K_L]}$. In particular, $V^{\flat}\in\HC(\fkg,K_L)$.

 \end{lemma}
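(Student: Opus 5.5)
The plan is to reduce everything to generalized $\fkz_L$-weight spaces, mirroring the proof of the isomorphism \eqref{glob-iso} and of Lemma~\ref{inv lim lem}. Since $V\in\CC(\fkg,L)_f$, each $V/\fkn^k V$ is a Casselman--Wallach representation of $L$ whose weights lie in a finite set. Moreover, $\fkn^k V$ is closed and $V\simeq\varprojlim_k V/\fkn^k V$ by Lemma~\ref{inv lim lem}. Set $E_k:=(V/\fkn^k V)^{[K_L]}$, a Harish-Chandra $(\fkl,K_L)$-module. The transition maps $E_{k+1}\to E_k$ are surjective, because taking $K_L$-finite vectors is exact on Casselman--Wallach representations. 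For each $\kappa\in\fkz_L^*$ the weight space $(V/\fkn^kV)_\kappa$ stabilizes to $V_\kappa$ for $k\gg0$. Hence $(E_k)_\kappa$ stabilizes to $(V_\kappa)^{[K_L]}$, and this yields
\[
V^{\flat}\simeq \prod_{\kappa\in\fkz_L^*}(V_\kappa)^{[K_L]}\simeq \varprojlim_{S\subset\fkz_L^*\text{ finite}}\bigoplus_{\kappa\in S}(V_\kappa)^{[K_L]}.
\]

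Next, compute $\fkn^kV^{\flat}$. I claim that $\fkn^kV^{\flat}$ equals the kernel of the projection $V^{\flat}\to E_k$. For the inclusion $\subseteq$, use compatibility of the actions. For the converse, take $x=(x_j)\in V^\flat$ whose image in $E_k$ is zero. Because $\fkn^k$ is spanned by finitely many monomials $Y_1,\dots,Y_m$, each of fixed $\fkz_L$-weight, I will solve $x=\sum_i Y_i y_i$ weight by weight. For every $\kappa$, the component $x_\kappa$ lies in $(\fkn^kV)_\kappa^{[K_L]}$. The latter is the $K_L$-finite part of $\sum_i Y_i V_{\kappa-\wt(Y_i)}$, which is a surjective image of finitely many Casselman--Wallach representations. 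Exactness of $[K_L]$ then lets me choose $K_L$-finite preimages $y_{i,\kappa-\wt(Y_i)}$.

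Assembling these choices over all $\kappa$ gives elements $y_i\in\prod_\kappa (V_\kappa)^{[K_L]}=V^\flat$. This works because each weight of the target receives contributions from only one weight of each $y_i$, so the infinite product causes no convergence issue. Hence $V^\flat/\fkn^kV^\flat\simeq E_k$. I expect this step to be the main obstacle: one must check that the product decomposition is compatible with the $\fkn$-action. That is, $Y_i$ maps $\prod_\kappa$ to $\prod_\kappa$ componentwise, which holds because $V_\kappa$ is preserved up to a weight shift and $\fkn$ acts continuously.

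With this in hand, the conditions defining $\HC(\fkg,K_L)$ follow. First, $V^\flat/\fkn V^\flat\simeq (V/\fkn V)^{[K_L]}$ is a Harish-Chandra $(\fkl,K_L)$-module. Second, $V^\flat\to\varprojlim_k V^\flat/\fkn^kV^\flat=\varprojlim_k E_k=V^\flat$ is the identity, so $V^\flat$ is $\fkn$-complete. Finally, $K_L$-local finiteness on each $E_k$ and the compatibility of the $\U(\fkg)$-action, defined exactly as for $\widehat{\cdot}$, make $V^\flat$ a $(\fkg,K_L)$-module. Therefore $V^\flat\in\HC(\fkg,K_L)$.
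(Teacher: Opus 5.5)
Your argument is correct, but it takes a different route from the paper's.

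\textbf{The paper's route.} The paper does not use the weight decomposition at this point; \eqref{prod dec} is only recorded after the lemma. It works directly with the tower $E_j=(V/\fkn^jV)^{[K_L]}$. Given $(x_j)$ with $x_j\in(\fkn V/\fkn^jV)^{[K_L]}$, it lifts through the $L$-equivariant action maps $a_j:\fkn\otimes E_j\to(\fkn V/\fkn^jV)^{[K_L]}$. The key observation is that the ambiguity $\fkn\otimes(\fkn^{j-1}V/\fkn^jV)^{[K_L]}$ in the fibre of $a_j$ disappears after projecting one level down. This lets it build a compatible system of preimages inductively, a Mittag-Leffler type argument that uses only the tower and exactness of $K_L$-finite vectors.

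\textbf{Your route.} You pass instead to $V^\flat\simeq\prod_\kappa(V_\kappa)^{[K_L]}$. Each weight component of the tower stabilizes, so the lifting problem splits into independent finite problems, one per weight, and no compatibility or $\varprojlim^1$ issue arises. This treats all $k$ at once and gives the explicit description $\fkn^kV^\flat=\prod_\kappa((\fkn^kV)_\kappa)^{[K_L]}$. The cost is that you rely on two extra inputs: the stabilization of weight spaces (weights of $V$ lie in finitely many translates of $\Omega$), and the componentwise form of the $\fkn$-action.

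\textbf{One detail to fix.} The map $(v_i)\mapsto\sum_iY_iv_i$ is not $L$-equivariant, because individual monomials are not $\Ad(L)$-fixed. So exactness of $[K_L]$ does not apply to it directly. Instead:
\begin{enumerate}
\item Use the $L$-equivariant surjection $\bigoplus_\alpha(\fkn^k)_\alpha\otimes V_{\kappa-\alpha}\to(\fkn^kV)_\kappa$, where $\fkn^k\subset\U(\fkn)$ is the $\Ad(L)$-stable span of degree-$k$ monomials.
\item Use $(F\otimes W)^{[K_L]}=F\otimes W^{[K_L]}$ for finite-dimensional $F$.
\item Expand in the monomial basis to obtain your $K_L$-finite $y_{i,\kappa-\wt(Y_i)}$.
\end{enumerate}
This mirrors the role of $\fkn\otimes(\cdot)$ in the paper's maps $a_j$.
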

\begin{proof}
Since $(V/\fkn^p V)^{[K_L]} \to (V/\fkn^q V)^{[K_L]}$ is surjective for any $p>q$, the natural map
\[V^{\flat}/\fkn^k V^{\flat}\lra (V/\fkn^k V)^{[K_L]} \quad (x_j)_{j=1}^{\infty} \mapsto x_k
\]
is also surjective. Let $(x_j)\in V^{\flat}$ such that $x_j\in (\fkn^kV/\fkn^j V)^{[K_L]}$ for any integer $j$. It suffices to prove that $(x_j)\in \fkn^k V^{\flat}$. Suppose that in the following $L$-equivariant commutative diagram
    \begin{center}
        \begin{tikzcd}
            \fkn\otimes (V/\fkn^j V)^{[K_L]}  \ar[d," a_j"] &  \fkn\otimes (V/\fkn^{j+1} V)^{[K_L]} \ar[d," a_{j+1}"]\ar[l,"\widetilde{d_j}=\id\otimes d_j"]\\
             (\fkn V/\fkn^j V)^{[K_L]}  & (\fkn V/\fkn^{j+1} V)^{[K_L]}\ar[l,"d_j"],
        \end{tikzcd}
    \end{center}
    we have $x_j\in   (\fkn V/\fkn^j V)^{[K_L]}, x_{j+1}\in (\fkn V/\fkn^{j+1} V)^{[K_L]}$ and 
    \[ y_j  =\sum_{i=1}^N X_i\otimes v_i \in \fkn\otimes (V/\fkn^j V)^{[K_L]}
    \]
    such that $a_j(y_j)=x_j$ and $d_j(x_{j+1})=x_j$. Here, ``$d$" is the natural quotient map, and ``$a$" is the $\fkn$-action map. Then, the subset
    \[
     W:=   \sum_{i=1}^N X_i\otimes (v_i+(\fkn^{j-1}V/\fkn^j V)^{[K_L]})\subset a_j^{-1}(x_j)
    \]
    satisfies $ \widetilde{d_{j-1}}(W)= \widetilde{d_{j-1}}(y_j)$ and 
    \[
  x_{j+1}\in  a_{j+1}\left( \widetilde{d_j}^{-1}(W)\right) = d_j^{-1}(x_j).
    \]
    Therefore, we can successively define $(y_j)$ such that $(y_j)\in \fkn V^{\flat}$. This completes the proof. 
\end{proof}
We observe that similar to~\eqref{glob-iso}, by Lemma~\ref{inv lim lem}, 
\begin{equation}\label{prod dec}
     V^{\flat}\simeq  \prod_{\kappa \in\fkz_L^*} (V_{\kappa})^{[K_L]}.
\end{equation}
In particular, the functor $\cdot ^{\flat}$ is exact. Now, we can prove the category equivalence.
\begin{theorem}\label{category equi thm}
    The functor $\widehat{\cdot}$ is an equivalence of categories, whose quasi-inverse is $\cdot ^{\flat}$. 
\end{theorem}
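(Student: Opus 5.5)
The plan is to construct natural isomorphisms $\widehat{V}^{\flat}\simeq V$ for $V\in\HC(\fkg,K_L)$ and $\widehat{W^{\flat}}\simeq W$ for $W\in\CC(\fkg,L)_f$. Both will be built level by level on the quotients by $\fkn^k$, using the fact that $K_L$-finite vectors and Casselman--Wallach globalization are quasi-inverse equivalences between Harish-Chandra $(\fkl,K_L)$-modules and $\mathrm{CW}_L$.

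First take $V\in\HC(\fkg,K_L)$. By the product decomposition \eqref{glob-iso} we have $\widehat{V}_\kappa=(V_\kappa)^\infty$. Since $\widehat{V}\in\CC(\fkg,L)_f$, Lemma~\ref{inv lim lem} and the co-finality argument show that $\widehat{V}/\fkn^k\widehat{V}$ is the Casselman--Wallach representation $(V/\fkn^kV)^\infty$. Concretely, $\fkn^k\widehat{V}$ is closed and equals the kernel of $\widehat{V}\to (V/\fkn^kV)^\infty$; this can be checked weight by weight, because for fixed $\kappa$ the weight spaces involved are reached at a finite level. Taking $K_L$-finite vectors then gives $(\widehat{V}/\fkn^k\widehat{V})^{[K_L]}\simeq V/\fkn^kV$ compatibly in $k$. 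Passing to the inverse limit and using $\fkn$-completeness of $V$ yields $\widehat{V}^{\flat}\simeq V$. It remains to check that this map intertwines the $\U(\fkg)$-action. This holds because the $\fkn$-, $\ov{\fkn}$- and $\fkl$-actions on both sides are defined as limits of globalized (respectively $K_L$-finite) level-wise action maps $\mu_k,\lambda_k$, and globalization is a functor.

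Conversely, take $W\in\CC(\fkg,L)_f$. Lemma~\ref{equi-lem} gives $W^{\flat}/\fkn^kW^{\flat}\simeq (W/\fkn^kW)^{[K_L]}$. Since $W/\fkn^kW$ is Casselman--Wallach, applying the globalization gives $(W^{\flat}/\fkn^kW^{\flat})^\infty\simeq W/\fkn^kW$ as topological $L$-modules. Taking the inverse limit over $k$ gives
\[
\widehat{W^{\flat}}=\varprojlim_k (W^{\flat}/\fkn^kW^{\flat})^\infty\simeq \varprojlim_k W/\fkn^kW\simeq W,
\]
where the last isomorphism is Lemma~\ref{inv lim lem}. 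This is a topological isomorphism because both sides carry the inverse limit topology. Compatibility with the $\fkg$-action follows as before.

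Naturality in $V$ and $W$ is immediate, since every isomorphism used is natural at each level $k$. Because both functors are exact (by \eqref{glob-iso} and \eqref{prod dec}), it then follows that $\widehat{\cdot}$ and $\cdot^\flat$ are quasi-inverse equivalences.

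I expect the main obstacle to be the identification $\widehat{V}/\fkn^k\widehat{V}\simeq (V/\fkn^kV)^\infty$. A priori, $\fkn^k\widehat{V}$, the algebraic span, might differ from the kernel of the projection. I would handle this by invoking \cite[Proposition 2.26, Remark 2.31]{WZ}: for objects of $\CC(\fkg,L)_f$, the span $\fkn^k\widehat{V}$ is closed with Casselman--Wallach quotient. Its $\kappa$-weight components then agree with $(\fkn^kV)_\kappa^\infty$, since these are finite-level Casselman--Wallach computations where globalization is exact. Density of $\fkz_L$-finite vectors then gives the equality.
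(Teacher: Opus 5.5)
Your proposal is correct and follows essentially the same route as the paper, whose proof combines Lemma~\ref{equi-lem}, the product decompositions \eqref{prod dec} and \eqref{glob-iso}, and the fact that Casselman--Wallach globalization and taking $K_L$-finite vectors are quasi-inverse. The only difference is in the direction $\widehat{V}^{\flat}\simeq V$: there the paper's ingredients allow a weight-by-weight argument, applying \eqref{prod dec} to $\widehat{V}$ together with \eqref{glob-iso} to get $\widehat{V}^{\flat}\simeq\prod_{\kappa}((V_{\kappa})^{\infty})^{[K_L]}\simeq\prod_{\kappa}V_{\kappa}\simeq V$, which sidesteps the identification of $\fkn^k\widehat{V}$ that you flagged as the main obstacle (your resolution of that point is nonetheless valid).
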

\begin{proof}
    The theorem follows from Lemma~\ref{equi-lem}, isomorphism~\eqref{prod dec},\eqref{glob-iso} and the fact that Casselman-Wallach globalization and taking $K_L$-finite vectors are quasi-inverse to each other.
\end{proof}
The theorem has some direct corollaries.
\begin{corollary}
    Let $\tau$ be an irreducible $(\fkl,K_L)$-module. Then 
    \[
   \widehat{ \CL(\tau)}\simeq \CL(\tau^{\infty}).
    \]
\end{corollary}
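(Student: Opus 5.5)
The plan is to combine the exactness of $\widehat{\cdot}$ (from \eqref{glob-iso}) with the equivalence of Theorem~\ref{category equi thm} and Example~\ref{Verma exa}.

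We start from the surjection $\CV(\tau)\twoheadrightarrow \CL(\tau)$ in $\HC(\fkg,K_L)$. Applying $\widehat{\cdot}$ and using Example~\ref{Verma exa}, we obtain a surjection
\[
\CV(\tau^{\infty})\simeq \widehat{\CV(\tau)}\twoheadrightarrow \widehat{\CL(\tau)}.
\]
Since $\widehat{\cdot}$ is an equivalence of categories, it preserves simple objects. So $\widehat{\CL(\tau)}$ is an irreducible object of $\CC(\fkg,L)_f$: a nonzero proper subobject $W$ would give, via $\cdot^{\flat}$, a nonzero proper subobject $W^{\flat}$ of $\CL(\tau)$, using exactness of $\cdot^{\flat}$ from \eqref{prod dec}.

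Hence $\widehat{\CL(\tau)}$ is an irreducible quotient of $\CV(\tau^{\infty})$. By the classification of irreducible objects in $\CC(\fkg,L)_f$ recalled from \cite{WZ}, $\CV(\tau^{\infty})$ has a unique irreducible quotient, namely $\CL(\tau^{\infty})$. Here $\tau^{\infty}$ is irreducible because Casselman--Wallach globalization preserves irreducibility. Therefore $\widehat{\CL(\tau)}\simeq\CL(\tau^{\infty})$.

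The main point to check is the uniqueness of the irreducible quotient of the formal Verma module on the smooth side. I would cite the analogue in \cite{WZ}, whose argument is the same as in the algebraic lemma above: the projection onto the lowest $\fkz_L$-weight space $\tau^{\infty}$ kills every proper closed subobject. If needed, one can avoid this citation. Compare $\fkz_L$-weight spaces: the lowest weight space of $\widehat{\CL(\tau)}$ is $(\CL(\tau)_{\kappa})^{\infty}=\tau^{\infty}$, with $\kappa=\wt(\tau)$. This pins down the isomorphism class among the irreducibles $\CL(\sigma)$, which are distinguished by their lowest weight $L$-type.
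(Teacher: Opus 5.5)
Your argument is correct and is essentially the paper's proof. The paper likewise transports the surjection $\CV(\tau)\twoheadrightarrow\CL(\tau)$ through the equivalence $\widehat{\cdot}$, uses Example~\ref{Verma exa} to identify $\widehat{\CV(\tau)}\simeq\CV(\tau^{\infty})$, and concludes because $\CL(\tau^{\infty})$ is the unique irreducible quotient of $\CV(\tau^{\infty})$; you simply spell out the steps its one-line proof leaves implicit (exactness of $\widehat{\cdot}$, preservation of simple objects, and irreducibility of $\tau^{\infty}$).
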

\begin{proof}
    Since $\CL(\tau)$ and $\CL(\tau^{\infty})$ are characterized as the unique irreducible quotient objects of $\CV(\tau)$ and $\CV(\tau^{\infty})$, the statement follows from example~\ref{Verma exa}.
\end{proof}

\begin{corollary}\label{finite term coro}
     Let $\chi_{\mu}$ be an infinitesimal character of $\fkg$. If $\tau$ is an element in $\CT_{\mu}$ such that $\mathrm{wt}(\tau)$ is maximal in $\mathrm{wt}(\CT_{\mu})$, then $\CV(\tau)$ is irreducible.
\end{corollary}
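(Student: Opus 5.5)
To prove Corollary~\ref{finite term coro}, I will show directly that every nonzero subobject of $\CV(\tau)$ must contain the generating weight space $\tau$ itself, and hence equals $\CV(\tau)$.

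Let $M \subset \CV(\tau)$ be a nonzero subobject in $\HC(\fkg,K_L)$; it has infinitesimal character $\chi_\mu$. By Lemma~\ref{basic property}(3) and the product decomposition $M \simeq \prod_\kappa M_\kappa$, the set $\wt(M)$ is nonempty.

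\textbf{Step 1: $M$ has an irreducible quotient of the form $\CL(\tau')$ with $\tau'$ at a minimal weight of $M$.}
\begin{itemize}
\item The weights of $\CV(\tau)$ lie in $\wt(\tau)+\Omega$. These are bounded below in the order~\eqref{partial_order}, with minimal weight $\wt(\tau)$.
\item Choose a weight $\kappa$ of $M$ that is minimal with respect to~\eqref{partial_order}. Such a $\kappa$ exists, because $M/\fkn M$ is Harish-Chandra and hence has finitely many weights.
\item Then $M_\kappa$ maps injectively into $M/\fkn M$, since $\fkn$ raises weights by elements of $\Omega\setminus\{0\}$.
\item Take an irreducible $(\fkl,K_L)$-quotient $\tau'$ of $M_\kappa$. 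By the adjunction, $\Hom(\CV(\tau'),\cdot)$ corresponds to $\Hom_{\fkl}(\tau',(\cdot)^{\ov{\fkn}})$, in the same way as in the classification lemma modeled on~\cite[Lemma 2.22]{WZ}.
\item This yields a nonzero map between $M$ and $\CV(\tau')$ relating $M$ to $\CL(\tau')$. Equivalently, $\CL(\tau')$ is a composition factor of $M$, with $\tau'\in\CT_\mu$ and $\wt(\tau')=\kappa$.
\end{itemize}

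\textbf{Step 2: use maximality to force $\kappa=\wt(\tau)$.}
\begin{itemize}
\item Since $\kappa\in\wt(\tau)+\Omega$, we have $\wt(\tau)\le\kappa$.
\item The hypothesis says $\wt(\tau)$ is maximal in $\wt(\CT_\mu)$, and $\kappa\in\wt(\CT_\mu)$. Hence $\kappa=\wt(\tau)$.
\item Therefore $M_{\wt(\tau)}\ne 0$.
\item The weight space $\CV(\tau)_{\wt(\tau)}=\tau$ is irreducible as an $(\fkl,K_L)$-module. So $M_{\wt(\tau)}=\tau$.
\end{itemize}

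\textbf{Step 3: conclude $M=\CV(\tau)$.}
\begin{itemize}
\item $\CV(\tau)$ is generated as an object by $\tau$ under $\U(\fkn)$, with the closure taken as in Lemma~\ref{basic property}(3).
\item Since $M$ contains $\tau$, it follows that $M=\CV(\tau)$.
\item Thus $\CV(\tau)$ has no proper nonzero subobject, i.e.\ it is irreducible.
\end{itemize}

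\textbf{Main obstacle.} The key point is Step 1: justifying that the weight of some composition factor $\CL(\tau')$ of $M$ is genuinely compared with $\wt(\tau)$, and that this $\tau'$ lies in $\CT_\mu$. An alternative route uses the theory of lowest weights: every weight of $M$ is $\ge\wt(\tau)$, and a minimal weight space of $M$ yields a composition factor $\CL(\tau')$ of $\CV(\tau)$ with $\wt(\tau')>\wt(\tau)$ unless $\tau'=\tau$. This would contradict the maximality of $\wt(\tau)$ in $\wt(\CT_\mu)$, because both $\tau$ and $\tau'$ lie in $\CT_\mu$ by $\CZ(\fkg)$-compatibility.

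I would need to double-check the direction of the partial order against the convention defining maximality. If the convention is reversed, the argument should instead be run on a maximal weight of a quotient object, which is the dual formulation.
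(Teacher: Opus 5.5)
Your argument is correct in substance, but it takes a different route from the paper. The paper gives no intrinsic proof. It transports the statement through the equivalence $\widehat{\cdot}:\HC(\fkg,K_L)\to\CC(\fkg,L)_f$ of Theorem~\ref{category equi thm}, which sends $\CV(\tau)$ to $\CV(\tau^{\infty})$ and preserves weights and infinitesimal characters, and then cites the smooth version \cite[Lemma 2.25]{WZ}. You argue directly inside $\HC(\fkg,K_L)$, using only four facts:
\begin{itemize}
\item finite length;
\item the classification $\tau'\mapsto\CL(\tau')$;
\item exactness of $V\mapsto V_\kappa$;
\item $\CV(\tau)_{\wt(\tau)}=\tau$.
\end{itemize}
Your version is self-contained and independent of the globalization machinery; the paper's is a one-liner once the smooth result is available. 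Your order convention is right: the weights of $\CV(\tau)$ lie in $\wt(\tau)+\Omega$, i.e.\ they are $\ge\wt(\tau)$, so maximality is exactly the needed hypothesis.

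One step needs repair. In Step 1 you take an irreducible \emph{quotient} $\tau'$ of $M_\kappa$ and invoke $\Hom(\CV(\tau'),M)\simeq\Hom(\tau',M^{\ov{\fkn}})$. That adjunction requires $\tau'$ to sit \emph{inside} $M^{\ov{\fkn}}$, so a quotient gives you nothing. The fix is as follows:
\begin{itemize}
\item Take an irreducible submodule $\tau'\subset M_\kappa$. Minimality of $\kappa$ gives $\ov{\fkn}M_\kappa=0$.
\item This yields a nonzero map $\U(\fkg)\otimes_{\U(\ov{\fkp})}\tau'\to M$. It extends to $\CV(\tau')$ because $M$ is $\fkn$-complete.
\item Its image is a nonzero quotient of $\CV(\tau')$, so it has $\CL(\tau')$ as a quotient.
\end{itemize}
Separately, a minimal $\kappa$ does exist, but not for the reason you give (finiteness of $\wt(M/\fkn M)$). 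It exists because $\wt(M)\subset\wt(\tau)+\Omega$, and only finitely many elements of $\wt(\tau)+\Omega$ lie below any given one.

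In fact Step 1 can be dropped entirely. Take any composition factor $\CL(\tau')$ of $\CV(\tau)$. Then $\tau'\in\CT_\mu$, and $\wt(\tau')\ge\wt(\tau)$ by exactness of weight spaces, so maximality forces $\wt(\tau')=\wt(\tau)$. Since $\CV(\tau)_{\wt(\tau)}=\tau$ is irreducible, exactness of $V\mapsto V_{\wt(\tau)}$ then allows exactly one composition factor.
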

\begin{proof}
    It is a direct consequence of Theorem~\ref{category equi thm} and~\cite[Lemma 2.25]{WZ}.
\end{proof}

\begin{corollary}\label{homo HC coro}
    Let $E\in \HC(\fkg,K_L)$. Then for any integer $i$, $\rmh_i(\fkn,E)$ is a Harish-Chandra module of $L$.
\end{corollary}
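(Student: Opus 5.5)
The plan is to reduce to irreducible objects, and then to formal Verma modules, whose $\fkn$-homology can be computed directly. Harish-Chandra $(\fkl,K_L)$-modules (finitely generated over $\U(\fkl)$, $K_L$-locally finite, $\CZ(\fkl)$-finite) are closed under submodules, quotients and extensions, since $\U(\fkl)$ is Noetherian. By the long exact sequence of $\fkn$-homology, it therefore suffices to treat the composition factors of $E$. This uses that $\HC(\fkg,K_L)$ is abelian with the same kernels and cokernels as ambient $(\fkg,K_L)$-modules (Corollary~\ref{abelian cate alg}), and that every object has finite length. Moreover, $\rmh_i(\fkn,\cdot)$ vanishes for $i>\dim\fkn$, so only finitely many degrees occur. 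The irreducible objects are the $\CL(\tau)$, where $\tau$ runs over irreducible Harish-Chandra $(\fkl,K_L)$-modules.

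\textbf{Step 1: formal Verma modules.} I will show that $\rmh_0(\fkn,\CV(\tau))\simeq\tau$ and $\rmh_i(\fkn,\CV(\tau))=0$ for $i>0$. Before completion, $\U(\fkg)\otimes_{\U(\ov{\fkp})}\tau\simeq\U(\fkn)\otimes\tau$ is free over $\U(\fkn)$, so its Koszul complex $\Lambda^\bullet\fkn\otimes(\cdot)$ has homology $\tau$ in degree $0$ only. The Koszul differentials are $\fkz_L$-equivariant. Also, $\CV(\tau)\simeq\prod_{\kappa}\CV(\tau)_\kappa$, where each generalized weight space coincides with the corresponding (finite-dimensional over $\tau$) weight space of $\U(\fkn)\otimes\tau$; this is the analogue of the description in Lemma~\ref{basic property}. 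Hence the Koszul complex of $\CV(\tau)$ is the product over $\kappa$ of the weight components of the uncompleted Koszul complex. Each $\Lambda^i\fkn$ has finitely many weights, so each weight component involves only finitely many weight spaces. Taking products is exact, which gives the claim. This is the step requiring the most care: one must check that the weight decomposition is compatible with the completion, in the sense that no weight component of $\Lambda^i\fkn\otimes\CV(\tau)$ receives contributions from infinitely many weights.

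\textbf{Step 2: induction on weights.} By $\CZ(\fkg)$-finiteness (Lemma~\ref{basic property}(2)), it suffices to fix an infinitesimal character $\chi_\mu$ and prove the statement for $\CL(\tau)$ with $\tau\in\CT_\mu$, which is a finite set. I will argue by descending induction on $\wt(\tau)$ with respect to the partial order \eqref{partial_order}. If $\wt(\tau)$ is maximal, then $\CV(\tau)=\CL(\tau)$ by Corollary~\ref{finite term coro}, and Step 1 applies. In general, consider the exact sequence $0\to K\to\CV(\tau)\to\CL(\tau)\to0$. Since $\tau$ is the full $\wt(\tau)$-weight space of $\CV(\tau)$ and generates it, the weights of $K$ lie in $\wt(\tau)+\Omega$ minus $\wt(\tau)$. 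Every composition factor $\CL(\tau')$ of $K$ has infinitesimal character $\chi_\mu$ and $\tau'$ occurring as a weight space of $K$, so $\tau'\in\CT_\mu$ and $\wt(\tau')>\wt(\tau)$ strictly. By the induction hypothesis and finite length of $K$, the modules $\rmh_i(\fkn,K)$ are Harish-Chandra. The long exact sequence
\[
\rmh_i(\fkn,\CV(\tau))\lra\rmh_i(\fkn,\CL(\tau))\lra\rmh_{i-1}(\fkn,K)
\]
then exhibits $\rmh_i(\fkn,\CL(\tau))$ as an extension of a submodule of a Harish-Chandra module by a quotient of one, so it is Harish-Chandra.

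The induction is well-founded because $\CT_\mu$ is finite. The only nonformal input is the exactness and weight-compatibility in Step 1, which I expect to be the main obstacle; the rest is bookkeeping with long exact sequences.
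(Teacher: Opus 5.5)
Your proposal is correct and follows essentially the same route as the paper, which reduces (via finite length, long exact sequences in $\fkn$-homology, the classification $\tau\mapsto\CL(\tau)$, and Corollary~\ref{finite term coro}) to formal Verma modules and defers the details to step 2 of \cite[Proposition 2.26]{WZ}. Your weight-by-weight computation that $\rmh_\bullet(\fkn,\CV(\tau))$ is $\tau$ concentrated in degree $0$, together with the descending induction over the finite set $\CT_\mu$, supplies exactly those details; the base case does not even need Corollary~\ref{finite term coro}, since for maximal $\wt(\tau)$ your own weight argument already forces $K=0$.
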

\begin{proof}
    By Corollary~\ref{finite term coro} and the classification of irreducible objects, we can reduce the problem to formal Verma modules. For details, see step 2 of~\cite[Proposition 2.26]{WZ}.
\end{proof}

Actually, the result in subsection~\ref{classify irr subsec} can also be obtained from the category equivalence and the classification of irreducible objects in $\CC(\fkg,L)_f$.

\begin{corollary}
    The morphism in category $\CC(\fkg,L)_f$ has a closed image. Moreover, $\CC(\fkg,L)_f$ is an abelian category, and its kernels and cokernels agree with those in the ambient category of $(\fkg,L)$-modules. 
\end{corollary}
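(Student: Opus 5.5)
The plan is to transport everything through the equivalence of Theorem~\ref{category equi thm}. Let $\varphi\colon V\to W$ be a morphism in $\CC(\fkg,L)_f$. Since $\widehat{\cdot}$ and $\cdot^{\flat}$ are quasi-inverse, $\varphi$ corresponds to $\varphi^{\flat}\colon V^{\flat}\to W^{\flat}$ in $\HC(\fkg,K_L)$. By Corollary~\ref{abelian cate alg}, $\HC(\fkg,K_L)$ is abelian, with kernels and cokernels computed as $(\fkg,K_L)$-modules. Hence $\Ker\varphi^{\flat}$, $\mathrm{Im}\,\varphi^{\flat}$ and $\Coker\varphi^{\flat}$ all lie in $\HC(\fkg,K_L)$.

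The key step is to work weight space by weight space. By \eqref{glob-iso} and \eqref{prod dec}, $V\simeq\prod_{\kappa}V_{\kappa}$ and $W\simeq\prod_{\kappa}W_{\kappa}$, where each $V_{\kappa}, W_{\kappa}$ is a Casselman--Wallach representation of $L$. Since $\varphi$ is $\fkz_L$-equivariant and continuous, it preserves generalized weight spaces, so it decomposes as $\prod_{\kappa}\varphi_{\kappa}$ with $\varphi_{\kappa}\colon V_{\kappa}\to W_{\kappa}$ a morphism of Casselman--Wallach representations. By the Casselman--Wallach theorem, each $\varphi_{\kappa}$ has closed image, and $\mathrm{Im}\,\varphi_{\kappa}=((\mathrm{Im}\,\varphi^{\flat})_{\kappa})^{\infty}$. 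Since the product topology makes a product of closed subspaces closed, $\mathrm{Im}\,\varphi=\prod_{\kappa}\mathrm{Im}\,\varphi_{\kappa}$ would follow, and this set is closed.

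The main obstacle is checking that $\mathrm{Im}\,\varphi$ really equals the full product, not merely a dense subspace of it. For this I would use the equivalence: the morphism $\widehat{\mathrm{Im}\,\varphi^{\flat}}\to W$ obtained by applying $\widehat{\cdot}$ to the inclusion is injective. Exactness of $\widehat{\cdot}$ (noted after \eqref{glob-iso}) then shows that $V\simeq\widehat{V^{\flat}}\to\widehat{\mathrm{Im}\,\varphi^{\flat}}$ is surjective. Its composite with the inclusion is $\varphi$, so the image of $\varphi$ is exactly $\widehat{\mathrm{Im}\,\varphi^{\flat}}=\prod_{\kappa}\mathrm{Im}\,\varphi_{\kappa}$. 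Thus the image is closed. The same reasoning shows that $\Ker\varphi=\prod_{\kappa}\Ker\varphi_{\kappa}=\widehat{\Ker\varphi^{\flat}}$, and that $W/\mathrm{Im}\,\varphi=\prod_{\kappa}\Coker\varphi_{\kappa}=\widehat{\Coker\varphi^{\flat}}$.

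It remains to verify that these spaces are the categorical kernel and cokernel and that the category is abelian. Both the kernel and the Hausdorff cokernel lie in $\CC(\fkg,L)_f$ because they are images of objects of $\HC(\fkg,K_L)$ under $\widehat{\cdot}$. Since $\CC(\fkg,L)_f$ is equivalent to the abelian category $\HC(\fkg,K_L)$, it is abelian. Its kernels and cokernels are the transported ones, which we have just identified with the set-theoretic kernel and the quotient by the (closed) image in the ambient category of $(\fkg,L)$-modules. Finally, the open mapping theorem identifies the coimage with the image topologically, since all spaces involved are Fréchet.
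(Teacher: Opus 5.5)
Your proof is correct. Its skeleton matches the paper's: transport through Theorem~\ref{category equi thm} and use exactness of $\widehat{\cdot}$. The difference lies in how you get closedness of the image.

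The paper sets $C:=\Coker(A^{\flat}\to B^{\flat})$ in $\HC(\fkg,K_L)$. Applying the exact functor $\widehat{\cdot}$ gives an exact sequence $A\to B\to\widehat{C}$. So $\mathrm{Im}(\alpha)=\Ker(B\to\widehat{C})$ is closed, being the kernel of a continuous map into a Hausdorff space. This uses no weight spaces and no Casselman--Wallach closed-range theorem.

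You instead identify the image as $\prod_{\kappa}\mathrm{Im}\,\varphi_{\kappa}$ and use closed range for each $\varphi_\kappa$ between Casselman--Wallach representations of $L$. The step you flag as the main obstacle is actually immediate. Since $V=\prod_\kappa V_\kappa$ is the full product and $\varphi=\prod_\kappa\varphi_\kappa$, choosing preimages componentwise gives $\mathrm{Im}\,\varphi=\prod_\kappa\mathrm{Im}\,\varphi_\kappa$ directly. So your weight-space argument proves closedness of the image without the equivalence. The equivalence is then needed mainly to place kernels and Hausdorff cokernels in $\CC(\fkg,L)_f$ and to conclude the category is abelian, which you do just as the paper's ``similar argument'' does.

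What each route buys: yours describes kernels, images and cokernels explicitly weight by weight. The paper's is shorter and purely formal once the equivalence and exactness of $\widehat{\cdot}$ are available.
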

\begin{proof}
    Let $\alpha:A\to B$ be a morphism in $\CC(\fkg,L)_f$. Let $C:= \Coker(A^{\flat}\to B^{\flat})$. Since the globalization functor is exact, we have an exact sequence by Corollary~\ref{abelian cate alg} and Theorem~\ref{category equi thm},
    \[
     A \lra B \lra \widehat{C}.
    \]
    Thus $\mathrm{Im}(\alpha)$ is closed in $B$. The second assertion follows from a similar argument.
\end{proof}

\section{Homological comparison conjecture}
In this section, we outline a proof of the comparison conjecture~\ref{comparison conj} under the assumption $\spadesuit$. The proof proceeds as follows.
\begin{enumerate}
    \item [Step 1.] We first prove that under the assumption $\spadesuit$, we have
\begin{equation}\label{CW-comp}
    \rmh_i(\fkn,\pi)\simeq \rmh_i(\fkn,\mathrm{CJ}^{\infty}(\pi))
\end{equation}
for any Casselman-Wallach representation $\pi$ and integer $i$.
\item [Step 2.]When $\pi$ is a Harish-Chandra $(\fkg,K)$-module, we prove that
\[
\rmh_i(\fkn,\pi)\simeq \rmh_i (\fkn,\mathrm{CJ}(\pi)).
\]
In fact, this was already mentioned in~\cite{HS83a}.
\item[Step 3.] Then we prove the homological comparison between $\HC(\fkg,K_L)$ and $\CC(\fkg,L)_f$. In particular, we prove that when $E\in \HC(\fkg,K_L)$, we have
\begin{equation}\label{com-in-CJ eq}
  \rmh_i(\fkn,E)^{\infty}\simeq \rmh_i(\fkn,\widehat{E}),  
\end{equation}
where $\rmh_i(\fkn,E)^{\infty}$ is the Casselman-Wallach globalization of $\rmh_i(\fkn,E)\in \HC(\fkl,K_L)$, see Corollary~\ref{homo HC coro}. 
\end{enumerate}

Before the detailed proof, we record a lemma that is crucial in \textbf{Step 1} to show the vanishing of the homology of the non-zero spectrum. Here, the notation is the same as in subsection~\ref{general theory section} and Example~\ref{diff-mod-exam}. 
\begin{lemma}[\cite{WZ}, Corollary 7.6]\label{vanish lem}
    Let $\sigma\in \Smod_{S_{\ell}\cap L}$. Then for any $\ell\neq 0$, we have
    \[
    \rmh_i (\fkn, I_{\ell}(\sigma))=0 \text{ for any integer } i.
    \]
\end{lemma}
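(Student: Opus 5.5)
Here $I_{\ell}(\sigma)=\SInd_{S_{\ell}}^P(\sigma\boxtimes\phi_{\ell})$ with $\phi_{\ell}\neq 0$. The plan is to compute $\fkn$-homology through the Fourier transform along $N$. Since $N$ is abelian, I would realize $I_{\ell}(\sigma)$ as Schwartz sections of a tempered bundle over $S_{\ell}\backslash P\simeq\CO_{\ell}\subset\widehat{N}\setminus\{0\}$, as in Example~\ref{diff-mod-exam}. In this model $X\in\fkn$ acts by multiplication by the function $\xi\mapsto \xi(X)$, where $\xi$ is viewed as a character of the complexified Lie algebra, since $N$ acts by the character $p\cdot\phi_{\ell}$ at the point $p$. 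The Koszul complex $\wedge^\bullet\fkn\otimes I_{\ell}(\sigma)$ computing $\rmh_\bullet(\fkn,\cdot)$ is therefore a complex of $\CS(\CO_{\ell})$-modules whose differentials are multiplication operators by the linear coordinates $\xi(X_1),\dots,\xi(X_m)$ for a basis $X_1,\dots,X_m$ of $\fkn$.

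The key step is to exhibit a contracting homotopy. The functions $\xi\mapsto\xi(X_j)$ have no common zero on $\CO_{\ell}$, because $\phi_{\ell}\neq 0$ and $0\notin\CO_{\ell}$. I would cover $\CO_{\ell}$ by finitely many Nash open sets $U_j=\{\xi:\ \xi(X_j)\neq 0\}$, or use the single function $|\xi|^2=\sum_j|\xi(X_j)|^2$. Taking a Euclidean norm, the functions $h_j(\xi)=\ov{\xi(X_j)}/|\xi|^2$ are tempered Nash functions on $\CO_{\ell}$ with $\sum_j h_j(\xi)\xi(X_j)=1$. Contraction with $\sum_j h_j\, e_j^{*}$ then gives the standard homotopy $s$ on the Koszul complex with $ds+sd=\id$. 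It follows that all homology vanishes, including $\rmh_0$, which is the statement that $\fkn\cdot I_{\ell}(\sigma)=I_{\ell}(\sigma)$. Alternatively, one can invoke Proposition~\ref{twisted Jacquet computation}, or \cite[Lemma 6.2.2]{AGS15b}, locally with $\epsilon$ the inclusion $\CO_{\ell}\hookrightarrow\fkn^*$ and $0\notin\epsilon(\CO_{\ell})$, and then glue by a covering argument as in \cite[Corollary 4.3]{WZ}.

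The main obstacle is checking that multiplication by $h_j$ preserves the Schwartz sections. Near $0$ the bound $|\xi|^{-1}$ blows up, but $0\notin\CO_{\ell}$ and $\CO_{\ell}$ is only locally closed, so $\CO_{\ell}$ may accumulate at $0$ inside $Z_{\ell+1}$. Schwartz functions on $\CO_{\ell}$ nevertheless decay rapidly toward the boundary $\overline{\CO_{\ell}}\setminus\CO_{\ell}\ni 0$. Since $1/|\xi|^2$ is a Nash function on $\CO_{\ell}$, hence tempered, multiplication by tempered Nash functions preserves $\CS(\CO_{\ell})$ and, after trivializing, the vector-valued Schwartz sections. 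This uses standard Nash/Schwartz theory, e.g.\ that $\CS(X)$ is a module over tempered functions. A further point is that $\sigma$ is merely an $S_{\ell}\cap L$-representation, so the identification with the bundle over $\CO_{\ell}$ must intertwine the $\fkn$-action with multiplication. This follows from the definition of the Mackey induction, because $N$ is normal and acts on the fiber at $p$ through $p\cdot\phi_{\ell}$.
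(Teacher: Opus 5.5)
Your proposal is correct and uses the same mechanism the paper relies on through \cite[Corollary 7.6]{WZ}, which it cites rather than reproves (and whose argument it reuses in Proposition~\ref{open-iso} and Corollary~\ref{open-exa}): on the induced model over $S_{\ell}\backslash P\simeq\CO_{\ell}$, the abelian $\fkn$ acts by multiplication by the linear coordinates of $\CO_{\ell}\hookrightarrow\fkn^*$, these have no common zero because $0\notin\CO_{\ell}$, so the Koszul complex is contractible, and your tempered Nash functions $h_j$ just make that homotopy explicit. Two small corrections: on the homological Koszul complex $\wedge^\bullet\fkn\otimes I_{\ell}(\sigma)$ the homotopy must raise degree, so it is wedging with $\sum_j h_jX_j$ rather than contraction with $\sum_j h_je_j^*$; and Proposition~\ref{twisted Jacquet computation} as stated assumes $\dim_{\BR}\fkv=\dim M$, so it only covers open orbits, whereas your direct homotopy handles every $\ell\neq 0$.
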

\subsection{Details of the proof}
Let $J$ be a generalized principal series of $G$ and assume $\spadesuit$. Then by Theorem~\ref{Canonical filtration}, we have a short exact sequence
\[
0\lra \CS(\widehat{N}\setminus\{0\})\cdot J \lra J\lra \mathrm{CJ}^{\infty}(J)\lra 0,
\]
such that $\rmh_i(\fkn, \CS(\widehat{N}\setminus\{0\})\cdot J)=0$ for any integer $i$ by Lemma~\ref{generalized prin lem}, Lemma~\ref{vanish lem} and~\cite[Lemma 2.9]{WZ}. Therefore, for any integer $i$,
\[
    \rmh_i(\fkn,J)\simeq \rmh_i(\fkn,\mathrm{CJ}^{\infty}(J)).
\]
 Let $\pi$ be a Casselman-Wallach representation. By Casselman's embedding theorem, there is a resolution of $\pi$ by generalized principal series:
    \[
    \pi \lra J_0\lra J_1\lra \dots
    \]
    Considering the $\fkn$-Koszul resolution $P_{i,\bullet}$ of each $J_i$, we obtain a double complex $P_{\bullet,\bullet}$. A standard homological argument shows that
    \[
    \rmh_i(\Tot(P_{\bullet,\bullet}))\simeq \rmh_i(\fkn,\pi).
    \]
     On the other hand, we have a decreasing filtration $\CF^{\bullet}$ of the total complex 
    \[
    \CF^j=F^j(\Tot(P_{\bullet,\bullet})):= \Tot(P_{\geq j,\bullet}).
    \]
    Since the Koszul resolution is of finite length, we can find a large enough $m$, such that for every $i$,
    \[
     \rmh_i(\CF^0/\CF^m)\simeq \rmh_i(\fkn,\pi).
    \]
    
 On the other hand, by the exactness of Casselman-Jacquet functor, we have a corresponding resolution
\[
0\lra \mathrm{CJ}^{\infty}(\pi)\lra \mathrm{CJ}^{\infty}(J_0)\lra \dots,
\]
 double complex $\widetilde{P}_{\bullet,\bullet}$ by $\fkn$-Koszul resolution of $\mathrm{CJ}^{\infty}(J_{\bullet})$ and decreasing filtration $\widetilde{\CF}^{\bullet}$. Note that there is a canonical map $J_p\otimes \fkn^q \lra \mathrm{CJ}^{\infty}(J_p)\otimes \fkn^q$ for any pair of integers $p,q$ intertwining two double complexes, and
\[
\rmh_i(\CF^j/\CF^{j+1}) \simeq \rmh_i(\fkn, J_j)\simeq \rmh_i(\fkn,\mathrm{CJ}^{\infty}(J_j))\simeq \rmh_i(\widetilde{\CF}^{j}/\widetilde{\CF}^{j+1}).
\]

Thus, inductively, we consider the exact sequence of complexes:
    \begin{equation*}
        0\lra \CF^j/\CF^{j+1}\lra \CF^{j-r+1}/\CF^{j+1}\lra\CF^{j-r+1}/\CF^{j}\lra 0 ,
    \end{equation*} 
    which leads to 
    \begin{equation}\label{CW-com-prove}
         \rmh_i(\fkn,\pi)\simeq \rmh_i(\CF^0/\CF^m)\simeq \rmh_i(\widetilde{\CF}^{0}/\widetilde{\CF}^{m})\simeq \rmh_i(\fkn,\mathrm{CJ}^{\infty}(\pi)).   
    \end{equation}
This completes the proof of \textbf{Step 1}.

\begin{proposition}\label{HC-com}
    Let $\pi$ be a Harish-Chandra $(\fkg,K)$-module, then the natural map
    \[
\rmh_i(\fkn,\pi)\lra  \rmh_i (\fkn,\mathrm{CJ}(\pi))
\]
is an isomorphism.
\end{proposition}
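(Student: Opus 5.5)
Our plan is to compute both sides with the Koszul complex $\wedge^\bullet\fkn\otimes(\cdot)$ and to use the standard $\fkn$-adic filtrations. Write $\widehat{\pi}:=\mathrm{CJ}(\pi)=\varprojlim_k \pi/\fkn^k\pi$. The homology $\rmh_i(\fkn,\pi)$ is the homology of $\wedge^\bullet\fkn\otimes\pi$. Because $\wedge^\bullet\fkn$ is finite dimensional, $\wedge^\bullet\fkn\otimes\widehat{\pi}=\varprojlim_k \wedge^\bullet\fkn\otimes \pi/\fkn^k\pi$, and this complex computes $\rmh_i(\fkn,\widehat{\pi})$. The argument will therefore run in three steps:
\begin{itemize}
\item[(a)] show that $\rmh_i(\fkn,\pi)\to \varprojlim_k\rmh_i(\fkn,\pi/\fkn^k\pi)$ is an isomorphism;
\item[(b)] show that $\rmh_i(\fkn,\widehat{\pi})\to\varprojlim_k\rmh_i(\fkn,\pi/\fkn^k\pi)$ is an isomorphism;
\item[(c)] compose the two.
\end{itemize}

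Step (b) is formal, given Mittag-Leffler. The transition maps $\wedge^j\fkn\otimes\pi/\fkn^{k+1}\pi\to\wedge^j\fkn\otimes\pi/\fkn^k\pi$ are surjective, so the $\varprojlim^1$ of the chain groups vanishes. By the Milnor sequence it then suffices to show that the tower $\{\rmh_i(\fkn,\pi/\fkn^k\pi)\}_k$ is Mittag-Leffler, since that kills $\varprojlim^1\rmh_{i+1}$. Both (a) and this Mittag-Leffler property will come from one Artin--Rees type input. The Koszul complex $C_\bullet=\wedge^\bullet\fkn\otimes\pi$ is a complex of finitely generated modules over the Noetherian ring $\U(\fkp)$, because $\pi$ is finitely generated over $\U(\fkn)\U(\fkl)$ after Casselman--Osborne. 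It carries the $\fkn$-adic filtration $F^kC_\bullet=\wedge^\bullet\fkn\otimes\fkn^k\pi$. The Artin--Rees lemma of \cite{CO78} gives that, for the cycles $Z_i$ and boundaries $B_i$, the induced filtrations $Z_i\cap F^kC_i$ and $B_i\cap F^k C_i$ are comparable to $\fkn^{k-c}Z_i$ and $\fkn^{k-c}B_i$ for a fixed $c$.

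From this we intend to deduce that the kernel and cokernel of $\rmh_i(\fkn,\pi)\to\rmh_i(\fkn,\pi/\fkn^k\pi)$ stabilise in the pro-sense. More precisely, the tower $\{\rmh_i(F^kC)\}_k$ should be pro-zero: every class in $\rmh_i(F^{k+c'}C)$ should map to zero in $\rmh_i(F^kC)$. We use here that $\fkn$ acts trivially on $\rmh_i(\fkn,\cdot)$, while $\rmh_i(\fkn,\pi)$ is a Harish-Chandra $(\fkl,K_L)$-module with finitely many $\fkz_L$-weights. In contrast, $\fkn^k\pi$ has $\fkz_L$-weights pushed to infinity in $\Omega$-directions. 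Applying the long exact sequence to $0\to F^kC\to C\to C/F^kC\to 0$, the pro-zero property gives both (a) and the Mittag-Leffler property in (b).

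A cleaner alternative to the Artin--Rees argument is a weight-space argument, which we would try first. Everything is $\fkz_L$-equivariant, and $\rmh_i(\fkn,\pi)$ is supported on a finite set $S$ of generalized $\fkz_L$-weights. For $k\gg0$, the weights of $\fkn^k\pi$ appearing in the Koszul complex avoid $S$: the weights of $\fkn^k\pi$ lie in $\wt(\pi/\fkn\pi)+\Omega_{\ge k}$, and only finitely many Koszul twists by weights of $\wedge^\bullet\fkn$ occur. Hence $\rmh_i(F^kC)_\kappa=0$ for $\kappa\in S$, and $F^kC$ contributes nothing in those weights. Taking generalized weight spaces is exact, and the weight spaces of $\widehat{\pi}$ agree with those of $\pi/\fkn^k\pi$ for large $k$ (Lemma~\ref{basic property}). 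So for each $\kappa$ we get $\rmh_i(\fkn,\pi)_\kappa\simeq\rmh_i(\fkn,\pi/\fkn^k\pi)_\kappa\simeq\rmh_i(\fkn,\widehat{\pi})_\kappa$ for $k$ large. Both sides are $\fkz_L$-finite: $\rmh_i(\fkn,\widehat{\pi})$ by Corollary~\ref{homo HC coro}, and $\widehat{\pi}\in\HC(\fkg,K_L)$ as noted above. Hence they are direct sums of their weight spaces, and the natural map is an isomorphism.

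The main obstacle is making the weight-avoidance bound rigorous and uniform. Since $\fkz_L$ acts on $\fkn$ by weights in $\Omega\setminus\{0\}$, some element $H\in\fkz_L$ is strictly positive on $\Omega\setminus\{0\}$. On $\fkn^k\pi$, the real parts of $H$-eigenvalues then exceed $\min\operatorname{Re}H(\wt(\pi/\fkn\pi))+k\epsilon$. This must be compared against the bounded set $S$ shifted by weights of $\wedge^\bullet\fkn$, and that comparison yields the needed $k$.
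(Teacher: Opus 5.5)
Your skeleton is correct: the Koszul complex, the Milnor sequence, and the reduction of both (a) and the Mittag-Leffler condition in (b) to pro-vanishing of the tower $\{\rmh_i(\fkn,\fkn^k\pi)\}_k$. It is also a genuinely different route from the paper, which resolves $\pi$ by finite free $\U(\fkn_B)$-modules, notes that these are $\fkn$-acyclic, and uses exactness of $\mathrm{CJ}$ to transport the resolution. The gap is in the one non-formal step, the pro-vanishing.

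The weight-space argument you would try first fails for a structural reason, not a uniformity one. The module $\pi$, and hence $\fkn^k\pi$, is in general not $\fkz_L$-finite: the split part $\fka_L\subset\fkz_L$ does not act locally finitely on $K$-finite vectors, already for principal series of $\mathrm{SL}_2(\mathbb{R})$ with $P$ minimal. So the claim that ``the weights of $\fkn^k\pi$ lie in $\wt(\pi/\fkn\pi)+\Omega_{\ge k}$'' has no meaning. Generalized weight decompositions exist only for $\pi/\fkn^k\pi$ and $\widehat\pi$. Those do suffice for (b), since the $\kappa$-part of $\wedge^\bullet\fkn\otimes\pi/\fkn^k\pi$ stabilizes in $k$. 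But (a), the comparison of $\pi$ with its completion, is invisible to weights. The vanishing $\rmh_i(\fkn,\fkn^k\pi)_\kappa=0$ for $\kappa\in S$ and $k\gg0$ is in fact true, but it is essentially equivalent to the statement being proved, so weight bookkeeping cannot produce it. For instance, when $\dim\fkn=1$ the required pro-vanishing in degree one is $\ker X\cap X^k\pi=0$ for $k\gg0$, a Krull/Artin--Rees statement about $\pi$ itself. Your Artin--Rees paragraph also ends by appealing to the same weight heuristic, so as written the key step is not established.

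To close the gap, run Artin--Rees at chain level through the Cartan homotopy instead of through weights.
\begin{itemize}
\item Let $\fkn_B$ act diagonally on $C_i=\wedge^i\fkn\otimes\pi$, by the adjoint action on $\wedge^i\fkn$. The Koszul differential is equivariant for this action, though not for the action on $\pi$ alone, and $C_i$ is finitely generated over $\U(\fkn_B)$.
\item Since $\mathrm{ad}(\fkn)$ is nilpotent on $\wedge\fkn$, the filtration $F^kC_i$ agrees with the $\fkn$-adic filtration for the diagonal action up to a fixed shift.
\item Artin--Rees for $Z_i\subset C_i$ then gives a constant $c$ with $Z_i\cap F^mC_i\subset\fkn^{m-c}Z_i$.
\item For $z\in Z_i$ and $X\in\fkn$, the Cartan formula gives $X\cdot z=\pm d(X\wedge z)$.
\end{itemize}
Hence, for $c'$ large, every $z\in Z_i\cap F^{k+c'}C_i$ can be written $z=\sum_j X_jz_j$ with $z_j\in Z_i\cap F^kC_i$. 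Then $z=\pm d(\sum_j X_j\wedge z_j)$ with $X_j\wedge z_j\in F^kC_{i+1}$, so $\rmh_i(F^{k+c'}C)\to\rmh_i(F^kC)$ is zero, and no weights are needed. With this fix, your proof and the paper's rest on the same Artin--Rees input. Yours avoids the free resolution and the computation of $\mathrm{CJ}(\U(\fkn_B))$, at the cost of the Milnor/Mittag-Leffler bookkeeping.
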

\begin{proof}
    By Osborne's lemma, $\pi$ is finitely generated by $\U(\fkn_{B})$. Hence, there exists a resolution $P^{\bullet}$ of $\pi$ by finite rank free $\U(\fkn_{B})$-modules. Note that
    \begin{equation*}
        \rmh_i(\fkn,\U(\fkn_{B}))\simeq\begin{cases}
            \U(\fkn_L) & i=0\\
            0 & i>0.
        \end{cases}
    \end{equation*}
    In particular, this resolution is an acyclic resolution. On the other hand, since the $\mathrm{CJ}$ is exact, we have $\mathrm{CJ}(P^{\bullet})$ is a resolution for $\mathrm{CJ}(\pi)$. Moreover, 
    \[ \mathrm{CJ}_{N}(\U(\fkn_B))\simeq \U[[\fkn]]\otimes \U(\fkn_L)
    \]
    as $\fkn$-module since $\U(\fkn_B)\simeq \U(\fkn)\otimes \U(\fkn_L)$. Therefore, 
    \begin{equation*}
        \rmh_i(\fkn,\mathrm{CJ}_{N}(\U(\fkn_B)))\simeq\begin{cases}
            \U(\fkn_L) & i=0\\
            0 & i>0.
        \end{cases}
    \end{equation*}
    Consequently, $\mathrm{CJ}(P^{\bullet})$ is also acyclic and \[
    \rmh_0(\fkn, \mathrm{CJ}(P^{\bullet}))\simeq \rmh_0(\fkn,P^{\bullet}),
    \]
    which implies the result.
\end{proof}
Now, we prove the comparison~\eqref{com-in-CJ eq}. We first construct the functorial map. Let $V\in \HC(\fkg,K_L)$. Then by~\eqref{glob-iso}, we have a natural embedding $V\to \widehat{V}$ as $(\fkg,K_L)$-modules, which will induce a $(\fkl,K_L)$-map for any integer $i$
\[ \rmh_i(\fkn,V)\lra \rmh_i(\fkn,\widehat{V}).
\]
Recall that $\rmh_i(\fkn,\widehat{V})$ is an Casselman-Wallach representation, see~\cite[Proposition 2.26]{WZ}. Therefore, by the Casselman-Wallach globalization, we obtain the natural map 
\begin{equation}\label{nautral comparison map}
     \rmh_i(\fkn,V)^{\infty}\lra \rmh_i(\fkn,\widehat{V}) \text{ for any integer } i.
\end{equation}

\begin{theorem}\label{complet-com}
    Let $V$ be an irreducible object in $\HC(\fkg,K_L)$. Then the natural map~\eqref{nautral comparison map} is an isomorphism.
\end{theorem}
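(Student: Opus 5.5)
We will first prove the statement for formal Verma modules and then pass to irreducible objects by induction along the finite length of objects in $\HC(\fkg,K_L)$.

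\textbf{Step 1: formal Verma modules.} For a Harish-Chandra $(\fkl,K_L)$-module $\tau$ we compare $\CV(\tau)$ with $\widehat{\CV(\tau)}\simeq \CV(\tau^{\infty})$ (Example~\ref{Verma exa}). As an $\fkn$-module, $\CV(\tau)$ is the completion $\varprojlim_k \U(\fkn)/\fkn^k\U(\fkn)\otimes \tau$. Filter it by powers of $\fkn$ (equivalently, by $\fkz_L$-weights). The $\fkn$-Koszul complex computing the homology of the graded pieces is then the Koszul complex of $\Sym(\fkn)$ tensored with $\tau$. Hence $\rmh_0(\fkn,\CV(\tau))\simeq\tau$ and $\rmh_i(\fkn,\CV(\tau))=0$ for $i>0$. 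Since the Koszul complex has finite length and the completion is taken weight by weight, Mittag-Leffler holds, so this passes to the completion. The identical computation on the smooth side, using \eqref{glob-iso} so that everything is a product of weight spaces that are Casselman--Wallach representations of $L$, gives $\rmh_0(\fkn,\CV(\tau^\infty))\simeq\tau^\infty$ and vanishing higher homology. The natural map \eqref{nautral comparison map} is induced by $\tau\hookrightarrow\tau^\infty$, so it is an isomorphism for formal Verma modules.

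\textbf{Step 2: irreducible objects.} Fix an infinitesimal character $\chi_\mu$ and induct downward on $\wt(\tau)$ within the finite set $\CT_\mu$, with respect to the partial order \eqref{partial_order}.
\begin{enumerate}
\item In the base case, $\wt(\tau)$ is maximal, so Corollary~\ref{finite term coro} gives $\CL(\tau)=\CV(\tau)$ and Step 1 applies.
\item In general, there is a short exact sequence $0\to M\to\CV(\tau)\to\CL(\tau)\to 0$ in $\HC(\fkg,K_L)$, and $M$ has finite length.
\item Every composition factor $\CL(\tau')$ of $M$ has $\tau'\in\CT_\mu$, because $\CV(\tau)$ has the single infinitesimal character $\chi_\mu$. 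Moreover $\wt(\tau')>\wt(\tau)$ strictly, because the $\wt(\tau)$-weight space of $\CV(\tau)$ is $\tau$ itself and lies entirely in the quotient.
\item By induction the natural map is an isomorphism for all $\CL(\tau')$. Applying the five lemma to the long exact sequences of $\fkn$-homology over a composition series, it is an isomorphism for $M$.
\end{enumerate}

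For the five lemma to apply, the natural map \eqref{nautral comparison map} must be compatible with connecting homomorphisms. This holds because:
\begin{itemize}
\item $\widehat{\cdot}$ is exact (by \eqref{glob-iso});
\item Casselman--Wallach globalization is exact;
\item the long exact sequences on both sides come from the Koszul complexes, and $V\hookrightarrow\widehat V$ gives a map of short exact sequences of Koszul complexes.
\end{itemize}
Applying the five lemma once more to the long exact sequences of $0\to M\to\CV(\tau)\to\CL(\tau)\to0$, using Step 1 for $\CV(\tau)$, gives the isomorphism for $\CL(\tau)$. The same argument in fact yields the result for every object of $\HC(\fkg,K_L)$.

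\textbf{Main obstacle.} The delicate point is Step 1 on the smooth side. We must justify that $\fkn$-homology of the complete module $\CV(\tau^\infty)$ is computed weight by weight: the images in the Koszul complex must be closed, and homology must commute with the infinite product over $\fkz_L^*$. We will handle this by noting three facts:
\begin{itemize}
\item the Koszul differentials preserve $\fkz_L$-weights up to a shift by $\Omega$;
\item each weight space $\prod_\kappa$ is finite-dimensional over $\Sym(\fkn)$ tensored with a single Casselman--Wallach space;
\item products of exact sequences of Fréchet spaces are exact.
\end{itemize}
We will also appeal to \cite[Proposition 2.26]{WZ} for the Hausdorffness of $\rmh_i(\fkn,\widehat V)$.
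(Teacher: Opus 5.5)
Your proposal is correct and follows the paper's own proof. Like the paper, you reduce to formal Verma modules using $0\to M\to\CV(\tau)\to\CL(\tau)\to 0$ and the strict increase of $\fkz_L$-weights of the composition factors of $M$ (finitely many steps), and you then compute that the $\fkn$-homology of a formal Verma module is $\tau$, resp.\ $\tau^{\infty}$, in degree $0$ and zero otherwise. Your five-lemma bookkeeping, and the remark that the argument covers all finite-length objects (which is what the application to $\mathrm{CJ}(\pi)$ actually needs), go beyond what the paper writes.

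The only slip is in Step 1. Since $\fkn$ need not be abelian in this section, the graded pieces do not yield the Koszul complex of $\Sym(\fkn)$. Instead, use that $\U(\fkn)$ is free over itself, and that the Koszul complex of the completion is a product over total $\fkz_L$-weights of finite pieces, as in your final paragraph.
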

\begin{proof}
    By the classification of the irreducible objects, there exists an irreducible $(\fkl,K_L)$-module $\tau$ such that $V\simeq \CL(\tau)$. Consider the short exact sequence
    \[
    0\lra \iota \lra  \CV(\tau)\lra \CL(\tau)\lra 0.
    \]
    To prove the comparison map~\eqref{nautral comparison map} is an isomorphism, it suffices to prove that the comparison map for $\iota$ and $\CV(\tau)$ is an isomorphism. Note that
    \[
    \min \mathrm{wt}(\iota)>\mathrm{wt}(\tau).
    \]
    Applying similar argument as above to the composition factors of $\iota$, and by~\cite[Lemma 2.26]{WZ}, after finite steps, we reduce to proving that the comparison map for any formal Verma module is an isomorphism. As a $\fkn$-module, we have
    \[
    \CV(\tau)\simeq \U[[\fkn]]\otimes \tau \quad \text{ and }\quad  \widehat{\CV(\tau)} \simeq \CV(\tau^{\infty})\simeq \U[[\fkn]]\widehat{\otimes} \tau^{\infty}.
    \]
    Consequently,
    \[
    \rmh_0(\fkn,\CV(\tau))^{\infty}\simeq \rmh_0(\fkn,\widehat{\CV(\tau)})\simeq \tau^{\infty},
    \]
    and 
    \[
     \rmh_i(\fkn,\CV(\tau))^{\infty}\simeq \rmh_i(\fkn,\widehat{\CV(\tau)})=0
    \]
    for $i>0$.
\end{proof}

\begin{proof}[Proof of the comparison conjecture]
    \begin{equation}\label{iso comp eq}
            \rmh_i(\fkn,\pi_K)^{\infty}\simeq \rmh_i(\fkn,\mathrm{CJ}(\pi))^{\infty}\simeq \rmh_i(\fkn, \widehat{\mathrm{CJ}(\pi)})\simeq \rmh_i(\fkn,\mathrm{CJ}^{\infty}(\pi))\simeq \rmh_i(\fkn,\pi),
    \end{equation}
    where the first isomorphism is Proposition~\ref{HC-com}, the second isomorphism is Theorem~\ref{complet-com}, the third isomorphism is Proposition~\ref{commutative dia prop} and the last isomorphism is~\eqref{CW-com-prove}.
\end{proof}
\begin{remark}
    When $\rmh_i(\fkn,\pi)$ is known to be Casselman-Wallach, then by the uniqueness of Casselman-Wallach globalization, there is a natural map 
    \[
    \rmh_i(\fkn,\pi_K) ^{\infty}\lra \rmh_i(\fkn,\pi)
    \]
    induced by $\rmh_i(\fkn,\pi_K)\to \rmh_i(\fkn,\pi)$. One can verify that it is identical to the composition~\eqref{iso comp eq}.
\end{remark}
Lastly, we remark that the maximal parabolic subgroups of classical groups have finitely many coadjoint orbits on their unipotent radicals, see~\cite[Theorem 1.1]{HG99}. Hence, to complete our strategy, it is desirable to have following lemma.
\begin{lemma}\label{reduce to max}
   Let $P\subset Q$ be two standard parabolic subgroups of a real reductive group $G$, with Levi decompositions $P=LN$ and $Q=MU$. If the comparison conjecture~\ref{comparison conj} holds for Casselman-Wallach representations of $G$ with parabolic subgroup $Q$, and Casselman-Wallach representations of $M$ with parabolic subgroup $M\cap P$. Then, the comparison conjecture~\ref{comparison conj} also holds for Casselman-Wallach representations of $G$ with parabolic subgroup $P$.
\end{lemma}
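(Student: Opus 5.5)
We derive the statement for $P$ from the two given comparisons through a Hochschild--Serre spectral sequence for $\fkn=\fkv\oplus\fku$, where $\fkv=\fkm\cap\fkn$ is the nilradical of $\fkm\cap\fkp$. Since $\fku$ is an ideal of $\fkn$, there are Hochschild--Serre spectral sequences
\[
E^2_{p,q}=\rmh_p(\fkv,\rmh_q(\fku,\pi_K))\Rightarrow \rmh_{p+q}(\fkn,\pi_K),\qquad
\widetilde E^2_{p,q}=\rmh_p(\fkv,\rmh_q(\fku,\pi))\Rightarrow \rmh_{p+q}(\fkn,\pi).
\]
The natural map $\pi_K\hookrightarrow\pi$ induces a morphism between them. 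Both can be realized from the Koszul complex $\wedge^\bullet\fkn\otimes(\cdot)$, filtered by the $\fku$-degree. This is a finite filtration, so convergence is automatic and the sequences are functorial.

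\textbf{Step 1: the $\fku$-layer.} By the hypothesis for $(G,Q)$, there is a natural isomorphism
\[
\rmh_q(\fku,\pi_K)^{\infty}\simeq\rmh_q(\fku,\pi)
\]
of Casselman--Wallach representations of $M$. Here $\rmh_q(\fku,\pi_K)$ is a Harish-Chandra module of $(\fkm,K_M)$, which follows from Corollary~\ref{homo HC coro} together with Proposition~\ref{HC-com}, or from Casselman--Osborne.

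\textbf{Step 2: the $\fkv$-layer.} Apply the hypothesis for $(M,M\cap P)$ to the Casselman--Wallach representation $\sigma:=\rmh_q(\fku,\pi)$, whose Harish-Chandra module is $\rmh_q(\fku,\pi_K)$. This gives
\[
\rmh_p(\fkv,\rmh_q(\fku,\pi_K))^{\infty}\simeq \rmh_p(\fkv,\rmh_q(\fku,\pi)).
\]
If $\sigma$ is not irreducible, nothing further is needed, since the conjecture is stated for all Casselman--Wallach representations. We conclude that the morphism of spectral sequences, after applying $(\cdot)^\infty$ to the algebraic one, is an isomorphism on $E^2$-pages.

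\textbf{Step 3: passing to the abutment.} The Casselman--Wallach globalization is an exact functor on Harish-Chandra modules of $L$. Hence $(E^r)^{\infty}$ is again a spectral sequence, converging to $\rmh_\bullet(\fkn,\pi_K)^\infty$ with the globalized filtration. A comparison theorem for finite first-quadrant spectral sequences then gives isomorphisms on $E^\infty$. Finally, a five-lemma induction along the finite filtrations of the abutments yields
\[
\rmh_i(\fkn,\pi_K)^\infty\simeq\rmh_i(\fkn,\pi).
\]

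The main obstacle is in Step 3, on the smooth side. There we must know that every term $\widetilde E^r_{p,q}$, and the filtration on $\rmh_i(\fkn,\pi)$, lives in the Casselman--Wallach category, so that topological issues (non-Hausdorff quotients, non-closed images of differentials) do not arise. I would handle this in two ways. First, $\widetilde E^2$ consists of Casselman--Wallach representations of $L$ by Step 2. Second, morphisms between Casselman--Wallach representations have closed images (Casselman--Wallach), so kernels and cokernels of the differentials $d^r$ remain Casselman--Wallach; by induction, every $\widetilde E^r$ is Casselman--Wallach. For the abutment filtration, I would use that the filtered Koszul complex has closed terms whose graded pieces have Hausdorff homology. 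This gives that the filtration steps on $\rmh_i(\fkn,\pi)$ are closed and that $\rmh_i(\fkn,\pi)$ is Hausdorff, which means it is Casselman--Wallach as an extension.

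A secondary point is naturality: the $E^2$-isomorphism must be induced by the actual map $\pi_K\to\pi$. This holds because both conjectural isomorphisms are the globalizations of the maps induced by inclusion of $K$-finite vectors, as noted in the Remark following the proof of the comparison conjecture.
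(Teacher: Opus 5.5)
Your proposal is correct and is essentially the paper's proof: the paper also filters the Koszul complex $\wedge^\bullet\fkv\otimes\wedge^\bullet\fku\otimes\pi$ to obtain the Hochschild--Serre spectral sequence, identifies the $E_2$-terms through $\rmh_p(\fkv,\rmh_q(\fku,\pi_K))^{\infty}\simeq\rmh_p(\fkv,\rmh_q(\fku,\pi)_K)^{\infty}\simeq\rmh_p(\fkv,\rmh_q(\fku,\pi))$ (first the comparison for $Q$, then the one for $(M,M\cap P)$), and concludes from the morphism of spectral sequences. Your Step 3 is more careful than the paper, which does not address the topology at all; one adjustment is needed there: the Hausdorffness of $\rmh_i(\fkn,\pi)$ and the closedness of its filtration should be deduced from the Hausdorffness of the $E^\infty$-terms, which your induction on $\widetilde E^r$ supplies, by the open mapping theorem applied inductively in the filtration degree, and not from the Hausdorffness of the homology of the graded pieces of the Koszul complex, which alone does not suffice because a connecting map can have non-closed range.
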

\begin{proof}
    The proof is similar as~\cite[Lemma 8.11]{WZ}. Let $\pi$ be a Casselman-Wallach representation of $G$. Let $V:=N \cap M$. Consider the double complex given by the Koszul resolution
    \[
    P_{p,q}:=\wedge^p \fkv\otimes \wedge^q \fku\otimes \pi,
    \]
    then 
    \[
    \rmh_i(\Tot(P_{\bullet,\bullet}))=\rmh_i(\fkn,\pi).
    \]
    The total complex admits a finite increasing filtration $\CF^j:=\Tot_{p\leq j,\bullet}$ with 
    \[
       E_2^{p,q}=\rmh_p(\fkv,\rmh_q(\fku,\pi)).
    \]
    Similarly, since the Casselman-Wallach globalization functor is exact, there is a spectral sequence for $\rmh_i(\fkn,\pi_K)^{\infty}$ whose
    \[
    E_2^{p,q}=\rmh_p(\fkv,\rmh_q(\fku,\pi_K))^{\infty}.
    \]
    Then by assumption, the natural isomorphism map
    \[
    \rmh_p(\fkv,\rmh_q(\fku,\pi_K))^{\infty}\simeq \rmh_p(\fkv,\rmh_q(\fku,\pi)_K)^{\infty}\simeq \rmh_p(\fkv,\rmh_q(\fku,\pi))
    \]
    intertwines two spectral sequences, where the first isomorphism uses the comparison for $Q$, and the second uses the comparison for $M$ with parabolic $M\cap P$. This implies that the natural map $\rmh_i(\fkn,\pi_K)^{\infty}\to\rmh_i(\fkn,\pi)$ is an isomorphism for any integer $i$.
\end{proof}

\section{Transitivity of (co)-standard objects}\label{trans sec}
In this section, we investigate the transitivity of the Casselman-Jacquet functor. Our idea is to reduce the computation of the Casselman-Jacquet functor to maximal parabolic cases. We remark that, by Theorem~\ref{unique sub thm} and the related discussion in~\cite[subsection 2.5]{WZ}, the three families
\[
\{[\CV(\tau)]\},\qquad \{[\CQ(\tau)]\},\qquad \{[\CL(\tau)]\},
\]
where $\tau$ runs over irreducible objects in $\mathrm{CW}_L$, each forms a basis for the Grothendieck group of the category $\CC(\fkg,L)_f$. Our idea is to study the relationship between the smooth Casselman-Jacquet functor and these bases.

We first introduce the viewpoint from the dual side. Let $P=LN$ be an almost linear Nash group with a Levi decomposition. Let $\pi\in\Smod_P$. By~\cite[Proposition 5.1]{CWYZ}, the canonical map $\pi \to \mathrm{CJ}^{\infty}(\pi)$ is surjective. Thus, we have a topological isomorphism
\begin{equation}\label{dual iso}
      \mathrm{CJ}_N^{\infty}(\pi)'\simeq \varinjlim_k  \, (\pi')^{\fkn^k},
\end{equation}
where the right-hand side is equipped with the direct limit topology, which is equivalent to the subspace topology under this circumstance. On the other hand, if we assume moreover $\pi$ is nuclear, then
  \begin{equation}\label{CJ dual side}
              \mathrm{CJ}^{\infty}_{N}(\pi)\simeq  (  \varinjlim_k \, (\pi')^{\fkn^k})'
  \end{equation}
as $(\fkg,P)$-modules since the nuclear Fr\'echet space is reflexive.
We recall two properties of the nuclear Fr\'echet space.
\begin{enumerate}
    \item The closed subspace and Hausdorff quotient space of a nuclear Fr\'echet space is still nuclear F\'echet.
    \item The projective limit of a family of nuclear Fr\'echet spaces is still nuclear Fr\'echet.
\end{enumerate}

Let $G$ be a real reductive group. Let $P\subset Q$ be two standard parabolic subgroups, with Levi decompositions $P=LN$ and $Q=MU$. Denote $V:= M\cap N$.
\begin{lemma}\label{transitivity lem}
    Let $\pi$ be a nuclear $(\fkg,Q)$-module. Then we have a natural topological isomorphism
    \[
    \mathrm{CJ}^{\infty}_{V} (\mathrm{CJ}^{\infty}_{U}(\pi))\simeq \mathrm{CJ}^{\infty}_{N}(\pi).
    \]
    In particular, as $(\fkg, P)$-modules, $\mathrm{CJ}^{\infty}_{N} (\mathrm{CJ}^{\infty}_{U}(\pi))\simeq \mathrm{CJ}^{\infty}_{N}(\pi)$.
\end{lemma}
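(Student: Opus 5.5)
The plan is to work on the dual side, using the description \eqref{dual iso} and \eqref{CJ dual side}. This works only because every space involved stays nuclear Fréchet. For nuclear $\pi$, we have $\mathrm{CJ}^{\infty}_N(\pi)' \simeq \varinjlim_k (\pi')^{\fkn^k}$, the space of $\fkn$-finite (locally nilpotent) continuous functionals. First I will check that $\mathrm{CJ}^{\infty}_U(\pi)$ is again a nuclear $(\fkg,Q)$-module. Each $\pi/\ov{\fku^k\pi}$ is a Hausdorff quotient of a nuclear Fréchet space, and the projective limit of nuclear Fréchet spaces is nuclear Fréchet. So $\mathrm{CJ}^\infty_V$ may be applied to it, and \eqref{CJ dual side} holds for it as well.

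Next, compute the dual of the left side. Applying \eqref{dual iso} twice gives
\[
\mathrm{CJ}^{\infty}_{V}(\mathrm{CJ}^{\infty}_{U}(\pi))' \simeq \varinjlim_j \bigl(\mathrm{CJ}^{\infty}_U(\pi)'\bigr)^{\fkv^j} \simeq \varinjlim_j \Bigl(\varinjlim_k (\pi')^{\fku^k}\Bigr)^{\fkv^j},
\]
which is the space of functionals in $\pi'$ that are $\fku$-finite and on which $\fkv$ acts locally nilpotently. Since $\fkn = \fkv \oplus \fku$ with $\fku$ an ideal of $\fkn$ (indeed $\fku$ is normalized by $\fkq \supset \fkn$), I will show this space equals $\varinjlim_m (\pi')^{\fkn^m}$. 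One inclusion is immediate, because $\fku^k,\fkv^k\subset \fkn^k$ as subspaces of $\U(\fkn)$. For the converse, take $\lambda$ killed by $\fku^k$ and lying in the $\fkv^j$-part of the $\fku$-finite subspace. Using $\fkv\fku^a\subset \fku^a\U(\fkn)$ and PBW with $\U(\fkn)=\U(\fku)\U(\fkv)$, any product of $m$ elements of $\fkn$ can be rewritten as a sum of terms $u\,v$ with $u\in\fku^a$ and $v\in\fkv^b$, where $a+b\ge m$ (filtration by degree). Hence for $m\ge j+k$ every such term kills $\lambda$. This requires keeping track that $\fkv$ preserves $(\pi')^{\fku^k}$ because $\fku^k\U(\fkn)=\U(\fkn)\fku^k$, and that $\fkv^b\lambda$ stays in $(\pi')^{\fku^k}$.

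Then I will match topologies and conclude. Both sides carry the direct limit topology, which by the remark after \eqref{dual iso} coincides with the subspace topology from the strong dual $\pi'$. The identification above is therefore a topological isomorphism of subspaces of $\pi'$, and it is compatible with the $(\fkg,P)$-action because everything is induced from $\pi'$. Dualizing again and using reflexivity of nuclear Fréchet spaces via \eqref{CJ dual side} gives the natural topological isomorphism $\mathrm{CJ}^{\infty}_V(\mathrm{CJ}^\infty_U(\pi))\simeq\mathrm{CJ}^\infty_N(\pi)$. Naturality is checked by verifying that the map is induced by the canonical surjections $\pi\to\mathrm{CJ}^\infty_U(\pi)\to\mathrm{CJ}^\infty_V(\cdots)$, all of which are surjective by \cite[Proposition 5.1]{CWYZ}.

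For the last assertion, $\fku$ acts on $\mathrm{CJ}^\infty_U(\pi)$ and $\fku\subset\fkn$. Applying the result just proved to the nuclear module $\mathrm{CJ}^\infty_U(\pi)$ gives $\mathrm{CJ}^\infty_N(\mathrm{CJ}^\infty_U(\pi))\simeq \mathrm{CJ}^\infty_V(\mathrm{CJ}^\infty_U(\mathrm{CJ}^\infty_U(\pi)))$. Since $\mathrm{CJ}^\infty_U$ is idempotent on its image (its dual is already $\fku$-finite), this equals $\mathrm{CJ}^\infty_V(\mathrm{CJ}^\infty_U(\pi))\simeq\mathrm{CJ}^\infty_N(\pi)$. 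The main obstacle I expect is the topological step: checking that the twice-iterated inductive limit carries the same topology as the single one inside $\pi'$. That is why the argument is routed through the subspace-topology statement accompanying \eqref{dual iso}, applied both to $\pi$ and to $\mathrm{CJ}^\infty_U(\pi)$.
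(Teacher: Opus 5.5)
Your overall route is the paper's: dualize via \eqref{dual iso}, identify $\varinjlim_j\bigl(\varinjlim_k(\pi')^{\fku^k}\bigr)^{\fkv^j}$ with $\varinjlim_m(\pi')^{\fkn^m}$ inside $\pi'$, and dualize back using reflexivity. The nuclearity check, the topological remarks and your idempotence argument for the last assertion are fine.

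The gap is in the one substantive step, the inclusion of the left side into $\varinjlim_m(\pi')^{\fkn^m}$. You claim that a product of $m$ elements of $\fkn$ is a sum of terms $uv$ with $u\in\fku^a$, $v\in\fkv^b$ and $a+b\ge m$. This is false, because PBW reordering lowers degree. For instance, $vu=uv+[v,u]$ with $[v,u]\in\fku$, and the second term has $a+b=1$. More seriously, your argument never uses that $\fkn$ is nilpotent, and without it the inclusion fails. Take $\fkn=\BC v\oplus\BC u$ with $[v,u]=u$, so $\fku=\BC u$ is an ideal. Let it act on $\BC e_0\oplus\BC e_1$ by $ve_0=0$, $ve_1=e_1$, $ue_0=e_1$, $ue_1=0$. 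Then $\fku^2e_0=0$ and $\fkv e_0=0$, but $v^{m-1}ue_0=e_1\neq 0$ for every $m$. Your count, by contrast, would give $\fkn^3e_0=0$.

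The missing ingredient is nilpotency of $\fkn$, which is exactly what the paper invokes through \eqref{nilpotent index}. Choose $N$ with $\mathrm{ad}(w_1)\cdots\mathrm{ad}(w_N)=0$ on $\fkn$ for all $w_i\in\fkn$. Take a monomial $X_1\cdots X_m$ with $X_i\in\fku\cup\fkv$ and exactly $r$ factors in $\fku$. Move the $\fku$-factors to the left using $wu=uw+[w,u]$.
\begin{itemize}
\item Since $\fku$ is an ideal, the number of $\fku$-factors stays $r$.
\item By the choice of $N$, each $\fku$-factor absorbs at most $N-1$ of the $\fkv$-factors.
\item Hence the monomial becomes a sum of terms $u'w'$ with $u'\in\fku^r$ and $w'\in\fkv^b$, where $b\ge m-rN$.
\end{itemize}
By your own observation that $\fkn$ preserves $(\pi')^{\fku^k}$, such a term kills $\lambda$ whenever $r\ge k$ or $b\ge j$. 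If $r<k$, then $b\ge m-(k-1)N$. So $m\ge kN+j$ suffices, and with this replacement your proof goes through.
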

\begin{proof}
  The isomorphism~\eqref{dual iso} implies that
  \[
 \mathrm{CJ}^{\infty}_{V} (\mathrm{CJ}^{\infty}_{U}(\pi))\simeq \left( \varinjlim_{k} \left( \varinjlim_{\ell} \, (\pi')^{\fku^{\ell}} \right)^{\fkv^k} \right)'.
    \]
    since $\mathrm{CJ}^{\infty}_{U}(\pi)$ is nuclear. Hence, it suffices to prove $\varinjlim_{k} \left( \varinjlim_{\ell} \, (\pi')^{\fku^{\ell}} \right)^{\fkv^k} = \varinjlim_{k}(\pi')^{\fkn^{k}} $. Since 
    \[
    (\pi')^{\fkn^k} \subset (\pi')^{\fku^k}\cap (\pi')^{\fkv^k},
    \]
    the right-hand side is contained in the left-hand side. Conversely, note that $\fkn$ is nilpotent, namely, there is a positive integer $N$ such that
    \begin{equation}\label{nilpotent index}
       [[\fkn, \fkn  \underbrace{ ]\dots,\fkn]}_{N}=0.  
    \end{equation}
    Thus, for any two fixed integers $k,\ell$, there exists a sufficiently large integer $t$, such that for any monomial 
    \[
    X= X_1 \dots X_t, X_i \in \fku \text{ or } X_i \in\fkv,
    \]
    either $X\in \U(\fkg)\fku^{\ell}$ or $X\in \U(\fkg) \fkv^k$. Consequently, we obtain the inverse containment.   
\end{proof}

In what follows, we discuss the relation between the Casselman-Jacquet functor and (dual) Verma modules. 
\begin{proposition}\label{trans co-std}
    Let $\tau$ be a Casselman-Wallach representation of $M$. Then we have a natural isomorphism
    \[
  \mathrm{CJ}_{N}^{\infty}(\CQ(\tau))  \simeq \CQ(\mathrm{CJ}_{V}^{\infty}(\tau)) .
    \]
\end{proposition}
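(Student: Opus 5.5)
The plan is to work on the dual side, using the description \eqref{CJ dual side} of $\mathrm{CJ}^{\infty}_N$ for nuclear modules. By definition $\CQ(\tau)=(\U(\fkg)\otimes_{\U(\fkq)}\tau')'$, and $\CQ(\tau)$ is a nuclear Fr\'echet space: as an $L$-representation it is $\BC[[\fku]]\widehat{\otimes}\tau$ by the computation in Proposition~\ref{dual verma prop}. Its dual is the generalized Verma module $\CQ(\tau)'\simeq \U(\fkg)\otimes_{\U(\fkq)}\tau'$, with the direct limit topology. Hence
\[
\mathrm{CJ}^{\infty}_N(\CQ(\tau))\simeq \Bigl(\varinjlim_k\,(\U(\fkg)\otimes_{\U(\fkq)}\tau')^{\fkn^k}\Bigr)'.
\]
In the same way,
\[
\CQ(\mathrm{CJ}^{\infty}_V(\tau))=\bigl(\U(\fkg)\otimes_{\U(\fkq)}\mathrm{CJ}^{\infty}_V(\tau)'\bigr)'
\quad\text{with}\quad
\mathrm{CJ}^{\infty}_V(\tau)'\simeq\varinjlim_k(\tau')^{\fkv^k}
\]
by \eqref{dual iso}. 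The statement is therefore reduced to a natural topological isomorphism of $(\fkg,L)$-modules
\[
\varinjlim_k\,(\U(\fkg)\otimes_{\U(\fkq)}\tau')^{\fkn^k}\;\simeq\;\U(\fkg)\otimes_{\U(\fkq)}\varinjlim_k(\tau')^{\fkv^k},
\]
that is, the $\fkn$-locally finite part of the induced module equals the module induced from the $\fkv$-locally finite part.

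First, the right-hand side sits inside the left-hand side. Because $\U(\fkg)\otimes_{\U(\fkq)}\cdot$ is exact, the right-hand side embeds into $\U(\fkg)\otimes_{\U(\fkq)}\tau'$. Write $\fkn=\fkv\oplus\fku$ and use PBW: $\U(\fkg)\otimes_{\U(\fkq)}\tau'\simeq\U(\ov{\fku})\otimes\tau'$. Filter this by degree in $\U(\ov{\fku})$, or equivalently by $\fkz_M$-weight, which $\fku$ raises. An element $u\otimes w$ with $w\in(\tau')^{\fkv^k}$ is then killed by a high power of $\fkn$. Two facts give this: $\fku$ lowers the $\ov{\fku}$-degree, so powers of $\fku$ kill any fixed-degree element after finitely many steps; and $\fkv$ commutes past $\U(\ov{\fku})$ modulo terms of the same degree, since $[\fkv,\ov{\fku}]\subset\ov{\fku}$. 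Finally, the nilpotency argument from the proof of Lemma~\ref{transitivity lem} controls mixed monomials in $\fku$ and $\fkv$.

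Second, and harder, is the reverse inclusion. I would take a vector $x=\sum_j u_j\otimes w_j$ annihilated by $\fkn^k$, with $u_j$ a basis of $\U(\ov{\fku})$ up to some degree. I would look at its top-degree component in the $\ov{\fku}$-filtration, or at its extremal $\fkz_M$-weight component, and argue that $\fkv$ acts on the leading coefficients $w_j$ through the $\fkm$-module structure on $\tau'$ (tensored with the finite-dimensional $\ad$-module $\mathrm{gr}\,\U(\ov{\fku})$, on which $\fkv$ acts nilpotently). This forces each leading $w_j$ into $(\tau')^{\fkv^{k'}}$; one then subtracts and inducts downward on degree. The point I expect to be the main obstacle is handling the action of $\fkv$ on $\Sym(\ov{\fku})\otimes\tau'$, which is twisted by the nilpotent adjoint representation. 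The model for this is Proposition~\ref{twisted Jacquet computation}, where a nilpotent twist is handled by a filtration on which $\fkv$ acts by the untwisted action. Here a finite $\fkv$-stable filtration of each graded piece $\Sym^d(\ov{\fku})$ with trivial subquotients shows that $(\Sym^d\otimes\tau')^{\fkv\text{-fin}}=\Sym^d\otimes(\tau')^{\fkv\text{-fin}}$.

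Third, I would check topology and naturality. Both sides carry strict inductive limit topologies of countable type. The algebraic bijection is $\fkg$- and $L$-equivariant and continuous on each step, so the open mapping theorem for LF or dual-nuclear-Fr\'echet spaces makes it a topological isomorphism. Dualizing, using reflexivity of nuclear Fr\'echet spaces together with the properties (1) and (2) of nuclear spaces recalled above, gives the stated isomorphism. Naturality in $\tau$ holds because every map is functorial. As a sanity check, $\wt$ considerations from Proposition~\ref{dual verma prop} show that both sides lie in $\CC(\fkg,L)_f$.
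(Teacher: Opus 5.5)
Your proof is correct and has the same skeleton as the paper's. You dualize using the description of $\mathrm{CJ}^{\infty}$ of a nuclear module as the dual of $\varinjlim_k(\cdot)^{\fkn^k}$. You reduce to the equality $\varinjlim_k(\U(\fkg)\otimes_{\U(\fkq)}\tau')^{\fkn^k}=\U(\fkg)\otimes_{\U(\fkq)}\varinjlim_k(\tau')^{\fkv^k}$ inside $\U(\ov{\fku})\otimes\tau'$, and prove the two inclusions.

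The real difference is in the hard inclusion. The paper orders the PBW monomials $B_i$ of $\U(\ov{\fku})$ by $\fka$-weight (the positive-root order). Both $\mathrm{ad}(Y)$ for $Y\in\fkv$ and the action of $\fku$ produce extra terms only at strictly larger monomials. So the coefficient at a minimal monomial $B_j$ of $YX$ is just $Y\beta_j$, giving $\fkn^k\beta_j=0$ directly, and the twisted action never has to be confronted. You instead use a coarse grading (degree, or $\mathrm{ad}\,\fkz_M$-weight), which $\fkv$ preserves. So you need the lemma $(F\otimes W)^{\fkv\text{-fin}}=F\otimes W^{\fkv\text{-fin}}$ for a finite-dimensional $F$ on which $\fkv$ acts nilpotently. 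That lemma is correct: peel off the top of a flag with trivial $\fkv$-subquotients. In fact it makes your leading-term induction unnecessary. Each $\U_d(\ov{\fku})$ is itself a finite-dimensional $\mathrm{ad}\,\fkv$-stable space on which $\fkv$ acts nilpotently, since $\fkv$ raises $\fka$-weight. So the lemma shows at once that the $\fkv$-finite vectors of $\U(\ov{\fku})\otimes\tau'$ are exactly $\U(\ov{\fku})\otimes(\tau')^{\fkv\text{-fin}}$. This is slightly stronger than what is needed and avoids choosing any order on PBW monomials.

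One correction to your easy inclusion. $\fku$ does not lower the $\ov{\fku}$-degree when $\fku$ is non-abelian; for the Borel of $\GL_3$ one has $[e_{12},e_{31}]=-e_{32}\in\ov{\fku}$. So degree and $\mathrm{ad}\,\fkz_M$-weight are not equivalent filtrations, and only the $\fkz_M$-weight version, which $\fku$ strictly raises, works for this step. The paper's argument here is also shorter than your bookkeeping of mixed monomials. It uses two facts:
\begin{itemize}
\item $\fkn^{N}X\subset\U(\fkg)\fkn^{k}$ for $X\in\U(\fkg)$ and $N\gg0$;
\item $\fkn^{k}w=0$ for $w\in(\tau')^{\fkv^{k}}$, because $\U(\fkq)\fku$ is a two-sided ideal of $\U(\fkq)$ annihilating $\tau'$.
\end{itemize}
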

\begin{proof}
    Since $\CQ(\tau)$ is nuclear, by~\eqref{CJ dual side},
    \[
    \mathrm{CJ}_{N}^{\infty}(\CQ(\tau)) \simeq \left(\varinjlim_k (\CQ(\tau)')^{\fkn^k} \right)'\simeq \left(\varinjlim_k (\U(\fkg)\otimes_{\U(\fkq)}\tau')^{\fkn^k} \right)'.
    \]
    Hence, the result follows from the following claim:
    \begin{equation}\label{claim eq}
            \varinjlim_k (\U(\fkg)\otimes_{\U(\fkq)}\tau')^{\fkn^k}  = \U(\fkg) \otimes_{\U(\fkq)}\varinjlim_k(\tau')^{\fkv^k}
    \end{equation}
    since $\mathrm{CJ}_V^{\infty}(\tau)'\simeq\varinjlim_k(\tau')^{\fkv^k} $ for Casselman-Wallach representation $\tau$. 
    \begin{itemize}
        \item ``RHS $\subset$ LHS": For any $X\in\U(\fkg)$ and any positive integer $k$, there exists a positive integer $N$ such that $\fkn^{N} X\subset \U(\fkg) \fkn^k$.
        \item ``LHS $\subset$ RHS": Fix a set of root vectors in $\ov{\fku}$. This gives a set of standard PBW basis $\CB_{\fku}$ in $\U(\ov{\fku})$. Since $\U(\fkg)\otimes_{\U(\fkq)}\tau'\simeq \U(\ov{\fku})\otimes \tau'$, we express an element $X$ in $\varinjlim_k (\U(\fkg)\otimes_{\U(\fkq)}\tau')^{\fkn^k}$ under the basis $\CB_{\fku}$:
        \[
     X=   \sum_{B_i\in \CB_{\fku}} B_i \otimes \beta_i, \beta_i\in \tau'.
        \]
        We argue by induction on the number of nonzero coefficients $\beta_i$. Let $B_j$ be a minimal element in $\CB_{\fku}$ under the partial order defined by positive roots such that $\beta_j \neq 0$. Suppose $\fkn^k \cdot X=0$. Then for any $Y\in \fkn^k$,
        \[
        YX= \sum_{B_i\in \CB_{\fku}} B_i \otimes Y\beta_i  +\sum_{B_i\in \CB_{\fku}} B_i \otimes \gamma_i,
        \]
        where $\gamma_i\neq 0$ only if $B_i > B_{\ell}$ for some $\ell$ such that $\beta_{\ell}\neq 0$. Therefore,
        \[
        Y\beta_j =0 \text{ for any }Y\in\fkn^k,
        \]
        which implies $\beta_j \in (\tau')^{\fkv^k}$ since $\U(\fkn)\simeq \U(\fkn)\fku \oplus \U(\fkv)$. Thus, 
        \[
        X- B_j \otimes \beta_j \in \varinjlim_k (\U(\fkg)\otimes_{\U(\fkq)}\tau')^{\fkn^k},
        \]
    and the result follows from the induction hypothesis.
    \end{itemize}
\end{proof}

\begin{proposition}\label{trans std}
        Let $\tau$ be a Casselman-Wallach representation of $M$. Then we have a natural isomorphism
    \[
  \mathrm{CJ}_{N}^{\infty}(\CV(\tau))  \simeq \CV(\mathrm{CJ}_{V}^{\infty}(\tau)) .
    \]
\end{proposition}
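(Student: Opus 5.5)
The strategy is to mirror the proof of Proposition~\ref{trans co-std}, but for $\CV(\tau)$ we work directly on the side of inverse limits rather than through the dual. Here $\CV(\tau)$ is the smooth formal Verma module for $Q$,
\[
\CV(\tau)=\varprojlim_k \big(\U(\fkg)\otimes_{\U(\ov{\fkq})}\tau\big)/\big(\fku^k\U(\fkg)\otimes_{\U(\ov{\fkq})}\tau\big)\simeq \U[[\fku]]\widehat{\otimes}\tau,
\]
and it is nuclear. By Lemma~\ref{transitivity lem} applied with the pair $P\subset Q$, we have $\mathrm{CJ}^{\infty}_N(\CV(\tau))\simeq \mathrm{CJ}^{\infty}_V(\mathrm{CJ}^{\infty}_U(\CV(\tau)))$. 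The first step is to observe that $\CV(\tau)$ is already $\fku$-complete, i.e.\ $\mathrm{CJ}^{\infty}_U(\CV(\tau))=\CV(\tau)$. This holds because $\CV(\tau)\in\CC(\fkg,M)_f$, so Lemma~\ref{inv lim lem} (for the parabolic $Q$) gives $\CV(\tau)\simeq\varprojlim_k \CV(\tau)/\fku^k\CV(\tau)$. The statement therefore reduces to computing $\mathrm{CJ}^{\infty}_V(\CV(\tau))=\varprojlim_k \CV(\tau)/\ov{\fkv^k\CV(\tau)}$.

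Next we compute the $\fkv$-adic quotients using the decomposition $\fkn=\fku\oplus\fkv$ and $\U(\fkn)=\U(\fku)\otimes\U(\fkv)$. Writing $\CV(\tau)\simeq\U[[\fku]]\widehat{\otimes}\tau$, the algebra $\fkv\subset\fkm$ normalizes $\fku$, so $\fkv$ acts on $\U[[\fku]]\widehat{\otimes}\tau$ by $\ad$ on the first factor and by the given action on $\tau$. Since $\ad\fkv$ raises $\fkz_M$-weights on $\U(\fku)$ and acts nilpotently on each finite-dimensional weight piece $\Sym(\fku)_\alpha$, we can compare filtrations. The aim is to show that the two systems $\{\ov{\fkv^k\CV(\tau)}+\fku^j\CV(\tau)\}$ and $\{\U[[\fku]]\widehat{\otimes}\ov{\fkv^k\tau}+\fku^j\CV(\tau)\}$ are cofinal, in the same spirit as the nilpotency argument in Lemma~\ref{transitivity lem}. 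The doubly-indexed limit then becomes
\[
\varprojlim_{j,k}\ \big(\U(\fku)/\fku^j\big)\otimes \tau/\ov{\fkv^k\tau}\simeq \U[[\fku]]\widehat{\otimes}\mathrm{CJ}^{\infty}_V(\tau).
\]

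The final step identifies this space with $\CV(\mathrm{CJ}^\infty_V(\tau))$, the formal Verma module for $P$ induced from the $(\fkm,L)$-module $\mathrm{CJ}^\infty_V(\tau)$ and completed along $\fkn$. We use $\U(\fkg)\otimes_{\U(\ov{\fkp})}\sigma\simeq \U(\fkg)\otimes_{\U(\ov{\fkq})}(\U(\fkm)\otimes_{\U(\fkm\cap\ov{\fkp})}\sigma)$. Because $\mathrm{CJ}^{\infty}_V(\tau)$ is $\fkv$-complete, the $\fkv$-completed inner Verma module for $M$ should recover it, and after $\fku$-completion this gives the formula. The $\fkg$-actions on the two sides are checked on the dense $\fkz_L$-finite parts, where both sides reduce to finitely many Harish-Chandra pieces. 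Naturality in $\tau$ is clear from the construction.

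I expect the main obstacle to be the cofinality and closedness in the middle step. The subspace $\fkv^k\CV(\tau)$ mixes the $\ad$-action on $\U[[\fku]]$ with the action on $\tau$, and one needs Hausdorffness, namely closedness of $\fkv^k\tau$ in $\tau$, which holds for Casselman--Wallach $\tau$ as in \cite[Theorem 12.1]{CWYZ}. If the direct estimate becomes messy, my fallback is to dualize as in Proposition~\ref{trans co-std}, using \eqref{CJ dual side} and the fact that $\CV(\tau)'$ is a direct sum of $\fkz_M$-weight pieces $\Sym(\fku)_\alpha'\otimes\tau'$. Then $\varinjlim_k(\CV(\tau)')^{\fkv^k}$ should be computed weight by weight by the same PBW-minimal-term induction, giving $\bigoplus\Sym(\fku)'_\alpha\otimes\varinjlim_k(\tau')^{\fkv^k}$.
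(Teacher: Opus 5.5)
Your core computation is right, but the final step misreads the right-hand side, and the detour you take there would fail. In the statement, $\CV$ on both sides is the formal Verma functor for $Q$, $\sigma\mapsto\U[[\fku]]\widehat{\otimes}\sigma$ (the $\fku$-completion of $\U(\fkg)\otimes_{\U(\ov{\fkq})}\sigma$). On the right it is applied to the $\fkm$-module $\mathrm{CJ}^{\infty}_V(\tau)$. The paper's proof uses it this way when it applies exactness of $\CV(\cdot)$ to $0\to\bigcap_k\ov{\fkv^k\tau}\to\tau\to\mathrm{CJ}^{\infty}_V(\tau)\to0$. A formal Verma module for $P$ needs an $\fkl$-module as input. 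Your induction in stages through $\U(\fkm)\otimes_{\U(\fkm\cap\ov{\fkp})}\sigma$ would require $\mathrm{CJ}^{\infty}_V(\tau)$ to be the $\fkv$-completion of such an $M$-Verma module, and this fails in general. For finite-dimensional $\tau$ and $\fkv\neq0$, $\mathrm{CJ}^{\infty}_V(\tau)=\tau$ is finite-dimensional, whereas $\U[[\fkv]]\widehat{\otimes}\sigma$ is not for $\sigma\neq0$. Read correctly, the target is exactly the space $\U[[\fku]]\widehat{\otimes}\mathrm{CJ}^{\infty}_V(\tau)$ your middle step produces. Its $\fkg$-equivariance is automatic, because your isomorphism is induced by $\CV$ applied to the quotient map $\tau\to\mathrm{CJ}^{\infty}_V(\tau)$; nothing needs checking on $\fkz_L$-finite parts.

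Two smaller repairs are needed in the middle step.
\begin{itemize}
\item $\mathrm{ad}\,\fkv$ \emph{preserves} $\fkz_M$-weights, since $\fkv\subset\fkm$; it is $\fkz_L$-weights that it raises. Preservation is what makes the finite-dimensional pieces $\Sym(\fku)_\alpha$ $\fkv$-stable, and $\fkv$ then acts nilpotently on them.
\item Passing from $\varprojlim_k\CV(\tau)/\ov{\fkv^k\CV(\tau)}$ to a doubly indexed limit needs each quotient to be $\fku$-complete, which you do not justify.
\end{itemize}
The clean fix also makes Lemma~\ref{transitivity lem} unnecessary, together with the integration of $\fku$ to $U$ that the lemma requires, which the paper supplies and you omit. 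Set $F_j:=\U(\fku)/\fku^j\U(\fku)$. Then $\CV(\tau)/\fku^j\CV(\tau)\simeq F_j\otimes\tau$ as $\fkm$-modules. So $\fku^j\CV(\tau)+\fkv^k\CV(\tau)$ is closed: it is the preimage of $\fkv^k(F_j\otimes\tau)$, which is closed because $F_j\otimes\tau$ is Casselman--Wallach. By nilpotency of $\fkn$, these subspaces are cofinal with $\{\ov{\fkn^t\CV(\tau)}\}$. Hence $\mathrm{CJ}^{\infty}_N(\CV(\tau))\simeq\varprojlim_{j,k}(F_j\otimes\tau)/\fkv^k(F_j\otimes\tau)$ directly.

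Once repaired, your argument takes a genuinely different route from the paper's.
\begin{itemize}
\item The paper works with kernels. It uses the short exact sequence $0\to\bigcap_k\ov{\fkv^k\CV(\tau)}\to\CV(\tau)\to\mathrm{CJ}^{\infty}_V(\CV(\tau))\to0$ (via Lemma~\ref{transitivity lem}) and exactness of $\CV(\cdot)$. This reduces the statement to the equality $\bigcap_k\ov{\fkv^k\CV(\tau)}=\CV(\bigcap_k\ov{\fkv^k\tau})$ inside $\U[[\fku]]\widehat{\otimes}\tau$, which it proves by two nilpotency containments.
\item You work with the tower of quotients, where everything lives in $F_j\otimes\tau$ with $F_j$ finite-dimensional and $\fkv$-nilpotent of some index $r_j$. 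The estimates $\fkv^{k+r_j}(F_j\otimes\tau)\subset F_j\otimes\fkv^k\tau$ and $F_j\otimes\fkv^{k+r_j}\tau\subset\fkv^k(F_j\otimes\tau)$ are the elementary twisting argument the paper uses for $\mathrm{CJ}(V\otimes\beta)\simeq\mathrm{CJ}(V)\otimes\beta$. Closedness is inherited from Casselman--Wallach theory.
\end{itemize}
Your route is more self-contained; the paper's identifies the kernel explicitly and parallels Proposition~\ref{trans co-std}. Your dual fallback would also go through, and more simply than you suggest. Each $\Sym(\fku)_\alpha^{*}\otimes\tau'$ is $\fkv$-stable with a finite-dimensional nilpotent first factor, so no PBW-minimal-term induction is needed.
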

\begin{proof}
    By the definition of the formal Verma module, the $\fku$-action on $\CV(\tau)$ can be integrated, via the logarithmic map, to a smooth moderate growth $U$-action. Hence, by Lemma~\ref{transitivity lem}, we have a short exact sequence
    \[
    0 \lra \bigcap_k \ov{\fkv^k\CV(\tau) } \lra \CV(\tau) \lra \mathrm{CJ}_{V}^{\infty}(\CV(\tau)) \simeq \mathrm{CJ}_{N}^{\infty}(\CV(\tau))\lra 0.
    \]
    On the other hand, since the functor $\CV(\cdot)$ is exact, we have another short exact sequence
    \[
    0 \lra \CV(\cap_k \ov{\fkv^k \tau}) \lra \CV(\tau) \lra \CV(\mathrm{CJ}_{V}^{\infty}(\tau))\lra 0.
    \]
    Therefore, it suffices to show that 
    \[
\bigcap_k \ov{\fkv^k\CV(\tau) }   =  \CV(\cap_k \ov{\fkv^k \tau}).
    \]
    We realize $\CV(\tau)$ as $\U[[\fku]] \widehat{\otimes} \tau $. 
    \begin{itemize}
        \item`` RHS $\subset$ LHS": Since $\CV(\tau)=\ov{\U[[\fku]]\otimes \tau}$, it suffices to prove that for any positive integer $k$, there is an integer $t$, such that
        \[
        \U[[\fku]]\otimes \fkv^t \tau \subset \fkn^k (\U[[\fku]]\otimes \tau).
        \]
        Assume that $\fkn$ satisfies~\eqref{nilpotent index}. Then for any $X\in\U[[\fku]]$, 
        \[
         [[X, \fkv\underbrace{]\dots,\fkv]}_{N}  \subset \fku^2 \U[[\fku]].
        \]
        Thus, $t=k(1+N)$ is one of the choices.
        \item `` LHS $\subset$ RHS": Let $x\in \bigcap_k \ov{\fkv^k\CV(\tau) }$, then there exists a sequence of elements $\{x_i\}$ such that
        \[
        x_i\in \fkv^k(\U[[\fku]]\otimes \tau) \text{ and } x_i \to x.
        \]
        Since $\fkn$ is nilpotent, for any integer $k$, there exists a sufficiently large positive integer $t(k)$ such that
        \[
        \fkv^t \U[[\fku]] \subset \U[[\fku]] \fku^k + \U[[\fku]] \fkv^k .
        \]
        Consequently, $x_{t(k)}= y_{t(k)} +z_{t(k)}$, for some
        \[
        y_{t(k)} \in \U[[\fku]]\fku^k \otimes \tau \quad, \quad x_{t(k)}\in \U[[\fku]] \otimes \fkv^k \tau.
        \]
        In other words, $x_{t(k)}$ converges to $x$ since $y_{t(k)}$ converges to zero, which implies that $x\in \CV(\ov{\fkv^k\tau})$ for any positive integer $k$.
    \end{itemize}
\end{proof}

\end{document}